\providecommand{\tabularnewline}{\\}
\numberwithin{equation}{section}
\numberwithin{figure}{section}
\theoremstyle{plain}
\newtheorem{thm}{\protect\theoremname}[section]
\theoremstyle{remark}
\newtheorem{rem}[thm]{\protect\remarkname}
\theoremstyle{plain}
\newtheorem{cor}[thm]{\protect\corollaryname}
\theoremstyle{definition}
\newtheorem{defn}[thm]{\protect\definitionname}
\theoremstyle{remark}
\newtheorem{notation}[thm]{\protect\notationname}
\theoremstyle{plain}
\newtheorem{lem}[thm]{\protect\lemmaname}
\theoremstyle{definition}
\newtheorem{condition}[thm]{\protect\conditionname}
\theoremstyle{plain}
\newtheorem{prop}[thm]{\protect\propositionname}
\DeclareRobustCommand*\cal{\@fontswitch\relax\mathcal}
\tikzset{curve/.style={settings={#1},to path={(\tikztostart)
    .. controls ($(\tikztostart)!\pv{pos}!(\tikztotarget)!\pv{height}!270:(\tikztotarget)$)
    and ($(\tikztostart)!1-\pv{pos}!(\tikztotarget)!\pv{height}!270:(\tikztotarget)$)
    .. (\tikztotarget)\tikztonodes}},
    settings/.code={\tikzset{quiver/.cd,#1}
        \def\pv##1{\pgfkeysvalueof{/tikz/quiver/##1}}},
    quiver/.cd,pos/.initial=0.35,height/.initial=0}
\tikzset{tail reversed/.code={\pgfsetarrowsstart{tikzcd to}}}
\tikzset{2tail/.code={\pgfsetarrowsstart{Implies[reversed]}}}
\tikzset{2tail reversed/.code={\pgfsetarrowsstart{Implies}}}
\tikzset{no body/.style={/tikz/dash pattern=on 0 off 1mm}}
\providecommand{\conditionname}{Condition}
\providecommand{\corollaryname}{Corollary}
\providecommand{\definitionname}{Definition}
\providecommand{\lemmaname}{Lemma}
\providecommand{\notationname}{Notation}
\providecommand{\propositionname}{Proposition}
\providecommand{\remarkname}{Remark}
\providecommand{\theoremname}{Theorem}
\begin{document}
\title{Spectral sequences in unoriented link Floer homology}
\author{Gheehyun Nahm}
\thanks{The author was partially supported by the Simons Grant \emph{New structures in low-dimensional topology}.}
\begin{abstract}
In a previous work, we defined an unoriented skein exact triangle
in unoriented link Floer homology. In this paper, we iterate a modified
version of this exact triangle and obtain a spectral sequence from
various versions of Khovanov homology to various versions of unoriented
link Floer homology, over the field with two elements. In particular,
for knots in $S^{3}$, we obtain a spectral sequence from the reduced
Khovanov homology of the mirror of the knot to the knot Floer homology
of the knot.
\end{abstract}

\address{Department of Mathematics, Princeton University, Princeton, New Jersey
08544, USA}
\email{gn4470@math.princeton.edu}

\maketitle
\tableofcontents{}

\section{\label{sec:Introduction}Introduction}

In this paper, we prove Rasmussen's conjecture \cite{MR2189938} over
the field $\mathbb{F}=\mathbb{Z}/2$ with two elements.
\begin{thm}
\label{thm:knot-S3}Let $K$ be a knot in $S^{3}$. Then, there exists
a spectral sequence 
\[
\widetilde{Kh}(m(K))\Rightarrow\widehat{HFK}(S^{3},K)
\]
over the field $\mathbb{F}=\mathbb{Z}/2$ with two elements, where
$\widetilde{Kh}(m(K))$ is the reduced Khovanov homology of the mirror
of the knot.
\end{thm}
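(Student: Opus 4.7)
The plan is to iterate the unoriented skein exact triangle from the previous work over all crossings of a diagram of $K$, producing a cube of resolutions whose associated spectral sequence has $E_{1}$ (or $E_{2}$) page identifiable with the reduced Khovanov complex of $m(K)$ and converges to $\widehat{HFK}(S^{3},K)$. Fix a diagram $D$ of $K$ with $n$ crossings and a basepoint on $K$. At each crossing, the (modified) skein triangle is a three-term exact sequence in unoriented link Floer homology relating the crossing to its two unoriented resolutions. Assembling these triangles over all $n$ crossings yields a $\{0,1\}^{n}$-cube whose vertices correspond to complete resolutions of $D$ -- unlinks in $S^{3}$ -- and whose edges are merge- or split-type maps of unoriented link Floer groups.

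The first step is to show that the total complex (iterated mapping cone) of this cube computes $\widehat{HFK}(S^{3},K)$. This is the standard filter-by-cubical-weight argument that upgrades an exact triangle to a spectral sequence whenever the connecting maps are natural with respect to resolving the remaining crossings, so I would need to verify this naturality directly from the construction of the unoriented skein triangle. The $E_{0}$ page is then the direct sum of the vertex groups. Each vertex is the unoriented link Floer homology of an unlink, which I expect to be a tensor product of copies of a two-dimensional Frobenius algebra; the marked basepoint converts the unreduced theory to a reduced one on the component containing it, giving chain groups that match those of the reduced Khovanov complex.

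The next step is to identify the $(E_{1},d_{1})$ page with the reduced Khovanov chain complex of $m(K)$. The vertex computation gives the correct underlying groups, and the edge maps in the cube are the connecting maps of the skein triangles restricted to unlinks, which at the level of resolutions must come from the elementary merge/split cobordisms between the two resolved diagrams. Matching these with the Khovanov TQFT maps -- and tracking the convention by which $0$- and $1$-resolutions correspond to the two resolved terms of the unoriented skein triangle (which is where the mirror appears) -- pins down $d_{1}$ on the nose. Then $E_{2}\cong\widetilde{Kh}(m(K))$, and the sequence converges to $\widehat{HFK}(S^{3},K)$ by construction of the filtered complex.

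The main obstacle is the $E_{1}$ identification: one must pin down the unoriented skein connecting maps for unlinks with enough precision to recover Khovanov's merge and split maps exactly -- with the right Frobenius algebra structure, the correct reduction induced by the basepoint, and the correct mirror convention -- rather than some homotopy-equivalent variant. The second, more technical hurdle is the naturality of the skein triangle with respect to simultaneous resolutions of the other crossings, which is needed to assemble the cube as an honest hypercube of chain complexes; this I expect to follow from the construction of the triangle in the earlier paper, but it may require bookkeeping of the auxiliary data (Heegaard diagrams, basepoints) in the modified version of the exact triangle.
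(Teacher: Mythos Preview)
Your overall strategy matches the paper's: build a cube of resolutions by iterating the unoriented skein triangle, show its total complex computes $\widehat{HFK}$, and identify the $E_1$ page with the reduced Khovanov complex. However, you underestimate the central obstruction. The band maps from the earlier paper provably do \emph{not} commute in general (Remark~\ref{rem:Similarly,-we-can}, Proposition~\ref{prop:swap-merge-2-1}), so the cube cannot be assembled from them by bookkeeping alone. The paper's main new idea is an explicit modification: for merge and split edges the canonical element $\theta$ is replaced by $\delta=\theta+\Phi_j\bigl(\sum_{k}\Phi_k\bigr)\theta$, where the $\Phi$'s are basepoint actions indexed by a chosen total order on the arcs of the diagram (Definition~\ref{def:For-each-consecutive}). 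Proving that these modified maps commute (Proposition~\ref{prop:We-have-the}(\ref{enu:4})) and still generate an exact triangle (Proposition~\ref{prop:We-have-the}(\ref{enu:6})) occupies the bulk of the paper and requires computing the unoriented link Floer homology of model links in $\#^2 S^1\times S^2$ together with their $\Phi$- and relative $H_1$-actions.

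Two further ingredients your proposal omits: higher commutativity (filling faces of dimension $\geq 3$) does not come for free and is handled by a grading argument showing the relevant morphism spaces vanish in positive degree (Proposition~\ref{prop:combi-claims}(\ref{enu:If-,-then})); and the reduced hat version needs its own setup (Subsection~\ref{subsec:The-reduced-hat}), since no single baseball is a link baseball for every resolution, so one cannot simply quotient the minus-version cube by $U^{1/2}$. Your $E_1$ identification is correct in spirit: for planar unlinks the $\Phi$-corrections vanish on homology (Lemma~\ref{lem:planar-phi}), so the modified edge maps reduce to the original band maps, which match Khovanov's Frobenius structure by the earlier work (Proposition~\ref{prop:Let--be-1}).
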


Note that a similar result over $\mathbb{Q}$ was proven by Dowlin
\cite{MR4777638}. Combined with that the unknot is the unique knot
in $S^{3}$ whose $\widehat{HFK}$ has rank $\le1$ \cite{MR2023281},
we obtain another proof of that Khovanov homology detects the unknot,
which was originally proven by Kronheimer and Mrowka \cite{MR2805599}.

Theorem \ref{thm:knot-S3} is a special case of the next result (Theorem
\ref{thm:main-thm}). In \cite{nahm2025unorientedskeinexacttriangle},
we studied an algebraic variant of the unoriented link Floer homology
defined by Ozsv\'{a}th, Stipsicz, and Szab\'{o} in \cite{MR3694597}.
For links $L$ with a distinguished point\footnote{The unreduced hat version and the minus version do not depend on this
distinguished point.} in a three-manifold $Y$, we defined three versions of unoriented
link Floer homology: the reduced hat version $\widetilde{HFL'}$,
the unreduced hat version $\widehat{HFL'}$, and the minus version
$HFL'{}^{-}$. They resemble the various versions of Khovanov homology:
the reduced version $\widetilde{Kh}$, the unreduced version $\widehat{Kh}$
(usually denoted as $Kh$), and the equivariant version (given by
the Lee deformation) $Kh^{-}$. We obtain a spectral sequence for
each of these versions. We recover Theorem \ref{thm:knot-S3} from
the reduced hat version, since $\widetilde{HFL'}(Y,K)=\widehat{HFK}(Y,K)$
for knots.
\begin{thm}[Corollary \ref{cor:khovanov-main}]
\label{thm:main-thm}If $L$ is a link with a distinguished point
in $S^{3}$, then there exist spectral sequences
\[
\widetilde{Kh}(m(L))\Rightarrow\widetilde{HFL'}(S^{3},L),\ \widehat{Kh}(m(L))\Rightarrow\widehat{HFL'}(S^{3},L),\ Kh^{-}(m(L))\Rightarrow HFL'{}^{-}(S^{3},L).
\]
\end{thm}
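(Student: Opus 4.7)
The plan is to iterate the modified unoriented skein exact triangle from \cite{nahm2025unorientedskeinexacttriangle} over every crossing of a fixed link diagram $D$ of $L$, producing a cube-of-resolutions chain complex whose associated spectral sequence converges to $HFL'$ (in the appropriate flavor) and whose $E_2$-page can be identified with Khovanov homology of $m(L)$. This follows the same strategy as Ozsv\'ath--Szab\'o's spectral sequence from $\widehat{Kh}$ to $\widehat{HF}$ of the branched double cover, adapted from branched double covers to the unoriented link Floer setting.

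First, I would fix a diagram $D$ with $n$ crossings and, for each vertex $v\in\{0,1\}^{n}$ of the resolution cube, form the unoriented resolution $L_{v}$, which is an unlink in $S^{3}$. Using the modified skein exact triangle applied to a single crossing, the triangle relates $HFL'(L_{v})$ for $v$ differing in one coordinate, so iterating the mapping-cone construction produces a hyperbox of chain complexes whose totalization computes $HFL'(S^{3},L)$. The $n$-fold iteration comes equipped with the cube filtration by $|v|$, and the associated spectral sequence has $E_{1}$-page equal to $\bigoplus_{v}HFL'(L_{v})$ with $d_{1}$ given by the edge maps of the cube. This is the standard unpacking of an iterated mapping cone, and for each of the three flavors ($\widetilde{HFL'}$, $\widehat{HFL'}$, $HFL'{}^{-}$) the construction should go through verbatim once the corresponding version of the skein triangle is in hand.

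The heart of the argument, and what I expect to be the main obstacle, is identifying the $E_{1}$-page with the Khovanov complex of $m(L)$. This has two parts. (a) For the \emph{objects}: one must compute $HFL'$ of an $k$-component unlink in each of the three versions and check that it matches $V^{\otimes k}$ (respectively $\widetilde{V}\otimes V^{\otimes(k-1)}$ or the Lee-type deformation), where $V=\mathbb{F}[x]/(x^{2})$. (b) For the \emph{differentials}: one must show that the maps induced on $HFL'$ of unlinks by the skein triangle, when a crossing is changed between the two unoriented resolutions, agree with the Frobenius multiplication $V\otimes V\to V$ and comultiplication $V\to V\otimes V$ of Khovanov's TQFT (and with their reduced/equivariant analogs). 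The mirror will appear because in the unoriented skein triangle the ``$0$-resolution'' in the cube convention corresponds, in Khovanov's setup for $L$, to the resolution used for $m(L)$; this is a routine convention check.

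Once (a) and (b) are established, the $E_{2}$-page is exactly $\widetilde{Kh}(m(L))$, $\widehat{Kh}(m(L))$, or $Kh^{-}(m(L))$, respectively, and convergence to the correct target follows from the totalization computing $HFL'(S^{3},L)$, which in turn follows from repeatedly applying the long exact sequence of a mapping cone. For the reduced hat version, combined with $\widetilde{HFL'}(Y,K)=\widehat{HFK}(Y,K)$ for knots, this specializes to Theorem \ref{thm:knot-S3}. The main work, therefore, is in proving (b): computing the explicit form of the skein-triangle map on unlinks and matching it to the Khovanov merge/split maps, which likely requires an explicit Heegaard diagram model for the unoriented resolutions and a direct chain-level computation of the map defined in \cite{nahm2025unorientedskeinexacttriangle}.
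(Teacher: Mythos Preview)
Your high-level strategy is correct and matches the paper's: build a cube of resolutions by iterating the unoriented skein triangle, filter it, and identify the $E_1$-page with the Khovanov complex. However, you have misidentified where the main difficulty lies, and the gap is serious.

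The central obstacle is not your step (b). The band maps on planar unlinks were already computed in \cite{nahm2025unorientedskeinexacttriangle} and shown there to agree with the Khovanov Frobenius maps (Proposition~\ref{prop:Let--be-1}). The real problem is earlier: to build the cube at all, the edge maps along different faces must commute up to homotopy, and the band maps $\theta$ from \cite{nahm2025unorientedskeinexacttriangle} \emph{do not commute} in general (see Remark~\ref{rem:Similarly,-we-can} and Proposition~\ref{prop:swap-merge-2-1}). The paper therefore defines \emph{modified} band maps $\delta = \theta + \Phi_j\Phi_{ab}\theta$ (Definition~\ref{def:For-each-consecutive}), involving the $\Phi$-actions and depending on a choice of ordering of the connected components, and then spends Sections~\ref{sec:Some-important-balled}--\ref{sec:Computations-for-the} proving that these modified maps commute. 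This requires computing $HFL'{}^-$ of certain links in $\#^2 S^1\times S^2$ (Figure~\ref{fig:Some-important-balled}) and introducing auxiliary ``swap maps'' and ``modified swap maps''. Only after this does one check, via Lemma~\ref{lem:planar-phi}, that on planar unlinks the correction terms vanish and $\delta$ coincides with $\theta$, so the $E_1$-differential still matches Khovanov's.

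Two smaller points you gloss over: higher commutativity (the existence of the length-$\ge 3$ homotopies in the twisted complex) is handled by a grading argument (Proposition~\ref{prop:combi-claims}(\ref{enu:If-,-then})), not for free; and the reduced hat version does \emph{not} go through verbatim, since no single baseball is a link baseball for every resolution, requiring the separate setup of Subsection~\ref{subsec:The-reduced-hat}.
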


These spectral sequences respect the relative ``\emph{$\delta$ gradings},''
and the relative Alexander $\mathbb{Z}/2$ grading on unoriented link
Floer homology corresponds to the relative $q/2$ (quantum grading
divided by $2$) grading modulo $2$ on Khovanov homology; see Corollary
\ref{cor:khovanov-main}.
\begin{rem}
Since we work over $\mathbb{F}$, $Kh^{-}\simeq\widehat{Kh}\otimes_{\mathbb{F}}\mathbb{F}[U]$
as $\mathbb{F}[U]$-modules. It seems interesting to ask which $\mathbb{F}[U]$-towers
survive to the $E_{\infty}$-page: see Subsection \ref{subsec:Future-directions}.
\end{rem}

We obtain the following as a corollary (see Lemma \ref{lem:rank-ineq}).
\begin{cor}
\label{cor:link-inequality}Let $L$ be an $\ell$-component link
with a distinguished point in $S^{3}$. Then, we have 
\[
2^{\ell-1}\dim_{\mathbb{F}}\widetilde{Kh}(L)\ge2^{\ell-1}\dim_{\mathbb{F}}\widetilde{HFL'}(S^{3},L)\ge\dim_{\mathbb{F}}\widehat{HFL}(S^{3},L).
\]
\end{cor}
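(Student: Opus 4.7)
My plan is to derive both inequalities from the reduced spectral sequence of Theorem \ref{thm:main-thm}, together with standard identifications between the various flavors of link Floer homology. For the first inequality, I would apply the general rank comparison for spectral sequences: the $\mathbb{F}$-dimension of the $E_2$-page bounds that of the limit from above. Applied to $\widetilde{Kh}(m(L)) \Rightarrow \widetilde{HFL'}(S^{3}, L)$, this yields
\[
\dim_{\mathbb{F}} \widetilde{Kh}(m(L)) \ge \dim_{\mathbb{F}} \widetilde{HFL'}(S^{3}, L).
\]
Since reduced Khovanov homology of the mirror and of the original link coincide in $\mathbb{F}$-dimension by the standard duality over a field, multiplying through by the (harmless) factor $2^{\ell-1}$ gives the first asserted inequality.

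For the second inequality, I would invoke the standard comparison between $\widetilde{HFL'}(S^{3}, L)$, defined with a single distinguished basepoint, and $\widehat{HFL}(S^{3}, L)$, defined with one basepoint per component. Adding a free basepoint on each of the remaining $\ell - 1$ components should contribute a tensor factor of a two-dimensional $\mathbb{F}$-vector space, either directly or at the $E_2$-page of an auxiliary spectral sequence; this supplies the bound
\[
2^{\ell-1}\dim_{\mathbb{F}} \widetilde{HFL'}(S^{3}, L) \ge \dim_{\mathbb{F}} \widehat{HFL}(S^{3}, L),
\]
which is precisely the role played by Lemma \ref{lem:rank-ineq}.

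The main technical point, and the only nontrivial step, is the second inequality: one has to track carefully how the reduced hat unoriented variant $\widetilde{HFL'}$ from \cite{nahm2025unorientedskeinexacttriangle} — built from a single marked point — relates to the standard hat link Floer homology $\widehat{HFL}$ with $\ell$ basepoints, and to verify that the resulting $2^{\ell-1}$-factor is correct. The first inequality, by contrast, is an essentially formal consequence of the spectral sequence of Theorem \ref{thm:main-thm} together with Khovanov mirror symmetry over $\mathbb{F}$.
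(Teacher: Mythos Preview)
Your handling of the first inequality matches the paper: the spectral sequence $\widetilde{Kh}(m(L)) \Rightarrow \widetilde{HFL'}(S^{3},L)$ gives the rank bound, and over $\mathbb{F}$ the dimensions of $\widetilde{Kh}(L)$ and $\widetilde{Kh}(m(L))$ agree by duality, so multiplying by $2^{\ell-1}$ is harmless.

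Your description of the second inequality, however, misidentifies the mechanism. The reduced object $\widetilde{HFL'}(S^{3},L)$ is \emph{not} built from a single basepoint: the underlying complex $CFL'{}^{-}$ already carries a link basepoint pair on every component. The reduced hat version is obtained by setting $U_{BB^{link}}^{1/2}=0$ for the distinguished link baseball, while $\widehat{HFL}$ is obtained by setting \emph{all} the $U_i^{1/2}$ to zero. So there are no basepoints to ``add'', and there is no genuine tensor factor of $\mathbb{F}^2$ per component---that would be an equality, not an inequality.

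What the paper actually does in Lemma~\ref{lem:rank-ineq} is the following chain of short exact sequences. With $U_1$ corresponding to the distinguished link baseball, set
\[
C_k := \widetilde{CFL'}(S^{3},L,BB^{link})/(U_2^{1/2},\ldots,U_k^{1/2}),
\]
so that $C_1 = \widetilde{CFL'}$ and $C_\ell = \widehat{CFL}$. For each $k$ there is a short exact sequence
\[
0 \to C_k \xrightarrow{\cdot U_{k+1}^{1/2}} C_k \to C_{k+1} \to 0,
\]
whose long exact sequence in homology gives $\dim_{\mathbb F} H(C_{k+1}) \le 2\dim_{\mathbb F} H(C_k)$. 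Iterating yields the factor $2^{\ell-1}$. This is the precise content of the lemma you cite; your ``free basepoint/tensor factor'' picture does not produce it.
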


We prove Theorem \ref{thm:main-thm} by iterating a modified version
of the unoriented skein exact triangle that we studied in \cite{nahm2025unorientedskeinexacttriangle},
following the general strategy of obtaining a spectral sequence by
iterating an exact triangle; compare \cite{MR2141852,MR2805599,MR2964628}.
In particular, we get spectral sequences for links in any three-manifold
(Theorems \ref{thm:spectral-1}, \ref{thm:spectral-1-1}, and \ref{thm:spectral-1-2}).

\subsection{Organization}

To see what goes into the proof, we suggest reading Sections \ref{sec:Unoriented-link-Floer}
(ignoring discussions on ${\rm Spin}^{c}$-structures) and \ref{sec:The-setup-and},
Subsection \ref{subsec:Standard-generators}, and then Section \ref{sec:Iterating-the-unoriented}.
See Section \ref{sec:Some-simple-examples} for some simple examples.

In Section \ref{sec:Iterating-the-unoriented}, we assume the results
of the later sections and iterate an unoriented skein exact triangle
to build a \emph{``cube of resolutions''} and obtain a spectral
sequence. To build the cube of resolutions, we define the \emph{edge
maps}, check that the edge maps commute (up to homotopy, and so add
in the homotopies to the cube of resolution), and also check \emph{``higher
commutativity''} (and add in the ``higher homotopies'' as well).
Defining the edge maps and showing that they commute up to homotopy,
which boils downs to a series of local computations, take up most
of the paper.

Note that we work in the minus version throughout most of the paper
and consider the reduced and unreduced hat versions in Section \ref{sec:Iterating-the-unoriented}.

\subsubsection*{Preliminaries}

We build the cube of resolutions by building a twisted complex in
an $A_{\infty}$-category. In Section \ref{sec:Unoriented-link-Floer},
we define the $A_{\infty}$-categories we work in, and also recall
the definitions of balled links and unoriented link Floer homology.
In Section \ref{sec:More-Heegaard-Floer}, we recall the tools that
we use to carry out the computations. We recall the relevant definitions
of Khovanov homology in Section \ref{sec:Khovanov-homology}.

\subsubsection*{The setup}

In Section \ref{sec:The-setup-and}, we introduce a setup which will
be used throughout the remainder of the paper and describe how to
get a multi Heegaard diagram from that setup. Some properties, including
gradings, of the corresponding $A_{\infty}$-category are studied
in Section \ref{sec:Gradings-for-the}. 

\subsubsection*{The definition of the edge maps}

We define the edge maps in multiple steps. They are finally defined
in Definition \ref{def:For-each-consecutive} by modifying\footnote{The band maps defined in \cite{nahm2025unorientedskeinexacttriangle}
do not commute, but it is possible to modify them so that they commute.
See Remark \ref{rem:Similarly,-we-can}.} the \emph{``general band maps''}\footnote{We simply call these band maps later.}
(\emph{``canonical elements''}) defined in Section \ref{sec:Canonical-generators}.

Some simple cases of the general band maps are the band maps defined
in \cite{nahm2025unorientedskeinexacttriangle}, which we recall in
Section \ref{sec:Band-maps-for}. In Section \ref{sec:Swap-maps},
we define \emph{``swap maps''}; we later show that the general band
maps are compositions of band maps and swap maps (discussion right
before Proposition \ref{prop:The-swap,-merge,}).

We needed to understand the unoriented link Floer homology of some
simple links in $S^{1}\times S^{2}$ (Figure \ref{fig:s1s2-important-1-1})
to define the band maps. To define the general band maps, we also
have to study some links in $\#^{2}S^{1}\times S^{2}$ (Figure \ref{fig:Some-important-balled}).
We present two methods of computation: one method, carried out in
Appendix \ref{sec:Direct-computations}, is a direct computation using
Heegaard diagrams. The other method, presented in Section \ref{sec:Some-important-balled},
uses the unoriented skein exact sequence and some algebraic structures.

\subsubsection*{Edge maps commute}

Proposition \ref{prop:We-have-the} (\ref{enu:4}) (also see Proposition
\ref{prop:(Modified-band-maps-2}) states that the edge maps (\emph{``modified
band maps''}) commute. We show this through some local computations
and algebraic arguments. We state the local computation results in
terms of schematics: see Section \ref{sec:Interpreting-schematics}.
We introduce a general nonvanishing argument in Section \ref{sec:A-nonvanishing-argument}.
The computations are carried out in Sections \ref{sec:Band-maps-and},
\ref{sec:Modified-band-maps}, \ref{sec:Composition-of-band}, and
\ref{sec:Computations-for-the}.

\subsubsection*{Higher commutativity}

Similarly to \cite{MR2964628}, higher commutativity follows from
grading considerations; we study gradings in Section \ref{sec:Gradings-for-the}. 

\subsubsection*{The cube of resolutions is quasi-isomorphic to unoriented link Floer
homology}

This follows from Proposition \ref{prop:We-have-the} (\ref{enu:6})
(also see Proposition \ref{prop:(Modified-band-maps-2}), by a standard
argument which we recall right before Remark \ref{rem:unreduced-hat}.
We show these propositions in Section \ref{sec:Proofs-of-claims}.

\subsection{\label{subsec:Future-directions}Future directions}

We do not deal with signs and work over the field $\mathbb{F}$ with
two elements. In contrast, Dowlin \cite{MR4777638} works over $\mathbb{Q}$,
and has to divide by $2$. It would be interesting to work out whether
an analogous spectral sequence works over $\mathbb{Z}$. We do not
know any direct ways of computing our spectral sequence; it would
be interesting to compute it directly, using bordered methods or by
defining analogous spectral sequences in grid homology. It would also
be interesting to compare our spectral sequence with Dowlin's spectral
sequence; Dowlin's spectral sequence uses an oriented cube of resolutions
which involves singular links, and it is not clear to the author how
they would be related.

Furthermore, since we work over $\mathbb{F}$, our spectral sequence
for the reduced version is trivial for alternating knots, but is nontrivial
for the unreduced version and the minus version, even for the trefoils
(see Section \ref{sec:Some-simple-examples}). Indeed, for the minus
version, the $E_{2}$ page is $Kh^{-}\simeq\widehat{Kh}\otimes_{\mathbb{F}}\mathbb{F}[U]$
as $\mathbb{F}[U]$-modules, which is a free, rank $6$ $\mathbb{F}[U]$-module.
On the other hand, the $E_{\infty}$ page is $\mathbb{F}[U^{1/2}]\oplus\mathbb{F}[U^{1/2}]/(U^{1/2})$.
We can compute that the $\mathbb{F}[U]$-towers on the $E_{2}$ page
that survive to the $E_{\infty}$ page lie in homological grading
$\pm2$. This is perhaps similar to Rasmussen's $E(-1)$ spectral
sequence \cite{MR3447099} and Ballinger's $t$-invariant \cite{ballinger2020concordance};
it would be interesting to understand possible connections, and hence
it would be interesting to compute our spectral sequence even for
the infinity version.

\subsection{\label{subsec:Conventions-and-notations}Conventions and notations}

Three-manifolds are closed and oriented, but may be disconnected.
Handlebodies and Heegaard surfaces may be disconnected as well.

If a Heegaard surface is drawn on a plane, then almost complex structures
rotate in a counterclockwise direction. If a Heegaard surface is drawn
such that it bounds a (part of a) handlebody in a bounded region,
then it is oriented as the boundary of the handlebody (the shaded
regions in Figure \ref{fig:zstandard-heegaard-triple} have holomorphic
representatives for the order (red, blue, green)). The Heegaard surface
is $-\partial H_{\boldsymbol{\beta}}$, where $H_{\boldsymbol{\beta}}$
is the $\boldsymbol{\beta}$-handlebody. See Footnote \ref{fn:The-attaching-curves}.

We sometimes write $R\{a\}\oplus R\{b\}$, for instance, to mean the
rank $2$ free $R$-module generated by $a,b$.

We describe how some maps act on a module by describing their actions
on a set of generators. For example, the left hand side of Proposition
\ref{prop:For-each-balled} means $\Phi_{1}(a)=c$, $\Phi_{2}(a)=b+c$,
and so on. Beware that for instance in the last paragraph of the proof
of Lemma \ref{lem:structure-z11-2}, the left hand side diagram means
$A_{\kappa,-1}(d)=Zay+Wbx$, instead of $Z(d)=ay$.

The indeterminates $U_{p}$ are indexed by \emph{basepoint pairs}
$p$ and also \emph{baseballs} $BB$, and we identify $U_{p}=U_{BB}$
if $p$ belongs to $BB$. See Notations \ref{nota:Instead-of-writing}
and \ref{nota:Recall-from-Notation}, and Footnote \ref{fn:Similarly-to-Notation}.

\subsection{Acknowledgements}

We thank Peter Ozsv\'{a}th for his continuous support, explaining
a lot of the arguments in this paper, and helpful discussions. We
also thank Ian Zemke for his continuous support, teaching the author
a lot of previous works, especially \cite{2308.15658}, and helpful
discussions. We thank John Baldwin, Deeparaj Bhat, Fraser Binns, Evan
Huang, Yonghwan Kim, Jae Hee Lee, Adam Levine, Jiakai Li, Ayodeji
Lindblad, Robert Lipshitz, Marco Marengon, Yuta Nakayama, Sucharit
Sarkar, Zolt\'{a}n Szab\'{o}, Alison Tatsuoka, Jacob Rasmussen,
Joshua Wang, and Fan Ye for helpful discussions. We also thank Peter
Ozsv\'{a}th and Jacob Rasmussen for helpful comments on earlier drafts
of this paper.

\section{\label{sec:Unoriented-link-Floer}Unoriented link Floer homology}

In \cite{nahm2025unorientedskeinexacttriangle}, we considered certain
rank $2$ local systems that a priori involve negative powers of $U$.
We proved that if there are at most two attaching curves with such
a nontrivial local system, then it is possible to work in a minus
theory (i.e. there are no negative powers of $U$). Also, we observed
that if there is at most one attaching curve with such a nontrivial
local system, then we can express the chain complexes and the $\mu_{n}$'s
without mentioning local systems. In this paper, we push this observation
further: we morally use local systems, but phrase them in a way that
does not mention any local systems.

\subsection{\label{subsec:Pair-pointed-Heegaard-diagrams}Pair-pointed Heegaard
diagrams}
\begin{defn}
\label{def:A-pair-pointed-Heegaard}A\emph{ pair-pointed Heegaard
surface} $(\Sigma,\boldsymbol{p},\boldsymbol{G})$ consists of
\begin{itemize}
\item a closed, oriented surface $\Sigma$\footnote{We allow the Heegaard surface to be disconnected, as in for instance
\cite{1512.01184} (see Remark \ref{rem:If-the-Heegaard}).},
\item a set $\boldsymbol{p}$ of pairs of basepoints in $\Sigma$ (\emph{``basepoint
pairs''}),
\item and an arc $G$ (\emph{``forbidding arc''}) between $z$ and $w$
for each basepoint pair $(z,w)$.
\end{itemize}
An \emph{attaching curve }$(\boldsymbol{\gamma},E_{\boldsymbol{\gamma}})$
(we usually omit $E_{\boldsymbol{\gamma}}$ and just write $\boldsymbol{\gamma}$)
on a pair-pointed Heegaard surface $(\Sigma,\boldsymbol{p},\boldsymbol{G})$
is
\begin{itemize}
\item a set $\boldsymbol{\gamma}$ of pairwise disjoint, simple closed curves
on $\Sigma\backslash\boldsymbol{p}$ such that each connected component
of $\Sigma\backslash\boldsymbol{\gamma}$ is planar and contains exactly
one basepoint pair (and no other basepoints),
\item together with an assignment $E_{\boldsymbol{\gamma}}:\boldsymbol{p}\to\{{\rm free},{\rm link}\}$
such that if $E_{\boldsymbol{\gamma}}(p)={\rm free}$, then the corresponding
forbidding arc does not intersect $\boldsymbol{\gamma}$. We say that
$p\in\boldsymbol{p}$ is a \emph{free basepoint pair} (resp.\emph{
link basepoint pair}) for $\boldsymbol{\gamma}$ if $E_{\boldsymbol{\gamma}}(p)={\rm free}$
(resp. ${\rm link}$).
\end{itemize}
A \emph{pair-pointed Heegaard diagram} consists of a pair-pointed
Heegaard surface $(\Sigma,\boldsymbol{p},\boldsymbol{G})$, together
with a partially ordered set $(\boldsymbol{AC},\le)$ of attaching
curves, such that for every chain $\boldsymbol{\gamma}_{1}<\cdots<\boldsymbol{\gamma}_{m}$
and for every basepoint pair, the subset of $\boldsymbol{\gamma}_{k}$'s
for which the basepoint pair is a link basepoint pair is $\{\boldsymbol{\gamma}_{k}:i\le k\le j\}$
for some $i,j$.
\end{defn}

\begin{rem}
\label{rem:The-last-condition}The last condition in Definition \ref{def:A-pair-pointed-Heegaard}
on the chains and basepoint pairs is important for the (higher) compositions
$\mu_{n}$ (see Subsection \ref{subsec:ainf}) to satisfy the $A_{\infty}$-relations.
\end{rem}

An \emph{elementary two-chain} is a connected component of the Heegaard
surface minus the attaching curves, and a \emph{two-chain} is a linear
combination of elementary two-chains. A two-chain ${\cal D}$ is \emph{cornerless}
if $\partial_{\boldsymbol{\gamma}}{\cal D}$ is a cycle for every
attaching curve $\boldsymbol{\gamma}$. A \emph{domain} is a two-chain
together with its vertices, and $D({\bf x}_{0},\cdots,{\bf x}_{n})$
is the set of domains whose vertices are ${\bf x}_{0},\cdots,{\bf x}_{n}$.
Compare \cite[Subsection 2.1]{nahm2025unorientedskeinexacttriangle}.
\begin{defn}
\label{def:translate}If $(\Sigma,\boldsymbol{\alpha},\boldsymbol{\beta},\boldsymbol{p},\boldsymbol{G})$
is a pair-pointed Heegaard diagram with two attaching curves, then
identify it with the \emph{associated Heegaard diagram $(\Sigma,\boldsymbol{\alpha},\boldsymbol{\beta},\boldsymbol{u}\sqcup\boldsymbol{v})$},
where $\boldsymbol{u}$, $\boldsymbol{v}$ are as follows. Let $\boldsymbol{v}$
be the set of the first basepoints of each basepoint pair that is
a free basepoint pair for both $\boldsymbol{\alpha}$ and $\boldsymbol{\beta}$,
and let $\boldsymbol{u}$ be the set of both basepoints of each basepoint
pair that is a link basepoint pair for $\boldsymbol{\alpha}$ or $\boldsymbol{\beta}$.
\end{defn}

We will often simply call pair-pointed Heegaard diagrams Heegaard
diagrams.

\subsection{\label{subsec:Unoriented-links-and}Unoriented links and ${\rm Spin}^{c}$-structures}

In this subsection, we recall \cite[Subsections 2.7 and 2.8]{nahm2025unorientedskeinexacttriangle}.
Given an unoriented link $L$ in a three-manifold $Y$, we define
the space of ${\rm Spin}^{c}$-structures ${\rm Spin}^{c}(Y(L))$
as an $H_{1}^{orb}(Y(L);\mathbb{Z}):=H_{1}(Y\backslash L;\mathbb{Z})/2M$-torsor,
where $M\subset H_{1}(Y\backslash L;\mathbb{Z})$ is the subspace
spanned by the meridians of $L$.
\begin{defn}
\label{def:suture-data}A \emph{minimally pointed link $(L,\boldsymbol{u})$}
is a link $L\subset Y$ together with a set of basepoints $\boldsymbol{u}\subset L$
(\emph{``link basepoints''}) such that for each connected component
$L_{i}$ of $L$, we have $|L_{i}\cap\boldsymbol{u}|=2$. A \emph{suture
datum} $\alpha_{L}$ on a minimally pointed link $L\subset Y$ is
a choice of one of the two connected components of $L_{i}\backslash\boldsymbol{u}$
(which corresponds to the component inside the $\alpha$-handlebody)
for each connected component $L_{i}$.
\end{defn}

If $L^{sut}=(L,\boldsymbol{u},\alpha_{L})$ is a minimally pointed
link equipped with a suture datum in a pointed three-manifold $(Y,\boldsymbol{v})$,
consider the following sutured manifold $Y_{L^{sut}}$: the underlying
three-manifold is $Y\backslash N(L\cup\boldsymbol{v})$ where $N$
means tubular neighborhood, and each toroidal boundary component has
two meridional sutures and each sphere boundary component has one
suture. The sutures are oriented in a way such that $R_{-}$\footnote{Our convention is that $R_{-}$ corresponds to the $\alpha$-handlebody
part.} is the part that lies over the chosen components of $L\backslash\boldsymbol{u}$.
We say that a \emph{pair-pointed Heegaard diagram $(\Sigma,\boldsymbol{\alpha},\boldsymbol{\beta},\boldsymbol{p},\boldsymbol{G})$
represents $L^{sut}$ in $(Y,\boldsymbol{v})$} if the associated
Heegaard diagram $(\Sigma,\boldsymbol{\alpha},\boldsymbol{\beta},\boldsymbol{u}\sqcup\boldsymbol{v})$
represents the sutured manifold $Y_{L^{sut}}$.

Define ${\rm Spin}^{c}(Y_{L^{sut}})$ as the space of ${\rm Spin}^{c}$-structures
on the sutured manifold $Y_{L^{sut}}$ (\cite[Section 4]{MR2253454}).
Then, ${\rm Spin}^{c}(Y_{L^{sut}})$ is an $H^{2}(Y,L;\mathbb{Z})\simeq H_{1}(Y\backslash L;\mathbb{Z})$-torsor,
and an intersection point ${\bf x}\in\boldsymbol{\alpha}\cap\boldsymbol{\beta}$
determines a ${\rm Spin}^{c}$-structure $\mathfrak{s}({\bf x})\in{\rm Spin}^{c}(Y_{L^{sut}})$.

Given an orientation $\mathfrak{o}$ on $L$, \cite[Section 3]{MR2443092}
defines the first Chern class\footnote{\cite[Section 3]{MR2443092} defines the \emph{space of relative ${\rm Spin}^{c}$-structures}
$\underline{{\rm Spin}^{c}}(Y,L)$ and a map $c_{1}:\underline{{\rm Spin}^{c}}(Y,L)\to H^{2}(Y,L;\mathbb{Z})$,
and given an orientation $\mathfrak{o}$ on $L$, also an isomorphism
of $H^{2}(Y,L;\mathbb{Z})$-torsors ${\rm Spin}^{c}(Y_{L^{sut}})\to\underline{{\rm Spin}^{c}}(Y,L)$.} $c_{1}^{\mathfrak{o}}:{\rm Spin}^{c}(Y_{L^{sut}})\to H^{2}(Y,L;\mathbb{Z})$.
We have $c_{1}^{\mathfrak{o}}(\mathfrak{s}+\omega)=c_{1}^{\mathfrak{o}}(\mathfrak{s})+2\omega$
for $\mathfrak{s}\in{\rm Spin}^{c}(Y_{L^{sut}})$ and $\omega\in H^{2}(Y,L;\mathbb{Z})$.
For different choices of $\mathfrak{o}$, the Chern class $c_{1}^{\mathfrak{o}}$
differs by the Poincare dual of a sum of some meridians of $L$. Hence,
we get a map $c_{1}:{\rm Spin}^{c}(Y_{L^{sut}})\to H^{2}(Y;\mathbb{Z})$
that does not depend on $\mathfrak{o}$.

Define the \emph{space of ${\rm Spin}^{c}$-structures} as ${\rm Spin}^{c}(Y(L^{sut})):={\rm Spin}^{c}(Y_{L^{sut}})/2M$.
An intersection point ${\bf x}\in\boldsymbol{\alpha}\cap\boldsymbol{\beta}$
determines a ${\rm Spin}^{c}$-structure $\mathfrak{s}({\bf x})\in{\rm Spin}^{c}(Y(L^{sut}))$.
Also, ${\rm Spin}^{c}(Y(L^{sut}))$ is an $H_{1}^{orb}(Y(L);\mathbb{Z}):=H_{1}(Y\backslash L;\mathbb{Z})/2M$-torsor,
and we have the induced map $c_{1}:{\rm Spin}^{c}(Y(L^{sut}))\to H^{2}(Y;\mathbb{Z})$.
\begin{rem}
\label{rem:compute-chern}Here is one way to compute $c_{1}(\mathfrak{s}({\bf x}))$.
Let $\boldsymbol{z}$ (resp. $\boldsymbol{w}$) be the set of the
first (resp. second) basepoints of each basepoint pair. Then, each
connected component of $\Sigma\backslash\boldsymbol{\alpha}$ and
$\Sigma\backslash\boldsymbol{\beta}$ contains exactly one element
of $\boldsymbol{z}$ and one element of $\boldsymbol{w}$, and so
we can consider $\mathfrak{s}_{\boldsymbol{z}}({\bf x})\in{\rm Spin}^{c}(Y)$
(resp. $\mathfrak{s}_{\boldsymbol{w}}({\bf x})$), i.e. the ${\rm Spin}^{c}$-structure
of ${\bf x}\in\boldsymbol{\alpha}\cap\boldsymbol{\beta}$ with respect
to $\boldsymbol{z}$ (resp. $\boldsymbol{w}$). Furthermore, $\boldsymbol{z}$
and $\boldsymbol{w}$ define an orientation of $L$: call this oriented
link $\overrightarrow{L}$. Then, by \cite[Lemma 3.13]{MR2443092}
(\cite[Equation (2.3)]{nahm2025unorientedskeinexacttriangle}), 
\begin{equation}
c_{1}(\mathfrak{s}({\bf x}))=c_{1}(\mathfrak{s}_{\boldsymbol{w}}({\bf x}))-PD(\overrightarrow{L})=c_{1}(\mathfrak{s}_{\boldsymbol{z}}({\bf x}))+PD(\overrightarrow{L}).\label{eq:c1}
\end{equation}
\end{rem}

\begin{rem}
\label{rem:Note-that-different}Note that different assignments $E_{\boldsymbol{\alpha}},E_{\boldsymbol{\beta}}$
result in different $L$'s: the corresponding links differ by some
number of unknots. However, for ${\bf x}\in\boldsymbol{\alpha}\cap\boldsymbol{\beta}$,
the Chern class $c_{1}(\mathfrak{s}({\bf x}))$ does not depend on
$E_{\boldsymbol{\alpha}},E_{\boldsymbol{\beta}}$, by Equation (\ref{eq:c1}).
\end{rem}

\begin{rem}
For simplicity, we assumed that links are minimally pointed since
that is the setting that we will use throughout most of the paper,
with the exception being Subsection \ref{sec:Higher-compositions-vanish}.
However, the discussion works in general, where each link component
has an even number of link basepoints.
\end{rem}

\subsection{\label{subsec:Strong-admissibility-and}Strong admissibility and
unoriented link Floer homology}

In this subsection, we define strong admissibility and unoriented
link Floer homology. Compare \cite[Subsections 2.9 and 2.10]{nahm2025unorientedskeinexacttriangle}.

Let $R:=\mathbb{F}[\{U_{p}^{k_{p}}\}_{p\in\boldsymbol{p}}]$\footnote{This means that it is the polynomial ring generated by indeterminates
$U_{p}^{k_{p}}$ for $p\in\boldsymbol{p}$.} for some $k_{p}=\{1,1/2\}$ such that whenever $p$ is a link basepoint
pair for $\boldsymbol{\alpha}$ or $\boldsymbol{\beta}$, then $k_{p}=1/2$. 
\begin{notation}
\label{nota:Instead-of-writing}If $v$ is a basepoint in a basepoint
pair $p$, then let $U_{v}:=U_{p}$. Also, instead of writing $U_{(z,w)}$
for a basepoint pair $(z,w)$, we sometimes write $U_{i}$ if the
basepoint pair has index $i$ (for instance, if it is called $(z_{i},w_{i})$).
We use the same notation conventions for other objects as well.
\end{notation}

\begin{defn}
Let ${\cal D}$ be a two-chain. If $p=(z,w)$ is a basepoint pair,
let $n_{p}({\cal D}):=n_{z}({\cal D})+n_{w}({\cal D})$. The \emph{total
multiplicity} $P({\cal D})$ of ${\cal D}$ is 
\[
P({\cal D})=\sum_{p\in\boldsymbol{p}}n_{p}({\cal D}).
\]
\end{defn}

\begin{defn}
\label{def:strongly-admissible-multi}A pair-pointed Heegaard diagram
with attaching curves $\boldsymbol{\alpha},\boldsymbol{\beta}$ is
\emph{$c$-strongly admissible} for $c\in H^{2}(Y;\mathbb{Z})$, where
$Y$ is the three-manifold given by $\boldsymbol{\alpha},\boldsymbol{\beta}$,
if all cornerless domains ${\cal D}$ such that 
\[
\left\langle c,H({\cal D})\right\rangle +P({\cal D})=0
\]
have both positive and negative local multiplicities, where $H({\cal D})\in H_{2}(Y;\mathbb{Z})$
is the homology class that ${\cal D}$ represents. A Heegaard diagram
is \emph{weakly admissible }if it is $0$-strongly admissible.
\end{defn}

\begin{lem}[{\cite[Lemma 2.47]{nahm2025unorientedskeinexacttriangle}}]
\label{lem:If--is}If ${\cal D}\in D({\bf x},{\bf x})$ is a cornerless
domain, then 
\[
\mu({\cal D})=\left\langle c_{1}(\mathfrak{s}({\bf x})),H({\cal D})\right\rangle +P({\cal D}).
\]
\end{lem}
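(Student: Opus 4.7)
The plan is to deduce this from the classical Ozsv\'{a}th--Szab\'{o} Maslov-index identity for periodic domains, applied twice to the underlying Heegaard diagram of the closed three-manifold $Y$ with two different reference sets of basepoints. As in Remark~\ref{rem:compute-chern}, let $\boldsymbol{z}$ denote the first basepoints of the basepoint pairs and $\boldsymbol{w}$ the second basepoints. Since ${\cal D}\in D({\bf x},{\bf x})$ is cornerless, both $\partial_{\boldsymbol{\alpha}}{\cal D}$ and $\partial_{\boldsymbol{\beta}}{\cal D}$ are cycles, so ${\cal D}$ is a periodic domain in the classical sense, and its Maslov index $\mu({\cal D})$, viewed as that of the corresponding constant Whitney disk in $\mathrm{Sym}^{g}(\Sigma)$, is independent of any choice of basepoint set.

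The key step will be to invoke the classical periodic-domain formula twice: once using $\boldsymbol{z}$ and once using $\boldsymbol{w}$ as the reference basepoints. These yield
\begin{align*}
\mu({\cal D}) & =\langle c_{1}(\mathfrak{s}_{\boldsymbol{z}}({\bf x})),H({\cal D})\rangle+2n_{\boldsymbol{z}}({\cal D}),\\
\mu({\cal D}) & =\langle c_{1}(\mathfrak{s}_{\boldsymbol{w}}({\bf x})),H({\cal D})\rangle+2n_{\boldsymbol{w}}({\cal D}).
\end{align*}
Averaging these two identities and using that $n_{\boldsymbol{z}}({\cal D})+n_{\boldsymbol{w}}({\cal D})=\sum_{p\in\boldsymbol{p}}n_{p}({\cal D})=P({\cal D})$, the lemma reduces to the identity $c_{1}(\mathfrak{s}_{\boldsymbol{z}}({\bf x}))+c_{1}(\mathfrak{s}_{\boldsymbol{w}}({\bf x}))=2c_{1}(\mathfrak{s}({\bf x}))$, which is immediate from Equation~(\ref{eq:c1}) of Remark~\ref{rem:compute-chern}: the two copies of $PD(\overrightarrow{L})$ there cancel on summing.

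The main obstacle is really just the bookkeeping required to justify each application of the classical formula in this pair-pointed setting. In particular, one must verify that the homology class $H({\cal D})\in H_{2}(Y;\mathbb{Z})$ as defined here agrees with the class of ${\cal D}$ regarded as a periodic domain in either of the two associated closed-manifold Heegaard diagrams, and that the Whitney-disk Maslov index is insensitive to whether a given basepoint is \emph{free} or \emph{link} in the sense of Definition~\ref{def:A-pair-pointed-Heegaard}. Both checks are essentially tautological once one unwinds Definition~\ref{def:translate}, since on the level of $\mathrm{Sym}^{g}(\Sigma)$ the Whitney disk does not see the basepoints at all. With these in hand, the lemma follows from the one-line averaging argument above.
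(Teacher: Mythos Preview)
Your proof is correct. Note, however, that the paper does not give its own proof of this lemma: it is stated with a citation to \cite[Lemma 2.47]{nahm2025unorientedskeinexacttriangle} and nothing more. So there is no ``paper's own proof'' to compare against here; the result is imported from the author's earlier work.

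That said, your argument is essentially the standard one and almost certainly what appears in the cited reference. The averaging trick is clean: it sidesteps having to identify $\langle PD(\overrightarrow{L}),H({\cal D})\rangle$ directly with $n_{\boldsymbol z}({\cal D})-n_{\boldsymbol w}({\cal D})$ (which one could do instead, using only one application of the classical formula together with Equation~(\ref{eq:c1}), but which requires a moment of care about orientation conventions). Your bookkeeping remarks about $H({\cal D})$ and the basepoint-independence of $\mu$ are appropriate and correctly handled by Definition~\ref{def:A-pair-pointed-Heegaard}, which guarantees that $\boldsymbol z$ and $\boldsymbol w$ each form a complete basepoint set for the underlying closed Heegaard diagram.
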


\begin{defn}
If a pair-pointed Heegaard diagram with attaching curves $\boldsymbol{\alpha}$
and $\boldsymbol{\beta}$ is $c$-strongly admissible, then define
$CF_{R}^{-}(\boldsymbol{\alpha},\boldsymbol{\beta};c)$ as the free
$R$-module generated by intersection points (\emph{``generators''})
${\bf x}\in\boldsymbol{\alpha}\cap\boldsymbol{\beta}$ such that $c_{1}(\mathfrak{s}({\bf x}))=c$.
Define the differential $\partial$ as the $R$-linear map such that
\[
\partial{\bf x}=\sum_{{\cal D}\in D({\bf x},{\bf y}),\ \mu({\cal D})=1}\#{\cal M}({\cal D})\prod_{p\in\boldsymbol{p}}U_{p}^{\frac{1}{2}n_{p}({\cal D})}{\bf y}.
\]
\end{defn}

By Lemma \ref{lem:If--is}, $CF_{R}^{-}(\boldsymbol{\alpha},\boldsymbol{\beta};c)$
is a well-defined chain complex, and if $\mathfrak{d}(c)$ is the
\emph{divisibility} of $c$, then $CF_{R}^{-}(\boldsymbol{\alpha},\boldsymbol{\beta};c)$
can be given a (relative) homological $\mathbb{Z}/\mathfrak{d}(c)$-grading,
where every intersection point ${\bf x}\in\boldsymbol{\gamma}_{1}\cap\boldsymbol{\gamma}_{2}$
is homogeneous and $U_{p}^{k}$ has homological grading $-2k$. (Compare
\cite[Subsubsections 2.8.3 and 2.10.1]{nahm2025unorientedskeinexacttriangle})
Also, we sometimes lift this relative homological grading to an absolute
homological grading; in that case, we write $CF_{R,i}^{-}$, $HF_{R,i}^{-}$
to denote the $\mathbb{F}$-linear summands in grading $i$. Similarly,
if there is a relative homological $\mathbb{Z}$-grading, we write
$CF_{R,top}^{-}$, $HF_{R,top}^{-}$ to denote the top relative homological
$\mathbb{Z}$-grading summands. 

Also, if the Heegaard surface is connected, then multiplication by
$U_{p}$ for different $p\in\boldsymbol{p}$ are homotopic (\cite[Equation (2.8)]{MR4845975}),
and so we can view $HF_{R}^{-}$ as an $R[U]/(\{U-U_{p}\}_{p\in\boldsymbol{p}})$-module,
and in particular as an $\mathbb{F}[U]$-module, where $U$ acts by
multiplying by $U_{p}$.
\begin{rem}
\label{rem:In-particular,-}In particular, $HF_{R}^{-}(\boldsymbol{\alpha},\boldsymbol{\beta};0)$
can be given a relative homological $\mathbb{Z}$-grading, but it
is in general not unique. However, if $H^{2}(Y;\mathbb{Z})$ does
not have $2$-torsion, then it is unique since there is a domain in
$D({\bf x},{\bf y})$ for any two intersection points ${\bf x},{\bf y}$
with the same $c_{1}$, and so we can say \emph{the} relative homological
$\mathbb{Z}$-grading, or \emph{the} top homological $\mathbb{Z}$-grading.
\end{rem}

Now, let us specialize to a specific ring $R_{\boldsymbol{\alpha},\boldsymbol{\beta}}$.
\begin{defn}
Define $R_{\boldsymbol{\alpha},\boldsymbol{\beta}}:=\mathbb{F}[\{U_{p}^{k_{p}}\}_{p\in\boldsymbol{p}}]$
(\emph{``the coefficient ring''}), where $k_{p}=1$ if the $p$
is a free basepoint pair for both $\boldsymbol{\alpha}$ or $\boldsymbol{\beta}$,
and $k_{p}=1/2$ otherwise. Define $CF^{-}(\boldsymbol{\alpha},\boldsymbol{\beta};c):=CF_{R_{\boldsymbol{\alpha},\boldsymbol{\beta}}}^{-}(\boldsymbol{\alpha},\boldsymbol{\beta};c)$.
\end{defn}

Let $S:=\mathbb{F}[\{U_{p}\}_{p\in\boldsymbol{p}}]$. Then, $R_{\boldsymbol{\alpha},\boldsymbol{\beta}}=\mathbb{F}[\{U_{p}^{k_{p}}\}_{p\in\boldsymbol{p}}]$
is a free $S$-module, with basis 
\[
B:=\left\{ \prod_{k_{p}=1/2}U_{p}^{\ell_{p}}:\ell_{p}\in\{0,1/2\}\right\} ,
\]
where the product ranges over $p$'s such that $k_{p}=1/2$. An \emph{$S$-generator
}for $\boldsymbol{\alpha},\boldsymbol{\beta}$ is an element of the
form $b{\bf x}$ for $b\in B$, ${\bf x}\in\boldsymbol{\alpha}\cap\boldsymbol{\beta}$.
The ${\rm Spin}^{c}$-structure of an $S$-generator is defined as
\[
\mathfrak{s}\left(\prod U_{p}^{\ell_{p}}{\bf x}\right):=\mathfrak{s}({\bf x})+\sum_{p}2\ell_{p}\mu_{p}\in{\rm Spin}^{c}(Y(L^{sut})),
\]
where $\mu_{p}\in H_{1}^{orb}(Y(L);\mathbb{Z})$ is defined as follows:
\begin{itemize}
\item if $p$ is a link basepoint pair, then $\mu_{p}$ is the meridian
of the link component that $p$ is on.
\item if $p$ is a free basepoint pair, then $\mu_{p}=0$.
\end{itemize}
\begin{defn}
If a Heegaard diagram with attaching curves $\boldsymbol{\alpha}$
and $\boldsymbol{\beta}$ is $c_{1}(\mathfrak{s})$-strongly admissible
for some ${\rm Spin}^{c}$-structure $\mathfrak{s}\in{\rm Spin}^{c}(Y(L^{sut}))$,
define $CF^{-}(\boldsymbol{\alpha},\boldsymbol{\beta};\mathfrak{s})$
as the free $S$-sub chain complex of $CF^{-}(\boldsymbol{\alpha},\boldsymbol{\beta};c_{1}(\mathfrak{s}))$
generated by the $S$-generators $b{\bf x}$ for $b\in B$, ${\bf x}\in\boldsymbol{\alpha}\cap\boldsymbol{\beta}$
such that $\mathfrak{s}(b{\bf x})=\mathfrak{s}$.
\end{defn}

Hence, we have an $S$-linear splitting of chain complexes
\[
CF^{-}(\boldsymbol{\alpha},\boldsymbol{\beta};c)=\bigoplus_{c_{1}(\mathfrak{s})=c}CF^{-}(\boldsymbol{\alpha},\boldsymbol{\beta};\mathfrak{s}),
\]
which we call the \emph{${\rm Spin}^{c}$-splitting}.

\begin{defn}[Unoriented link Floer homology]
\label{def:unori}Let $L^{sut}:=(L,\boldsymbol{u},\alpha_{L})$ be
a minimally pointed link equipped with a suture datum inside a pointed
three-manifold $(Y,\boldsymbol{v})$ such that every connected component
of $Y$ contains at least one basepoint $v\in\boldsymbol{u}\sqcup\boldsymbol{v}$.
Define the \emph{unoriented link Floer chain complex }as 
\[
CFL'{}^{-}((Y,\boldsymbol{v}),L^{sut};c):=CF^{-}(\boldsymbol{\alpha},\boldsymbol{\beta};c),\ CFL'{}^{-}((Y,\boldsymbol{v}),L^{sut};\mathfrak{s}):=CF^{-}(\boldsymbol{\alpha},\boldsymbol{\beta};\mathfrak{s})
\]
where $(\Sigma,\boldsymbol{\alpha},\boldsymbol{\beta},\boldsymbol{p},\boldsymbol{G})$
is a pair-pointed Heegaard diagram that represents $L^{sut}$ in $(Y,\boldsymbol{v})$.
\end{defn}

\subsection{\label{subsec:ainf}The \texorpdfstring{$A_{\infty}$}{Ainf}-categories
that we work in}

When we consider more than two attaching curves, we have to in general
fix a ${\rm Spin}^{c}$-structure of the associated four-manifold
to talk about strong admissibility. In \cite[Subsection 2.10]{nahm2025unorientedskeinexacttriangle},
we introduced an assumption for which this becomes simple.
\begin{defn}
\label{def:Two-attaching-curves}Two attaching curves $\boldsymbol{\beta}_{1},\boldsymbol{\beta}_{2}$
on a pair-pointed Heegaard surface $(\Sigma,\boldsymbol{p},\boldsymbol{G})$
are \emph{handlebody-equivalent} if they define the same handlebody
(ignoring $\boldsymbol{p}$).
\end{defn}

We assume that the poset $\boldsymbol{AC}$ of attaching curves is
a disjoint union of alpha attaching curves $\boldsymbol{A}$ and beta
attaching curves $\boldsymbol{B}$; that $\boldsymbol{\alpha}<\boldsymbol{\beta}$
for every alpha attaching curve $\boldsymbol{\alpha}\in\boldsymbol{A}$
and beta attaching curve $\boldsymbol{\beta}\in\boldsymbol{B}$; that
all the alpha attaching curves are pairwise handlebody-equivalent;
and that all the beta attaching curves are also pairwise handlebody-equivalent.
Identify the alpha-handlebodies, and identify the beta-handlebodies.
If $\boldsymbol{A},\boldsymbol{B}\neq\emptyset$, then let $Y$ be
the three-manifold given by the alpha-handlebody and the beta-handlebody.
\begin{defn}
\label{def:strongly-admissible-multi-1}Let $\boldsymbol{A},\boldsymbol{B}\neq\emptyset$.
The Heegaard diagram is \emph{$c$-strongly admissible} for $c\in H^{2}(Y;\mathbb{Z})$
if all cornerless two-chains ${\cal D}$ such that 
\[
\left\langle c,H({\cal D})\right\rangle +P({\cal D})=0
\]
have both positive and negative local multiplicities.
\end{defn}

\begin{defn}
A pair-pointed Heegaard diagram is \emph{weakly admissible} if all
cornerless two-chains ${\cal D}$ such that $n_{v}({\cal D})=0$ for
all basepoints $v$ have both positive and negative local multiplicities.
\end{defn}

If $\boldsymbol{A},\boldsymbol{B}\neq\emptyset$, then fix $c\in H^{2}(Y;\mathbb{Z})$
and consider a $c$-strongly admissible Heegaard diagram with underlying
pair-pointed Heegaard surface $(\Sigma,\boldsymbol{p},\boldsymbol{G})$.
If $\boldsymbol{A}$ or $\boldsymbol{B}$ is empty, then consider
a weakly admissible Heegaard diagram with underlying pair-pointed
Heegaard surface $(\Sigma,\boldsymbol{p},\boldsymbol{G})$ (and formally
let $c=0$). For attaching curves $\boldsymbol{\gamma}_{1},\boldsymbol{\gamma}_{2}\in\boldsymbol{AC}$
such that $\boldsymbol{\gamma}_{1}<\boldsymbol{\gamma}_{2}$, we write
\[
CF_{R}^{-}(\boldsymbol{\gamma}_{1},\boldsymbol{\gamma}_{2}):=\begin{cases}
CF_{R}^{-}(\boldsymbol{\gamma}_{1},\boldsymbol{\gamma}_{2};0) & {\rm if}\ \boldsymbol{\gamma}_{1},\boldsymbol{\gamma}_{2}\in\boldsymbol{A}\ {\rm or}\ \boldsymbol{\gamma}_{1},\boldsymbol{\gamma}_{2}\in\boldsymbol{B}\\
CF_{R}^{-}(\boldsymbol{\gamma}_{1},\boldsymbol{\gamma}_{2};c) & {\rm if}\ \boldsymbol{\gamma}_{1}\in\boldsymbol{A}\ {\rm and}\ \boldsymbol{\gamma}_{2}\in\boldsymbol{B}
\end{cases}.
\]

The associated $A_{\infty}$-category has objects $\boldsymbol{\gamma}\in\boldsymbol{AC}$,
and ${\rm Hom}(\boldsymbol{\gamma}_{1},\boldsymbol{\gamma}_{2})$
is $0$ if $\boldsymbol{\gamma}_{1}\not<\boldsymbol{\gamma}_{2}$,
and if $\boldsymbol{\gamma}_{1}<\boldsymbol{\gamma}_{2}$, then it
is
\[
CF^{-}(\boldsymbol{\gamma}_{1},\boldsymbol{\gamma}_{2}):=CF_{R_{\boldsymbol{\gamma}_{1},\boldsymbol{\gamma}_{2}}}^{-}(\boldsymbol{\gamma}_{1},\boldsymbol{\gamma}_{2}).
\]

We sometimes refer to an element $\sigma\in CF^{-}(\boldsymbol{\gamma}_{1},\boldsymbol{\gamma}_{2})$
as a \emph{map $\sigma:\boldsymbol{\gamma}_{1}\to\boldsymbol{\gamma}_{2}$}.
We will consider twisted complexes in this $A_{\infty}$-category,
following the conventions of \cite[Subsection 2.4]{nahm2025unorientedskeinexacttriangle}.
Similarly, given twisted complexes $\underline{\boldsymbol{\gamma}_{1}}$
and $\underline{\boldsymbol{\gamma}_{2}}$, we refer to an element
$\sigma\in CF^{-}(\underline{\boldsymbol{\gamma}_{1}},\underline{\boldsymbol{\gamma}_{2}})$
as a \emph{map $\sigma:\underline{\boldsymbol{\gamma}_{1}}\to\underline{\boldsymbol{\gamma}_{2}}$}.
\begin{rem}
\label{rem:Recall-Remark-.}Recall Remark \ref{rem:In-particular,-}.
Hence, in particular if $\boldsymbol{\gamma}_{1},\boldsymbol{\gamma}_{2}\in\boldsymbol{A}$
or $\boldsymbol{\gamma}_{1},\boldsymbol{\gamma}_{2}\in\boldsymbol{B}$,
then $CF^{-}(\boldsymbol{\gamma}_{1},\boldsymbol{\gamma}_{2})$ has
a well-defined relative homological $\mathbb{Z}$-grading.
\end{rem}

Let $R_{tot}:=\mathbb{F}[\{U_{p}^{1/2}\}_{p\in\boldsymbol{p}}]$.
Let $P:=\{\boldsymbol{\gamma}_{0}<\cdots<\boldsymbol{\gamma}_{d}\}$
be a sequence of attaching curves. Let $F_{\boldsymbol{\gamma}_{0},\boldsymbol{\gamma}_{d}}$
be the set of basepoint pairs that is a free basepoint pair for both
$\boldsymbol{\gamma}_{0}$ and $\boldsymbol{\gamma}_{d}$. Then, $R_{tot}$
is a rank $2^{|F_{\boldsymbol{\gamma}_{0},\boldsymbol{\gamma}_{d}}|}$,
free $R_{\boldsymbol{\gamma}_{0},\boldsymbol{\gamma}_{d}}$-module,
with basis 
\[
B_{\boldsymbol{\gamma}_{0},\boldsymbol{\gamma}_{d}}:=\left\{ \prod_{p\in F_{\boldsymbol{\gamma}_{0},\boldsymbol{\gamma}_{d}}}U_{p}^{\ell_{p}}:\ell_{p}\in\{0,1/2\}\right\} .
\]
This basis induces a splitting of $R_{\boldsymbol{\gamma}_{0},\boldsymbol{\gamma}_{d}}$-chain
complexes 
\begin{equation}
CF_{R_{tot}}^{-}(\boldsymbol{\gamma}_{0},\boldsymbol{\gamma}_{d})=\bigoplus_{b\in B_{\boldsymbol{\gamma}_{0},\boldsymbol{\gamma}_{d}}}bCF^{-}(\boldsymbol{\gamma}_{0},\boldsymbol{\gamma}_{d}).\label{eq:splitting}
\end{equation}
Let $U_{F}^{1/2}$ be the product of the $U_{p}^{1/2}$'s where $p$
ranges over the basepoints pairs in $F_{\boldsymbol{\gamma}_{0},\boldsymbol{\gamma}_{d}}$
that is a link basepoint pair for at least one of the $\boldsymbol{\gamma}_{i}$'s.
Define 
\[
\mu_{d}^{\boldsymbol{\gamma}_{0},\boldsymbol{\gamma}_{1},\cdots,\boldsymbol{\gamma}_{d}}:CF^{-}(\boldsymbol{\gamma}_{0},\boldsymbol{\gamma}_{1})\otimes_{S}\cdots\otimes_{S}CF^{-}(\boldsymbol{\gamma}_{d-1},\boldsymbol{\gamma}_{d})\to CF^{-}(\boldsymbol{\gamma}_{0},\boldsymbol{\gamma}_{d}),
\]
where $S:=\mathbb{F}[\{U_{p}\}_{p\in\boldsymbol{p}}]$, as the composite
of the following maps:
\begin{gather}
CF^{-}(\boldsymbol{\gamma}_{0},\boldsymbol{\gamma}_{1})\otimes_{S}\cdots\otimes_{S}CF^{-}(\boldsymbol{\gamma}_{d-1},\boldsymbol{\gamma}_{d})\rightarrow CF_{R_{tot}}^{-}(\boldsymbol{\gamma}_{0},\boldsymbol{\gamma}_{1})\otimes_{R_{tot}}\cdots\otimes_{R_{tot}}CF_{R_{tot}}^{-}(\boldsymbol{\gamma}_{d-1},\boldsymbol{\gamma}_{d})\nonumber \\
\xrightarrow{\mu_{d,R_{tot}}^{\boldsymbol{\gamma}_{0},\boldsymbol{\gamma}_{1},\cdots,\boldsymbol{\gamma}_{d}}}CF_{R_{tot}}^{-}(\boldsymbol{\gamma}_{0},\boldsymbol{\gamma}_{d})\xrightarrow{\pi}U_{F}^{1/2}CF^{-}(\boldsymbol{\gamma}_{0},\boldsymbol{\gamma}_{d})\xrightarrow{\cdot U_{F}^{-1/2}}CF^{-}(\boldsymbol{\gamma}_{0},\boldsymbol{\gamma}_{d}),\label{eq:mud-defn}
\end{gather}
where the first map is induced by the inclusions $\iota:CF^{-}(\boldsymbol{\gamma}_{i},\boldsymbol{\gamma}_{i+1})\hookrightarrow CF_{R_{tot}}^{-}(\boldsymbol{\gamma}_{i},\boldsymbol{\gamma}_{i+1})$
into the summand $CF^{-}(\boldsymbol{\gamma}_{i},\boldsymbol{\gamma}_{i+1})$
of Equation (\ref{eq:splitting}) and the third map $\pi$ is projection
onto the $U_{F}^{1/2}CF^{-}(\boldsymbol{\gamma}_{0},\boldsymbol{\gamma}_{d})$
summand of Equation (\ref{eq:splitting}). As in this formula, we
will sometimes denote the higher composition map on the $CF_{R}^{-}(\boldsymbol{\gamma}_{i},\boldsymbol{\gamma}_{j})$'s
as $\mu_{d,R}$. Also, we mostly omit the superscript $\boldsymbol{\gamma}_{0},\cdots,\boldsymbol{\gamma}_{d}$.

Lemma \ref{lem:If--is-1} ensures that the higher multiplications
$\mu_{d,R_{tot}}$ are well-defined and that the $A_{\infty}$-relations
hold for $\mu_{d,R_{tot}}$. Using the last condition of Definition
\ref{def:A-pair-pointed-Heegaard}, one can show that 
\[
\mu_{d-j+1}(\overbrace{{\rm Id}\otimes\cdots\otimes{\rm Id}}^{i}\otimes\mu_{j}({\rm Id}\otimes\cdots\otimes{\rm Id})\otimes\cdots\otimes{\rm Id})=U_{F}^{-1/2}\pi(\mu_{d-j+1,R_{tot}}(\overbrace{\iota\otimes\cdots\otimes\iota}^{i}\otimes\mu_{j,R_{tot}}(\iota\otimes\cdots\otimes\iota)\otimes\cdots\otimes\iota))
\]
as maps 
\[
CF^{-}(\boldsymbol{\gamma}_{0},\boldsymbol{\gamma}_{1})\otimes_{S}\cdots\otimes_{S}CF^{-}(\boldsymbol{\gamma}_{d-1},\boldsymbol{\gamma}_{d})\to CF^{-}(\boldsymbol{\gamma}_{0},\boldsymbol{\gamma}_{d}),
\]
and so the $A_{\infty}$-relations hold for $\mu_{d}$ as well.

Also, Lemma \ref{lem:If--is-1} shows that the above $A_{\infty}$-category
is \emph{homologically $\mathbb{Z}/\mathfrak{d}(c)$-gradable}: in
other words, there exist homological $\mathbb{Z}/\mathfrak{d}(c)$-grdings
on $CF^{-}(\boldsymbol{\gamma}_{1},\boldsymbol{\gamma}_{2})$ for
each $\boldsymbol{\gamma}_{1}<\boldsymbol{\gamma}_{2}$ as in Subsection
\ref{subsec:Strong-admissibility-and} such that $\mu_{d}$ has degree
$d-2$. In particular, if $c$ is torsion, then the $A_{\infty}$-category
is homologically $\mathbb{Z}$-gradable.
\begin{lem}
\label{lem:If--is-1} If $\boldsymbol{A},\boldsymbol{B}\neq\emptyset$,
then fix $c\in H^{2}(Y;\mathbb{Z})$. Otherwise, formally let $c=0$.
Say ${\bf x}\in\boldsymbol{\gamma}_{1}\cap\boldsymbol{\gamma}_{2}$
is \emph{relevant} if the following hold.
\begin{itemize}
\item If $\boldsymbol{\gamma}_{1},\boldsymbol{\gamma}_{2}\in\boldsymbol{A}$
or $\boldsymbol{\gamma}_{1},\boldsymbol{\gamma}_{2}\in\boldsymbol{B}$,
then $c_{1}(\mathfrak{s}({\bf x}_{i}))=0$.
\item If $\boldsymbol{\gamma}_{1}\in\boldsymbol{A}$ and $\boldsymbol{\gamma}_{2}\in\boldsymbol{B}$,
then $c_{1}(\mathfrak{s}({\bf x}_{i}))=c$.
\end{itemize}
Let ${\cal D}\in D({\bf x}_{0},\cdots,{\bf x}_{d})$ be a domain such
that ${\bf x}_{0},\cdots,{\bf x}_{d-1}$ are relevant. Then, ${\bf x}_{d}$
is relevant.

If ${\cal D},{\cal D}'\in D({\bf x}_{0},\cdots,{\bf x}_{d})$ are
domains with the same vertices and if ${\bf x}_{0},\cdots,{\bf x}_{d}$
are relevant, then
\[
\mu({\cal D})-\mu({\cal D}')=\left\langle c,H({\cal D}-{\cal D}')\right\rangle +P({\cal D})-P({\cal D}').
\]

\end{lem}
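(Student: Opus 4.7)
The plan is to prove (1) by induction on $d$, reducing to bigons and triangles, and to prove (2) as a direct corollary of (1) combined with a formula for the Maslov index of cornerless two-chains in the multi-diagram setting.

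For (1), the base case $d=1$ follows from the standard Heegaard Floer fact that a bigon ${\cal D}\in D({\bf x}_{0},{\bf x}_{1})$ satisfies $\mathfrak{s}_{\boldsymbol{w}}({\bf x}_{0})-\mathfrak{s}_{\boldsymbol{w}}({\bf x}_{1})=\sum_{p\in\boldsymbol{p}} n_{w_p}({\cal D})\,\mathrm{PD}(\mu_{p})$ in $H^{2}(Y;\mathbb{Z})$ (and similarly for $\boldsymbol{z}$), where $w_p$ is the second basepoint of the pair $p$. Applying Equation (\ref{eq:c1}) from Remark \ref{rem:compute-chern} gives $c_{1}(\mathfrak{s}({\bf x}_{0}))-c_{1}(\mathfrak{s}({\bf x}_{1}))\in 2M$, which vanishes after projecting to $H^{2}(Y;\mathbb{Z})$ (meridians are $2$-torsion there). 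For $d=2$ (triangles), the domain ${\cal D}$ induces ${\rm Spin}^{c}$-structures $\mathfrak{s}_{\boldsymbol{w}}({\cal D})$ and $\mathfrak{s}_{\boldsymbol{z}}({\cal D})$ on the triangle four-manifold $W$ whose restrictions to the three boundary three-manifold pieces agree with the $\mathfrak{s}_{\boldsymbol{w}}({\bf x}_{i})$'s (resp.~$\mathfrak{s}_{\boldsymbol{z}}({\bf x}_{i})$'s); standard $c_{1}$-additivity for ${\rm Spin}^{c}$-structures on $W$ determines $c_{1}(\mathfrak{s}_{\boldsymbol{w}}({\bf x}_{2}))$ and $c_{1}(\mathfrak{s}_{\boldsymbol{z}}({\bf x}_{2}))$ from the other two modulo classes coming from the basepoints (i.e.\ $2M$). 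Combining with Equation (\ref{eq:c1}) and the relevance of ${\bf x}_{0},{\bf x}_{1}$ then forces the relevance of ${\bf x}_{2}$. The inductive step splits a $(d+1)$-gon along a diagonal into a $d$-gon and a triangle; the intermediate vertex can be chosen to be any intersection point of the two new attaching-curve pairs, and the base cases combine to propagate relevance.

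For (2), given ${\cal D},{\cal D}'\in D({\bf x}_{0},\ldots,{\bf x}_{d})$ with the same vertices, the difference ${\cal E}:={\cal D}-{\cal D}'$ is a cornerless two-chain on $\Sigma$, and the Maslov index is additive so that $\mu({\cal D})-\mu({\cal D}')=\mu({\cal E})$. The multi-diagram analogue of Lemma \ref{lem:If--is}, which follows by the same argument as the two-curve case after identifying cornerless two-chains with classes in $H_{2}$ of the multi-diagram four-manifold and decomposing along attaching-curve pairs, gives $\mu({\cal E})=\langle c_{1}(\mathfrak{s}({\bf x}_{i})),H({\cal E})\rangle +P({\cal E})$ for any relevant ${\bf x}_{i}$. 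Since relevance forces all $c_{1}(\mathfrak{s}({\bf x}_{i}))$ to equal the prescribed class ($0$ or $c$), the pairing equals $\langle c,H({\cal E})\rangle$, yielding the desired formula.

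The main obstacle will be the bookkeeping in (1), specifically verifying the $c_{1}$-additivity on the triangle cobordism $W$ in the sutured, multi-pointed setting and checking that all discrepancies between $\mathfrak{s}_{\boldsymbol{w}}$- and $\mathfrak{s}_{\boldsymbol{z}}$-type shifts wash out modulo $2M$ upon projection to $H^{2}(Y;\mathbb{Z})$. Once the $c_{1}$-additivity and the correct identification of basepoint contributions (free versus link) as meridional torsion are in place, the rest of the proof, including (2), is formal.
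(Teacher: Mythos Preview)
Your approach is essentially correct and is presumably close to what is carried out in the cited \cite[Lemmas~2.65 and~2.66]{nahm2025unorientedskeinexacttriangle}, but it differs from the paper's proof in one key organizational respect. The paper does not reprove these facts; it first observes (via Remark~\ref{rem:Note-that-different}) that $c_{1}(\mathfrak{s}({\bf x}))$ is insensitive to the free/link assignments $E_{\boldsymbol{\gamma}}$, so one may assume every basepoint pair is a link basepoint pair for every attaching curve, and then the statement becomes literally the setting of the earlier paper. This reduction is precisely the device that eliminates the ``bookkeeping obstacle'' you identify at the end: the free-versus-link distinction disappears before any cobordism or Maslov computation takes place.

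A few remarks on your direct argument. First, a small correction: in your $d=1$ step you say the meridians are ``$2$-torsion'' in $H^{2}(Y;\mathbb{Z})$, but in fact they are zero there (each $\mu_{p}$ bounds a disk in $Y$), so the conclusion is immediate rather than a torsion cancellation. Second, your inductive splitting of a $(d{+}1)$-gon along a diagonal is correct but not quite as free as you suggest: you must check that for some choice of intermediate vertex ${\bf y}\in\boldsymbol{\gamma}_{0}\cap\boldsymbol{\gamma}_{d-1}$ the two sub-domains actually exist (this follows once you realize you can first choose any triangle domain with the prescribed $\partial_{\boldsymbol{\gamma}_{d}}$ and subtract). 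Third, your ``multi-diagram analogue of Lemma~\ref{lem:If--is}'' really does require decomposing the cornerless two-chain $\mathcal{E}=\mathcal{D}-\mathcal{D}'$ into cornerless $\boldsymbol{\gamma}_{i},\boldsymbol{\gamma}_{i+1}$-periodic domains (as in \cite[Lemma~2.64]{nahm2025unorientedskeinexacttriangle}), then applying Lemma~\ref{lem:If--is} to each piece and checking that the $\boldsymbol{A}$--$\boldsymbol{A}$ and $\boldsymbol{B}$--$\boldsymbol{B}$ pieces contribute zero both to the pairing (by relevance) and to $H(\mathcal{E})\in H_{2}(Y)$ (since their caps lie entirely in one handlebody). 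Once these points are made explicit, your argument goes through.
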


\begin{proof}
By Remark \ref{rem:Note-that-different}, we can assume, without loss
of generality, that every basepoint pair is a link basepoint pair
for every attaching curve. The first part is \cite[Lemma 2.66]{nahm2025unorientedskeinexacttriangle},
and the second part is \cite[Lemma 2.65]{nahm2025unorientedskeinexacttriangle}.
\end{proof}
\begin{rem}
\label{rem:If-the-Heegaard}If the Heegaard diagram ${\cal H}$ is
the disjoint union of ${\cal H}_{1}$ and ${\cal H}_{2}$, then let
the coefficient ring $R_{\boldsymbol{\alpha},\boldsymbol{\beta}}$
for ${\cal H},{\cal H}_{1},{\cal H}_{2}$ be $R,R_{1},R_{2}$ respectively.
Then $R=R_{1}\otimes_{\mathbb{F}}R_{2}$, and 
\[
CF_{{\cal H}}^{-}(\boldsymbol{\alpha},\boldsymbol{\beta})\simeq CF_{{\cal H}_{1}}^{-}(\boldsymbol{\alpha},\boldsymbol{\beta})\otimes_{\mathbb{F}}CF_{{\cal H}_{2}}^{-}(\boldsymbol{\alpha},\boldsymbol{\beta})
\]
as chain complexes over $R$. Furthermore, if there are three attaching
curves, then the following commutes (let the rings $\mathbb{F}[\{U_{p}\}_{p\in\boldsymbol{p}}]$
for ${\cal H},{\cal H}_{1},{\cal H}_{2}$ be $S,S_{1},S_{2}$ respectively):
\[\begin{tikzcd}
	{\left(CF_{{\cal H}_{1}}^{-}(\boldsymbol{\alpha},\boldsymbol{\beta})\otimes_{\mathbb{F}}CF_{{\cal H}_{2}}^{-}(\boldsymbol{\alpha},\boldsymbol{\beta})\right)\otimes_{S}\left(CF_{{\cal H}_{1}}^{-}(\boldsymbol{\beta},\boldsymbol{\gamma})\otimes_{\mathbb{F}}CF_{{\cal H}_{2}}^{-}(\boldsymbol{\beta},\boldsymbol{\gamma})\right)} & {CF_{{\cal H}}^{-}(\boldsymbol{\alpha},\boldsymbol{\beta})\otimes_{S}CF_{{\cal H}}^{-}(\boldsymbol{\beta},\boldsymbol{\gamma})} \\
	{\left(CF_{{\cal H}_{1}}^{-}(\boldsymbol{\alpha},\boldsymbol{\beta})\otimes_{S_{1}}CF_{{\cal H}_{1}}^{-}(\boldsymbol{\beta},\boldsymbol{\gamma})\right)\otimes_{\mathbb{F}}\left(CF_{{\cal H}_{2}}^{-}(\boldsymbol{\alpha},\boldsymbol{\beta})\otimes_{S_{2}}CF_{{\cal H}_{2}}^{-}(\boldsymbol{\beta},\boldsymbol{\gamma})\right)} \\
	{CF_{{\cal H}_{1}}^{-}(\boldsymbol{\alpha},\boldsymbol{\gamma})\otimes_{\mathbb{F}}CF_{{\cal H}_{2}}^{-}(\boldsymbol{\alpha},\boldsymbol{\gamma})} & {CF_{{\cal H}}^-(\boldsymbol{\alpha},\boldsymbol{\gamma})}
	\arrow["\simeq", from=1-1, to=1-2]
	\arrow["\simeq", from=1-1, to=2-1]
	\arrow["{\mu _2 }", from=1-2, to=3-2]
	\arrow["{\mu_2}", from=2-1, to=3-1]
	\arrow["\simeq", from=3-1, to=3-2]
\end{tikzcd}\]
\end{rem}

\subsection{\label{subsec:An-Alexander--splitting}An Alexander $\mathbb{Z}/2$-splitting}

For simplicity, we do not discuss the decomposition of $\mu_{n}$
for $n\ge2$ as $\mu_{n}=\sum_{\mathfrak{s}}\mu_{n}^{\mathfrak{s}}$
where $\mathfrak{s}$ ranges over the ${\rm Spin}^{c}$-structures
of a suitable four-manifold. However, we consider a closely related\emph{
Alexander $\mathbb{Z}/2$-splitting} on the $A_{\infty}$-category,
which is sufficient for our purposes. Compare \cite[Subsubsection 2.9.2]{nahm2025unorientedskeinexacttriangle}.
\begin{defn}
A pair-pointed Heegaard diagram is \emph{Alexander $\mathbb{Z}/2$-splittable}
if $P({\cal D})$ is even for all cornerless two-chains ${\cal D}$.
\end{defn}

We have the following lemma for the case where the Heegaard diagram
has two attaching curves.
\begin{lem}
\label{lem:alexander-two}Let ${\cal H}=(\Sigma,\boldsymbol{\alpha},\boldsymbol{\beta},\boldsymbol{p},\boldsymbol{G})$
be a pair-pointed Heegaard diagram. Let ${\cal H}$ represent the
link $L$ in $Y$, and let $L_{1},\cdots,L_{n}$ be the components
of $L$. Then, the following are equivalent:
\begin{enumerate}
\item \label{enu:alex1}${\cal H}$ is Alexander $\mathbb{Z}/2$-splittable.
\item \label{enu:alex2}Let $\lambda_{i}\in H_{1}(Y;\mathbb{Z})$ be the
homology class of $L_{i}$. The intersection number $x\cdot\lambda$
is even for all $x\in H_{2}(Y;\mathbb{Z})$.
\item \label{enu:alex3}Let $\mu_{i}\in H_{1}(Y\backslash L;\mathbb{Z})$
be the homology class of the meridian of $L_{i}$. If $\sum n_{i}\mu_{i}=0$,
then $\sum n_{i}$ is even.
\end{enumerate}
Note that $\lambda_{i}$ and $\mu_{i}$ are well-defined up to sign,
and the conditions do not depend on the sign.
\end{lem}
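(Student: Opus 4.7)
Throughout, set $\lambda := \sum_i \lambda_i \in H_1(Y;\mathbb{Z})$. The plan is to prove (1) $\Leftrightarrow$ (2) using the Maslov-index identity of Lemma \ref{lem:If--is} combined with the Chern-class identity of Equation (\ref{eq:c1}), and to prove (2) $\Leftrightarrow$ (3) via the long exact sequence of the pair $(Y,Y\setminus L)$.

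For (1) $\Leftrightarrow$ (2), I first recall that cornerless two-chains modulo the full surface $\Sigma$ are in bijection with $H_2(Y;\mathbb{Z})$ via ${\cal D}\mapsto H({\cal D})$, and since $P(\Sigma)=2|\boldsymbol{p}|$ is even, $P({\cal D})\pmod 2$ descends to a homomorphism $H_2(Y;\mathbb{Z})\to\mathbb{Z}/2$. The crux is then the identity
\[
P({\cal D})\equiv H({\cal D})\cdot\lambda\pmod 2.
\]
To prove this, I would fix any ${\bf x}\in\boldsymbol{\alpha}\cap\boldsymbol{\beta}$ and any cornerless ${\cal D}\in D({\bf x},{\bf x})$. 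Lemma \ref{lem:If--is} gives
\[
\mu({\cal D})=\langle c_1(\mathfrak{s}({\bf x})),H({\cal D})\rangle+P({\cal D}),
\]
while the classical Ozsv\'ath--Szab\'o formula for the Maslov index of a periodic domain with respect to the single-basepoint-set $\boldsymbol{w}$ gives
\[
\mu({\cal D})=\langle c_1(\mathfrak{s}_{\boldsymbol{w}}({\bf x})),H({\cal D})\rangle+2 n_{\boldsymbol{w}}({\cal D}).
\]
Subtracting and using $c_1(\mathfrak{s}({\bf x}))=c_1(\mathfrak{s}_{\boldsymbol{w}}({\bf x}))-\mathrm{PD}(\overrightarrow{L})$ from Equation (\ref{eq:c1}) yields
\[
P({\cal D})=2n_{\boldsymbol{w}}({\cal D})+H({\cal D})\cdot\overrightarrow{L},
\]
whose mod-$2$ reduction is the desired identity. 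Surjectivity of ${\cal D}\mapsto H({\cal D})$ then converts (1) into (2) and back.

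For (2) $\Leftrightarrow$ (3), I would use the long exact sequence
\[
H_2(Y;\mathbb{Z})\xrightarrow{f}H_2(Y,Y\setminus L;\mathbb{Z})\xrightarrow{g}H_1(Y\setminus L;\mathbb{Z}).
\]
Excision and the Thom isomorphism identify $H_2(Y,Y\setminus L;\mathbb{Z})\cong\bigoplus_i\mathbb{Z}\langle D_i\rangle$ with $D_i$ a meridional disk of $L_i$; under this identification $g(D_i)=\mu_i$ and $f(x)=\sum_i(x\cdot L_i)D_i$. The relation $\sum n_i\mu_i=0$ in $H_1(Y\setminus L;\mathbb{Z})$ is then equivalent, by exactness, to the existence of $x\in H_2(Y;\mathbb{Z})$ with $x\cdot L_i=n_i$ for every $i$, under which $\sum_i n_i=x\cdot\lambda$. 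Hence (3) says exactly (2).

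The main obstacle is the first step: reconciling the two Maslov-index formulas requires careful bookkeeping between the $c_1$ associated to the pair-pointed setup and the $c_1$ coming from the single basepoint set $\boldsymbol{w}$, and the identity of Equation (\ref{eq:c1}) is essential. Some minor additional care is needed to handle disconnected Heegaard surfaces or connected components on which $\boldsymbol{\alpha}\cap\boldsymbol{\beta}$ is empty, which can be handled by decomposing into connected components. The second step is a routine Poincar\'e-duality computation.
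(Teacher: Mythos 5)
Your plan is correct and follows the same structure as the paper's (very terse) proof, which simply asserts the key identity $P({\cal D})\equiv H({\cal D})\cdot\lambda \pmod 2$ and declares (\ref{enu:alex2}) $\Leftrightarrow$ (\ref{enu:alex3}) easy; your derivation of that identity by subtracting Lemma \ref{lem:If--is} from the standard $\boldsymbol{w}$-pointed index formula and invoking Equation (\ref{eq:c1}) is valid, and your long-exact-sequence argument for (\ref{enu:alex2}) $\Leftrightarrow$ (\ref{enu:alex3}) is exactly the intended computation. (A slightly more direct alternative for the first step: free basepoint pairs contribute evenly since $n_z=n_w$ for them, and for link basepoint pairs the flowlines through $z$ and $w$ sweep out $L$, so $P({\cal D})\equiv H({\cal D})\cdot\lambda$ by counting intersections of $L$ with the capped-off surface representing $H({\cal D})$ — but your index-formula route is equally legitimate.)
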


\begin{proof}
Conditions (\ref{enu:alex1}) and (\ref{enu:alex2}) are equivalent
since $P({\cal D})=H({\cal D})\cdot\lambda$ modulo 2 for any cornerless
two-chain ${\cal D}$, and any $x\in H_{2}(Y;\mathbb{Z})$ is $H({\cal D})$
for some ${\cal D}$. It is easy to check that Condition (\ref{enu:alex2})
and (\ref{enu:alex3}) are equivalent.
\end{proof}
If a pair-pointed Heegaard diagram is Alexander $\mathbb{Z}/2$-splittable,
then we can assign to each intersection point ${\bf x}$ an (absolute)
\emph{Alexander $\mathbb{Z}/2$-grading }${\rm gr}_{A}^{\mathbb{Z}/2}({\bf x})\in\mathbb{Z}/2$,
such that if there is a domain ${\cal D}\in D({\bf x}_{0},\cdots,{\bf x}_{n})$,
then 
\[
\sum_{i=0}^{n}{\rm gr}_{A}^{\mathbb{Z}/2}({\bf x}_{i})=P({\cal D})\mod 2.
\]
Hence, we can equip the associated $A_{\infty}$-category with an
\emph{Alexander $\mathbb{Z}/2$-splitting}, in the following sense:
for each $\boldsymbol{\gamma}_{1}<\boldsymbol{\gamma}_{2}$, define
\[
{\rm gr}_{A}^{\mathbb{Z}/2}(\prod_{p}U_{p}^{\ell_{p}}{\bf x}):={\rm gr}_{A}^{\mathbb{Z}/2}({\bf x})+2\sum\ell_{p}.
\]
Let $CF_{i}^{-}(\boldsymbol{\gamma}_{1},\boldsymbol{\gamma}_{2})$
(\emph{``the Alexander $\mathbb{Z}/2$-grading $i$ summand''};
this notation is used only within this subsection) be the $S:=\mathbb{F}[\{U_{p}\}_{p\in\boldsymbol{p}}]$-submodule
of $CF^{-}(\boldsymbol{\gamma}_{1},\boldsymbol{\gamma}_{2})$ generated
by the $\prod U_{p}^{\ell_{p}}{\bf x}$'s with Alexander $\mathbb{Z}/2$-grading
$i$. Then, $CF^{-}(\boldsymbol{\gamma}_{1},\boldsymbol{\gamma}_{2})=CF_{0}^{-}(\boldsymbol{\gamma}_{1},\boldsymbol{\gamma}_{2})\oplus CF_{1}^{-}(\boldsymbol{\gamma}_{1},\boldsymbol{\gamma}_{2})$
as $S$-chain complexes, and $\mu_{d}$ has Alexander $\mathbb{Z}/2$-degree
$0$.

We claim that these Alexander $\mathbb{Z}/2$-grading summands $CF_{i}^{-}(\boldsymbol{\gamma}_{1},\boldsymbol{\gamma}_{2})$
are the direct sums of some ${\rm Spin}^{c}$-summands. Let $(\boldsymbol{\gamma}_{1},\boldsymbol{\gamma}_{2})$
represent the link $L$ in $Y$, and let $\mu_{1},\cdots,\mu_{n}\in H_{1}^{orb}(Y(L);\mathbb{Z})$
be the meridians of the components of $L$. For intersection points
${\bf x},{\bf y}\in\boldsymbol{\gamma}_{1}\cap\boldsymbol{\gamma}_{2}$,
if there is a domain ${\cal D}\in D({\bf x},{\bf y})$, then 
\begin{equation}
\mathfrak{s}({\bf x})=\mathfrak{s}({\bf y})+\sum_{i=1}^{n}n_{i}\mu_{i}\label{eq:spinc-sum}
\end{equation}
for some $n_{i}\in\mathbb{Z}$. The claim follows once we show that
for any such $n_{i}$ that satisfy Equation (\ref{eq:spinc-sum}),
$\sum n_{i}=P({\cal D})$ modulo $2$. This follows from the following
two statements: if $n_{i}:=n_{p}({\cal D})$ where $p$ is the basepoint
pair for $L_{i}$, then Equation (\ref{eq:spinc-sum}) holds by \cite[Lemma 4.7]{MR2253454}.
Also, the sum $\sum n_{i}$ modulo $2$ does not depend on the $n_{i}$'s
that satisfy Equation (\ref{eq:spinc-sum}), by Lemma \ref{lem:alexander-two}
(\ref{enu:alex3}).
\begin{rem}
If $\boldsymbol{\gamma}_{1},\boldsymbol{\gamma}_{2}\in\boldsymbol{A}$
or $\boldsymbol{\gamma}_{1},\boldsymbol{\gamma}_{2}\in\boldsymbol{B}$,
then if the sub Heegaard diagram with attaching curves $\boldsymbol{\gamma}_{1},\boldsymbol{\gamma}_{2}$
is Alexander $\mathbb{Z}/2$-splittable, then the relative Alexander
$\mathbb{Z}/2$-grading on $CF^{-}(\boldsymbol{\gamma}_{1},\boldsymbol{\gamma}_{2})$
is well-defined, as in Remarks \ref{rem:In-particular,-} and \ref{rem:Recall-Remark-.}.
We sometimes denote the relative Alexander $\mathbb{Z}/2$-grading
as ${\rm gr}_{A}^{\mathbb{Z}/2}$ as well.
\end{rem}

\subsection{Balled links and unoriented link Floer homology}

In \cite[Section 3]{nahm2025unorientedskeinexacttriangle}, we introduced
the notion of a \emph{balled link} that encodes a minimally pointed
link equipped with a suture datum inside a pointed three-manifold,
and behaves nicely with respect to band moves.

A \emph{balled link $L$ in a three-manifold $Y$} consists of:
\begin{itemize}
\item a (potentially empty) link $L^{link}$ in $Y$, referred to as the
\emph{underlying link} (also denoted $L$),
\item a finite collection of pairwise disjoint, embedded (closed) three-balls
in $Y$, called \emph{baseballs}, such that (1) each connected component
of $Y$ contains some baseball; (2) each link component of $L^{link}$
intersects some baseball; and (3) if a baseball $BB$ intersects $L^{link}$,
then there exists an open neighborhood $U\supset BB$ such that $(U,L^{link}\cap U,BB)$
is diffeomorphic to $(\mathbb{R}^{3},\mathbb{R}\times\{(0,0)\},\{{\bf x}:\left|{\bf x}\right|\le1\})$,
\item one of two \emph{types} (\emph{link} or \emph{free}) associated to
each baseball: for each link component of $L^{link}$, one baseball
intersecting the component is said to be a \emph{link baseball}, and
all baseballs not said to be link baseballs are said to be \emph{free
baseballs.}
\end{itemize}

To define the unoriented link Floer homology of a balled link $L\subset Y$,
we need to choose an \emph{auxiliary datum}. An \emph{auxiliary datum}
for $L\subset Y$ is a link $\widetilde{L}$ together with a choice
of \emph{link basepoints} $\boldsymbol{u}$ and \emph{free basepoints}
$\boldsymbol{v}$ as follows:
\begin{itemize}
\item $\widetilde{L}$ agrees with $L^{link}$ outside the union of the
baseballs.
\item The link $\widetilde{L}$ and the baseballs of $L$ form a balled
link.
\item For each link baseball $BB$, choose two distinct points (\emph{``link
basepoints''}) in $({\rm int}BB)\cap\widetilde{L}$. Let $\boldsymbol{u}$
be the set of these link basepoints.
\item For each free baseball $BB$, choose a point (\emph{``free basepoint''})
in $({\rm int}BB)\backslash\widetilde{L}$. Let $\boldsymbol{v}$
be the set of these free basepoints.
\end{itemize}
If $(\widetilde{L},\boldsymbol{u},\boldsymbol{v})$ is an auxiliary
datum for $L\subset Y$, then each link component of $\widetilde{L}$
has exactly two link basepoints (i.e. $(\widetilde{L},\boldsymbol{u})$
is minimally pointed), and they divide the component into two components,
exactly one of which is entirely contained in the interior of a link
baseball. Define the \emph{suture datum} $\alpha_{\widetilde{L}}$
on $(\widetilde{L},\boldsymbol{u})$ by choosing the components of
$\widetilde{L}\backslash\boldsymbol{u}$ that are contained in the
link baseballs.

The tuples $(Y,\widetilde{L},\boldsymbol{u},\boldsymbol{v})$ for
different choices of auxiliary datum $(\widetilde{L},\boldsymbol{u},\boldsymbol{v})$
are diffeomorphic, via a diffeomorphism supported inside the baseballs.
In \cite[Proposition 3.8]{nahm2025unorientedskeinexacttriangle},
we showed that any diffeomorphism of $(Y,\widetilde{L},\boldsymbol{u},\boldsymbol{v})$
that is supported inside the baseballs is isotopic to a diffeomorphism
that is obtained by moving the free basepoints $\boldsymbol{v}$ along
some meridional loops (inside the baseballs). Hence, the ${\rm Spin}^{c}(Y(\widetilde{L},\boldsymbol{u},\alpha_{\widetilde{L}}))$'s
for different choices of the auxiliary datum $(\widetilde{L},\boldsymbol{u},\boldsymbol{v})$
are canonically identified; call this the \emph{space of ${\rm Spin}^{c}$-structures
of the balled link $L\subset Y$}, and denote it as ${\rm Spin}^{c}(Y(L))$.

The \emph{unoriented link Floer chain complex of the balled link $L\subset Y$}
is 
\[
CFL'{}^{-}(Y,L;\mathfrak{s}):=CFL'{}^{-}((Y,\boldsymbol{v}),(\widetilde{L},\boldsymbol{u},\alpha_{\widetilde{L}});\mathfrak{s}),
\]
for $\mathfrak{s}\in{\rm Spin}^{c}(Y(L))$. By \cite[Proposition 3.8]{nahm2025unorientedskeinexacttriangle},
this definition is natural: two auxiliary data for $L\subset Y$ give
rise to two different chain complexes, but a diffeomorphism between
the tuples $(Y,\widetilde{L},\boldsymbol{u},\boldsymbol{v})$ induces
a chain homotopy equivalence between the chain complexes. We consider
such chain homotopy equivalences induced by diffeomorphisms that are
supported inside the baseballs; these chain homotopy equivalences
do not depend on the diffeomorphism up to chain homotopy since the
chain homotopy equivalences induced by moving the free basepoints
$\boldsymbol{v}$ along some meridional loops are chain homotopic
to the identity.
\begin{notation}
\label{nota:Recall-from-Notation}Recall from Notation \ref{nota:Instead-of-writing}
that the variables $U_{p}$ were indexed by basepoint pairs $p$.
For a baseball $BB$, let $U_{BB}:=U_{p}$ where $p$ corresponds
to the basepoint pair that is contained in $BB$. Also, if a baseball
has index $i$ (i.e. it is called $BB_{i}$), then let $U_{i}:=U_{BB_{i}}$.
\end{notation}

\begin{rem}
\label{rem:If--is}If $L$ is a balled link in $Y$ and $L'$ is obtained
from $L$ by adding a free baseball $BB$, then one can obtain a Heegaard
diagram of $L'\subset Y$ from a Heegaard diagram of $L\subset Y$
by a simple stabilization \cite[Subsection 6.1]{MR2443092} (called
free-stabilization in \cite[Section 6]{1512.01184}; also compare
Subsection \ref{subsec:Connected-sums}), and so $CFL'{}^{-}(Y,L';\mathfrak{s})$
is chain homotopy equivalent to the mapping cone 
\[
CFL'{}^{-}(Y,L;\mathfrak{s})[U_{BB}]\xrightarrow{U_{BB}-U_{BB'}}CFL'{}^{-}(Y,L;\mathfrak{s})[U_{BB}],
\]
where $BB'$ is any baseball of $L$ that is in the same connected
component of $Y$ as $BB$. Hence $HFL'{}^{-}(Y,L;\mathfrak{s})\simeq HFL'{}^{-}(Y,L';\mathfrak{s})$.
\end{rem}

To define the \emph{unreduced hat version}, choose a \emph{distinguished
baseball }$BB$, and define the \emph{unreduced hat version of the
unoriented link Floer chain complex }as 
\[
\widehat{CFL'}(Y,L,BB;\mathfrak{s}):=CFL'{}^{-}(Y,L;\mathfrak{s})/U_{BB}.
\]
If $BB,BB'$ belong to the same connected component of $Y$, then
multiplication by $U_{BB}$ and $U_{BB'}$ are homotopic, and so the
corresponding homology groups are isomorphic.

To define the \emph{reduced hat version}, choose a \emph{distinguished
link baseball }$BB^{link}\in L$, and define 
\[
\widetilde{CFL'}(Y,L,BB^{link};\mathfrak{s}):=CFL'{}^{-}(Y,L;\mathfrak{s})/U_{BB^{link}}^{1/2}.
\]
Instead of a distinguished link baseball, we can choose a \emph{distinguished
point }$q$ on $L$; in this case, let $BB^{link}$ be the link baseball
of the link component that $q$ is on, and define 
\[
\widetilde{CFL'}(Y,L,q;\mathfrak{s}):=\widetilde{CFL'}(Y,L,BB^{link};\mathfrak{s}).
\]
Different choices of $BB^{link}$ may result in different $\widetilde{HFL'}$.

\begin{rem}
\label{rem:For-knots-}For knots $K$ in $S^{3}$ (in fact, for any
three-manifold), the reduced hat version of unoriented link Floer
homology is the hat version of knot Floer homology:
\[
\widetilde{CFL'}(S^{3},K,BB^{link})=\widehat{CFK}(S^{3},K).
\]
\end{rem}

Corollary \ref{cor:link-inequality} follows from the following lemma.
\begin{lem}
\label{lem:rank-ineq}Let $L$ be an $\ell$-component balled link,
and let $BB^{link}$ be a link baseball. Then, 
\[
2^{\ell-1}\dim_{\mathbb{F}}\widetilde{HFL'}(S^{3},L,BB^{link})\ge\dim_{\mathbb{F}}\widehat{HFL}(S^{3},L).
\]
\end{lem}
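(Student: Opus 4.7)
The strategy is to realize $\widehat{HFL}(S^{3},L)$ as the homology of a chain complex obtained from $\widetilde{CFL'}(S^{3},L,BB^{link})$ by $\ell-1$ successive quotients, each of which at most doubles the dimension via an associated long exact sequence.

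First, I would identify the classical hat link Floer homology $\widehat{HFL}(S^{3},L)$ with the homology of
\[
C_{\ell}:=CFL'{}^{-}(S^{3},L)/\bigl(U_{BB_{1}}^{1/2},\ldots,U_{BB_{\ell}}^{1/2}\bigr),
\]
where $BB_{1}=BB^{link},BB_{2},\ldots,BB_{\ell}$ are the link baseballs (one per link component). The intuition is that killing all link-baseball half-variables collapses the unoriented chain complex (with its basepoint-pair structure) to a chain complex that models the classical multi-pointed Heegaard Floer hat complex of Ozsv\'{a}th--Szab\'{o}. Establishing this identification is the main step; it should follow from unwinding the definitions in Subsections \ref{subsec:Pair-pointed-Heegaard-diagrams} and \ref{subsec:Strong-admissibility-and} and comparing to the classical setup, together with a ${\rm Spin}^{c}$-summation to pass from the fixed-$c_{1}$ complex $CF^{-}(\boldsymbol{\alpha},\boldsymbol{\beta};c)$ to the total complex underlying $\widehat{HFL}$.

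Next, for $1\le k\le\ell$, set
\[
C_{k}:=CFL'{}^{-}(S^{3},L)/\bigl(U_{BB_{1}}^{1/2},\ldots,U_{BB_{k}}^{1/2}\bigr),
\]
so that $H(C_{1})=\widetilde{HFL'}(S^{3},L,BB^{link})$ by Definition \ref{def:unori} and the definition of the reduced hat version. For each $k\in\{1,\ldots,\ell-1\}$, multiplication by $U_{BB_{k+1}}^{1/2}$ is a chain map on $C_{k}$ with cokernel $C_{k+1}$, yielding a short exact sequence of chain complexes
\[
0\longrightarrow C_{k}\xrightarrow{\;U_{BB_{k+1}}^{1/2}\;}C_{k}\longrightarrow C_{k+1}\longrightarrow0.
\]
The induced long exact sequence in homology gives $\dim_{\mathbb{F}}H(C_{k+1})\le2\dim_{\mathbb{F}}H(C_{k})$. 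Iterating this bound from $k=1$ to $k=\ell-1$ yields
\[
\dim_{\mathbb{F}}\widehat{HFL}(S^{3},L)=\dim_{\mathbb{F}}H(C_{\ell})\le2^{\ell-1}\dim_{\mathbb{F}}H(C_{1})=2^{\ell-1}\dim_{\mathbb{F}}\widetilde{HFL'}(S^{3},L,BB^{link}),
\]
as claimed.

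The hardest step will be the identification of $H(C_{\ell})$ with $\widehat{HFL}(S^{3},L)$, since the paper's unoriented variant is a priori a different theory than classical link Floer homology and the identification has to account for the half-variables, the local-system conventions, and the ${\rm Spin}^{c}$-splitting. Once that identification is in place, the iterated mapping-cone argument is entirely formal and parallels standard rank inequalities in link Floer homology.
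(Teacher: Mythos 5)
Your proposal is correct and follows essentially the same route as the paper: both define the intermediate complexes $C_k$ (your $C_k=CFL'{}^{-}(S^{3},L)/(U_{BB_1}^{1/2},\ldots,U_{BB_k}^{1/2})$ coincides with the paper's $\widetilde{CFL'}(S^{3},L,BB^{link})/(U_{2}^{1/2},\ldots,U_{k}^{1/2})$ since $\widetilde{CFL'}=CFL'{}^{-}/U_1^{1/2}$), use the same short exact sequences $0\to C_k\xrightarrow{\cdot U_{k+1}^{1/2}}C_k\to C_{k+1}\to0$, and iterate the resulting rank bound. The only difference is that the paper simply asserts $C_\ell=\widehat{CFL}(S^3,L)$ by definition whereas you flag it as the ``hardest step''; it is in fact immediate once one notices that killing all the link half-variables $U_p^{1/2}$ removes exactly the disks crossing any of the paired basepoints, which is the differential of $\widehat{CFL}$.
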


\begin{proof}
Let the $U$-variables corresponding to the link basepoint pairs be
$U_{1},\cdots,U_{\ell}$, and let $U_{1}$ correspond to $BB^{link}$.
Define chain complexes $C_{k}$ for $k=1,\cdots,\ell$ as 
\[
C_{k}:=\widetilde{CFL'}(S^{3},L,BB^{link})/(U_{2}^{1/2},\cdots,U_{k}^{1/2}).
\]
Then, $\widetilde{CFL'}(Y,L,BB^{link})=C_{1}$ and $\widehat{CFL}(Y,L)=C_{\ell}$.
Furthermore, we have a short exact sequence of chain complexes 
\[
0\to C_{k}\xrightarrow{\cdot U_{k+1}^{1/2}}C_{k}\to C_{k+1}\to0
\]
for $k=1,\cdots,\ell-1$, and so $2\dim_{\mathbb{F}}H(C_{k})\ge\dim_{\mathbb{F}}H(C_{k+1})$.
\end{proof}
The following is an analogue of the basepoint actions in Khovanov
homology (see Subsections \ref{subsec:The-action} and \ref{subsec:The-Khovanov-chain})
for unoriented link Floer homology. Note that in Heegaard Floer homology,
the $\Phi$-actions (Subsection \ref{subsec:The--action}) are referred
to as basepoint actions (as in \cite[Subsection 4.2]{MR3905679});
however, we do not adopt this convention in this paper, in order to
match the conventions used in Khovanov homology.
\begin{defn}
\label{def:hflaction}For a point $q\in L$, define the \emph{action
of $q$ on $HFL'{}^{-}$, $\widehat{HFL'}$, $\widetilde{HFL'}$}
as multiplication by $U_{BB}^{1/2}$, if $BB$ is the link baseball
on the connected component of $L$ that $q$ is on.
\end{defn}

\section{\label{sec:More-Heegaard-Floer}More Heegaard Floer preliminaries}

\subsection{\label{subsec:The--action}The \texorpdfstring{$\Phi$}{Phi} action}

Let $q=(z,w)$ be a basepoint pair, and let $R=\mathbb{F}[\{U_{p}^{k_{p}}\}_{p\in\boldsymbol{p}}]$
for $k_{p}\in\{1,1/2\}$ be any ring such that $k_{q}=1/2$, and whenever
$p$ is a link basepoint pair for $\boldsymbol{\gamma}_{1}$ or $\boldsymbol{\gamma}_{2}$,
we have $k_{p}=1/2$. Define $R$-linear maps $\Phi_{R,v}:CF_{R}^{-}(\boldsymbol{\gamma}_{1},\boldsymbol{\gamma}_{2})\to CF_{R}^{-}(\boldsymbol{\gamma}_{1},\boldsymbol{\gamma}_{2})$
for $v\in\{z,w\}$ as follows (they are denoted as $\Phi$ and $\Psi$
in \cite{MR3426686,MR3709653}):
\[
\Phi_{R,v}{\bf x}=U_{q}^{-1/2}\sum_{{\bf y}\in\mathbb{T}_{\boldsymbol{\gamma}_{1}}\cap\mathbb{T}_{\boldsymbol{\gamma}_{2}}}\sum_{{\cal D}\in D({\bf x},{\bf y}),\ \mu({\cal D})=1}n_{v}({\cal D})\#\mathcal{M}({\cal D})\prod_{p\in\boldsymbol{p}}U_{p}^{\frac{1}{2}n_{p}({\cal D})}{\bf y}.
\]
These maps are chain maps and have homological degree $0$.

Furthermore, $\Phi_{R,w}+\Phi_{R,z}$ is given by differentiating
the coefficients of the differential $\partial$ considered as a matrix
with entries in $R$, with respect to $U^{1/2}$, as in \cite[Remark 14.12]{1512.01184}.
Hence, the discussion right before \cite[Corollary 14.19]{1512.01184}
shows that $\Phi_{R,w}=\Phi_{R,z}$ on homology $HF_{R}^{-}(\boldsymbol{\gamma}_{1},\boldsymbol{\gamma}_{2})$.
We simply denote these maps as 
\[
\Phi_{R,q}:=\Phi_{R,w}=\Phi_{R,z}:HF_{R}^{-}(\boldsymbol{\gamma}_{1},\boldsymbol{\gamma}_{2})\to HF_{R}^{-}(\boldsymbol{\gamma}_{1},\boldsymbol{\gamma}_{2}).
\]
Also, similarly to Notation \ref{nota:Instead-of-writing}, if the
basepoint pair $q$ has index $i$, i.e. is called $q_{i}$, then
we write $\Phi_{R,i}:=\Phi_{R,q_{i}}$.

If $q$ is a free basepoint pair for both $\boldsymbol{\gamma}_{1}$
and $\boldsymbol{\gamma}_{2}$, then $\Phi_{R,q}=0$ on $HF_{R}^{-}(\boldsymbol{\gamma}_{1},\boldsymbol{\gamma}_{2})$,
since $\Phi_{R,q}$ is $U_{q}^{1/2}$ times the degree $1$ $\Phi$
action (defined in \cite[Subsection 14.3]{1512.01184}) given by a
free basepoint, and this degree $1$ $\Phi$ action vanishes by the
argument right before \cite[Corollary 14.19]{1512.01184}.

If $q$ is a link basepoint pair for $\boldsymbol{\gamma}_{1}$ or
$\boldsymbol{\gamma}_{2}$, then define $\Phi_{q}:=\Phi_{R_{\boldsymbol{\gamma}_{1},\boldsymbol{\gamma}_{2}},q}:HF^{-}(\boldsymbol{\gamma}_{1},\boldsymbol{\gamma}_{2})\to HF^{-}(\boldsymbol{\gamma}_{1},\boldsymbol{\gamma}_{2})$\footnote{\label{fn:Similarly-to-Notation}Similarly to Notation \ref{nota:Instead-of-writing}
and the above, if the basepoint pair $q$ has index $i$, then we
write $\Phi_{i}:=\Phi_{q_{i}}$, and if $v$ is a basepoint in the
basepoint pair $q$, then $\Phi_{v}:=\Phi_{q}$.}. If $q$ is a free basepoint pair for both $\boldsymbol{\gamma}_{1}$
and $\boldsymbol{\gamma}_{2}$, then define $\Phi_{q}:=0$. Consider
$R$ as a free $R_{\boldsymbol{\gamma}_{1},\boldsymbol{\gamma}_{2}}$-module,
with basis 
\[
B:=\left\{ \prod_{p\in F_{\boldsymbol{\gamma}_{1},\boldsymbol{\gamma}_{2}},\ k_{p}=1/2}U_{p}^{\ell_{p}}:\ell_{p}\in\{0,1/2\}\right\} ,
\]
where $F_{\boldsymbol{\gamma}_{1},\boldsymbol{\gamma}_{2}}$ is the
set of basepoint pairs that is a free basepoint pair for $\boldsymbol{\gamma}_{1}$
and $\boldsymbol{\gamma}_{2}$. Consider the induced splitting 
\[
HF_{R}^{-}(\boldsymbol{\gamma}_{1},\boldsymbol{\gamma}_{2})=\bigoplus_{b\in B}bHF^{-}(\boldsymbol{\gamma}_{1},\boldsymbol{\gamma}_{2}).
\]
The $\Phi_{R,q}$ action acts on each summand $bHF^{-}(\boldsymbol{\gamma}_{1},\boldsymbol{\gamma}_{2})$,
and agrees with $\Phi_{q}$: if $q$ is a link basepoint pair for
$\boldsymbol{\gamma}_{1}$ or $\boldsymbol{\gamma}_{2}$, then this
is immediate. If $q$ is a free basepoint pair for both $\boldsymbol{\gamma}_{1}$
and $\boldsymbol{\gamma}_{2}$, then this follows since $\Phi_{R,q}=\Phi_{q}=0$.

By \cite[Lemma 3.2]{MR3709653}, the $\Phi$ actions are natural (i.e.
$\Phi_{R,v}$ commutes with the change of diagram maps up to chain
homotopy). Hence, we introduce the following definition for balled
links.
\begin{defn}
\label{def:phi-balled}Let $L$ be a balled link in $Y$, and let
$c\in H^{2}(Y;\mathbb{Z})$. If $BB$ is a baseball, then define $\Phi_{BB}:HFL'{}^{-}(Y,L;c)\to HFL'{}^{-}(Y,L;c)$
as the $\Phi$ action of the basepoint pair that is contained in $BB$.
If $L_{C}$ is a connected component of $L$, then the\emph{ $\Phi$
action of $L_{C}$} is defined as $\Phi_{BB^{link}}:HFL'{}^{-}(Y,L;c)\to HFL'{}^{-}(Y,L;c)$
where $BB^{link}$ is the link baseball on $L_{C}$.
\end{defn}

\begin{rem}
By the above discussion, if $BB$ is a free baseball, then $\Phi_{BB}=0$.
\end{rem}

Now, consider three attaching curves $\boldsymbol{\gamma}_{1},\boldsymbol{\gamma}_{2},\boldsymbol{\gamma}_{3}$,
and further assume that whenever $p$ is a link basepoint pair for
at least one $\boldsymbol{\gamma}_{i}$, then $k_{p}=1/2$. Define
the map $\Phi_{R,q}$ on $HF_{R}^{-}(\boldsymbol{\gamma}_{1},\boldsymbol{\gamma}_{2})\otimes HF_{R}^{-}(\boldsymbol{\gamma}_{2},\boldsymbol{\gamma}_{3})$
as $\Phi_{R,q}\otimes{\rm Id}+{\rm Id}\otimes\Phi_{R,q}$. Then, $\mu_{2,R}:HF_{R}^{-}(\boldsymbol{\gamma}_{1},\boldsymbol{\gamma}_{2})\otimes HF_{R}^{-}(\boldsymbol{\gamma}_{2},\boldsymbol{\gamma}_{3})\to HF_{R}^{-}(\boldsymbol{\gamma}_{1},\boldsymbol{\gamma}_{3})$
is $\Phi_{R,q}$-equivariant, by \cite[Equation (37)]{MR3905679}.

Similarly, define $\Phi_{q}$ on $HF^{-}(\boldsymbol{\gamma}_{1},\boldsymbol{\gamma}_{2})\otimes HF^{-}(\boldsymbol{\gamma}_{2},\boldsymbol{\gamma}_{3})$
as $\Phi_{q}\otimes{\rm Id}+{\rm Id}\otimes\Phi_{q}$. By the above,
\[
\mu_{2}:HF^{-}(\boldsymbol{\gamma}_{1},\boldsymbol{\gamma}_{2})\otimes HF^{-}(\boldsymbol{\gamma}_{2},\boldsymbol{\gamma}_{3})\to HF^{-}(\boldsymbol{\gamma}_{1},\boldsymbol{\gamma}_{3})
\]
is $\Phi_{q}$-equivariant.

We record the following lemma.
\begin{lem}[{\cite[Lemmas 9.1, 9.2, and 9.7]{MR3709653}}]
Let $p,p'$ be basepoint pairs. Then on homology,
\[
\Phi_{R,p}^{2}=0,\ \Phi_{R,p}\Phi_{R,p'}=\Phi_{R,p'}\Phi_{R,p}.
\]
Hence, $\Phi_{p}^{2}=0$ and $\Phi_{p}\Phi_{p'}=\Phi_{p'}\Phi_{p}$
as well.
\end{lem}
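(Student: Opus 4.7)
The plan is to prove both relations via Zemke-style divided-power chain homotopies at the chain level, following the template of \cite[Section 9]{MR3709653}. Since $\Phi_{R,p} = \Phi_{R,v}$ on homology for either basepoint $v \in p$, it suffices to establish $\Phi_{R,v}^{2}=0$ and $\Phi_{R,v}\Phi_{R,v'}=\Phi_{R,v'}\Phi_{R,v}$ on homology for chosen basepoints $v \in p$ and $v' \in p'$.

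For the squaring relation, I would first define a \emph{divided-power} operator $\Phi^{(2)}_{R,v}$ on $CF_R^-(\boldsymbol{\gamma}_1,\boldsymbol{\gamma}_2)$, counting Maslov index $1$ holomorphic disks ${\cal D}$ weighted by $\binom{n_v({\cal D})}{2}$ in place of $n_v({\cal D})$ (all other $U$-weights and the $U_q^{-1/2}$ prefactor are the same as in $\Phi_{R,v}$). Then I would enumerate the ends of the one-dimensional compactified moduli spaces $\overline{{\cal M}}({\cal D})$ of Maslov index $2$ disks, each of whose ends decomposes as a broken disk ${\cal D} = {\cal D}_1 * {\cal D}_2$ with $\mu({\cal D}_i) = 1$, weighted by $\binom{n_v({\cal D})}{2}$. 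Using the integer identity
\[
\binom{n_v({\cal D}_1) + n_v({\cal D}_2)}{2} = \binom{n_v({\cal D}_1)}{2} + \binom{n_v({\cal D}_2)}{2} + n_v({\cal D}_1)\, n_v({\cal D}_2),
\]
the total weighted end count splits into the summands $\partial \Phi^{(2)}_{R,v}$, $\Phi^{(2)}_{R,v}\partial$, and $\Phi_{R,v}^2$. Mod $2$ vanishing of the boundary count then gives $\Phi_{R,v}^2 = [\partial, \Phi^{(2)}_{R,v}]$, so $\Phi_{R,p}^2 = 0$ on homology.

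For commutativity, I would introduce a mixed divided-power operator $\Phi^{(1,1)}_{R,v,v'}$ counting $\mu = 1$ disks with weight $n_v({\cal D})\, n_{v'}({\cal D})$, and repeat the boundary enumeration of Maslov index $2$ moduli with this weight. The bilinear expansion
\[
n_v({\cal D}_1 + {\cal D}_2)\, n_{v'}({\cal D}_1 + {\cal D}_2) = \sum_{i,j \in \{1,2\}} n_v({\cal D}_i)\, n_{v'}({\cal D}_j)
\]
yields $\Phi_{R,v}\Phi_{R,v'} + \Phi_{R,v'}\Phi_{R,v} = [\partial, \Phi^{(1,1)}_{R,v,v'}]$, so the operators commute on homology. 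The corresponding relations for $\Phi_p, \Phi_{p'}$ on $HF^-$ then follow by specializing $R = R_{\boldsymbol{\gamma}_1, \boldsymbol{\gamma}_2}$, since the divided-power operators and the boundary enumeration are compatible with this base change.

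The hard part is the rigorous enumeration of the ends of the Maslov index $2$ moduli spaces, which is the analytic core of Zemke's argument. This is a standard compactness and gluing argument in Heegaard Floer homology, and it applies essentially verbatim in the current setting: the moduli spaces and the local multiplicities $n_v({\cal D})$ are the same as in Zemke's setup, while the half-integer powers $U_p^{1/2}$ only affect the overall normalization prefactor $U_q^{-1/2}$ in the definition of $\Phi_{R,v}$, which is $R$-linear and commutes with everything in sight.
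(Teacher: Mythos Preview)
Your proposal is correct and follows exactly the approach of the cited source \cite[Lemmas 9.1, 9.2, and 9.7]{MR3709653}; the paper itself gives no proof and simply records the lemma as a citation. Your divided-power homotopy argument is precisely Zemke's, adapted in the obvious way to the half-integer $U$-weights, so there is nothing to add.
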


\subsection{\label{subsec:Relative-homology-actions}Relative homology actions}

We will consider relative homology actions as in \cite[Section 5]{1512.01184}
and \cite[Section 2.3]{MR4845975} (compare \cite[Section 4.2.5]{MR2113019},
\cite{MR3294567}). Let $R=\mathbb{F}[\{U_{p}^{k_{p}}\}_{p\in\boldsymbol{p}}]$
for $k_{p}\in\{1,1/2\}$ be any ring such that whenever $p$ is a
link basepoint pair for $\boldsymbol{\gamma}_{1}$ or $\boldsymbol{\gamma}_{2}$,
we have $k_{p}=1/2$.
\begin{defn}
\label{def:unoriented-collapse}View $\mathbb{F}[U^{1/2}]$ as an
$R$-module by letting $U_{p}^{k_{p}}$ act on $\mathbb{F}[U^{1/2}]$
as $U^{k_{p}}$. Let 
\[
CF_{\mathbb{F}[U^{1/2}]}^{-}(\boldsymbol{\gamma}_{1},\boldsymbol{\gamma}_{2}):=CF_{R}^{-}(\boldsymbol{\gamma}_{1},\boldsymbol{\gamma}_{2})\otimes_{R}\mathbb{F}[U^{1/2}].
\]
Denote the base change map as
\[
\rho:HF_{R}^{-}(\boldsymbol{\gamma}_{1},\boldsymbol{\gamma}_{2})\to HF_{\mathbb{F}[U^{1/2}]}^{-}(\boldsymbol{\gamma}_{1},\boldsymbol{\gamma}_{2}).
\]
\end{defn}

Let $\lambda$ be an immersed loop on the Heegaard surface $\Sigma$,
or an immersed path between two basepoints $u_{1}$ and $u_{2}$ on
$\Sigma$. We always assume that $\lambda$ intersects the attaching
curves transversely and is disjoint from all the intersection points.
Define $A_{R,\lambda,-1}:CF_{R}^{-}(\boldsymbol{\gamma}_{1},\boldsymbol{\gamma}_{2})\to CF_{R}^{-}(\boldsymbol{\gamma}_{1},\boldsymbol{\gamma}_{2})$
as the $R$-linear map such that
\begin{gather*}
A_{R,\lambda,-1}{\bf x}=\sum_{{\bf y}\in\mathbb{T}_{\boldsymbol{\gamma}_{1}}\cap\mathbb{T}_{\boldsymbol{\gamma}_{2}}}\sum_{{\cal D}\in D({\bf x},{\bf y}),\ \mu({\cal D})=1}\#(\lambda\cap\partial_{\boldsymbol{\gamma}_{1}}{\cal D})\#\mathcal{M}({\cal D})\prod_{p\in\boldsymbol{p}}U_{p}^{\frac{1}{2}n_{p}({\cal D})}{\bf y},
\end{gather*}
and define $B_{R,\lambda,-1}$ similarly, by replacing $\partial_{\boldsymbol{\gamma}_{1}}{\cal D}$
with $\partial_{\boldsymbol{\gamma}_{2}}{\cal D}$.
\begin{rem}
The map $A_{R,\lambda,-1}$ is denoted as $A_{\lambda}$ in \cite[Section 5]{1512.01184}.
We write $A_{R,\lambda,-1}$ or $A_{\lambda,-1}$ to emphasize that
it has homological degree $-1$, as we later consider closely related
maps that have homological degree $0$ (see Definition \ref{def:relative-hom}).
\end{rem}

By the same argument as \cite[Lemma 5.1]{1512.01184} (compare \cite[Equation (2.8)]{MR4845975}),
if $\lambda$ is a loop, then 
\[
A_{R,\lambda,-1}\partial+\partial A_{R,\lambda,-1}=B_{R,\lambda,-1}\partial+\partial B_{R,\lambda,-1}=0;
\]
if $\lambda$ is a path between $u_{1}$ and $u_{2}$, then 
\begin{gather*}
A_{R,\lambda,-1}\partial+\partial A_{R,\lambda,-1}=B_{R,\lambda,-1}\partial+\partial B_{R,\lambda,-1}=U_{u_{1}}+U_{u_{2}}.
\end{gather*}
Hence, $A_{R,\lambda,-1}$ and $B_{R,\lambda,-1}$ induce homological
degree $-1$ chain maps
\[
A_{\mathbb{F}[U^{1/2}],\lambda,-1},B_{\mathbb{F}[U^{1/2}],\lambda,-1}:CF_{\mathbb{F}[U^{1/2}]}^{-}(\boldsymbol{\gamma}_{1},\boldsymbol{\gamma}_{2})\to CF_{\mathbb{F}[U^{1/2}]}^{-}(\boldsymbol{\gamma}_{1},\boldsymbol{\gamma}_{2}),
\]
and hence a homological degree $-1$ map on $HF_{\mathbb{F}[U^{1/2}]}^{-}$.

We record the following lemmas.

\begin{lem}[{\cite[Lemma 5.3]{1512.01184}}]
\label{lem:ab-additive}The maps $A_{R,\lambda,-1}$ and $B_{R,\lambda,-1}$
are additive in $\lambda$:
\begin{itemize}
\item If $\lambda_{1}$ is a path from $u_{1}$ to $u_{2}$ and $\lambda_{2}$
is a path from $u_{2}$ to $u_{3}$, let $\lambda_{2}\ast\lambda_{1}$
be the concatenation.
\item If $\lambda_{1}$ is a loop or a path and $\lambda_{2}$ is a loop,
then let $\lambda_{2}\ast\lambda_{1}$ be the loop or path obtained
by splicing $\lambda_{2}$ into $\lambda_{1}$.
\end{itemize}
Then, 
\[
A_{R,\lambda_{2}\ast\lambda_{1},-1}=A_{R,\lambda_{2},-1}+A_{R,\lambda_{1},-1},\ B_{R,\lambda_{2}\ast\lambda_{1},-1}=B_{R,\lambda_{2},-1}+B_{R,\lambda_{1},-1}.
\]

\end{lem}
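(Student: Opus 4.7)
The plan is to reduce the identity to the additivity of intersection numbers, exactly as in Zemke's original argument. Both $A_{R,\lambda,-1}$ and $B_{R,\lambda,-1}$ are $R$-linear, so to prove the lemma it suffices to show that for each pair of generators ${\bf x}, {\bf y} \in \mathbb{T}_{\boldsymbol{\gamma}_1} \cap \mathbb{T}_{\boldsymbol{\gamma}_2}$ and each domain ${\cal D} \in D({\bf x}, {\bf y})$ with $\mu({\cal D}) = 1$, the coefficient $\#(\lambda \cap \partial_{\boldsymbol{\gamma}_1} {\cal D})$ (and analogously with $\boldsymbol{\gamma}_2$) depends additively on $\lambda$. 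Since the factors $\#\mathcal{M}({\cal D}) \prod_p U_p^{n_p({\cal D})/2}$ in the definition do not depend on $\lambda$, this is the only thing that needs to be checked.

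First, I would perturb $\lambda_1$ and $\lambda_2$ slightly (rel the common endpoint, for paths, or rel a fixed splice point for loops) so that they remain transverse to all attaching curves, avoid all intersection points of attaching curves, and the splice/concatenation point itself does not lie on any $\boldsymbol{\gamma}_i$. These perturbations do not change the mod 2 intersection numbers with any curve in $\boldsymbol{\gamma}_1$ or $\boldsymbol{\gamma}_2$, so they do not affect either side of the claimed equality. After such a perturbation, the concatenation $\lambda_2 \ast \lambda_1$ (respectively, the splice) is, as a subset of $\Sigma$, the union $\lambda_1 \cup \lambda_2$ identified along a finite set of points disjoint from $\boldsymbol{\gamma}_1 \cup \boldsymbol{\gamma}_2$.

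Since $\partial_{\boldsymbol{\gamma}_1} {\cal D}$ is a $1$-chain supported on $\boldsymbol{\gamma}_1$ and disjoint from the identification points, any transverse intersection of $\lambda_2 \ast \lambda_1$ with $\partial_{\boldsymbol{\gamma}_1} {\cal D}$ lies on exactly one of $\lambda_1$ or $\lambda_2$. Counting mod $2$ (we work over $\mathbb{F} = \mathbb{Z}/2$, so signs are irrelevant) gives
\[
\#((\lambda_2 \ast \lambda_1) \cap \partial_{\boldsymbol{\gamma}_1}{\cal D}) = \#(\lambda_1 \cap \partial_{\boldsymbol{\gamma}_1}{\cal D}) + \#(\lambda_2 \cap \partial_{\boldsymbol{\gamma}_1}{\cal D}),
\]
which is exactly the coefficient identity needed for $A_{R,\lambda_2 \ast \lambda_1,-1} = A_{R,\lambda_1,-1} + A_{R,\lambda_2,-1}$. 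The same argument with $\partial_{\boldsymbol{\gamma}_2} {\cal D}$ in place of $\partial_{\boldsymbol{\gamma}_1} {\cal D}$ yields the identity for $B_{R,\lambda,-1}$.

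There is no substantive obstacle here; the only point requiring mild care is ensuring via a preliminary isotopy that the concatenation/splice point is away from the attaching curves so that no intersection is counted twice or missed at the junction, and verifying (using transversality and the definition of concatenation/splicing) that paths composed at a common endpoint or a loop spliced into a path/loop really decompose intersection numbers additively. Both are immediate from general position.
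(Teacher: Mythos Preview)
Your argument is correct and is essentially the standard one: the paper does not give its own proof but simply cites \cite[Lemma 5.3]{1512.01184}, and your reduction to additivity of the mod $2$ intersection numbers $\#(\lambda\cap\partial_{\boldsymbol{\gamma}_i}{\cal D})$ under concatenation/splicing is exactly Zemke's argument there.
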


\begin{lem}[{\cite[Lemma 5.7]{1512.01184}}]
\label{lem:a+b}If $\lambda$ is a loop, then $A_{R,\lambda,-1}=B_{\lambda,-1}$.
If $\lambda$ is a path between $u_{1}$ and $u_{2}$, then $A_{R,\lambda,-1}+B_{R,\lambda,-1}=U_{u_{1}}^{1/2}\Phi_{R,u_{1}}+U_{u_{2}}^{1/2}\Phi_{R,u_{2}}$.
\end{lem}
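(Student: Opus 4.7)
The plan is a direct chain-level computation, organized around a single topological identity for the multiplicity function of a domain. Fix generators $\mathbf{x}, \mathbf{y} \in \boldsymbol{\gamma}_1 \cap \boldsymbol{\gamma}_2$ and a domain $\mathcal{D} \in D(\mathbf{x}, \mathbf{y})$ with $\mu(\mathcal{D}) = 1$. The contribution of $\mathcal{D}$ to the coefficient of $\mathbf{y}$ in $(A_{R,\lambda,-1} + B_{R,\lambda,-1})\mathbf{x}$ is
\[
\bigl(\#(\lambda \cap \partial_{\boldsymbol{\gamma}_1} \mathcal{D}) + \#(\lambda \cap \partial_{\boldsymbol{\gamma}_2} \mathcal{D})\bigr)\, \#\mathcal{M}(\mathcal{D}) \prod_{p \in \boldsymbol{p}} U_p^{n_p(\mathcal{D})/2}.
\]
Since we are working over $\mathbb{F} = \mathbb{Z}/2$, the parenthesized factor equals $\#(\lambda \cap \partial \mathcal{D})$, where $\partial \mathcal{D} = \partial_{\boldsymbol{\gamma}_1}\mathcal{D} + \partial_{\boldsymbol{\gamma}_2}\mathcal{D}$ is the $1$-chain boundary of $\mathcal{D}$ viewed as a $2$-chain on $\Sigma$. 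The whole lemma thus reduces to computing $\#(\lambda \cap \partial \mathcal{D}) \pmod{2}$ for each $\mathcal{D}$.

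For this, I would use the locally constant multiplicity function $n_\mathcal{D} \colon \Sigma \setminus \partial \mathcal{D} \to \mathbb{Z}$ defined by $x \mapsto n_x(\mathcal{D})$, which changes by $\pm 1$ at each transverse crossing of $\partial \mathcal{D}$. After perturbing $\lambda$ to be transverse to all $\boldsymbol{\gamma}_i$ and disjoint from the intersection points (part of the standing assumptions), the total change of $n_\mathcal{D}$ along $\lambda$ computes $\#(\lambda \cap \partial \mathcal{D})$ modulo $2$. When $\lambda$ is a loop this total change is $0$, giving $A_{R,\lambda,-1} = B_{R,\lambda,-1}$ at the chain level. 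When $\lambda$ is a path from $u_1$ to $u_2$ the total change is $n_{u_2}(\mathcal{D}) - n_{u_1}(\mathcal{D}) \equiv n_{u_1}(\mathcal{D}) + n_{u_2}(\mathcal{D}) \pmod{2}$.

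Substituting into the coefficient formula in the path case, and recognizing that
\[
U_{u_i}^{1/2}\, \Phi_{R, u_i} \mathbf{x} = \sum_{\mathbf{y}, \mathcal{D}} n_{u_i}(\mathcal{D})\, \#\mathcal{M}(\mathcal{D}) \prod_{p \in \boldsymbol{p}} U_p^{n_p(\mathcal{D})/2}\, \mathbf{y}
\]
(the $U_{u_i}^{1/2}$ prefactor cancels the $U_q^{-1/2}$ in the definition of $\Phi_{R, u_i}$, using that the basepoint pair $q$ containing $u_i$ has $k_q = 1/2$ by hypothesis), one recovers $A_{R,\lambda,-1} + B_{R,\lambda,-1} = U_{u_1}^{1/2}\Phi_{R, u_1} + U_{u_2}^{1/2}\Phi_{R, u_2}$ on the nose as chain maps.

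The only genuine subtlety is that $\partial \mathcal{D}$ has \emph{corners} at the components of $\mathbf{x}$ and $\mathbf{y}$, so it is not literally a closed $1$-cycle (respectively, a $1$-chain with endpoints only at $u_1, u_2$). Since $\lambda$ is disjoint from the intersection points, one can replace $\partial \mathcal{D}$ by a small generic smoothing near each corner without changing the mod $2$ intersection count with $\lambda$, and the multiplicity-change argument above then applies verbatim. This is essentially the local analysis carried out in \cite[Lemma 5.7]{1512.01184}, and is the main (but mild) obstacle in the proof.
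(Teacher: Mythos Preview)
Your argument is correct and is exactly the standard chain-level computation underlying \cite[Lemma~5.7]{1512.01184}, which the paper cites without reproducing a proof. The key identity $\#(\lambda\cap\partial\mathcal{D})\equiv n_{u_{2}}(\mathcal{D})-n_{u_{1}}(\mathcal{D})\pmod 2$ (and $\equiv 0$ for a loop) is precisely what is needed, and your handling of the corners is fine since $\lambda$ avoids the intersection points by hypothesis; in fact $\partial\mathcal{D}$ is already a $1$-cycle as a singular chain, so no smoothing is strictly necessary.
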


\begin{lem}[{\cite[Lemma 5.2]{1512.01184}}]
\label{lem:mu2-ab}We have 
\[
\mu_{2,R}(A_{R,\lambda,-1}\otimes{\rm Id})\simeq A_{R,\lambda,-1}\mu_{2,R}({\rm Id}\otimes{\rm Id}),\ \mu_{2,R}({\rm Id}\otimes B_{R,\lambda,-1})\simeq B_{R,\lambda,-1}\mu_{2,R}({\rm Id}\otimes{\rm Id}).
\]
\end{lem}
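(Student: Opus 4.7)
The plan is to build an explicit $R$-linear chain homotopy $H_A$ between $\mu_{2,R}(A_{R,\lambda,-1} \otimes {\rm Id})$ and $A_{R,\lambda,-1}\mu_{2,R}$ by counting Maslov index $0$ holomorphic triangles, weighted by the algebraic intersection number of $\lambda$ with the $\boldsymbol{\gamma}_1$-boundary of the triangle's domain. Explicitly, I would define
\[
H_A({\bf x} \otimes {\bf y}) = \sum_{{\bf z}} \sum_{\substack{{\cal D} \in D({\bf x},{\bf y},{\bf z}) \\ \mu({\cal D})=0}} \#(\lambda \cap \partial_{\boldsymbol{\gamma}_1}{\cal D})\, \#{\cal M}({\cal D}) \prod_{p \in \boldsymbol{p}} U_p^{\frac{1}{2}n_p({\cal D})} {\bf z}.
\]
The second homotopy is constructed symmetrically by $H_B$, using $\#(\lambda \cap \partial_{\boldsymbol{\gamma}_3}{\cal D})$ as the weight.

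To verify that $H_A$ is the required homotopy, I would pass to the one-parameter moduli spaces of Maslov index $1$ holomorphic triangles and weight their signed count of codimension-one ends by $\#(\lambda \cap \partial_{\boldsymbol{\gamma}_1})$. By Gromov compactness, the ends are of three types, corresponding to a strip breaking off on one of the three edges: a $(\boldsymbol{\gamma}_1,\boldsymbol{\gamma}_2)$-bigon concatenated with a triangle, a $(\boldsymbol{\gamma}_2,\boldsymbol{\gamma}_3)$-bigon concatenated with a triangle, or a triangle followed by a $(\boldsymbol{\gamma}_1,\boldsymbol{\gamma}_3)$-bigon. The crucial observation is that $\partial_{\boldsymbol{\gamma}_1}$ is additive under composition of domains, and $\partial_{\boldsymbol{\gamma}_1} {\cal D}_B = 0$ exactly when ${\cal D}_B$ is a $(\boldsymbol{\gamma}_2,\boldsymbol{\gamma}_3)$-bigon. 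Distributing the $\lambda$-weight over these three end-types and using that the total weighted count vanishes: the $(\boldsymbol{\gamma}_1,\boldsymbol{\gamma}_3)$-bigon ends contribute $\partial H_A + A_{R,\lambda,-1}\mu_{2,R}$; the $(\boldsymbol{\gamma}_1,\boldsymbol{\gamma}_2)$-bigon ends contribute $H_A(\partial \otimes {\rm Id}) + \mu_{2,R}(A_{R,\lambda,-1} \otimes {\rm Id})$; and the $(\boldsymbol{\gamma}_2,\boldsymbol{\gamma}_3)$-bigon ends contribute only $H_A({\rm Id} \otimes \partial)$ (since the bigon weight vanishes). Summing yields
\[
\partial H_A + H_A(\partial \otimes {\rm Id} + {\rm Id} \otimes \partial) = A_{R,\lambda,-1}\mu_{2,R} + \mu_{2,R}(A_{R,\lambda,-1} \otimes {\rm Id}),
\]
as desired.

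The hard part is purely bookkeeping: tracking how the $\lambda$-weight distributes across the three types of ends and verifying that the $U$-power weight $\prod U_p^{n_p({\cal D})/2}$ is multiplicative under domain composition (immediate from $n_p({\cal D}_B + {\cal D}_T) = n_p({\cal D}_B) + n_p({\cal D}_T)$). Because we work over $\mathbb{F}$ there are no signs to worry about, so the argument is a direct adaptation of \cite[Lemma 5.2]{1512.01184} to the present coefficient ring $R$; the only substantive step beyond that reference is ensuring that the third end type (where the bigon lies on the edge disjoint from $\boldsymbol{\gamma}_1$) contributes nothing to the $\lambda$-weighted count except through $H_A$ itself, which is the content of the vanishing $\partial_{\boldsymbol{\gamma}_1} {\cal D}_B = 0$ noted above.
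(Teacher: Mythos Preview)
Your proposal is correct and is precisely the standard argument: the paper does not supply its own proof of this lemma but simply cites \cite[Lemma~5.2]{1512.01184}, and what you have written is the proof given there, adapted to the coefficient ring $R$. Your bookkeeping of the three end-types and the vanishing of the $\boldsymbol{\gamma}_1$-weight on the $(\boldsymbol{\gamma}_2,\boldsymbol{\gamma}_3)$-bigon is exactly the point.
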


By the same argument as \cite[Section 5.3]{1512.01184}, one can show
that the maps $A_{R,\lambda,-1}$ and $B_{R,\lambda,-1}$ only depend
on the homology class $[\lambda]\in H_{1}(Y;\mathbb{Z})$ or $[\lambda]\in H_{1}(Y,\{u_{1},u_{2}\};\mathbb{Z})$
up to chain homotopy (note that any such homology class can be represented
by a loop or a path on $\Sigma$), and are natural (i.e. commute with
the change of diagram maps up to chain homotopy). In the context of
naturality, we assume that if $u_{i}$ belongs to a basepoint pair
$p$ that is a free basepoint pair for both $\boldsymbol{\gamma}_{1}$
and $\boldsymbol{\gamma}_{2}$, then $u_{i}$ is the first basepoint
in the basepoint pair $p$ (note that this does not lose any generality
since if $\kappa$ is the forbidding arc for $p$, then $A_{R,\kappa,-1},B_{R,\kappa,-1}=0$).
Hence, we in particular get maps $A_{\lambda,-1}$ and $B_{\lambda,-1}$
on $CFL'{}^{-}((Y,\boldsymbol{v}),(L,\boldsymbol{u},\alpha_{L});c)$,
and $A_{\mathbb{F}[U^{1/2}],\lambda,-1}$ and $B_{\mathbb{F}[U^{1/2}],\lambda,-1}$
on $CFL'{}_{\mathbb{F}[U^{1/2}]}^{-}((Y,\boldsymbol{v}),(L,\boldsymbol{u},\alpha_{L});c)$
that are well-defined up to chain homotopy.

\subsection{\label{subsec:Connected-sums}Stabilizations}

\begin{figure}[h]
\begin{centering}
\includegraphics[scale=2]{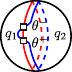}
\par\end{centering}
\caption{\label{fig:stabilization}Connected summing region}
\end{figure}

In this subsection, we introduce an operation called \emph{stabilization}.
Stabilization does not change the top homological $\mathbb{Z}$-grading
part (if well-defined) of the homology group. We follow \cite[Section 8]{1512.01184}.
Note that our notions are slightly different from \cite[Subsection 2.6]{nahm2025unorientedskeinexacttriangle}.

Consider a Heegaard diagram ${\cal H}$ with Heegaard surface $\Sigma$.
Given two points $q_{1},q_{2}\in\Sigma$ that are disjoint from the
attaching curves and forbidding arcs, define the Heegaard diagram
${\cal H}^{zs}$ \emph{obtained from ${\cal H}$ by performing a $0$-surgery
(along $q_{1}$ and $q_{2}$)} as follows. The underlying surface
is $\Sigma^{zs}=\Sigma\#_{q_{1},q_{2}}S^{2}$ where the $S^{2}$ is
as in Figure \ref{fig:stabilization}, and we perform a ``generalized
connected sum'' of $S^{2}$ and $\Sigma$ along $q_{1}\in\Sigma,S^{2}$
and $q_{2}\in\Sigma,S^{2}$. The set of basepoint pairs and forbidding
arcs of ${\cal H}^{zs}$ are the same as ${\cal H}$. For each attaching
curve $\boldsymbol{\alpha}$ on $\Sigma$, the corresponding attaching
curve on $\Sigma^{zs}$ is $\boldsymbol{\alpha}$ together with a
circle that separates $q_{1}$ and $q_{2}$ in $S^{2}$, as in Figure
\ref{fig:stabilization} (for different attaching curves, let these
corresponding circles be standard translates of each other). We denote
this new attaching curve as $\boldsymbol{\alpha}^{zs}$.

Let $(Y,L)$ (resp. $(Y^{zs},L)$) be the three-manifold and link
that $\boldsymbol{\alpha},\boldsymbol{\beta}$ (resp. $\boldsymbol{\alpha}^{zs},\boldsymbol{\beta}^{zs}$)
represent. Then, $Y^{zs}$ is obtained from $Y$ by performing a three-dimensional
$0$-surgery along $q_{1},q_{2}$. Call the boundary of the cocore
of the corresponding $1$-handle of the trace of the $0$-surgery
the \emph{cocore of the $0$-surgery}. This is a two-sphere. If $\mathfrak{s}\in{\rm Spin}^{c}(Y(L))$,
then let $\mathfrak{s}^{zs}\in{\rm Spin}^{c}(Y^{zs}(L))$ be the unique
${\rm Spin}^{c}$-structure that restricts to $\mathfrak{s}$ outside
the $0$-surgery region and evaluates trivially on the cocore of the
$0$-surgery.

Assume that ${\cal H}$ is $c_{1}(\mathfrak{s})$-admissible. Then,
${\cal H}^{zs}$ is $c_{1}(\mathfrak{s}^{zs})$-admissible, and we
understand the chain complex $CF^{-}(\boldsymbol{\alpha}^{zs},\boldsymbol{\beta}^{zs};\mathfrak{s}^{zs})$
(\cite[Proposition 8.5]{1512.01184}). First, the intersection points
$\boldsymbol{\alpha}^{zs}\cap\boldsymbol{\beta}^{zs}=(\boldsymbol{\alpha}\cap\boldsymbol{\beta})\times\{\theta^{+},\theta^{-}\}$,
and for ${\bf x}\in\boldsymbol{\alpha}\cap\boldsymbol{\beta}$, $\mathfrak{s}(({\bf x},\theta^{+}))=\mathfrak{s}(({\bf x},\theta^{-}))=\mathfrak{s}({\bf x})^{zs}$.
Hence the underlying module of $CF^{-}(\boldsymbol{\alpha}^{zs},\boldsymbol{\beta}^{zs};\mathfrak{s}^{zs})$
is 
\[
CF^{-}(\boldsymbol{\alpha},\boldsymbol{\beta};\mathfrak{s})\{\theta^{+}\}\oplus CF^{-}(\boldsymbol{\alpha},\boldsymbol{\beta};\mathfrak{s})\{\theta^{-}\}.
\]
If the connected summing regions are sufficiently stretched, then
the differential is upper triangular, i.e. 
\[
\partial_{{\cal H}^{zs}}=\begin{pmatrix}\partial_{{\cal H}} & F\\
0 & \partial_{{\cal H}}
\end{pmatrix}
\]
for some $F:CF^{-}(\boldsymbol{\alpha},\boldsymbol{\beta};\mathfrak{s})\{\theta^{-}\}\to CF^{-}(\boldsymbol{\alpha},\boldsymbol{\beta};\mathfrak{s})\{\theta^{+}\}$.
\begin{rem}
\label{rem:We-also-understand}By the proof of \cite[Proposition 8.5]{1512.01184},
we in fact understand the domains that contribute to the $\theta^{+}\to\theta^{+}$,
$\theta^{+}\to\theta^{-}$, $\theta^{-}\to\theta^{-}$ parts of the
differential. For $\theta^{+}\to\theta^{-}$, the domains that contribute
are the two bigons in $D(({\bf x},\theta^{+}),({\bf x},\theta^{-}))$
supported in the $S^{2}$ that we ``generalized connected summed''
with. They both have an odd number of holomorphic representatives.

For $\theta^{+}\to\theta^{+}$, for each Maslov index $1$ domain
${\cal D}\in D({\bf x},{\bf y})$ of ${\cal H}$, the sum $\sum_{{\cal D}_{0}}\#{\cal M}({\cal D}\#{\cal D}_{0})$
is equal to $\#{\cal M}({\cal D})$, where ${\cal D}_{0}$ ranges
over the domains ${\cal D}_{0}\in D_{S^{2}}(\theta^{+},\theta^{+})$
such that $\mu({\cal D}\#{\cal D}_{0})=1$. Furthermore, these are
the only domains that have holomorphic representatives. An analogous
statement holds for $\theta^{-}\to\theta^{-}$.
\end{rem}

Furthermore, if $p_{1}$ (resp. $p_{2}$) is the basepoint pair in
the connected component of $\Sigma\backslash\boldsymbol{\alpha}$
that $q_{1}$ (resp. $q_{2}$) is in, then the composition 
\[
CF^{-}(\boldsymbol{\alpha},\boldsymbol{\beta};\mathfrak{s})\{\theta^{-}\}\xrightarrow{F}CF_{{\cal H}}^{-}(\boldsymbol{\alpha},\boldsymbol{\beta};\mathfrak{s})\{\theta^{+}\}\xrightarrow{\iota}CF_{{\cal H}}^{-}(\boldsymbol{\alpha},\boldsymbol{\beta};\mathfrak{s})\{\theta^{-}\}
\]
is chain homotopic to multiplication by $U_{p_{1}}+U_{p_{2}}$, where
$\iota$ is given by $({\bf x},\theta^{-})\mapsto({\bf x},\theta^{+})$
for ${\bf x}\in\boldsymbol{\alpha}\cap\boldsymbol{\beta}$\footnote{To prove this, consider a path $\lambda$ between a basepoint in $p_{1}$
and a basepoint in $p_{2}$. which goes through the connected summing
region, and consider $A_{\lambda,-1}$. The $\theta^{+}\to\theta^{-}$
component of this is $\iota$. If $H$ is the $\theta^{-}\to\theta^{-}$
component of $A_{\lambda,-1}$, then the $\theta^{-}\to\theta^{-}$
component of $\partial A_{\lambda,-1}+A_{\lambda,-1}\partial=U_{p_{1}}+U_{p_{2}}$
reads $\iota F+U_{p_{1}}+U_{p_{2}}=\partial H+H\partial$.}.

Define the\emph{ stabilization map} (the $1$-handle map in \cite[Section 8]{1512.01184})
\[
S^{+}:CF^{-}(\boldsymbol{\alpha},\boldsymbol{\beta};\mathfrak{s})\to CF^{-}(\boldsymbol{\alpha}^{zs},\boldsymbol{\beta}^{zs};\mathfrak{s}^{zs})
\]
as the $R_{\boldsymbol{\alpha},\boldsymbol{\beta}}$-linear map given
by mapping ${\bf x}\in\boldsymbol{\alpha}\cap\boldsymbol{\beta}$
to $({\bf x},\theta^{+})$. By the above, this is a chain map, and
so we get a map on homology 
\begin{equation}
S^{+}:HF^{-}(\boldsymbol{\alpha},\boldsymbol{\beta};\mathfrak{s})\to HF^{-}(\boldsymbol{\alpha}^{zs},\boldsymbol{\beta}^{zs};\mathfrak{s}^{zs}).\label{eq:kunneth-1}
\end{equation}
This map Equation (\ref{eq:kunneth-1}) is equivariant with respect
the $\Phi$-actions \cite[Lemma 8.3]{MR3905679} and (relative) homology
actions \cite[Lemma 8.11]{1512.01184} (these follow from Remark \ref{rem:We-also-understand}).

For simplicity of exposition, let us assume that all the attaching
curves in the remainder of this subsection are handlebody-equivalent
(Definition \ref{def:Two-attaching-curves}) and let us only consider
the $c_{1}=0$ summands. Then, the map 
\[
S^{+}:HF^{-}(\boldsymbol{\alpha},\boldsymbol{\beta};0)\to HF^{-}(\boldsymbol{\alpha}^{zs},\boldsymbol{\beta}^{zs};0)
\]
is an isomorphism on the top homological $\mathbb{Z}$-grading parts.
If there are three attaching curves, then $\mu_{2}$ and $S^{+}$
commute on homology by \cite[Theorem 8.8]{1512.01184}, i.e. the following
square commutes.
\[\begin{tikzcd}
	{HF^{-}(\boldsymbol{\alpha},\boldsymbol{\beta};0) \otimes HF^{-}(\boldsymbol{\beta},\boldsymbol{\gamma};0)} & {HF^{-}(\boldsymbol{\alpha},\boldsymbol{\gamma};0)} \\
	{HF^{-}(\boldsymbol{\alpha}^{zs},\boldsymbol{\beta}^{zs};0) \otimes HF^{-}(\boldsymbol{\beta}^{zs},\boldsymbol{\gamma}^{zs};0)} & {HF^{-}(\boldsymbol{\alpha}^{zs},\boldsymbol{\gamma}^{zs};0)}
	\arrow["{\mu _2 }", from=1-1, to=1-2]
	\arrow["{S^+ \otimes S^+}", from=1-1, to=2-1]
	\arrow["{S^+}", from=1-2, to=2-2]
	\arrow["{\mu _2 }", from=2-1, to=2-2]
\end{tikzcd}\]
\begin{defn}
\label{def:A-pair-pointed-Heegaard-1}A pair-pointed Heegaard diagram
${\cal H}^{stab}$ is a\emph{ stabilization of ${\cal H}$} if it
is obtained from ${\cal H}^{disj}$ by performing some number of $0$-surgeries,
where ${\cal H}^{disj}$ is as follows: ${\cal H}^{disj}$ is a disjoint
union of ${\cal H}$ with some number of $S^{2}$'s with exactly one
basepoint pair on each $S^{2}$, where for each such basepoint pair,
it is either a free basepoint pair for every attaching curve or a
link basepoint pair for every attaching curve.

In particular, if we consider two attaching curves $\boldsymbol{\alpha},\boldsymbol{\beta}$
and if $R_{\boldsymbol{\alpha},\boldsymbol{\beta}}^{disj}$ is the
corresponding coefficient ring, then $CF_{{\cal H}^{disj}}^{-}(\boldsymbol{\alpha},\boldsymbol{\beta};0)=CF_{{\cal H}}^{-}(\boldsymbol{\alpha},\boldsymbol{\beta};0)\otimes_{R_{\boldsymbol{\alpha},\boldsymbol{\beta}}}R_{\boldsymbol{\alpha},\boldsymbol{\beta}}^{disj}.$
Define $S^{+}:CF_{{\cal H}}^{-}(\boldsymbol{\alpha},\boldsymbol{\beta};0)\to CF_{{\cal H}^{disj}}^{-}(\boldsymbol{\alpha},\boldsymbol{\beta};0)$
as $x\mapsto x\otimes1$.

Define the \emph{stabilization map} $S^{+}:CF_{{\cal H}}^{-}(\boldsymbol{\alpha},\boldsymbol{\beta};0)\to CF_{{\cal H}^{stab}}^{-}(\boldsymbol{\alpha},\boldsymbol{\beta};0)$
as the composition 
\[
CF_{{\cal H}}^{-}(\boldsymbol{\alpha},\boldsymbol{\beta};0)\xrightarrow{S^{+}}CF_{{\cal H}^{disj}}^{-}(\boldsymbol{\alpha},\boldsymbol{\beta};0)\xrightarrow{S^{+}}CF_{{\cal H}^{stab}}^{-}(\boldsymbol{\alpha},\boldsymbol{\beta};0).
\]
\end{defn}

By the above discussion, $S^{+}$ is homogeneous with respect to the
${\rm Spin}^{c}$-splitting (hence also \emph{the} relative Alexander
$\mathbb{Z}/2$-splitting), is equivariant with respect the $\Phi$-actions
and (relative) homology actions, and $\mu_{2}$ and $S^{+}$ commute
on homology. Furthermore, $S^{+}$, on homology, is an isomorphism
on the top homological $\mathbb{Z}$-grading.

\subsection{\label{subsec:Model-computations}Model computations}

\begin{figure}[h]
\begin{centering}
\includegraphics{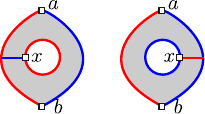}
\par\end{centering}
\caption{\label{fig:annulus}These annuli, viewed as domains in $D(ax,bx)$,
always have an odd number of holomorphic representatives.}
\end{figure}

\begin{figure}[h]
\begin{centering}
\includegraphics{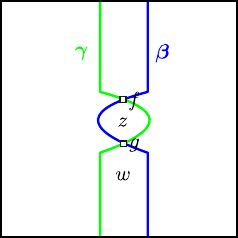}\qquad{}\includegraphics{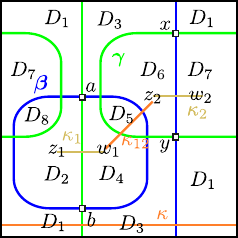}
\par\end{centering}
\caption{\label{fig:local-ori}Left: a local Heegaard diagram for the non-orientable
band map. Right: a local Heegaard diagram for the orientable band
maps. The basepoint pairs $(z,w)$ and $(z_{i},w_{i})$ are link basepoint
pairs for all the attaching curves, and the forbidding arcs are omitted.}
\end{figure}

\begin{lem}
\label{lem:ori-diagram}Consider the Heegaard diagrams of Figure \ref{fig:local-ori}.
For the left hand side, work over $\mathbb{F}[W,Z]$ and assign weights
$W,Z$ to $w,z$, respectively. For the right hand side, work over
$\mathbb{F}[W_{1},Z_{1},W_{2},Z_{2}]$ and assign weights $W_{1},Z_{1},W_{2},Z_{2}$
to $w_{1},z_{1},w_{2},z_{2}$, respectively. Then ${\cal CF}^{-}(\boldsymbol{\beta},\boldsymbol{\gamma})$
is the following (respectively), regardless of the almost complex
structure.
\[\begin{tikzcd}[sep=huge]
	f & ax & ay \\
	g & bx & by
	\arrow["{z+w}"', from=1-1, to=2-1]
	\arrow["{w_1 + w_2 }", curve={height=-6pt}, from=1-2, to=1-3]
	\arrow["{z_1 + z_2}"', curve={height=6pt}, from=1-2, to=2-2]
	\arrow["{z_1 + z_2 }", curve={height=-6pt}, from=1-3, to=1-2]
	\arrow["{z_1 + z_2}"', curve={height=6pt}, from=1-3, to=2-3]
	\arrow["{w_1 + w_2}"', curve={height=6pt}, from=2-2, to=1-2]
	\arrow["{w_1 + w_2 }", curve={height=-6pt}, from=2-2, to=2-3]
	\arrow["{w_1 + w_2}"', curve={height=6pt}, from=2-3, to=1-3]
	\arrow["{z_1 + z_2 }", curve={height=-6pt}, from=2-3, to=2-2]
\end{tikzcd}\]
\end{lem}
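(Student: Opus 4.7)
The plan is to identify the Maslov-index-one domains between each pair of generators in the two local Heegaard diagrams and to compute their mod 2 holomorphic counts using the Riemann mapping theorem for bigons together with the annulus count recorded in Figure \ref{fig:annulus}. Both computations are purely local, and the claim ``regardless of the almost complex structure'' will follow once each contributing domain is seen to be rigid.

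For the left diagram, the curves $\boldsymbol{\beta}$ and $\boldsymbol{\gamma}$ meet in two points, which I label $f$ and $g$. The embedded Maslov-index-one classes in $D(f,g)$ consist of exactly two bigons, one containing $z$ and the other containing $w$; each is conformally a disk and thus has a unique holomorphic representative by the Riemann mapping theorem. The bigons with reversed orientation (which would live in $D(g,f)$) have negative local multiplicities and so cannot support holomorphic representatives. Summing weights yields the displayed relation $g = (z+w)f$ in the complex.

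For the right diagram, the intersection points factor as products, giving the four generators $ax, ay, bx, by$. The contributing Maslov-index-one domains split into two families. The ``horizontal'' differentials $ax \leftrightarrow ay$ and $bx \leftrightarrow by$ come from embedded bigons in the $x$/$y$ direction, each containing exactly one of $w_1, w_2$ or $z_1, z_2$; the Riemann mapping theorem produces a unique holomorphic representative of each, and summing gives the coefficients $w_1+w_2$ and $z_1+z_2$ as indicated. The ``vertical'' differentials $ax \leftrightarrow bx$ and $ay \leftrightarrow by$ are supported on the annular domains depicted in Figure \ref{fig:annulus}, and by the cited fact each such annulus has an odd number of holomorphic representatives, giving the remaining coefficients.

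The main obstacle is verifying that no other Maslov-index-one positive domain contributes. I would enumerate the homotopy classes $\pi_2({\bf x},{\bf y})$ modulo the lattice of periodic domains, then rule out all additional positive classes either by a direct Maslov-index/positivity check or by showing they decompose as a disallowed bigon plus another positive class, forcing Maslov index at least two. The independence from the almost complex structure is then automatic: each contributing class is rigid (either a bigon or one of the fixed annuli in Figure \ref{fig:annulus}), so the mod 2 count cannot jump under a generic path of almost complex structures.
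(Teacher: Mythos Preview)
Your approach is essentially the same as the paper's: enumerate the Maslov index $1$ domains, observe that each is either a bigon or an annulus of the type in Figure~\ref{fig:annulus}, and conclude that each contributes with odd count independent of the almost complex structure. The paper's proof simply lists the eight relevant two-chains explicitly (e.g.\ $ax\to bx$ has the two domains $D_{2}+D_{8}$ and $D_{3}+D_{6}$) rather than arguing abstractly that no other positive index-one class exists, but the content is the same.
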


\begin{proof}
The statement for the left hand side diagram is immediate. Let us
focus on the right hand side. The following are the two-chains of
all the Maslov index $1$ domains, and each of these domains have
an odd number of holomorphic representatives (they are either a bigon
or an annulus (Figure \ref{fig:annulus})), regardless of the almost
complex structure. 
\begin{itemize}
\item $ax\to bx,ay\to by$: $D_{2}+D_{8}$, $D_{3}+D_{6}$
\item $bx\to ax,by\to ay$: $D_{1}+D_{7}$, $D_{4}+D_{5}$
\item $ax\to ay,bx\to by$: $D_{3}+D_{4}$, $D_{7}+D_{8}$
\item $ay\to ax,by\to bx$: $D_{1}+D_{2}$, $D_{5}+D_{6}$
\end{itemize}
\end{proof}
Hence, the homology groups $HF^{-}(\boldsymbol{\beta},\boldsymbol{\gamma};0)$\footnote{In the sense of Subsection \ref{subsec:Strong-admissibility-and},
i.e. work over $\mathbb{F}[U^{1/2}]$ (resp. $\mathbb{F}[U_{1}^{1/2},U_{2}^{1/2}]$)
for the left (resp. right) side diagram.} are (all the intersection points have $c_{1}=0$)
\[
\mathbb{F}[U^{1/2}]\{f\}\oplus\mathbb{F}[U^{1/2}]\{g\},\ {\rm resp.}\ \mathbb{F}[U^{1/2}]\{ay+bx\}\oplus\mathbb{F}[U^{1/2}]\{ax+by\},
\]
where $U_{i}^{1/2}$ acts on $\mathbb{F}[U^{1/2}]$ by multiplication
by $U^{1/2}$. 

Let us describe the $\Phi$ actions. Recall that if $z_{i},w_{i}$
are in the same basepoint pair, then $\Phi_{z_{i}}=\Phi_{w_{i}}$
on homology, and so we simply denote it as $\Phi_{i}$. For the left
hand side, $\Phi f:=\Phi_{z}f=\Phi_{w}f=g$ and $\Phi_{z}g=\Phi_{w}g=0$.
For the right hand side, 
\begin{gather*}
\Phi_{i}(ay+bx):=\Phi_{z_{i}}(ay+bx)=\Phi_{w_{i}}(ay+bx)=ax+by,\\
\Phi_{i}(ax+by):=\Phi_{z_{i}}(ax+by)=\Phi_{w_{i}}(ax+by)=0,
\end{gather*}
for $i=1,2$. Recall our conventions from Subsection \ref{subsec:Conventions-and-notations};
we describe the above succinctly as 
\[
f\xrightarrow{\Phi}g,\ ay+bx\xrightarrow{\Phi_{1},\Phi_{2}}ax+by.
\]

Hence we can also describe the top homological $\mathbb{Z}$-grading
part of any stabilization of the Heegaard diagrams of Figure \ref{fig:local-ori}.
We fix our notations.
\begin{defn}
\label{def:f-and-g}Let ${\cal H}=(\Sigma,\boldsymbol{\beta},\boldsymbol{\gamma},\boldsymbol{p},\boldsymbol{G})$
be a stabilization of one of the Heegaard diagrams of Figure \ref{fig:local-ori}.
Then, ${\cal H}$ is Alexander $\mathbb{Z}/2$-splittable, and each
relative Alexander $\mathbb{Z}/2$-grading summand of $HF_{top}^{-}(\boldsymbol{\beta},\boldsymbol{\gamma};0)$
has rank $1$. Let $f,g\in HF_{top}^{-}(\boldsymbol{\beta},\boldsymbol{\gamma};0)$
be the nonzero elements that are uniquely characterized by the following
properties:
\begin{itemize}
\item $f,g$ are homogeneous with respect to the relative Alexander $\mathbb{Z}/2$-grading
(hence also homogeneous with respect to the ${\rm Spin}^{c}$-splitting).
\item $\Phi_{p}g=0$ for every basepoint pair $p$, and there exists some
basepoint pair $p$ such that $\Phi_{p}f\neq0$.
\end{itemize}
In particular, ${\rm gr}_{A}^{\mathbb{Z}/2}(f)\neq{\rm gr}_{A}^{\mathbb{Z}/2}(g)$
and $(f,g)$ form a basis of $HF_{top}^{-}(\boldsymbol{\beta},\boldsymbol{\gamma};0)$.
Also note that if $p$ is a basepoint pair such that $\Phi_{p}f\neq0$,
then $\Phi_{p}f=g$, and that such basepoint pairs $p$ are precisely
the ones that already exist in Figure \ref{fig:local-ori}.

If the Heegaard diagram is a stabilization of the left hand side (resp.
right hand side) of Figure \ref{fig:local-ori}, then define the \emph{canonical
element} $\theta\in HF_{top}^{-}(\boldsymbol{\beta},\boldsymbol{\gamma};0)$
as $g$ (resp. $f$).
\end{defn}

\begin{rem}
For the right hand side of Figure \ref{fig:local-ori}, the $f,g$
given by Definition \ref{def:f-and-g} are $f=ay+bx$, $g=ax+by$.
\end{rem}

Let us now go back to the right hand side of Figure \ref{fig:local-ori},
and compute the $H_{1}$ and relative $H_{1}$-actions on $HF_{\mathbb{F}[U^{1/2}]}^{-}(\boldsymbol{\beta},\boldsymbol{\gamma})$.
If we let $F,G$ as below, then $A_{\mathbb{F}[U^{1/2}],\kappa,-1}=B_{\mathbb{F}[U^{1/2}],\kappa,-1}=B_{\mathbb{F}[U^{1/2}],\kappa_{1},-1}=A_{\mathbb{F}[U^{1/2}],\kappa_{2},-1}=U^{1/2}F$,
$A_{\mathbb{F}[U^{1/2}],\kappa_{1},-1}=B_{\mathbb{F}[U^{1/2}],\kappa_{2},-1}=0$,
and $A_{\mathbb{F}[U^{1/2}],\kappa_{12},-1}=U^{1/2}G$, $B_{\mathbb{F}[U^{1/2}],\kappa_{12},-1}=U^{1/2}(F+G)$.
\[\begin{tikzcd}[sep=large]
	ax & ay & ax & ay \\
	bx & by & bx & by
	\arrow["F"{description}, curve={height=-12pt}, from=1-1, to=1-2]
	\arrow["F"{description}, curve={height=12pt}, from=1-1, to=2-1]
	\arrow["F"{description}, curve={height=-12pt}, from=1-2, to=1-1]
	\arrow["F"{description}, curve={height=12pt}, from=1-2, to=2-2]
	\arrow["G"{description}, curve={height=12pt}, from=1-3, to=2-3]
	\arrow["G"{description}, curve={height=12pt}, from=1-4, to=2-4]
	\arrow["F"{description}, curve={height=12pt}, from=2-1, to=1-1]
	\arrow["F"{description}, curve={height=-12pt}, from=2-1, to=2-2]
	\arrow["F"{description}, curve={height=12pt}, from=2-2, to=1-2]
	\arrow["F"{description}, curve={height=-12pt}, from=2-2, to=2-1]
	\arrow["G"{description}, curve={height=12pt}, from=2-3, to=1-3]
	\arrow["G"{description}, curve={height=12pt}, from=2-4, to=1-4]
\end{tikzcd}\]

\section{\label{sec:Khovanov-homology}Khovanov homology}

In this section, we record our conventions for Khovanov homology.
Note that in this paper, we consider the quantum and homological gradings
of Khovanov homology only as \emph{relative} gradings.

The \emph{equivariant Khovanov homology given by the Lee deformation}
(${\cal F}_{3}$ of \cite{MR2232858}; compare \cite{MR4504654})
uses the Frobenius algebra ${\cal A}=\mathbb{F}[x,U]/(x^{2}=U)$ over
$\mathbb{F}[U]$ and the multiplication and comultiplication maps
are $\mathbb{F}[U]$-linear maps given by the following.
\begin{gather*}
m:{\cal A}\otimes_{\mathbb{F}[U]}{\cal A}\to{\cal A}:1\otimes1\mapsto1,\ 1\otimes x\mapsto x,\ x\otimes1\mapsto x,\ x\otimes x\mapsto U1\otimes1\\
\Delta:{\cal A}\to{\cal A}\otimes_{\mathbb{F}[U]}{\cal A}:1\mapsto1\otimes x+x\otimes1,\ x\mapsto x\otimes x+U1\otimes1
\end{gather*}
In other words, the equivariant Khovanov homology of a planar link
(a link in the equatorial $S^{2}\subset S^{3}$; hence an unlink)
$PL$ is\footnote{We denote it as ${\cal A}$ instead of $Kh^{-}$ to avoid confusions.}
\[
{\cal A}(PL):=\mathbb{F}[\{x_{C}\}_{C\in\boldsymbol{C}},U]/(\{x_{C}^{2}-U\}_{C\in\boldsymbol{C}})
\]
where $\boldsymbol{C}$ is the set of connected components of $PL$.
Furthermore, if $B:PL\to PL'$ is a planar band\footnote{A \emph{band on a link $L\subset Y$} is an ambient $2$-dimensional
$1$-handle, i.e. it is an embedding $\iota:[0,1]\times[0,1]\hookrightarrow Y$
such that $\iota^{-1}(L)=[0,1]\times\{0,1\}$. We say $B:L\to L'$
if the link $L'$ is obtained by surgering $L$ along $B$.}, i.e. $PL\cup B$ is in the equatorial $S^{2}\subset S^{3}$, then
the associated band map is as follows:
\begin{itemize}
\item If $B$ is a split band, i.e. if $|PL|+1=|PL'|$, then the band map
is the ring map
\[
{\cal A}(B):{\cal A}(PL)\to{\cal A}(PL'):\begin{cases}
x_{C}\mapsto x_{C} & C\cap B=\emptyset\\
x_{D}\mapsto x_{D_{1}}+x_{D_{2}}
\end{cases}
\]
where $D$ is the connected component of $PL$ that intersects $B$,
and $D_{1},D_{2}$ are the connected components of $PL'$ that intersect
$B$.
\item If $B$ is a merge band, i.e. if $|PL|=|PL'|+1$, then the band map
is the ring map
\[
{\cal A}(B):{\cal A}(PL)\to{\cal A}(PL'):\begin{cases}
x_{C}\mapsto x_{C} & C\cap B=\emptyset\\
x_{C}\mapsto x_{D'} & C\cap B\neq\emptyset
\end{cases}
\]
where $D'$ is the connected component of $PL'$ that intersects $B$.
\end{itemize}
\begin{figure}[h]
\begin{centering}
\includegraphics[scale=1.5]{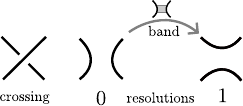}
\par\end{centering}
\caption{\label{fig:skein-moves-1-1}Our convention for Khovanov homology}
\end{figure}

Given a planar diagram of a link $L$ in $S^{3}$, one considers the
cube of resolutions $\{0,1\}^{n}$ (if the planar diagram has $n$
crossings), where the convention is as in Figure \ref{fig:skein-moves-1-1},
and uses the Khovanov homology for planar links and bands (i.e. the
Frobenius algebra ${\cal A}$) to build the \emph{equivariant Khovanov
(cube of resolutions) chain complex $CKh^{-}(L)$}. The underlying
$\mathbb{F}[U]$-module of the chain complex is the direct sum 
\[
\bigoplus_{v\in\{0,1\}^{n}}{\cal A}(L_{v})
\]
where $L_{v}$ is the complete $v$-resolution, which is a planar
link. For each edge $e:v\to v'$ of the cube (i.e. $v,v'$ differ
in one coordinate for which $v$ is $0$ and $v'$ is $1$), there
is an associated band $B_{e}:L_{v}\to L_{v'}$ (see Figure \ref{fig:skein-moves-1-1});
the differential is the sum of the corresponding band maps: $\partial=\sum_{e:v\to v'}{\cal A}(B_{e})$.
The homology of this cube of resolutions chain complex is $Kh^{-}(L)$,
the \emph{equivariant Khovanov homology} of $L$.
\begin{rem}
The convention is that if $L_{0},L_{1},L_{2}$ is an unoriented skein
triple (Definition \ref{def:unoriented-skein-triple}), then their
\emph{mirrors} form an exact triangle in Khovanov homology.
\end{rem}

Define the \emph{unreduced Khovanov chain complex}, or the unreduced
hat version, as
\[
\widehat{CKh}(L):=CKh^{-}(L)/U.
\]

Given a point $p\in L$ not on any of the crossings, define the action
of $p$ on each ${\cal A}(L_{v})$ as multiplication by $x_{C}$,
where $C$ is the connected component of $L_{v}$ that $x$ is on.
This defines an action on $CKh^{-}(L)$. Hence, given a set of points
$\boldsymbol{p}\in L$ that is disjoint from the crossings, we can
view $CKh^{-}(L)$ as a chain complex over $R_{\boldsymbol{p}}:=\mathbb{F}[\{X_{p}\}_{p\in\boldsymbol{p}}]$,
where $X_{p}$ acts by the action of $p$. Hence $Kh^{-}(L)$ can
be viewed as an $R_{\boldsymbol{p}}$-module.

To define the \emph{reduced Khovanov chain complex,} or the reduced
hat version, we need to choose a distinguished point $p\in L$; define
\[
\widetilde{CKh}(L,p):=CKh^{-}(L)/X_{p}.
\]

We collapse the relative quantum grading ($q$) and relative homological
grading $(h$) to a relative $\delta$-grading ${\rm gr}_{\delta}={\rm gr}_{q}/2-{\rm gr}_{h}$:
the actions $X_{p}$ have $\delta$-degree $-1$.

\section{\label{sec:The-setup-and}The setup and the Heegaard diagram}

In this section, we introduce a general setup. As in this section,
we work in the minus version throughout most of this paper, and specialize
to the reduced and unreduced hat versions in Subsection \ref{sec:Iterating-the-unoriented}.

\subsection{\label{subsec:The-general-setup}The general setup}

\begin{figure}[h]
\begin{centering}
\includegraphics[scale=2]{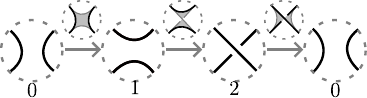}
\par\end{centering}
\caption{\label{fig:skein-moves-1}A local diagram for an unoriented skein
triple in $D^{3}$ and bands between them}
\end{figure}

Let $CB_{1},\cdots,CB_{s}$ (\emph{``crossing balls''}) be a collection
of pairwise disjoint embeddings of $D^{3}$ (identified with the $D^{3}$'s
of Figure \ref{fig:skein-moves-1}) in a three-manifold $Y$. Let
$\boldsymbol{L}$ be a partially ordered set of balled links in $Y$,
such that the following hold.
\begin{enumerate}
\item The underlying links of $L\in\boldsymbol{L}$ are identical outside
the crossing balls.
\item The baseballs of every $L\in\boldsymbol{L}$ are identical, if we
ignore their types (i.e. whether they are free or link baseballs).
\item The baseballs are disjoint from all the crossing balls.
\item For every $L\in\boldsymbol{L}$, every baseball intersects the underlying
link of $L$.
\item Every connected component of $Y$ has a baseball.
\item For each $L\in\boldsymbol{L}$ and $CB_{r}$, $CB_{r}\cap L$ is a
rational tangle in $CB_{r}$, and in particular is one of $0,1,2$
of Figure \ref{fig:skein-moves-1}. Call this $L(r)\in\{0,1,2\}$.
If $L<L'$, then $L(r)\le L'(r)$ for all $r$.
\item \label{enu:For-every-directed}For every chain $L_{1}<\cdots<L_{m}$
and for every baseball, the subset of the $L_{k}$'s for which the
baseball is a link baseball is $\{L_{k}:i\le k\le j\}$ for some $i,j$.
\end{enumerate}

\subsection{\label{subsec:The-Heegaard-diagram}The Heegaard diagram and the
\texorpdfstring{$A_{\infty}$}{Ainf}-category}

Given a setup as in Subsection \ref{subsec:The-general-setup}, we
define a Heegaard diagram. We will sometimes only consider beta-attaching
curves, and sometimes also consider an alpha-attaching curve. If we
only consider beta-attaching curves, then the ambient manifold $Y$
is not important. If we consider an alpha-attaching curve as well,
then we fix a second cohomology class $c\in H^{2}(Y;\mathbb{Z})$.

\subsubsection*{The Heegaard surface}

Fix an $L\in\boldsymbol{L}$\footnote{The following definition does not depend on this choice.},
and let $L^{link}$ be its underlying link. We define a ``$\beta$-handlebody\footnote{This might be disconnected.}''
$H^{oneh}(L)$ as follows (the Heegaard surface is $\Sigma:=-\partial H^{oneh}(L)$).
Let $\overline{N}(L^{link})$ be a sufficiently ``small'' closed
tubular neighborhood of $L^{link}$ in $Y$ such that for each crossing
ball $CB_{r}$, the intersection $\overline{N}(L^{link})\cap CB_{r}$
is a tubular neighborhood of $L^{link}\cap CB_{r}$ in $CB_{r}$;
and for each baseball $BB$, the intersection $\overline{N}(L^{link})\cap BB$
is a tubular neighborhood of $L^{link}\cap BB$ in $BB$. Let $H(L):=\overline{N}(L^{link})\cup\bigsqcup_{r}CB_{r}$.
Attach some number of ambient three-dimensional one-handles to $H(L)$,
disjoint from the crossing balls and the baseballs, and call the resulting
handlebody $H^{oneh}(L)$. If we want to consider an alpha-attaching
curve, then let the ambient one-handles be such that $Y\backslash{\rm int}(H^{oneh}(L))$
is a handlebody.

\begin{figure}[h]
\begin{centering}
\includegraphics{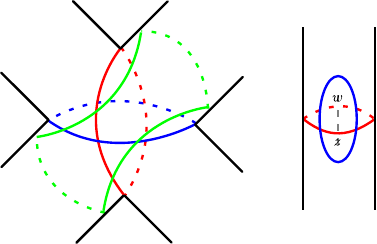}
\par\end{centering}
\caption{\label{fig:standard-diagram}Left: a standard diagram for $CB\cap\Sigma$.
Right: a standard diagram for $BB\cap\Sigma$ for a baseball $BB$}
\end{figure}

\subsubsection*{The basepoints, attaching curves, and forbidding arcs}

The beta-attaching curves are $\boldsymbol{\beta}(L)$ for each $L\in\boldsymbol{L}$
and the partial order of the $\boldsymbol{\beta}(L)$'s is given by
the partial order on $\boldsymbol{L}$. If we consider an alpha-attaching
curve $\boldsymbol{\alpha}$ as well, then $\boldsymbol{\alpha}<\boldsymbol{\beta}(L)$
for all $L$.

We first define an auxiliary Heegaard diagram ${\cal H}$ with Heegaard
surface $\Sigma$ as follows. The basepoints and the components of
the $\boldsymbol{\beta}(L)$'s are contained in the following (pairwise
disjoint) regions, and so we describe them in each region. We call
these regions \emph{special}.
\begin{itemize}
\item On the boundary of each one-handle we attached to get $H^{oneh}(L)$,
$\boldsymbol{\beta}(L)$ is a standard translate of the boundary of
the cocore of the one-handle.
\item For each crossing ball $CB_{r}$, identify $CB_{r}\cap\Sigma$ (which
is $S^{2}$ minus four disks) with the left hand side of Figure \ref{fig:standard-diagram}:
in this region, $\boldsymbol{\beta}(L)$ is a standard translate of
one of the curves in the left hand side of Figure \ref{fig:standard-diagram},
depending on the crossing.\footnote{\label{fn:The-attaching-curves}The attaching curves for the three
tangles $0$, $1$, and $2$ of Figure \ref{fig:skein-moves-1} can
be identified with red, blue, and green, respectively, in the left
hand side of Figure \ref{fig:standard-diagram}, although it looks
as if it suggests otherwise. This is because of our orientation conventions:
recall Subsection \ref{subsec:Conventions-and-notations}.}
\item For each baseball, identify its intersection with $\Sigma$ (which
is an annulus) with the right hand side of Figure \ref{fig:standard-diagram}:
put a basepoint pair ($(z,w)$ in the figure) and let the corresponding
forbidding arc be the grey arc in the figure. This basepoint pair
is a link (resp. free) basepoint pair for $\boldsymbol{\beta}(L)$
if the baseball is a link (resp. free) baseball for $L$. In this
region, $\boldsymbol{\beta}(L)$ is a standard translate of one of
the curves in the figure, depending on whether the baseball is a link
baseball (red circle) or a free baseball (blue circle). If it is a
free baseball, then $\boldsymbol{\beta}(L)$ is disjoint from the
corresponding forbidding arc.
\end{itemize}
If we want to consider an alpha-attaching curve, then define the attaching
curve $\boldsymbol{\alpha}$ such that it represents the handlebody
$Y\backslash{\rm int}(H^{oneh}(L))$ and is disjoint from all the
forbidding arcs. Let all the basepoint pairs be free basepoint pairs
for $\boldsymbol{\alpha}$.

To define the $A_{\infty}$-category, the Heegaard diagram has to
be weakly admissible if we only consider the beta-attaching curves.
If we consider an alpha-attaching curve as well, then we have to fix
some $c\in H^{2}(Y;\mathbb{Z})$, and the Heegaard diagram has to
be strongly $c$-admissible. To achieve weak admissibility or strong
$c$-admissibility, we choose a set of multi-arcs $\gamma_{\boldsymbol{\alpha}}$
and $\gamma_{\boldsymbol{\beta}(L)}$ that do not intersect any intersection
points of attaching curves, basepoints, or the forbidding arcs, and
wind the attaching curves along these multi-arcs. Call this Heegaard
diagram ${\cal H}^{wind}$; we mainly work with ${\cal H}^{wind}$.
Note that $(\boldsymbol{\alpha},\boldsymbol{\beta}(L))$ represents
the balled link $L$.

\subsubsection*{The $A_{\infty}$-category}

Let ${\cal B}$ be the $A_{\infty}$-category given by the weakly
admissible Heegaard diagram ${\cal H}^{wind}$ whose objects are the
beta-attaching curves $\boldsymbol{\beta}(L)$. By abuse of notion,
we will identify $L$ with $\boldsymbol{\beta}(L)$. Recall from Subsection
\ref{subsec:ainf} that ${\cal B}$ is homologically $\mathbb{Z}$-gradable.

For $c\in H^{2}(Y;\mathbb{Z})$, let ${\cal A}_{c}$ be the $A_{\infty}$-category
given by the strongly $c$-admissible Heegaard diagram ${\cal H}^{wind}$
whose objects are the $\boldsymbol{\beta}(L)$'s and $\boldsymbol{\alpha}$.

\section{\label{sec:Band-maps-for}Band maps for balled links}

In this section, we recall the band maps in unoriented link Floer
homology that we defined in \cite[Section 3]{nahm2025unorientedskeinexacttriangle}.
These band maps give rise to an unoriented skein exact triangle.

\subsection{\label{subsec:Band-maps}Band maps}
\begin{defn}
\label{def:Two-balled-links-1}Two balled links $L_{0},L_{1}$ are
\emph{related by a band map $B:L_{0}\to L_{1}$ }if there exists a
setup as in Subsection \ref{subsec:The-general-setup}, where $L_{0},L_{1}\in\boldsymbol{L}$,
their underlying links differ in exactly one crossing ball $CB$,
and $L_{i}\cap CB\subset CB$ is $i$ of Figure \ref{fig:skein-moves-1}.
Identify $B$ with the band drawn in Figure \ref{fig:skein-moves-1}.

The following are the three types of bands:
\begin{itemize}
\item $B$ is a \emph{non-orientable band} if $|L_{0}|=|L_{1}|;$
\item $B$ is a \emph{split band} if $|L_{0}|+1=|L_{1}|$; and
\item $B$ is a \emph{merge band} if $|L_{0}|=|L_{1}|+1$.
\end{itemize}
\end{defn}

\begin{condition}
\label{cond:If--is}If $L$ is a balled link, let $LB(L)$ be the
set of link baseballs of $L$. In this section (as in \cite[Definition 3.10]{nahm2025unorientedskeinexacttriangle}),
we further assume that
\begin{itemize}
\item If $B$ is a non-orientable band, then $LB(L_{0})=LB(L_{1})$.
\item If $B$ is a split band (resp. merge band), then there exists a baseball
$BB$ such that $LB(L_{1})=LB(L_{0})\sqcup\{BB\}$ (resp. $LB(L_{0})=LB(L_{1})\sqcup\{BB\}$).
\end{itemize}
\end{condition}

\begin{figure}[h]
\begin{centering}
\includegraphics{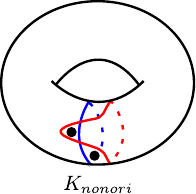}\qquad{}\includegraphics{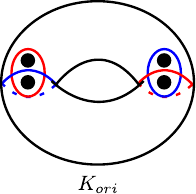}\qquad{}\includegraphics{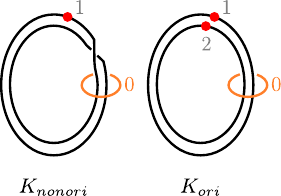}
\par\end{centering}
\caption{\label{fig:s1s2-important-1-1}Standard ($K_{ori}$ is not weakly
admissible) Heegaard diagrams and the corresponding links}
\end{figure}

Fix a second homology class $c\in H^{2}(Y;\mathbb{Z})$. Consider
a $c$-strongly admissible pair-pointed Heegaard diagram $(\Sigma,\boldsymbol{\alpha},\boldsymbol{\beta}_{0},\boldsymbol{\beta}_{1},\boldsymbol{p},\boldsymbol{G})$
as in Subsection \ref{subsec:The-Heegaard-diagram}. Then, the sub
Heegaard diagram for $\boldsymbol{\beta}_{0},\boldsymbol{\beta}_{1}$,
after isotopies, is a stabilization of the Heegaard diagram labelled
$K_{nonori}$ (resp. $K_{ori}$) of Figure \ref{fig:s1s2-important-1-1}
if $B$ is non-orientable (resp. a split or merge band), or equivalently,
is a Heegaard diagram that we considered in Definition \ref{def:f-and-g}.
Recall that we also defined the canonical element $\theta\in HF_{top}^{-}(\boldsymbol{\beta}_{0},\boldsymbol{\beta}_{1})$
in Definition \ref{def:f-and-g}. Define the band map as
\[
F_{B}:=\mu_{2}(-\otimes\theta):CF^{-}(\boldsymbol{\alpha},\boldsymbol{\beta}_{0})\to CF^{-}(\boldsymbol{\alpha},\boldsymbol{\beta}_{1}).
\]
This map is homogeneous with respect to the relative homological gradings
(see Subsection \ref{subsec:ainf}).
\begin{rem}
\label{rem:canonical-simple}For the two balled links $(S^{1}\times S^{2},L)$
($L=K_{nonori}$, resp. $K_{ori}$) of Figure \ref{fig:s1s2-important-1-1},
the $\Phi$ actions on the basis $(f,g)$ of $HFL'{}_{top}^{-}(S^{1}\times S^{2},L;0)$
defined in Definition \ref{def:f-and-g} are as follows:
\[
f\xrightarrow{\Phi_{1}}g,\ {\rm resp.}\ f\xrightarrow{\Phi_{1},\Phi_{2}}g.
\]
The canonical element $\theta$ is $g$, resp. $f$.
\end{rem}

\begin{rem}
\label{rem:relax-band}In Section \ref{sec:The-setup-and}, every
basepoint pair was a free basepoint pair for $\boldsymbol{\alpha}$.
We can relax this assumption: only assume that the basepoint pair
corresponding to the baseball $BB$ from Condition \ref{cond:If--is}
is a free baseball pair for $\boldsymbol{\alpha}$ (this is a vacuous
condition if the band is non-orientable). In this case, $(\boldsymbol{\alpha},\boldsymbol{\beta}_{0})$
and $(\boldsymbol{\alpha},\boldsymbol{\beta}_{1})$ represent balled
links (note that they are in general different from $L_{0},L_{1}$)
that are related by the band $B$, and the map 
\[
\mu_{2}(-\otimes\theta):CF^{-}(\boldsymbol{\alpha},\boldsymbol{\beta}_{0})\to CF^{-}(\boldsymbol{\alpha},\boldsymbol{\beta}_{1})
\]
is the band map for $B$.
\end{rem}

\subsection{\label{subsec:Merge-and-split}Merge and split band maps as the composition
of two maps}

We can consider merge and split band maps as the composition of two
maps, if $B$ is a merge or a split band (see Figure \ref{fig:s1s2-important-1-1-1}).

\begin{figure}[h]
\begin{centering}
\includegraphics{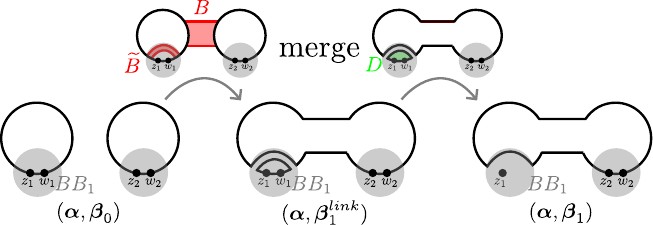}
\par\end{centering}
\begin{centering}
\includegraphics{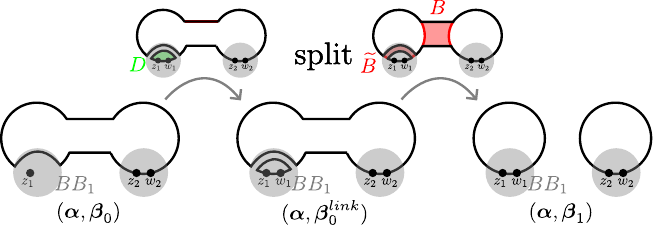}
\par\end{centering}
\caption{\label{fig:s1s2-important-1-1-1}Merge and split maps as the composition
of two maps}
\end{figure}

If $B$ is a merge band, then let $p_{1},p_{2}$ be the two link basepoint
pairs that correspond to the link components of $L_{0}$ that the
band intersects, such that $p_{2}$ is a link basepoint pair for $L_{1}$.
Then, let $\boldsymbol{\beta}_{1}^{link}$ be the same attaching curve
as $\boldsymbol{\beta}_{1}$, but where $p_{1}$ is also a link basepoint
pair. Then, the band map $F_{B}$ is the composition of the following
maps:
\begin{multline*}
CF^{-}(\boldsymbol{\alpha},\boldsymbol{\beta}_{0})\xrightarrow{\mu_{2}(-\otimes\theta)}CF^{-}(\boldsymbol{\alpha},\boldsymbol{\beta}_{1}^{link})\simeq CF^{-}(\boldsymbol{\alpha},\boldsymbol{\beta}_{1})\oplus U_{1}^{1/2}CF^{-}(\boldsymbol{\alpha},\boldsymbol{\beta}_{1})\\
\xrightarrow{\pi}U_{1}^{1/2}CF^{-}(\boldsymbol{\alpha},\boldsymbol{\beta}_{1})\xrightarrow{\cdot U_{1}^{-1/2}}CF^{-}(\boldsymbol{\alpha},\boldsymbol{\beta}_{1})
\end{multline*}
where $\pi$ is projection onto the second summand.

Let $BB_{1}$ be the baseball corresponding to $p_{1}$. Then, the
link that $(\boldsymbol{\alpha},\boldsymbol{\beta}_{1}^{link})$ represents
is the same link as $(\boldsymbol{\alpha},\boldsymbol{\beta}_{1})$,
but has an extra unknot inside $BB_{1}$ (that bounds a disk $D$
in $BB_{1}$ given by the forbidding arc $G_{1}$ between the two
basepoints of $p_{1}$; $D\subset BB_{1}$ is disjoint from everything
else, and such $D\subset BB_{1}$ is unique up to isotopy). The map
$U_{1}^{-1/2}\pi:CF^{-}(\boldsymbol{\alpha},\boldsymbol{\beta}_{1}^{link})\twoheadrightarrow CF^{-}(\boldsymbol{\alpha},\boldsymbol{\beta}_{1})$
is the \emph{annihilation map} (\cite[Subsubsection 3.3.1]{nahm2025unorientedskeinexacttriangle})
given by the disk $D$. Also, the map $\mu_{2}(-\otimes\theta):CF^{-}(\boldsymbol{\alpha},\boldsymbol{\beta}_{0})\to CF^{-}(\boldsymbol{\alpha},\boldsymbol{\beta}_{1}^{link})$
is the map for the two bands $B,\widetilde{B}$ described in Figure
\ref{fig:s1s2-important-1-1-1} (\emph{a mergesplit pair} \cite[Subsubsection 3.3.2]{nahm2025unorientedskeinexacttriangle}).

Similarly, if $B$ is a split band, then let $p_{1},p_{2}$ be the
two link basepoint pairs that correspond to the link components of
$L_{1}$ that the band intersects, such that $p_{2}$ is a link basepoint
pair for $L_{0}$. Let $\boldsymbol{\beta}_{0}^{link}$ be the same
attaching curve as $\boldsymbol{\beta}_{0}$, but where $p_{1}$ is
also a link basepoint pair. Then, the band map $F_{B}$ is the composition
of the following maps:
\[
CF^{-}(\boldsymbol{\alpha},\boldsymbol{\beta}_{0})\xrightarrow{\iota}CF^{-}(\boldsymbol{\alpha},\boldsymbol{\beta}_{0})\oplus U_{1}^{1/2}CF^{-}(\boldsymbol{\alpha},\boldsymbol{\beta}_{0})\simeq CF^{-}(\boldsymbol{\alpha},\boldsymbol{\beta}_{0}^{link})\xrightarrow{\mu_{2}(-\otimes\theta)}CF^{-}(\boldsymbol{\alpha},\boldsymbol{\beta}_{1}),
\]
where $\iota$ is inclusion into the first summand. The link that
$(\boldsymbol{\alpha},\boldsymbol{\beta}_{0}^{link})$ represents
is the same link as $(\boldsymbol{\alpha},\boldsymbol{\beta}_{0})$,
but has an extra unknot inside $BB_{1}$ (that bounds a disk $D$
in $BB_{1}$, which is disjoint from everything else); the map $\iota:CF^{-}(\boldsymbol{\alpha},\boldsymbol{\beta}_{0})\hookrightarrow CF^{-}(\boldsymbol{\alpha},\boldsymbol{\beta}_{0}^{link})$
is the \emph{creation map} (\cite[Subsubsection 3.3.1]{nahm2025unorientedskeinexacttriangle})
given by $D$. Also, the map $\mu_{2}(-\otimes\theta):CF^{-}(\boldsymbol{\alpha},\boldsymbol{\beta}_{0}^{free})\to CF^{-}(\boldsymbol{\alpha},\boldsymbol{\beta}_{1})$
is the map for the mergesplit pair $B,\widetilde{B}$ described in
Figure \ref{fig:s1s2-important-1-1-1}.

\subsection{Unoriented link Floer homology and Khovanov homology for planar bands}

Unoriented link Floer homology and Khovanov homology agree for planar
links and planar bands. More precisely, we have the following.
\begin{prop}[{\cite[Section 5]{nahm2025unorientedskeinexacttriangle}}]
\label{prop:Let--be-1}Let $L$ be a balled link in $S^{3}$, whose
underlying link is a planar link. Recall that $Kh^{-}(L)=\mathbb{F}[\{x_{C}\}_{C},U]/(\{x_{C}-U\}_{C})$,
where $C$ ranges over the components of $L$. Consider $HFL'{}^{-}(S^{3},L)$
as a $Kh^{-}(L)$-module where $x_{C}$ acts by multiplication by
$U_{p}^{1/2}$ where $p$ is the link basepoint pair on $C$. Then,
\[
HFL'{}^{-}(S^{3},L)\simeq Kh^{-}(L)
\]
as relatively graded $Kh^{-}(L)$-modules, where the relative homological
$\mathbb{Z}$-grading (resp. relative Alexander $\mathbb{Z}/2$-grading)
on $HFL'{}^{-}(S^{3},L)$ corresponds to the relative $\delta=q/2-h$
grading (resp. relative $q/2$ modulo $2$ grading) on $Kh^{-}(L)$.
(Note that such isomorphism is unique.) If $B:L\to L'$ is a planar
band, then the band map on $HFL'{}^{-}$ agrees with the band map
on $Kh^{-}$.
\end{prop}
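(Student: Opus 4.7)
The plan is to bootstrap from the single-unknot case via a K\"unneth decomposition for disjoint unions, then verify the band-map statement by local computations on the model diagrams of Section \ref{sec:More-Heegaard-Floer}.

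For a single unknot $U\subset S^{3}$ with one link baseball, a standard genus-zero Heegaard diagram has two intersection points and no nontrivial holomorphic disks, yielding
\[
HFL'{}^{-}(S^{3},U)\simeq\mathbb{F}[U^{1/2}]\{f\}\oplus\mathbb{F}[U^{1/2}]\{g\}
\]
in distinct Alexander $\mathbb{Z}/2$-gradings with $g=U^{1/2}f$. Since $x_{C}$ acts by $U_{p}^{1/2}$-multiplication, this is a free $\mathbb{F}[U]$-module of rank $2$ satisfying $x_{C}^{2}=U$, and identifies with $Kh^{-}(U)=\mathbb{F}[x,U]/(x^{2}-U)$ via $1\mapsto f$, $x\mapsto g$; the grading match is immediate from the computation of Alexander $\mathbb{Z}/2$ and homological $\mathbb{Z}$ gradings on the standard diagram. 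For a general planar link $L=U_{1}\sqcup\cdots\sqcup U_{\ell}$, I would choose the Heegaard diagram to be the disjoint union of such unknot diagrams. By Remark \ref{rem:If-the-Heegaard}, $CFL'{}^{-}(S^{3},L)$ is the $\mathbb{F}$-linear tensor product of the unknot complexes, and its homology tensor-decomposes over $\bigotimes_{i}\mathbb{F}[U_{i}^{1/2}]$. Since $Kh^{-}(L)$ tensor-decomposes in the same way for planar unlinks, the single-unknot isomorphism assembles to the required isomorphism for $L$, and uniqueness follows from the $Kh^{-}(L)$-module structure and the graded identification.

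For the band maps, I would use $F_{B}=\mu_{2}(-\otimes\theta)$ from Subsection \ref{subsec:Band-maps}. For a planar merge or split band $B$, choose the Heegaard diagram so that the local picture around the band is a stabilization of the right-hand diagram of Figure \ref{fig:local-ori}, with the rest being standard unknot pieces; the triple-diagram K\"unneth square at the end of Remark \ref{rem:If-the-Heegaard} then reduces the $\mu_{2}$-computation to the local model of Lemma \ref{lem:ori-diagram} tensored with identities. For a merge band, the two-step decomposition from Subsection \ref{subsec:Merge-and-split} (mergesplit $\mu_{2}$ followed by an annihilation map) combined with the local computation yields the Frobenius product $m$ of ${\cal A}$; for a split band, the dual decomposition (creation map followed by mergesplit $\mu_{2}$) yields the coproduct $\Delta$, with the $U\cdot 1\otimes 1$ term coming from the doubly-covered triangle domain in the stabilized model.

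The main obstacle is the local bookkeeping: matching the canonical element $\theta$ of Definition \ref{def:f-and-g} with the correct tensor-product element in ${\cal A}\otimes_{\mathbb{F}[U]}{\cal A}$ computed by the local triangle count, and carefully tracking the Alexander $\mathbb{Z}/2$ and homological $\mathbb{Z}$ gradings through both the tensor decomposition and the mergesplit composition. Once the rank, gradings, and basepoint actions are nailed down on the unknot and the local $\mu_{2}$-computation of the Frobenius structure is verified on the right-hand diagram of Figure \ref{fig:local-ori}, the general case follows by assembly via Remark \ref{rem:If-the-Heegaard}.
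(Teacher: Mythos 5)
The paper itself does not prove this proposition here; it cites it from \cite[Section 5]{nahm2025unorientedskeinexacttriangle}. The closest internal analogue is the proof of Lemma \ref{prop:Let--be-1-1}, which obtains the diagram for $(S^{3},L)$ by performing $0$-surgeries on a disjoint union of spheres (one basepoint pair per sphere, no attaching circles) and then reads off the chain complex from the stabilization structure of Subsection \ref{subsec:Connected-sums}. Your proposal takes a disjoint-union route instead, and that is where the gap lies.

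The step ``choose the Heegaard diagram to be the disjoint union of unknot diagrams'' does not produce a diagram for $(S^{3},L)$. A disjoint union of $\ell$ unknot diagrams represents $(\sqcup^{\ell}S^{3},\sqcup U_{i})$, not the planar unlink inside a single connected $S^{3}$. Concretely, for $\ell>1$ the complement of $\overline{N}(L)$ in $S^{3}$ is not a handlebody, so the construction of Subsection \ref{subsec:The-Heegaard-diagram} forces you to attach ambient $1$-handles to $H(L)$, which on the Heegaard surface corresponds to $0$-surgeries connecting the tori; after this the surface is connected and Remark \ref{rem:If-the-Heegaard} no longer applies in the pure tensor form you invoke. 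This matters at the level of modules: in the disjoint picture the variables $U_{i}^{1/2}$ are genuinely independent and the homology has rank $2^{\ell}$ over $\bigotimes_{i}\mathbb{F}[U_{i}^{1/2}]$, whereas in the connected $S^{3}$ picture multiplication by the various $U_{p}$ are chain homotopic, and $Kh^{-}(L)$ has rank $2^{\ell-1}$ over $\mathbb{F}[U^{1/2}]$. The $0$-surgery/stabilization step is exactly what reconciles these, and it is not a formality — you need the upper-triangular model for the differential after $0$-surgery (and the identification of the $U_{p}$-actions) to land on the right $Kh^{-}(L)$-module. Without it, the ``assembly via Remark \ref{rem:If-the-Heegaard}'' does not give $HFL'{}^{-}(S^{3},L)$.

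The band-map part of your plan (reduce to the local model of Lemma \ref{lem:ori-diagram} via the mergesplit decomposition of Subsection \ref{subsec:Merge-and-split}, and read off the Frobenius product and coproduct) is the right idea, but it inherits the same issue: the K\"unneth square at the end of Remark \ref{rem:If-the-Heegaard} only applies to honest disjoint unions. You would instead want to use that $\mu_{2}$ commutes with the stabilization map $S^{+}$ (end of Subsection \ref{subsec:Connected-sums}) to push the local triangle count through the $0$-surgeries. Once you replace ``tensor over $\mathbb{F}$ of disjoint pieces'' with ``start from disjoint spheres, perform $0$-surgeries, and track the chain complex and $\mu_{2}$ through $S^{+}$,'' your outline matches the intended argument.
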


\begin{rem}
Since $L$ is planar, its Khovanov homology is supported in one $h$
grading, and so the relative $\delta$ grading agrees with the relative
$q/2$ grading. We consider the $\delta$ grading since this is the
grading that descends to the homological grading on $HFL'$ under
the spectral sequence: see Subsection \ref{subsec:The-Khovanov-chain}.
\end{rem}

\begin{rem}
Perhaps it is more natural to consider the mirror of the link in Proposition
\ref{prop:Let--be-1} for Khovanov homology (compare with Theorems
\ref{thm:knot-S3} and \ref{thm:main-thm}), but it does not matter
since the link and the band are planar.
\end{rem}

\subsection{The basepoint actions}

Let $L_{0},L_{1}$, $CB$, and $BB$ be as in Definition \ref{def:Two-balled-links-1}
and Condition \ref{cond:If--is}.
\begin{prop}[{\cite[Proposition 3.25]{nahm2025unorientedskeinexacttriangle}}]
\label{prop:For-any-}For any $q\in L_{i}\backslash CB$ (or equivalently
a connected component $CC$ of $L_{i}\backslash CB$), the band map
\[
F_{B}:HFL'{}^{-}(Y,L_{0};c)\to HFL'{}^{-}(Y,L_{1};c)
\]
is equivariant with respect to the action of $q$ (Definition \ref{def:hflaction}).
\end{prop}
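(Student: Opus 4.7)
The plan is to work at the chain level, where $F_B=\mu_2(-\otimes\theta)$ and the action of $q$ on $CFL'{}^-(Y,L_i;c)$ is strict multiplication by $U_{p(q,L_i)}^{1/2}$, with $p(q,L_i)$ the basepoint pair of the unique link baseball on the component of $L_i$ containing $q$. The key general fact is that by the definition of $\mu_2$ (Equation \eqref{eq:mud-defn}), $\mu_2$ is $U_p^{1/2}$-linear whenever $p$ is a link basepoint pair for at least one of $\{\boldsymbol{\alpha},\boldsymbol{\beta}_0\}$ and at least one of $\{\boldsymbol{\alpha},\boldsymbol{\beta}_1\}$, since then $U_p^{1/2}$ lies in both coefficient rings and commutes with the projection and $U_F^{-1/2}$-multiplication steps defining $\mu_2$.

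For a non-orientable band, Condition \ref{cond:If--is} gives $LB(L_0)=LB(L_1)$, so $p(q,L_0)=p(q,L_1)=:p$ is a link basepoint pair for both $\boldsymbol{\beta}_0$ and $\boldsymbol{\beta}_1$, and equivariance is immediate from the key fact: $F_B\circ U_p^{1/2}=\mu_2(U_p^{1/2}(-)\otimes\theta)=U_p^{1/2}\mu_2(-\otimes\theta)=U_p^{1/2}\circ F_B$.

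For merge and split bands, I would use the decomposition of $F_B$ from Subsection \ref{subsec:Merge-and-split} as a composition of a creation (resp.\ annihilation) map with a mergesplit $\mu_2(-\otimes\theta)$ taken against the modified attaching curve $\boldsymbol{\beta}_0^{link}$ or $\boldsymbol{\beta}_1^{link}$. The creation map $\iota$ and annihilation map $U_1^{-1/2}\pi$ are $R_{\boldsymbol{\alpha},\boldsymbol{\beta}_i}$-linear inclusions and projections of free summands, and hence commute with any $U^{1/2}$-action that makes sense on both sides. For the mergesplit $\mu_2$-factor, both $p_1$ (the basepoint pair of the created/destroyed baseball $BB$) and $p_2$ (the ``matched'' basepoint pair of the other intersecting component) are link basepoint pairs for all three attaching curves involved, so the key fact applies and immediately handles the ``matched'' subcase $p(q,L_0)=p(q,L_1)$.

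The main obstacle is the ``unmatched'' subcase, in which $q$ lies on the side of the crossing ball carrying $BB$, so $p(q,L_0)\neq p(q,L_1)$; equivariance on the mergesplit $\mu_2$-factor then reduces to the local identity $U_{p_1}^{1/2}\theta=U_{p_2}^{1/2}\theta$ on the homology of the stabilized mergesplit sub-diagram. I would verify this by inspecting the differential of the local model in Lemma \ref{lem:ori-diagram}: with $\theta=ay+bx$, one computes $\partial(ax)=(U_{p_1}^{1/2}+U_{p_2}^{1/2})(ay+bx)$, so $U_{p_1}^{1/2}\theta$ and $U_{p_2}^{1/2}\theta$ represent the same homology class. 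This local identity extends to any stabilization via the tensor splitting of Remark \ref{rem:If-the-Heegaard} and the top-homological-grading isomorphism for the stabilization map in Subsection \ref{subsec:Connected-sums}. Combined with $U_{p_i}^{1/2}$-linearity of $\mu_2$, this yields $U_{p_1}^{1/2}\circ F_B=U_{p_2}^{1/2}\circ F_B$ on homology, completing equivariance in the unmatched subcase.
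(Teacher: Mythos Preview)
Your argument is correct, but it takes a longer route than necessary. The paper's proof bypasses the decomposition of Subsection~\ref{subsec:Merge-and-split} entirely: for the merge (resp.\ split) case it simply asserts that $F_B(U_{BB}^{1/2}\cdot)=U_{BB'}^{1/2}F_B(\cdot)$ is a formal consequence of the identity $U_{BB}^{1/2}\theta=U_{BB'}^{1/2}\theta$ in $HF^-(\boldsymbol{\beta}_0,\boldsymbol{\beta}_1)$, applied directly to $\mu_2(-\otimes\theta)$. The point you are missing is that $\mu_2$ is always \emph{balanced}: $\mu_2(U_p^{1/2}x\otimes y)=\mu_2(x\otimes U_p^{1/2}y)$ whenever $U_p^{1/2}$ lies in both input coefficient rings, even if it does not lie in the output ring $R_{\boldsymbol{\alpha},\boldsymbol{\beta}_1}$. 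This follows because both sides compute as $U_F^{-1/2}\pi\bigl(U_p^{1/2}\mu_{2,R_{tot}}(\iota x\otimes\iota y)\bigr)$. Thus for the merge case one gets directly $\mu_2(U_{BB}^{1/2}x\otimes\theta)=\mu_2(x\otimes U_{BB}^{1/2}\theta)=\mu_2(x\otimes U_{BB'}^{1/2}\theta)=U_{BB'}^{1/2}\mu_2(x\otimes\theta)$, without ever invoking $\boldsymbol{\beta}_1^{link}$ or the annihilation map. Your approach works too, and has the virtue of making the local identity $U_{p_1}^{1/2}\theta=U_{p_2}^{1/2}\theta$ explicit from Lemma~\ref{lem:ori-diagram}, but the detour through creation/annihilation is unnecessary.

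One minor slip: your final conclusion should read $F_B\circ U_{p(q,L_0)}^{1/2}=U_{p(q,L_1)}^{1/2}\circ F_B$, not $U_{p_1}^{1/2}\circ F_B=U_{p_2}^{1/2}\circ F_B$ (in the merge case $U_{p_1}^{1/2}$ does not even act on the target of $F_B$).
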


\begin{proof}
If the band is non-orientable, there is nothing to check. If the band
is a merge band, then let $BB,BB'$ be the link baseballs of the components
of $L_{0}$ that intersect the band. Then we have to check that $F_{B}(U_{BB}^{1/2}\cdot-)=U_{BB'}^{1/2}F_{B}(-)$
on homology. Similarly, if the band is a split band, then let $BB,BB'$
be the link baseballs of the components of $L_{1}$ that intersect
the band. Then we have to check that $F_{B}(U_{BB'}^{1/2}\cdot-)=U_{BB}^{1/2}F_{B}(-)$
on homology. Both are formal consequences of that the element $\theta\in HF^{-}(\boldsymbol{\beta}_{0},\boldsymbol{\beta}_{1})$
satisfies $U_{BB}^{1/2}\theta=U_{BB'}^{1/2}\theta$.
\end{proof}

\subsection{\label{subsec:An-unoriented-skein}An unoriented skein exact triangle}
\begin{defn}
\label{def:unoriented-skein-triple}Three balled links $L_{0},L_{1},L_{2}$
form an \emph{unoriented skein triple} if there exists a setup as
in Subsection \ref{subsec:The-general-setup}, where $L_{0},L_{1},L_{2}\in\boldsymbol{L}$,
such that:
\begin{itemize}
\item There is exactly one crossing ball $CB$, and $L_{i}\cap CB\subset CB$
is $i$ of Figure \ref{fig:skein-moves-1}, and
\item Let $i\in\{0,1,2\}$ be such that $|L_{i}|=|L_{j}|+1$ for both $j\in\{0,1,2\}$
such that $j\neq i$. Then, there exists a baseball $BB$ such that
$LB(L_{i})=LB(L_{j})\sqcup\{BB\}$ for both $j\in\{0,1,2\}$ such
that $j\neq i$.
\end{itemize}
By abuse of notion, we also say that the corresponding attaching curves
$\boldsymbol{\beta}_{0},\boldsymbol{\beta}_{1},\boldsymbol{\beta}_{2}$
form an \emph{unoriented skein triple}.
\end{defn}

Let $\boldsymbol{\beta}_{0},\boldsymbol{\beta}_{1},\boldsymbol{\beta}_{2}$
form an unoriented skein triple. Recall that the $A_{\infty}$-category
${\cal B}$ (Subsection \ref{subsec:The-Heegaard-diagram}) with objects
$\boldsymbol{\beta}_{0},\boldsymbol{\beta}_{1},\boldsymbol{\beta}_{2}$
is homologically $\mathbb{Z}$-gradable. Lift the relative homological
$\mathbb{Z}$-grading to an absolute homological $\mathbb{Z}$-grading
such that for $i=0,1$, the canonical element $\theta\in HF_{0}^{-}(\boldsymbol{\beta}_{i},\boldsymbol{\beta}_{i+1})$,
i.e. lies in homological grading $0$. Let us abuse notation and let
$\theta\in CF_{0}^{-}(\boldsymbol{\beta}_{i},\boldsymbol{\beta}_{i+1})$
be any cycle that represents the homology class $\theta\in HF_{0}^{-}(\boldsymbol{\beta}_{i},\boldsymbol{\beta}_{i+1})$.

The main theorem of \cite{nahm2025unorientedskeinexacttriangle} is
the following.
\begin{thm}
\label{thm:Let--be}Let $\boldsymbol{\beta}_{0},\boldsymbol{\beta}_{1},\boldsymbol{\beta}_{2}$
form an unoriented skein triple. Then, there exists some $\zeta\in CF_{1}^{-}(\boldsymbol{\beta}_{0},\boldsymbol{\beta}_{2})$
such that $\mu_{1}(\zeta)=\mu_{2}^{\boldsymbol{\beta}_{0},\boldsymbol{\beta}_{1},\boldsymbol{\beta}_{2}}(\theta\otimes\theta)$,
i.e. 
\[\begin{tikzcd}
	{\underline{\boldsymbol{\beta}_{012}}:=} & {\boldsymbol{\beta}_0} & {\boldsymbol{\beta}_1} & {\boldsymbol{\beta}_2}
	\arrow["\theta"{description}, from=1-2, to=1-3]
	\arrow["{\zeta }"{description, pos=0.3}, curve={height=-12pt}, from=1-2, to=1-4]
	\arrow["\theta"{description}, from=1-3, to=1-4]
\end{tikzcd}\]is a twisted complex. For any such $\zeta$ and any attaching curve
$\boldsymbol{\alpha}$ such that the basepoint pair contained in the
baseball $BB$ is a free basepoint pair\footnote{Since our attaching curves are ordered, we technically cannot consider
both $HF^{-}(\boldsymbol{\alpha},\underline{\boldsymbol{\beta}_{012}})$
and $HF^{-}(\underline{\boldsymbol{\beta}_{012}},\boldsymbol{\alpha})$
at once. This is not a problem since we can treat this as separate
statements.}, 
\begin{gather*}
HF^{-}(\boldsymbol{\alpha},\underline{\boldsymbol{\beta}_{012}})=0,\ HF^{-}(\underline{\boldsymbol{\beta}_{012}},\boldsymbol{\alpha})=0.
\end{gather*}
\end{thm}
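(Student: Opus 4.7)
The plan is to reduce both assertions to explicit local computations near the crossing ball $CB$ and the distinguished baseball $BB$, where the three attaching curves $\boldsymbol{\beta}_0,\boldsymbol{\beta}_1,\boldsymbol{\beta}_2$ differ. Outside this region the three curves agree, so by a neck-stretching argument along a separating surface enclosing $CB\cup BB$ (compare Remark \ref{rem:If-the-Heegaard}) the computation decouples, and the essential content reduces to a standard local model sitting inside $\#^{k}S^{1}\times S^{2}$.

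For the existence of $\zeta$: by Lemma \ref{lem:If--is-1} and the grading lift fixed in Subsection \ref{subsec:An-unoriented-skein}, the product $\mu_{2}^{\boldsymbol{\beta}_0,\boldsymbol{\beta}_1,\boldsymbol{\beta}_2}(\theta\otimes\theta)$ is automatically homogeneous of homological grading $0$ in $CF^{-}(\boldsymbol{\beta}_0,\boldsymbol{\beta}_2)$. I would compute the homology class $[\mu_2(\theta\otimes\theta)]$ directly on the local model, in which the three attaching curves form a standard triangle as drawn in Figure \ref{fig:standard-diagram}. Using the identification of canonical elements from Definition \ref{def:f-and-g} and the $\Phi$-action descriptions in Remark \ref{rem:canonical-simple}, a count of small triangles in the pair-of-pants region shows the class vanishes. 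Lifting this computation back to the full diagram via the neck-stretching decomposition produces a cycle representative for $\mu_2(\theta\otimes\theta)$ that is a boundary, and choosing any primitive gives $\zeta\in CF_1^{-}(\boldsymbol{\beta}_0,\boldsymbol{\beta}_2)$.

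For the acyclicity: the neck-stretching degeneration identifies $CF^{-}(\boldsymbol{\alpha},\underline{\boldsymbol{\beta}_{012}})$, up to quasi-isomorphism, with the tensor product (over $\mathbb{F}[\{U_{p}\}]$) of the outside chain complex with a local twisted complex built from the restrictions of $\boldsymbol{\alpha},\boldsymbol{\beta}_0,\boldsymbol{\beta}_1,\boldsymbol{\beta}_2$ to $CB\cup BB$. The hypothesis that the basepoint pair inside $BB$ is free for $\boldsymbol{\alpha}$ forces the local restriction of $\boldsymbol{\alpha}$ to be a standard round curve as in Figure \ref{fig:standard-diagram}, so the local diagram is small enough that all intersection points and Maslov index $0$ and $1$ domains can be enumerated by hand. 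A direct computation then verifies that this local twisted complex is acyclic, which yields the desired vanishing. The symmetric statement $HF^{-}(\underline{\boldsymbol{\beta}_{012}},\boldsymbol{\alpha})=0$ follows from the same argument with the order of the attaching curves reversed.

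The main obstacle is the local acyclicity computation, since one must simultaneously choose a specific $\zeta$ (on the nose, not merely up to homology) and verify that the resulting twisted complex closes up against $\boldsymbol{\alpha}$ including all higher $A_{\infty}$-corrections. I expect grading considerations along the lines of Section \ref{sec:Gradings-for-the} to force $\mu_n$ for $n\ge 3$ to vanish on the local model for degree reasons, so that the verification reduces to the acyclicity of a small, explicit iterated mapping cone. Independence of the choice of $\zeta$ is then formal: any two choices differ by a $\mu_1$-cycle in $CF_1^{-}(\boldsymbol{\beta}_0,\boldsymbol{\beta}_2)$, which induces an $A_{\infty}$-isomorphism of twisted complexes, so the vanishing of Floer homology is insensitive to the choice.
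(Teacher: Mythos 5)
There is a genuine gap in the acyclicity step. You assert that a neck-stretching degeneration identifies $CF^{-}(\boldsymbol{\alpha},\underline{\boldsymbol{\beta}_{012}})$, up to quasi-isomorphism, with a tensor product of an ``outside'' complex and a local twisted complex built from the restrictions of all four attaching curves to $CB\cup BB$. No such decomposition exists: only the three $\boldsymbol{\beta}$-curves agree outside the local region, while $\boldsymbol{\alpha}$ is a global attaching curve for the complementary handlebody, constrained only by the condition that the basepoint pair in $BB$ be free for it. Holomorphic disks and triangles with an $\boldsymbol{\alpha}$-boundary component are not confined to the local region, and the restriction of $\boldsymbol{\alpha}$ to that region is not ``a standard round curve'' --- it can be an arbitrary collection of arcs, or empty. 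The tensor-product splitting of Remark \ref{rem:If-the-Heegaard} requires the entire Heegaard diagram to be disconnected, which is not the case here. Consequently your claim that the acyclicity reduces to a finite enumeration of intersection points and small-index domains in a local model does not go through for a general $\boldsymbol{\alpha}$, and this is precisely the hard part of the theorem.

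What does localize is the $\boldsymbol{\beta}$-only sub-diagram: $(\boldsymbol{\beta}_{0},\boldsymbol{\beta}_{1},\boldsymbol{\beta}_{2})$ is a stabilization of a standard genus-one model, so your computation of $[\mu_{2}(\theta\otimes\theta)]=0$ (hence the existence of $\zeta$) is essentially sound, and the independence of the choice of $\zeta$ follows since $HF_{1}^{-}(\boldsymbol{\beta}_{0},\boldsymbol{\beta}_{2})=0$ by Proposition \ref{prop:combi-claims}~(\ref{enu:If-,-then}) --- a grading input you should cite, since two choices of $\zeta$ give isomorphic twisted complexes only when their difference is a boundary. For the vanishing of $HF^{-}(\boldsymbol{\alpha},\underline{\boldsymbol{\beta}_{012}})$, the paper instead imports the acyclicity for a specific special model from \cite[Theorem 4.2]{nahm2025unorientedskeinexacttriangle} and transfers it to the general case by constructing a cycle $\underline{\Theta^{+}}\in CF^{-}(\underline{\widetilde{\boldsymbol{\beta}_{012}}},\underline{\boldsymbol{\beta}_{012}})$ whose vertical components are the top-degree classes $\Theta^{+}$ and whose diagonal components exist by the grading bounds of Proposition \ref{prop:combi-claims}; pairing with $\boldsymbol{\alpha}$ then gives an isomorphism $HF^{-}(\boldsymbol{\alpha},\underline{\widetilde{\boldsymbol{\beta}_{012}}})\to HF^{-}(\boldsymbol{\alpha},\underline{\boldsymbol{\beta}_{012}})$. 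Some mechanism of this kind --- formal $A_{\infty}$-algebra in the $\boldsymbol{\beta}$-subcategory that works uniformly in $\boldsymbol{\alpha}$ --- is needed to replace your tensor-product claim.
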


\begin{proof}
This is mostly shown in the proof of \cite[Theorem 4.2]{nahm2025unorientedskeinexacttriangle}.
Let us focus on the case where $\boldsymbol{\alpha}<\boldsymbol{\beta}_{0},\boldsymbol{\beta}_{1},\boldsymbol{\beta}_{2}$,
as the other case is exactly the same. In the proof of \cite[Theorem 4.2]{nahm2025unorientedskeinexacttriangle},
we considered special kinds of $\boldsymbol{\beta}_{0},\boldsymbol{\beta}_{1},\boldsymbol{\beta}_{2}$,
which are obtained from \cite[Figure 4.1]{nahm2025unorientedskeinexacttriangle}
($\boldsymbol{\beta}_{0},\boldsymbol{\beta}_{1},\boldsymbol{\beta}_{2}$
is a cyclic permutation of $\boldsymbol{\beta}_{a},\boldsymbol{\beta}_{b},\boldsymbol{\beta}_{c}$
of the figure) by a\emph{ ``stabilization''} in the sense of \cite[Subsection 2.6]{nahm2025unorientedskeinexacttriangle},
which is, as written, slightly more restrictive than Subsection \ref{subsec:Connected-sums},
but they are related by handleslides and isotopies. Let us call the
case we considered in the proof of \cite[Theorem 4.2]{nahm2025unorientedskeinexacttriangle}
the \emph{special case}, and our case the \emph{general case}.

Let us first briefly explain where the theorem for the special case
is shown in the proof of \cite[Theorem 4.2]{nahm2025unorientedskeinexacttriangle}.
There are three cases to consider, depending on the type of the band
$B_{01}:\boldsymbol{\beta}_{0}\to\boldsymbol{\beta}_{1}$ (equivalently,
the cyclic permutation of $\boldsymbol{\beta}_{a},\boldsymbol{\beta}_{b},\boldsymbol{\beta}_{c}$
of \cite[Figure 4.1]{nahm2025unorientedskeinexacttriangle}). ``Hence,
$\mu_{2}(-,\underline{f})$ is a quasi-isomorphism'' in the penultimate
paragraph proves the case where $B_{01}$ is non-orientable, and that
$\mu_{2}(-,\underline{g})$ is a quasi-isomorphism implies the case
where $B_{01}$ is a merge band. The final paragraph explains how
to deduce the theorem for the case where $B_{01}$ is a split band.

We deduce the theorem for the general case from the special case;
note that we use the same argument in Section \ref{sec:Proofs-of-claims}
and a similar argument in Section \ref{sec:Iterating-the-unoriented}
(the argument after Proposition \ref{prop:We-have-the}).

The general case can be obtained from a special case by handleslides
and isotopies, and so $\mu_{2}^{\boldsymbol{\beta}_{0},\boldsymbol{\beta}_{1},\boldsymbol{\beta}_{2}}(\theta\otimes\theta)=0$
on homology in the general case as well. Let us call the attaching
curves of the corresponding special case $\widetilde{\boldsymbol{\beta}_{i}}$.
The $A_{\infty}$-category with elements $\widetilde{\boldsymbol{\beta}_{i}},\boldsymbol{\beta}_{i}$
for $i=0,1,2$ (let $\widetilde{\boldsymbol{\beta}_{i}}<\widetilde{\boldsymbol{\beta}_{j}}$,
$\boldsymbol{\beta}_{i}<\boldsymbol{\beta}_{j}$ for $i<j$, and $\widetilde{\boldsymbol{\beta}_{i}}<\boldsymbol{\beta}_{i}$)
is homologically $\mathbb{Z}$-gradable. We can fix the homological
$\mathbb{Z}$-gradings as follows (alternatively, use Proposition
\ref{prop:combi-claims} and fix the homological $\mathbb{Z}$-gradings
as in Proposition \ref{prop:combi-claims}; this is equivalent to
the following): for $i=0,1$, the canonical elements $\theta\in HF^{-}(\widetilde{\boldsymbol{\beta}_{i}},\widetilde{\boldsymbol{\beta}_{i+1}})$,
$\theta\in HF^{-}(\boldsymbol{\beta}_{i},\boldsymbol{\beta}_{i+1})$
lie in homological grading $0$, and for $j=0,1,2$, the elements
$\Theta^{+}\in CF^{-}(\widetilde{\boldsymbol{\beta}_{j}},\boldsymbol{\beta}_{j})$
that induce the canonical isomorphism $\mu_{2}(-\otimes\Theta^{+}):HF^{-}(\boldsymbol{\alpha},\widetilde{\boldsymbol{\beta}_{j}})\to HF^{-}(\boldsymbol{\alpha},\boldsymbol{\beta}_{j})$
also lie in homological grading $0$. Then for $i<j$, $i,j\in\{0,1,2\}$,
the top homological grading of $HF^{-}(\widetilde{\boldsymbol{\beta}_{i}},\widetilde{\boldsymbol{\beta}_{j}})$
is $0$, and so the top homological grading of $HF^{-}(\widetilde{\boldsymbol{\beta}_{i}},\boldsymbol{\beta}_{j})$
is also $0$.

Let $\zeta\in CF_{1}^{-}(\widetilde{\boldsymbol{\beta}_{0}},\widetilde{\boldsymbol{\beta}_{2}})$
be such that $\mu_{1}(\zeta)=\mu_{2}^{\widetilde{\boldsymbol{\beta}_{0}},\widetilde{\boldsymbol{\beta}_{1}},\widetilde{\boldsymbol{\beta}_{2}}}(\theta\otimes\theta)$,
let $\zeta\in CF_{1}^{-}(\boldsymbol{\beta}_{0},\boldsymbol{\beta}_{2})$
be such that $\mu_{1}(\zeta)=\mu_{2}^{\boldsymbol{\beta}_{0},\boldsymbol{\beta}_{1},\boldsymbol{\beta}_{2}}(\theta\otimes\theta)$,
and let $\underline{\widetilde{\boldsymbol{\beta}_{012}}},\underline{\boldsymbol{\beta}_{012}}$
be the corresponding twisted complexes (the horizontal parts of Diagram
(\ref{eq:unoriented-twisted})). We can define the diagonal maps of
Diagram (\ref{eq:unoriented-twisted}) (i.e. find elements of $CF_{1}^{-}(\widetilde{\boldsymbol{\beta}_{0}},\boldsymbol{\beta}_{1})$,
$CF_{1}^{-}(\widetilde{\boldsymbol{\beta}_{1}},\boldsymbol{\beta}_{2})$,
and $CF_{2}^{-}(\widetilde{\boldsymbol{\beta}_{0}},\boldsymbol{\beta}_{2})$)
such that the element $\underline{\Theta^{+}}\in CF^{-}(\underline{\widetilde{\boldsymbol{\beta}_{012}}},\underline{\boldsymbol{\beta}_{012}})$
that consists of the vertical and diagonal maps is a cycle. Here,
the vertical maps are the elements $\Theta^{+}\in CF_{0}^{-}(\widetilde{\boldsymbol{\beta}_{j}},\boldsymbol{\beta}_{j})$
from above. 
\begin{equation}\label{eq:unoriented-twisted}
\begin{tikzcd}
	{\widetilde{\boldsymbol{\beta}_0}} & {\widetilde{\boldsymbol{\beta}_1}} & {\widetilde{\boldsymbol{\beta}_2}} \\
	{\boldsymbol{\beta}_0} & {\boldsymbol{\beta}_1} & {\boldsymbol{\beta}_2}
	\arrow["\theta"{description}, from=1-1, to=1-2]
	\arrow["{\zeta }"{description, pos=0.3}, curve={height=-12pt}, from=1-1, to=1-3]
	\arrow["{\Theta^+}"{description, pos=0.3}, from=1-1, to=2-1]
	\arrow[from=1-1, to=2-2]
	\arrow[from=1-1, to=2-3]
	\arrow["{\theta }"{description}, from=1-2, to=1-3]
	\arrow["{\Theta^+}"{description, pos=0.3}, from=1-2, to=2-2]
	\arrow[from=1-2, to=2-3]
	\arrow["{\Theta^+}"{description, pos=0.3}, from=1-3, to=2-3]
	\arrow["\theta"{description}, from=2-1, to=2-2]
	\arrow["{\zeta }"{description, pos=0.3}, curve={height=12pt}, from=2-1, to=2-3]
	\arrow["\theta"{description}, from=2-2, to=2-3]
\end{tikzcd}
\end{equation} Hence, $\mu_{2}(-\otimes\underline{\Theta^{+}}):HF^{-}(\boldsymbol{\alpha},\underline{\widetilde{\boldsymbol{\beta}_{012}}})\to HF^{-}(\boldsymbol{\alpha},\underline{\boldsymbol{\beta}_{012})}$
is an isomorphism, and so the theorem follows.
\end{proof}
If $L_{0},L_{1},L_{2}$ form an unoriented skein triple, then $L_{1},L_{2},L_{0}$
and $L_{2},L_{0},L_{1}$ form unoriented skein triples. Hence, we
get an exact triangle 
\[
\cdots\to HF^{-}(\boldsymbol{\alpha},\boldsymbol{\beta}_{0})\to HF^{-}(\boldsymbol{\alpha},\boldsymbol{\beta}_{1})\to HF^{-}(\boldsymbol{\alpha},\boldsymbol{\beta}_{2})\to HF^{-}(\boldsymbol{\alpha},\boldsymbol{\beta}_{0})\to\cdots.
\]
Lemma \ref{lem:Let--be} says that the sum of the homological degrees
of three consecutive maps is $-1$. Compare \cite[Subsection 9.1]{nahm2025unorientedskeinexacttriangle}.
\begin{lem}
\label{lem:Let--be}Let $L_{0},L_{1},L_{2}$ be an unoriented skein
triple as above. Let $\boldsymbol{\alpha}$ be an attaching curve
such that the basepoint pair for the baseball $BB$ is a free basepoint
pair. Then, it is possible to assign the homological gradings such
that the gradings are as follows:
\[
HF_{a}^{-}(\boldsymbol{\alpha},\boldsymbol{\beta}_{0})\to HF_{b}^{-}(\boldsymbol{\alpha},\boldsymbol{\beta}_{1})\to HF_{c}^{-}(\boldsymbol{\alpha},\boldsymbol{\beta}_{2})\to HF_{a-1}^{-}(\boldsymbol{\alpha},\boldsymbol{\beta}_{0}).
\]
\end{lem}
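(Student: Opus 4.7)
The plan is to assign absolute lifts of the relative homological gradings on $HF^-(\boldsymbol{\alpha},\boldsymbol{\beta}_i)$ so that the two band maps in the exact triangle preserve degree and the connecting map drops degree by one; then exactness forces the last grading to be $a-1$. Equivalently, I verify that the three maps in the triangle are homogeneous and that the sum of their absolute degrees around the cycle equals $-1$; since this sum is invariant under independent shifts of the absolute lifts of the three modules, one may afterwards re-shift to produce any desired $b,c$ in the statement.

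First, as in the paragraph preceding Theorem \ref{thm:Let--be}, I lift the relative gradings on $HF^-(\boldsymbol{\beta}_0,\boldsymbol{\beta}_1)$ and $HF^-(\boldsymbol{\beta}_1,\boldsymbol{\beta}_2)$ so that the canonical element $\theta$ of Definition \ref{def:f-and-g} lies in homological degree $0$. Since $\mu_2$ has degree $0$ (see Subsection \ref{subsec:ainf}), the band maps $F_{01}=\mu_2(-\otimes\theta)$ and $F_{12}=\mu_2(-\otimes\theta)$ are homogeneous of degree $0$. Moreover $\mu_1(\zeta)=\mu_2(\theta\otimes\theta)$ has degree $0$, and $\mu_1$ has degree $-1$, so one may choose a homogeneous representative $\zeta\in CF_1^-(\boldsymbol{\beta}_0,\boldsymbol{\beta}_2)$, which makes $\mu_2(-\otimes\zeta)\colon CF^-(\boldsymbol{\alpha},\boldsymbol{\beta}_0)\to CF^-(\boldsymbol{\alpha},\boldsymbol{\beta}_2)$ homogeneous of degree $+1$. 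I then pick any absolute lift on $HF^-(\boldsymbol{\alpha},\boldsymbol{\beta}_0)$ and use $F_{01}$ and $F_{12}$ to transport it to lifts on $HF^-(\boldsymbol{\alpha},\boldsymbol{\beta}_1)$ and $HF^-(\boldsymbol{\alpha},\boldsymbol{\beta}_2)$ so that both band maps preserve absolute degree.

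Next, I identify the connecting map. Applying $CF^-(\boldsymbol{\alpha},-)$ to the acyclic twisted complex $\underline{\boldsymbol{\beta}_{012}}$ of Theorem \ref{thm:Let--be} yields an acyclic total complex whose underlying module is
\[
CF^-(\boldsymbol{\alpha},\boldsymbol{\beta}_0)\ \oplus\ CF^-(\boldsymbol{\alpha},\boldsymbol{\beta}_1)[-1]\ \oplus\ CF^-(\boldsymbol{\alpha},\boldsymbol{\beta}_2)[-2],
\]
with a total differential of uniform homological degree $-1$; the shifts are precisely those dictated by the degrees of the off-diagonal contributions, namely $F_{01},F_{12}$ in degree $0$ and $\mu_2(-\otimes\zeta)$ in degree $+1$. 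The natural filtration by the index $i$ identifies the third map in the exact triangle with the snake-lemma connecting homomorphism, and unwinding the shifts gives a homogeneous map $F_{20}\colon HF^-_{m+1}(\boldsymbol{\alpha},\boldsymbol{\beta}_2)\to HF^-_m(\boldsymbol{\alpha},\boldsymbol{\beta}_0)$ of degree $-1$.

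Combining these, the exact triangle with my choice of absolute lifts reads
\[
HF_a^-(\boldsymbol{\alpha},\boldsymbol{\beta}_0)\xrightarrow{F_{01}}HF_a^-(\boldsymbol{\alpha},\boldsymbol{\beta}_1)\xrightarrow{F_{12}}HF_a^-(\boldsymbol{\alpha},\boldsymbol{\beta}_2)\xrightarrow{F_{20}}HF_{a-1}^-(\boldsymbol{\alpha},\boldsymbol{\beta}_0),
\]
which is the claim with $b=c=a$; general $b,c$ are obtained by further independently re-shifting the absolute lifts on $HF^-(\boldsymbol{\alpha},\boldsymbol{\beta}_1)$ and $HF^-(\boldsymbol{\alpha},\boldsymbol{\beta}_2)$, noting that the sum of the three degree changes telescopes and is therefore a shift-invariant quantity, equal to $-1$. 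The main subtlety is pinning down the shift convention in the mapping cone realization of the twisted complex; this is rigidly forced by the relation $\mu_1(\zeta)=\mu_2(\theta\otimes\theta)$, which determines $\deg\zeta=1$ and hence the degree $-1$ of the connecting map.
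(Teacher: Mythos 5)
Your handling of the first two maps is correct and matches the paper's conventions: with the grading of Proposition \ref{prop:combi-claims}, the canonical elements $\theta(0,1)$ and $\theta(1,2)$ lie in degree $0$, so the two band maps are homogeneous of degree $0$ and $\deg\zeta=1$ is forced. The gap is in the third map. The exact triangle displayed before the lemma, and the way the lemma is used later (e.g.\ in Section \ref{sec:A-nonvanishing-argument} and in Subsection \ref{subsec:An-upper-bound-1}, where the map $HF^{-}(\boldsymbol{\alpha},\boldsymbol{\beta}_{c})\to HF^{-}(\boldsymbol{\alpha},\boldsymbol{\beta}_{a})$ is analyzed concretely as the non-orientable band map), has all three maps equal to band maps $\mu_{2}(-\otimes\theta)$: the third one is $\mu_{2}(-\otimes\theta(2,0'))$ for a standard translate $\boldsymbol{\beta}_{0}'$ of $\boldsymbol{\beta}_{0}$, composed with the canonical identification given by $\Theta^{+}$. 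You instead compute the degree of the formal connecting homomorphism of the filtration on $CF^{-}(\boldsymbol{\alpha},\underline{\boldsymbol{\beta}_{012}})$. Those two maps have the same image by exactness, but the claim that they have the same degree is not formal: the degree of the third band map is governed by the grading of $\theta(2,0')\in HF^{-}(\boldsymbol{\beta}_{2},\boldsymbol{\beta}_{0}')$ in a convention where $\Theta^{+}\in HF^{-}(\boldsymbol{\beta}_{0},\boldsymbol{\beta}_{0}')$ has degree $0$, and $\boldsymbol{\beta}_{0}'$ does not fit into the poset of Section \ref{sec:The-setup-and}, so Proposition \ref{prop:combi-claims} does not pin it down. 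Indeed, your cone bookkeeping would return $-1$ no matter what the Maslov indices in the crossing-ball region actually are, whereas the statement about the three band maps is a geometric fact about the triple diagram there.

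This geometric input is exactly what the paper's proof supplies: it enlarges the category to $\boldsymbol{\alpha}<\boldsymbol{\beta}_{0}<\boldsymbol{\beta}_{1}<\boldsymbol{\beta}_{2}<\boldsymbol{\beta}_{0}'$, normalizes so that $\Theta^{+}$ has degree $0$, and verifies by a local computation in the genus-one diagram of Figure \ref{fig:drawing-genus1} that $\mu_{3}(\tau,\rho_{1}+\rho_{2},\sigma_{1}'+\sigma_{2}')$ lies in the same grading as $\Theta^{+}$; since $\mu_{3}$ has degree $+1$, the three canonical elements have degrees summing to $-1$, hence so do the three band maps. To repair your argument you would either have to reproduce this Maslov index computation, or prove that the connecting homomorphism agrees with the third band map as a \emph{graded} map — which amounts to the same computation, since it requires tracking the degree of the comparison data between $\boldsymbol{\beta}_{0}$ and $\boldsymbol{\beta}_{0}'$ through the triangle-detection argument.
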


\begin{proof}
Without loss of generality, cyclically permute $\boldsymbol{\beta}_{0},\boldsymbol{\beta}_{1},\boldsymbol{\beta}_{2}$
such that $BB$ is a free basepoint pair for $\boldsymbol{\beta}_{0}$.
Then, consider the $A_{\infty}$-category with objects $\boldsymbol{\alpha}<\boldsymbol{\beta}_{0}<\boldsymbol{\beta}_{1}<\boldsymbol{\beta}_{2}<\boldsymbol{\beta}_{0}'$
where $\boldsymbol{\beta}_{0}'$ is a standard translate of $\boldsymbol{\beta}_{0}$,
and consider a homological grading on this $A_{\infty}$-category.
Shift the homological gradings such that the top homological $\mathbb{Z}$-grading
element $\Theta^{+}\in HF^{-}(\boldsymbol{\beta}_{0},\boldsymbol{\beta}_{0}')$
has grading $0$, i.e. the canonical isomorphism $HF^{-}(\boldsymbol{\alpha},\boldsymbol{\beta}_{0})\to HF^{-}(\boldsymbol{\alpha},\boldsymbol{\beta}_{0}')$
has degree $0$. We only have to show that the sum of the homological
degrees of the band maps $\mu_{2}^{\boldsymbol{\alpha},\boldsymbol{\beta}_{0},\boldsymbol{\beta}_{1}}(-\otimes\theta)$,
$\mu_{2}^{\boldsymbol{\alpha},\boldsymbol{\beta}_{1},\boldsymbol{\beta}_{2}}(-\otimes\theta)$,
and $\mu_{2}^{\boldsymbol{\alpha},\boldsymbol{\beta}_{2},\boldsymbol{\beta}_{0}'}(-\otimes\theta)$
is $-1$, which follows from a combinatorial grading computation.
See Figure \ref{fig:drawing-genus1} for a local computation (the
general case is obtained by stabilizing): $\mu_{3}^{\boldsymbol{\beta}_{0},\boldsymbol{\beta}_{1},\boldsymbol{\beta}_{2},\boldsymbol{\beta}_{0}'}(\tau,\rho_{1}+\rho_{2},\sigma_{1}'+\sigma_{2}')$
is in the same homological grading as $\Theta^{+}$.
\end{proof}
\begin{figure}[h]
\begin{centering}
\includegraphics{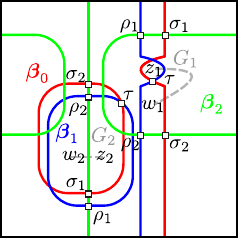}
\par\end{centering}
\caption{\label{fig:drawing-genus1}A genus $1$ diagram for Lemma \ref{lem:Let--be}.
The basepoint pair $(z_{2},w_{2})$ corresponds to the baseball $BB$.}
.
\end{figure}

\section{\label{sec:Swap-maps}Swap maps for balled links}

In the spirit of \cite[Section 3]{nahm2025unorientedskeinexacttriangle}
and Section \ref{sec:Band-maps-for}, we define another type of map
between balled links.
\begin{defn}
\label{def:Two-balled-links}Two balled links $L_{0},L_{1}$ are \emph{related
by a swap map $L_{0}\to L_{1}$} if there exists a setup as in Subsection
\ref{subsec:The-general-setup}, where $L_{0},L_{1}\in\boldsymbol{L}$
and their underlying links are identical.
\end{defn}

\begin{condition}
\label{cond:In-this-section,}In this section, further assume that
there are two baseballs $BB_{0},BB_{1}$ on the same component of
$L_{0}$ such that $LB(L_{1})\sqcup\{BB_{0}\}=LB(L_{0})\sqcup\{BB_{1}\}$
($LB(L)$ is the set of link baseballs of $L$).
\end{condition}

Fix $c\in H^{2}(Y;\mathbb{Z})$, and consider a $c$-strongly admissible
pair-pointed Heegaard diagram $(\Sigma,\boldsymbol{\alpha},\boldsymbol{\beta}_{0},\boldsymbol{\beta}_{1},\boldsymbol{p},\boldsymbol{G})$
as in Subsection \ref{subsec:The-Heegaard-diagram}. The definition
of the swap map $CF^{-}(\boldsymbol{\alpha},\boldsymbol{\beta}_{0})\to CF^{-}(\boldsymbol{\alpha},\boldsymbol{\beta}_{1})$
is similar to the definition of the merge and swap band maps from
Subsection \ref{subsec:Band-maps}. The sub Heegaard diagram for $\boldsymbol{\beta}_{0},\boldsymbol{\beta}_{1}$,
after isotopies, is a stabilization of the Heegaard diagram $K_{ori}$
of Figure \ref{fig:s1s2-important-1-1} (or equivalently, the right
hand side of Figure \ref{fig:local-ori}). Let $\theta\in CF_{top}^{-}(\boldsymbol{\beta}_{0},\boldsymbol{\beta}_{1})$
be the canonical element (Definition \ref{def:f-and-g}), and define
the swap map as
\[
\mu_{2}(-\otimes\theta):CF^{-}(\boldsymbol{\alpha},\boldsymbol{\beta}_{0})\to CF^{-}(\boldsymbol{\alpha},\boldsymbol{\beta}_{1}).
\]
This map is homogeneous with respect to the relative homological gradings
(see Subsection \ref{subsec:ainf}).
\begin{rem}
We can relax the assumption on $\boldsymbol{\alpha}$ similarly to
Remark \ref{rem:relax-band}; we only require that the basepoint pairs
that are contained in $BB_{0},BB_{1}$ are free basepoint pairs for
$\boldsymbol{\alpha}$.
\end{rem}

The same proof as Proposition \ref{prop:For-any-} gives the following.
\begin{prop}
\label{prop:For-any--1}For any $q\in L_{i}$, the swap map 
\[
F:HFL'{}^{-}(Y,L_{0};c)\to HFL'{}^{-}(Y,L_{1};c)
\]
is equivariant with respect to the action of $q$ (Definition \ref{def:hflaction}).
\end{prop}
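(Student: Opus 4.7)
The plan is to follow the same strategy as in the proof of Proposition \ref{prop:For-any-}, reducing the claim to a single identity about the canonical element $\theta\in HF^-(\boldsymbol{\beta}_0,\boldsymbol{\beta}_1)$ and verifying that identity in the local model of Figure \ref{fig:local-ori}.

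First I would split into cases based on where $q$ lies on $L_i$. If $q$ lies on a connected component of $L_i$ that does not pass through either $BB_0$ or $BB_1$, then the link baseball for that component is the same baseball $BB$ for both $L_0$ and $L_1$. The action of $q$ on both sides is then multiplication by $U_{BB}^{1/2}$, which already lies in the coefficient rings $R_{\boldsymbol{\alpha},\boldsymbol{\beta}_0},R_{\boldsymbol{\alpha},\boldsymbol{\beta}_1}$, and equivariance follows immediately from the $R_{tot}$-linearity of $\mu_{2,R_{tot}}$ together with the fact that the projection and rescaling appearing in the definition of $\mu_2$ (Equation (\ref{eq:mud-defn})) commute with multiplication by $U_{BB}^{1/2}$, since $BB$ is not one of the two swapped baseballs.

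The essential case is when $q$ lies on the component containing $BB_0$ and $BB_1$. Then the action of $q$ on $HFL'{}^-(Y,L_0;c)$ is multiplication by $U_{BB_0}^{1/2}$, while on $HFL'{}^-(Y,L_1;c)$ it is multiplication by $U_{BB_1}^{1/2}$. Lifting everything to $R_{tot}$ and using that $\mu_{2,R_{tot}}$ is $R_{tot}$-linear, the required relation $F(U_{BB_0}^{1/2}\cdot-)=U_{BB_1}^{1/2}F(-)$ becomes a formal consequence of the single identity
\[
U_{BB_0}^{1/2}\theta=U_{BB_1}^{1/2}\theta
\]
on $HF^-_{R_{tot}}(\boldsymbol{\beta}_0,\boldsymbol{\beta}_1)$; projecting to the appropriate $R_{\boldsymbol{\alpha},\boldsymbol{\beta}_1}$-summand and rescaling by $U_F^{-1/2}$ then yields the original claim.

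To verify the displayed identity, I would pass to the local model. The sub-Heegaard diagram for $(\boldsymbol{\beta}_0,\boldsymbol{\beta}_1)$ in the swap setup is, after isotopy, a stabilization of the right-hand side of Figure \ref{fig:local-ori}, with the two basepoint pairs corresponding to $BB_0$ and $BB_1$. By Lemma \ref{lem:ori-diagram} and the computation immediately following it, the top homological grading part of $HF^-(\boldsymbol{\beta},\boldsymbol{\gamma};0)$ in that model is $\mathbb{F}[U^{1/2}]\{ay+bx\}\oplus\mathbb{F}[U^{1/2}]\{ax+by\}$, with both $U_1^{1/2}$ and $U_2^{1/2}$ acting as multiplication by $U^{1/2}$. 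The canonical element $\theta=f=ay+bx$ therefore satisfies $U_1^{1/2}\theta=U^{1/2}\theta=U_2^{1/2}\theta$. The general case follows because stabilization maps are $R$-linear and so preserve the $U_p^{1/2}$-actions (Subsection \ref{subsec:Connected-sums}). There is no serious obstacle: the only substantive input is the model computation that was already carried out in Subsection \ref{subsec:Model-computations}.
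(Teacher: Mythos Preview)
Your proof is correct and follows essentially the same approach as the paper's: reduce to the case where $q$ lies on the component containing $BB_0$ and $BB_1$, and observe that equivariance then follows formally from the identity $U_{BB_0}^{1/2}\theta=U_{BB_1}^{1/2}\theta$. You supply more detail than the paper does (the explicit verification of this identity via the local model of Figure~\ref{fig:local-ori} and the discussion of how the $R_{tot}$-linearity of $\mu_{2,R_{tot}}$ plus projection yields the claim), but the underlying argument is the same.
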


\begin{proof}
Assume that $q$ is on the component of $L_{0}$ that the baseballs
$BB_{0},BB_{1}$ are on, since the proposition is immediate otherwise.
In this case, the statement is that $F(U_{0}^{1/2}\cdot-)=U_{1}^{1/2}F(-)$
on homology. This is a formal consequence of that the element $\theta\in HF^{-}(\boldsymbol{\beta}_{0},\boldsymbol{\beta}_{1})$
satisfies $U_{0}^{1/2}\theta=U_{1}^{1/2}\theta$.
\end{proof}
\begin{figure}[h]
\begin{centering}
\includegraphics{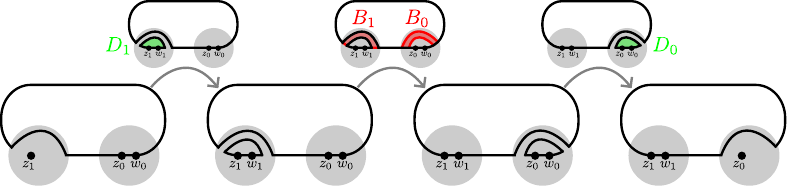}
\par\end{centering}
\caption{\label{fig:swap-schematic}We view swap maps as the composition of
three maps.}
\end{figure}

As in Subsection \ref{subsec:Merge-and-split}, the swap map can be
viewed as the composition of three maps: a creation, a mergesplit
pair, and an annihilation map (see Figure \ref{fig:swap-schematic}).
Hence, it can appropriately\footnote{For instance, if we want to view the swap map as the composition of
a split map on balled links and an annihilation map, then we have
to redefine the baseball $BB_{0}$ for the split map, i.e. ``shrink''
$BB_{0}$ such that $L_{1}\sqcup\partial D_{0}$ is a balled link.} also be viewed as the composition of a creation map and a merge map,
or a split map and an annihilation map.

We have the following.
\begin{prop}
\label{prop:Let--be}Let $L_{0},L_{1}$ be balled links in $S^{3}$
such that they are related by a swap move as above, and their underlying
link $L$ (which is identical) is an unlink. Then, under the identification
$HFL^{-}(S^{3},L_{i})\simeq Kh^{-}(L)$ of Proposition \ref{prop:Let--be-1},
the swap map is the identity.
\end{prop}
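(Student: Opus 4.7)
The plan is to decompose the swap map via Figure \ref{fig:swap-schematic} as the composition of a creation, a mergesplit pair, and an annihilation, and to evaluate this composition in the Lee/equivariant Frobenius algebra model for $Kh^{-}$.

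For the two bands of the mergesplit pair, both are planar bands between planar links (since $L$ is an unlink), so Proposition \ref{prop:Let--be-1} identifies them with the multiplication $m$ and the comultiplication $\Delta$ on $\mathcal{A}=\mathbb{F}[x,U]/(x^{2}-U)$. For the creation and annihilation, I would unwind the definitions from Subsection \ref{subsec:Merge-and-split}. The creation is the inclusion $a\mapsto a$ into the first summand of $CF^{-}(\boldsymbol{\alpha},\boldsymbol{\beta}_{0}^{link})\simeq CF^{-}(\boldsymbol{\alpha},\boldsymbol{\beta}_{0})\oplus U_{BB_{1}}^{1/2}CF^{-}(\boldsymbol{\alpha},\boldsymbol{\beta}_{0})$, which under the identification $U_{BB_{1}}^{1/2}\leftrightarrow x_{D_{1}}$ becomes the Frobenius unit $a\mapsto a\otimes 1_{D_{1}}$; similarly, the annihilation $U_{BB_{0}}^{-1/2}\pi$ onto the second summand becomes the counit $a\otimes 1+b\otimes x_{D_{2}}\mapsto b$.

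By Proposition \ref{prop:For-any--1}, the swap map is $\mathbb{F}[U]$-linear and equivariant with respect to the basepoint action of each component; under the identifications $U_{BB_{0}}^{1/2}$ and $U_{BB_{1}}^{1/2}$ both correspond to $x_{C}$, so the swap map is $Kh^{-}(L)$-linear. It therefore suffices to track the element $1\in Kh^{-}(L)$: the creation yields $1\otimes 1_{D_{1}}$, the merge multiplication yields $1$, the split comultiplication yields $\Delta(1)=1\otimes x_{D_{2}}+x_{C}\otimes 1$, and the counit extracts the coefficient of $x_{D_{2}}$, returning $1$. Therefore the swap map is the identity.

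The main step requiring care is the identification of the unoriented link Floer creation and annihilation maps with the Frobenius unit and counit. However, both sides are given by explicit local formulas, so this reduces to a direct bookkeeping check using Subsection \ref{subsec:Merge-and-split} together with the $U_{BB}^{1/2}\leftrightarrow x_{C}$ identification from Proposition \ref{prop:Let--be-1}.
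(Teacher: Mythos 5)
Your overall strategy — decompose the swap map and evaluate the composition on the Khovanov side after reducing to tracking $1\in Kh^{-}(L)$ via Proposition~\ref{prop:For-any--1} — is the paper's first suggested route, and the conclusion is correct. Two points merit attention.

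First, your trace runs through four steps (creation, merge, split, annihilation), i.e.\ you compute $\epsilon\circ\Delta\circ m\circ\iota$. This equals the swap map only if the mergesplit-pair map $\mu_{2}(-\otimes\theta)$ factors on the Floer side as the composition of a merge band map and a split band map; Proposition~\ref{prop:Let--be-1} identifies \emph{band maps} with their Khovanov counterparts, but it does not directly say this for the mergesplit-pair map. You can sidestep this by using either of the two-step decompositions spelled out in Subsection~\ref{subsec:Merge-and-split} and in the proof of Proposition~\ref{prop:swap-quasi-iso-balled}: swap $=$ (merge band map)$\circ$(creation), giving $a\mapsto m(a\otimes 1)=a$; or swap $=$ (annihilation)$\circ$(split band map), giving $a\mapsto \epsilon(\Delta(a))=a$. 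Then each band map appearing is genuinely a band map and Proposition~\ref{prop:Let--be-1} applies to it. Second, the identification of creation/annihilation with the Frobenius unit/counit, which you call a bookkeeping check, does need a small argument; the cleanest is via relative $\delta$-grading homogeneity of the summand splitting. The paper also records a purely formal alternative: by Propositions~\ref{prop:For-any--1} and~\ref{prop:swap-quasi-iso-balled}, the swap map is a $Kh^{-}(L)$-linear isomorphism, hence multiplication by a unit $u=F(1)$; since it is homogeneous for the relative $\delta$-grading and $1$ spans the top degree, $u$ is the unique homogeneous unit, namely $1$. This avoids the unit/counit identification altogether.
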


\begin{proof}
From the definition, the swap map is a composition of a creation map
and a merge map, and also a composition of a split map and an annihilation
map. We can compute the swap map for planar links using either description.
Alternatively, we can use Propositions \ref{prop:For-any--1} and
\ref{prop:swap-quasi-iso-balled}.
\end{proof}
\begin{prop}
\label{prop:swap-quasi-iso-balled}Swap maps are quasi-isomorphisms.\footnote{It is possible to show this by reducing to a local computation, as
in \cite[Theorem 4.3 and Section 9]{nahm2025unorientedskeinexacttriangle}
(but simpler, as we can use the computation for planar links). However,
this involves actually considering non-trivial local systems (more
precisely, we need to consider two attaching curves with non-trivial
local systems); for ease of exposition, we avoid mentioning local
systems in this paper, and present the following proof instead.}
\end{prop}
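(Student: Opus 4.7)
The plan is to exhibit a near-inverse to the swap map and reduce to a local holomorphic triangle count. Let $F=\mu_{2}(-\otimes\theta_{01}):CF^{-}(\boldsymbol{\alpha},\boldsymbol{\beta}_{0})\to CF^{-}(\boldsymbol{\alpha},\boldsymbol{\beta}_{1})$ be the swap map, and let $\boldsymbol{\beta}_{0}'$ be a small Hamiltonian translate of $\boldsymbol{\beta}_{0}$, chosen so that $\boldsymbol{\alpha}<\boldsymbol{\beta}_{0}<\boldsymbol{\beta}_{1}<\boldsymbol{\beta}_{0}'$ fits into the $A_{\infty}$-category set up in Subsection \ref{subsec:The-Heegaard-diagram}. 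The pair $(\boldsymbol{\beta}_{1},\boldsymbol{\beta}_{0}')$ also forms a swap pair (with the roles of $BB_{0},BB_{1}$ interchanged), so we obtain a corresponding swap map $G=\mu_{2}(-\otimes\theta_{10}):CF^{-}(\boldsymbol{\alpha},\boldsymbol{\beta}_{1})\to CF^{-}(\boldsymbol{\alpha},\boldsymbol{\beta}_{0}')$. By Remark \ref{rem:If--is}, the $\boldsymbol{\beta}_{0}'$-diagram represents the same balled link $L_{0}$ as the $\boldsymbol{\beta}_{0}$-diagram.

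By the $A_{\infty}$-relations, the composition $G\circ F$ is chain homotopic to $\mu_{2}(-\otimes\mu_{2}(\theta_{01}\otimes\theta_{10}))$. The crucial step is to verify, on homology, the identity
\[
\mu_{2}(\theta_{01}\otimes\theta_{10})=\Theta^{+}\in HF^{-}_{top}(\boldsymbol{\beta}_{0},\boldsymbol{\beta}_{0}'),
\]
where $\Theta^{+}$ is the top generator inducing the canonical change-of-diagram quasi-isomorphism $\mu_{2}(-\otimes\Theta^{+}):HF^{-}(\boldsymbol{\alpha},\boldsymbol{\beta}_{0})\to HF^{-}(\boldsymbol{\alpha},\boldsymbol{\beta}_{0}')$ of Subsection \ref{subsec:Connected-sums}. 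Granted this identity, $G\circ F$ is a quasi-isomorphism. Running the symmetric argument with the roles of $L_{0}$ and $L_{1}$ interchanged, using a translate $\boldsymbol{\beta}_{1}'$ of $\boldsymbol{\beta}_{1}$, shows that the analogous composition $F\circ G$ is also a quasi-isomorphism. Together these force both $F$ and $G$ to be quasi-isomorphisms, which gives the proposition.

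The main obstacle is the identity $\mu_{2}(\theta_{01}\otimes\theta_{10})=\Theta^{+}$. After neck-stretching, this reduces to a triangle count in a local triple Heegaard diagram built from two copies of the right-hand side of Figure \ref{fig:local-ori} together with a small Hamiltonian translate, which can be stabilized to the ambient diagram by Definition \ref{def:A-pair-pointed-Heegaard-1}. Lemma \ref{lem:ori-diagram} together with Definition \ref{def:f-and-g} identifies $\theta_{01}$ and $\theta_{10}$ as explicit sums of intersection points (of the form $ay+bx$), and a direct count of small triangles in this standard model produces $\Theta^{+}$. The subtlety is that $\boldsymbol{\beta}_{0}$ and $\boldsymbol{\beta}_{0}'$ consist of two circles each in the local picture, so $HF^{-}_{top}(\boldsymbol{\beta}_{0},\boldsymbol{\beta}_{0}')$ has two relative Alexander $\mathbb{Z}/2$-grading summands, and one must check that the triangle count produces the generator in the correct Alexander parity to match $\Theta^{+}$. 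This triangle count is analogous to, but strictly simpler than, the computations in the proof of \cite[Theorem 4.2]{nahm2025unorientedskeinexacttriangle}, since no local systems are involved.
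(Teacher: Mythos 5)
Your approach is the ``local computation'' route that the footnote to this proposition explicitly flags and avoids, and as written it has a genuine gap at its first step. The composition you need, $\mu_{2}:CF^{-}(\boldsymbol{\beta}_{0},\boldsymbol{\beta}_{1})\otimes CF^{-}(\boldsymbol{\beta}_{1},\boldsymbol{\beta}_{0}')\to CF^{-}(\boldsymbol{\beta}_{0},\boldsymbol{\beta}_{0}')$, is not defined in the $A_{\infty}$-category framework of this paper. For the chain $\boldsymbol{\beta}_{0}<\boldsymbol{\beta}_{1}<\boldsymbol{\beta}_{0}'$, the basepoint pair contained in $BB_{0}$ is a \emph{link} basepoint pair for $\boldsymbol{\beta}_{0}$ and for its translate $\boldsymbol{\beta}_{0}'$, but a \emph{free} basepoint pair for $\boldsymbol{\beta}_{1}$; this link--free--link pattern violates the interval condition in the last bullet of Definition \ref{def:A-pair-pointed-Heegaard}, and Remark \ref{rem:The-last-condition} records that this condition is exactly what makes the $\mu_{n}$'s satisfy the $A_{\infty}$-relations. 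Consequently the identity ``$G\circ F$ is chain homotopic to $\mu_{2}(-\otimes\mu_{2}(\theta_{01}\otimes\theta_{10}))$'' is not available, and the target $HF^{-}_{top}(\boldsymbol{\beta}_{0},\boldsymbol{\beta}_{0}')$ together with the element $\mu_{2}(\theta_{01}\otimes\theta_{10})$ is not even well-posed without genuinely reintroducing the rank-$2$ local systems of \cite{nahm2025unorientedskeinexacttriangle} (here \emph{two} of the three curves would carry nontrivial local systems, which is precisely the case the paper's formalism excludes). Your closing claim that ``no local systems are involved'' is therefore the false step: the very reason the swap map is subtle is that its near-inverse cannot be expressed as a $\mu_{2}$ in this category. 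Note also that you cannot repair this by declaring $BB_{0}$'s pair free for $\boldsymbol{\beta}_{0}'$, since then $(\boldsymbol{\alpha},\boldsymbol{\beta}_{0}')$ no longer represents $L_{0}$ and $\Theta^{+}$ no longer induces the change-of-diagram map you want.

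The paper's proof takes an entirely different route that stays inside the framework: it factors the swap map as a split map followed by an annihilation map (for surjectivity) and as a creation map followed by a merge map (for injectivity), completes the split/merge maps into unoriented skein exact triangles as in Figure \ref{fig:merge-swap-ncom-1}, and then uses the $U_{0}^{1/2}$- versus $U_{1}^{1/2}$-equivariance of $F_{s}$ and $F_{m}$ together with exactness to identify ${\rm Im}\,F_{s}=(U_{0}^{1/2}+U_{1}^{1/2})HFL'(Y,L_{1})={\rm ker}\,F_{m}$. If you want to salvage your strategy, you would either need to reprove the relevant $A_{\infty}$-relations with local systems present, or replace the composition $G\circ F$ by an argument of the paper's type.
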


\begin{proof}
\begin{figure}[h]
\begin{centering}
\includegraphics{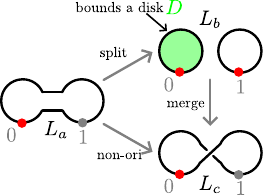}
\par\end{centering}
\caption{\label{fig:merge-swap-ncom-1}A schematic for an unoriented skein
exact triangle for Proposition \ref{prop:swap-quasi-iso-balled}.
The disk $D$ is disjoint from all the other components. The component
of $L_{b}$ that the baseball $BB_{1}$ is on may be knotted, and
may also be linked with other components. Also, only the components
that the baseball $BB_{0}$ or $BB_{1}$ is on are drawn.}
\end{figure}

For simplicity, we omit the Chern class from the notation. Let us
first show that the swap map $HFL'{}^{-}(Y,L_{0})\to HFL'{}^{-}(Y,L_{1})$
is surjective. As above, we view the swap map as the composite of
a split map $L_{0}\to L_{1}\sqcup\partial D$ for a disk $D$ and
an annihilation map $L_{1}\sqcup\partial D\to L_{1}$. Complete the
split map into an unoriented skein exact triangle as in Figure \ref{fig:merge-swap-ncom-1}:
we have $L_{a}=L_{0}$ and $L_{b}=L_{1}\sqcup\partial D$.

Let us call the split map $F_{s}$, and the merge map $F_{m}.$ We
have 
\[
HFL'{}^{-}(Y,L_{b})\simeq HFL'{}^{-}(Y,L_{1})\oplus U_{0}^{1/2}HFL'{}^{-}(Y,L_{1}),
\]
where this splitting is given by the disk $D$. On homology, $U_{0}^{1/2}F_{s}=U_{1}^{1/2}F_{s}$,
and so if $(a,U_{0}^{1/2}b)\in HFL'{}^{-}(Y,L_{b})$ is in the image
of $F_{s}$ for $a,b\in HFL'{}^{-}(Y,L_{1})$, then $a=U_{1}^{1/2}b$.
Hence, 
\[
{\rm Im}F_{s}\le(U_{0}^{1/2}+U_{1}^{1/2})HFL'(Y,L_{1}).
\]

Also, $F_{m}(U_{0}^{1/2}x)=F_{m}(U_{1}^{1/2}x)$ for all $x\in HFL'{}^{-}(Y,L_{b})$,
and so we have 
\[
(U_{0}^{1/2}+U_{1}^{1/2})HFL'(Y,L_{1})\le{\rm ker}F_{m}.
\]
Since Figure \ref{fig:merge-swap-ncom-1} forms an exact triangle,
we have ${\rm ker}F_{m}={\rm Im}F_{s}$, and so 
\[
{\rm Im}F_{s}=(U_{0}^{1/2}+U_{1}^{1/2})HFL'(Y,L_{1})={\rm ker}F_{m}.
\]

Hence, the swap map 
\[
HFL'{}^{-}(Y,L_{0})\xrightarrow{F_{s}}HFL'{}^{-}(Y,L_{1})\oplus U_{0}^{1/2}HFL'{}^{-}(Y,L_{1})\xrightarrow{\pi}U_{0}^{1/2}HFL'{}^{-}(Y,L_{1})\xrightarrow{U_{0}^{-1/2}}HFL'{}^{-}(Y,L_{1})
\]
is surjective.

Let us show that the swap map $HFL'{}^{-}(Y,L_{0})\to HFL'{}^{-}(Y,L_{1})$
is injective. We view the swap map as the composite of a creation
map $L_{0}\to L_{0}\sqcup\partial D$ for a disk $D$ and a merge
map $L_{0}\sqcup\partial D\to L_{1}$. Complete the merge map into
an unoriented skein exact triangle as in Figure \ref{fig:merge-swap-ncom-1}
(but swap the baseballs $BB_{0}$ and $BB_{1}$): $L_{b}=L_{0}\sqcup\partial D$
and $L_{c}=L_{1}$. The swap map is the composition
\[
HFL'{}^{-}(Y,L_{0})\hookrightarrow HFL'{}^{-}(Y,L_{0})\oplus U_{0}^{1/2}HFL'{}^{-}(Y,L_{0})\xrightarrow{F_{m}}HFL'{}^{-}(Y,L_{1}),
\]
where the first map is inclusion into the first summand. This is injective
since ${\rm ker}F_{m}=(U_{0}^{1/2}+U_{1}^{1/2})HFL'(Y,L_{0})$, and
so ${\rm ker}F_{m}\cap HFL'(Y,L_{0})=\{0\}$.
\end{proof}

\section{\label{sec:Some-important-balled}The unoriented link Floer homologies
of some simple links}

\begin{figure}[h]
\begin{centering}
\includegraphics{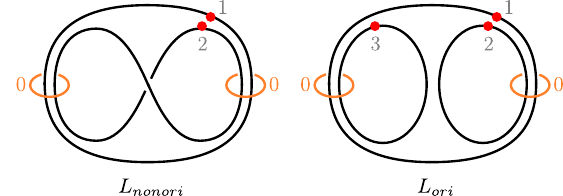}
\par\end{centering}
\caption{\label{fig:Some-important-balled}The balled links $L_{nonori},L_{ori}\subset\#^{2}S^{1}\times S^{2}$}
\end{figure}

In Sections \ref{sec:Band-maps-for} and \ref{sec:Swap-maps}, we
defined band maps and swap maps under some simplifying conditions
(Conditions \ref{cond:If--is} and \ref{cond:In-this-section,}).
To define these maps, we first studied the unoriented link Floer homology
of $K_{nonori}$ and $K_{ori}$ in $S^{1}\times S^{2}$ and defined
canonical elements $\theta$ in their unoriented link Floer homology
groups. For the general case, we also need to study the links $L_{nonori},L_{ori}$
in $\#^{2}S^{1}\times S^{2}$ (see Figure \ref{fig:Some-important-balled}).

For $L=L_{nonori},L_{ori}$, any Heegaard diagram that represents
$(\#^{2}S^{1}\times S^{2},L)$ is Alexander $\mathbb{Z}/2$-splittable,
by Lemma \ref{lem:alexander-two}. Also, there are exactly two ${\rm Spin}^{c}$-structures
with $c_{1}=0$, which differ by a meridian. Hence, the ${\rm Spin}^{c}$-splitting
on $HFL'{}^{-}(\#^{2}S^{1}\times S^{2},L;0)$ coincides with the relative
Alexander $\mathbb{Z}/2$-grading.

The following are the main results of this section.
\begin{prop}
\label{prop:For-each-balled}For the balled links $L=L_{nonori},L_{ori}$
of Figure \ref{fig:Some-important-balled}, $HFL'{}_{top}^{-}(\#^{2}S^{1}\times S^{2},L;0)$
is as follows, where the elements $a,b,c,d$ are homogeneous with
respect to the ${\rm Spin}^{c}$-splitting (equivalently, the relative
Alexander $\mathbb{Z}/2$-grading). 
\[\begin{tikzcd}[sep=large]
	a & b & a & b \\
	c & d & c & d
	\arrow["{\Phi _2}", from=1-1, to=1-2]
	\arrow["{\Phi _1 ,\Phi _2 }"{description}, from=1-1, to=2-1]
	\arrow["{\Phi _1 ,\Phi _2 }"{description}, from=1-2, to=2-2]
	\arrow["{\Phi _2 ,\Phi _3}", from=1-3, to=1-4]
	\arrow["{\Phi _1 ,\Phi _2 }"{description}, from=1-3, to=2-3]
	\arrow["{\Phi _1 ,\Phi _2 }"{description}, from=1-4, to=2-4]
	\arrow["{\Phi _2}", from=2-1, to=2-2]
	\arrow["{L_{nonori}}", shift left=5, draw=none, from=2-1, to=2-2]
	\arrow["{\Phi _2 ,\Phi _3}", from=2-3, to=2-4]
	\arrow["{L_{ori}}", shift left=5, draw=none, from=2-3, to=2-4]
\end{tikzcd}\]We have 
\[
{\rm gr}_{A}^{\mathbb{Z}/2}(a)={\rm gr}_{A}^{\mathbb{Z}/2}(d)\neq{\rm gr}_{A}^{\mathbb{Z}/2}(b)={\rm gr}_{A}^{\mathbb{Z}/2}(c).
\]

Furthermore, for all $\lambda\in H_{1}(\#^{2}S^{1}\times S^{2})$,
the degree $-1$ homology action $A_{\lambda,-1}=B_{\lambda,-1}$
vanishes on $HFL'{}_{top}^{-}(\#^{2}S^{1}\times S^{2},L_{ori};0)$.
\end{prop}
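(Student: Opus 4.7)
The plan is to exploit the unoriented skein exact triangle of Theorem \ref{thm:Let--be} to bootstrap from the computations already in hand (the analogous links $K_{nonori}, K_{ori}$ in $S^1\times S^2$ recorded in Remark \ref{rem:canonical-simple}, and links in $S^3$ coming from planar tangles handled by Proposition \ref{prop:Let--be-1}) to the links $L_{nonori}, L_{ori}$ in $\#^2 S^1\times S^2$. The first step is to embed each of $L_{nonori}, L_{ori}$ into an unoriented skein triple by picking a crossing ball at a suitable place near one of the essential $S^1$ factors, so that one of the three resolutions yields a link that either splits as a connected sum involving $K_{nonori}$ or $K_{ori}$ inside $S^1\times S^2$ (for which Künneth in the sense of Remark \ref{rem:If-the-Heegaard} gives the answer and the $\Phi$-actions) or else yields a link of the same shape in a three-manifold where a suture-datum / free-baseball manipulation plus Remark \ref{rem:If--is} reduces the computation to one already known.

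Next, the induced long exact sequence, together with the grading constraint of Lemma \ref{lem:Let--be}, determines the rank of $HFL'{}_{top}^{-}(\#^{2}S^{1}\times S^{2},L;0)$. To pin down the $\Phi$-table, I would combine three inputs: (i) naturality of $\Phi_{BB}$ under band maps (Proposition \ref{prop:For-any-}), which tells me how the $\Phi_i$ on the outer terms of the triangle lifts to the middle term; (ii) the relations $\Phi_i^2=0$ and $\Phi_i\Phi_j=\Phi_j\Phi_i$; and (iii) $\mu_2$-equivariance of the $\Phi$ actions from Subsection \ref{subsec:The--action}, applied with the canonical elements $\theta$ from Definition \ref{def:f-and-g}. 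Because each $\Phi_i$ shifts the Alexander $\mathbb{Z}/2$-grading (and the ${\rm Spin}^c$-splitting, which coincides with it here as the two relevant ${\rm Spin}^c$-structures differ by a meridian), reading off the arrow pattern in the statement of the proposition simultaneously identifies the four generators $a,b,c,d$ up to relabeling and pins down the equalities ${\rm gr}_{A}^{\mathbb{Z}/2}(a)={\rm gr}_{A}^{\mathbb{Z}/2}(d)\neq{\rm gr}_{A}^{\mathbb{Z}/2}(b)={\rm gr}_{A}^{\mathbb{Z}/2}(c)$.

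For the final assertion that $A_{\lambda,-1}=B_{\lambda,-1}$ vanishes on the top grading for $L_{ori}$, I would argue using naturality of the relative homology action (Lemmas \ref{lem:ab-additive}, \ref{lem:a+b}, \ref{lem:mu2-ab}) under the band maps in the triangle. Specifically, the $A_{\lambda,-1}$ action descends through the long exact sequence and is determined by its behavior on the outer terms; those outer terms are simpler links where the action is either manifestly zero (because $\lambda$ is null-homologous after the connected-sum description or because the target lies outside the top grading) or computable via Lemma \ref{lem:a+b} from the already-known $\Phi$ actions and the fact that the relevant $A_{\lambda,-1}+B_{\lambda,-1}$ combines $\Phi$-actions which one has just listed. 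The top-grading source then has image forced into a grading one lower, and the analysis of where the nontrivial summands of $HFL'{}_{top-1}^{-}$ come from (together with the $H_1(\#^2 S^1\times S^2)\simeq\mathbb{Z}^2$ basis of possible $\lambda$'s) rules out nonzero image.

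The hard part will be the vanishing of $A_{\lambda,-1}$: it is not purely algebraic and requires either a careful tracking of what $\lambda$'s look like in the skein-triangle description or a direct grading/admissibility argument. If the algebraic approach is too delicate, the fallback is to use the direct Heegaard-diagram computation of Appendix \ref{sec:Direct-computations} as independent confirmation, but the goal here is to keep the argument internal to Section \ref{sec:Some-important-balled} using only the skein exact triangle and the formal properties of $\Phi$ and $A_{\lambda,-1}$.
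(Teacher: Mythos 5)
Your overall strategy---bootstrapping from the known computations of $K_{ori}$ and of connected sums via the unoriented skein exact triangle, then using $\Phi$-equivariance of $\mu_{2}$ and the relative homology actions---is the approach the paper takes in Section \ref{sec:Some-important-balled}. However, there is a genuine gap at the step where you claim that ``the induced long exact sequence, together with the grading constraint of Lemma \ref{lem:Let--be}, determines the rank of $HFL'{}_{top}^{-}$.'' In the relevant triple (Figure \ref{fig:s1s2-links}), only one of the three vertices ($L_{c}$, the connected sum) is known; the other two are $L_{nonori}$ and $L_{ori}$ themselves, so the long exact sequence relates two unknown ranks to each other and can only produce a \emph{lower} bound (via injectivity of the band map out of the known term, as in Subsection \ref{subsec:A-lower-bound-of-la}). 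The paper must supply an independent \emph{upper} bound by a chain-level algebraic argument (Subsections \ref{subsec:An-upper-bound} and \ref{subsec:An-upper-bound-1}): writing $\partial=U_{1}^{1/2}\partial_{1}+U_{2}^{1/2}\partial_{2}$ on a destabilized diagram whose generators all lie in the top grading, deducing $\partial_{1}^{2}=\partial_{2}^{2}=0$ and that every top-grading cycle lies in ${\rm Im}\,\partial_{1}$, hence $\dim HF_{0}^{-}\le\tfrac{1}{2}\dim CF_{0}^{-}=4$ (with a refinement, $U_{i}^{1/2}x\neq0$, needed to close the loop for $L_{ori}$). Without this ingredient your argument cannot pin down the rank, and everything downstream (the full $\Phi$-table rather than a sub-table, and the identification of $a,b,c,d$) depends on knowing that the composition $\mu_{2}$ from the $4$-dimensional tensor product is an \emph{isomorphism}, not merely injective.

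A secondary issue: your mechanism for the vanishing of $A_{\lambda,-1}$ on $HFL'{}_{top}^{-}(\#^{2}S^{1}\times S^{2},L_{ori};0)$---that the action ``descends through the long exact sequence and is determined by its behavior on the outer terms''---does not work as stated, because knowing the action on two terms of an exact triangle leaves an extension problem on the third. The paper's argument instead uses that $\mu_{2}:HF_{0}^{-}(\boldsymbol{\alpha},\boldsymbol{\beta}_{c})\otimes HF_{0}^{-}(\boldsymbol{\beta}_{c},\boldsymbol{\beta}_{b})\to HF_{0}^{-}(\boldsymbol{\alpha},\boldsymbol{\beta}_{b})$ is onto, that $A_{R,\kappa_{2},-1}$ vanishes on the first tensor factor and $B_{R,\kappa_{1},-1}$ on the second (by the model computations of Subsection \ref{subsec:Model-computations}), and then Lemma \ref{lem:mu2-ab} plus additivity (Lemma \ref{lem:ab-additive}) to transport the vanishing to the target. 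You have all of these tools on your list, but the surjectivity of $\mu_{2}$---which again rests on the missing upper bound---is the hinge that makes them applicable.
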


\begin{prop}
For every balled link $(Z,L)$ of Figures \ref{fig:s1s2-important-1-1}
and \ref{fig:Some-important-balled}, $HFL_{\mathbb{F}[U^{1/2}]}^{\prime-}(Z,L;0)$
(Definition \ref{def:unoriented-collapse}) is a free $\mathbb{F}[U^{1/2}]$-module.
\end{prop}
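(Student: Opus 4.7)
The plan is to verify freeness case by case for each of the five listed balled links, by directly computing the base-changed chain complex $CF^-_{\mathbb{F}[U^{1/2}]}(Z,L;0)$ on an explicit Heegaard diagram and exploiting a characteristic-$2$ cancellation available after the identification $U_p^{1/2} \mapsto U^{1/2}$.

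For each balled link $(Z,L)$ in the list, I would take a weakly admissible Heegaard diagram representing it, obtained either directly from the standard diagrams in Figures \ref{fig:s1s2-important-1-1} or \ref{fig:Some-important-balled}, or from them by a small amount of winding where admissibility fails (most notably for $K_{ori}$). For the genus-one diagrams, Lemma \ref{lem:ori-diagram} writes $CF^-_R$ out explicitly; completely analogous descriptions for $L_{nonori}$ and $L_{ori}$ are established in Appendix \ref{sec:Direct-computations}. The key observation is that in every one of these chain complexes, each entry of the differential $\partial$ is a sum of two terms whose $U$-weights become equal after base change (for instance, $z+w$ where $z,w$ lie in a single basepoint pair $p$, or $U_i^{1/2}+U_j^{1/2}$ coming from a pair of bigons through two different link basepoint pairs). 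After the base change to $\mathbb{F}[U^{1/2}]$, each such sum becomes $U^{1/2}+U^{1/2}=0$ in characteristic $2$. Hence the base-changed chain complex has vanishing differential, and its homology is canonically the free $\mathbb{F}[U^{1/2}]$-module spanned by the intersection points.

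More invariantly, freeness can be extracted from the long exact sequence
\[
\cdots \to HFL^{\prime -}_{\mathbb{F}[U^{1/2}]}(Z,L;0) \xrightarrow{\cdot U^{1/2}} HFL^{\prime -}_{\mathbb{F}[U^{1/2}]}(Z,L;0) \to \widehat{HFL'}(Z,L;0) \to \cdots
\]
associated to $0 \to \mathbb{F}[U^{1/2}] \xrightarrow{U^{1/2}} \mathbb{F}[U^{1/2}] \to \mathbb{F} \to 0$: over the PID $\mathbb{F}[U^{1/2}]$, freeness of the finitely generated module $HFL^{\prime -}_{\mathbb{F}[U^{1/2}]}(Z,L;0)$ reduces to $U^{1/2}$-torsion-freeness, which in turn follows once one verifies that $\dim_{\mathbb{F}}\widehat{HFL'}(Z,L;0)$ matches the expected $\mathbb{F}[U^{1/2}]$-rank read off from Proposition \ref{prop:For-each-balled} and the preceding model computations.

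The main obstacle is handling winding: for $K_{ori}$, and potentially for a winding procedure used in the $L$-diagrams, additional intersection points appear in the admissible diagram. I would need to check that their contribution to $\partial$ continues to consist of $U_i^{1/2}+U_j^{1/2}$-type pairs that cancel after base change. This follows from the standard description of winding regions: newly-created intersection points come in pairs connected by two bigons of equal $U$-weight, so the cancellation pattern persists and the conclusion goes through unchanged.
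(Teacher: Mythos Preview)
Your approach is correct and closely parallels the paper's. The paper dispatches $K_{nonori}$ and $K_{ori}$ by pointing to the model computations in Subsection~\ref{subsec:Model-computations} (exactly as you do), sends $L_{nonori}$ to the explicit Appendix computation, and for $L_{ori}$ proves freeness in Lemma~\ref{lem:The-homology-} via a slightly different route: rather than reading off the pairwise cancellations directly, it observes that all generators lie in a single homological grading, so each differential coefficient is an $\mathbb{F}$-linear form in the $Z_i,W_i$, and then checks vanishing after setting $Z_i=U$, $W_i=1$ by a rank comparison with $HF^-(\#^2 S^1\times S^2)$ with three basepoints. Your uniform ``every arrow is $U^{1/2}+U^{1/2}=0$ after base change'' argument, applied to the Appendix~\ref{sec:Direct-computations} complexes, is a cleaner and more direct way to reach the same conclusion for both $L_{nonori}$ and $L_{ori}$.

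One small point: your winding paragraph is unnecessary and the argument there is a bit loose. The paper already provides weakly admissible diagrams with the cancellation structure you need (Figure~\ref{fig:local-ori} for $K_{nonori},K_{ori}$; Figure~\ref{fig:Lnonori-ori} for $L_{nonori},L_{ori}$), so you can simply cite those and drop the winding discussion entirely.
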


\begin{proof}
We already know this for $K_{nonori}$ and $K_{ori}$. We prove the
statement for $L_{ori}$ in Lemma \ref{lem:The-homology-}. We never
need the statement for $L_{nonori}$, but it can be deduced from Appendix
\ref{subsec:Direct-computations-for}.
\end{proof}
The $\Phi$ actions cannot distinguish $a,a+d\in HFL'{}_{top}^{-}(\#^{2}S^{1}\times S^{2},L_{ori};0)$,
but we can use the relative $H_{1}$ actions.
\begin{prop}
\label{prop:83}The composition
\[
HFL'{}^{-}(\#^{2}S^{1}\times S^{2},L_{ori};0)\xrightarrow{\rho}HFL'{}_{\mathbb{F}[U^{1/2}]}^{-}(\#^{2}S^{1}\times S^{2},L_{ori};0)\xrightarrow{\cdot U^{1/2}}HFL'{}_{\mathbb{F}[U^{1/2}]}^{-}(\#^{2}S^{1}\times S^{2},L_{ori};0)
\]
is injective on $HFL'{}_{top}^{-}(\#^{2}S^{1}\times S^{2},L_{ori};0)$.

Let $i\neq j$, $i,j\in\{1,2,3\}$, and let $\lambda$ be any path
between a link basepoint in the $i$th link baseball and a link basepoint
in the $j$th link baseball. For any $x\in HFL'{}_{top}^{-}(\#^{2}S^{1}\times S^{2},L_{ori};0)$,
there exists a (unique) $y\in HFL'{}_{top}^{-}(\#^{2}S^{1}\times S^{2},L_{ori};0)$
such that 
\[
A_{\mathbb{F}[U^{1/2}],\lambda,-1}(\rho(x))=U^{1/2}\rho(y).
\]
Furthermore, $y$ only depends on $x,i,j$, and does not depend on
the path $\lambda$. An analogous statement holds for $B_{\mathbb{F}[U^{1/2}],\lambda,-1}$.
In particular, the maps $A_{ii},B_{ii}$ are identically $0$.
\end{prop}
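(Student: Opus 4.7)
The plan is to first determine the graded $\mathbb{F}[U^{1/2}]$-module structure of $HFL'^{-}_{\mathbb{F}[U^{1/2}]}(\#^{2}S^{1}\times S^{2},L_{ori};0)$, and then to extract everything via grading and additivity of the relative homology action. The proposition immediately preceding this one states that this module is free. Combining that freeness with the $\Phi$-action description of Proposition \ref{prop:For-each-balled} and the naturality of $\rho$ with respect to the $\Phi$-actions (which preserves the relative Alexander $\mathbb{Z}/2$-grading and identifies all $U_{i}^{1/2}$ with $U^{1/2}$), I would conclude that the module is free of rank $4$ on $\rho(a),\rho(b),\rho(c),\rho(d)$, all concentrated in the top homological grading; in particular the grading $\mathrm{top}-k$ piece is precisely $U^{k/2}\cdot \mathrm{span}_{\mathbb{F}}(\rho(a),\rho(b),\rho(c),\rho(d))$.

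With this module description, Part 1 is immediate: $\rho$ restricts to an $\mathbb{F}$-isomorphism onto the top piece, and $U^{1/2}$ acts injectively on a free $\mathbb{F}[U^{1/2}]$-module. The existence and uniqueness of $y$ in Part 2 is also a grading argument: $A_{\mathbb{F}[U^{1/2}],\lambda,-1}$ has homological degree $-1$, so $A(\rho(x))$ lies in grading $\mathrm{top}-1$, which equals $U^{1/2}\rho(HFL'^{-}_{\mathrm{top}})$ by the module description; uniqueness of $y$ follows from Part 1.

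The substantive step is showing that $y$ depends only on $x,i,j$, not on the chosen path. Given two paths $\lambda,\lambda'$ with endpoints in baseballs $i$ and $j$, I would decompose their difference (after matching endpoints using in-baseball arcs) as a loop $\nu$ in $\#^{2}S^{1}\times S^{2}$ plus arcs $\eta_{i},\eta_{j}$ inside baseballs $i,j$. By additivity of $A$ (Lemma \ref{lem:ab-additive}), it suffices to show that $A_{\mathbb{F}[U^{1/2}],\mu,-1}(\rho(x))=0$ for each $\mu\in\{\nu,\eta_{i},\eta_{j}\}$. For the loop $\nu$: the last sentence of Proposition \ref{prop:For-each-balled} gives $A_{R,\nu,-1}=0$ on $HFL'^{-}_{\mathrm{top}}$, and naturality of $\rho$ transfers this to the base change. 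For an in-baseball arc $\eta$, pick a companion arc $\gamma$ in the same baseball, running from the endpoint of $\eta$ back to its start along a path parallel to the forbidding arc; since the $\alpha$-curve is disjoint from the forbidding arc (Figure \ref{fig:standard-diagram} right), $\gamma$ can be taken disjoint from $\alpha$, so $A_{R,\gamma,-1}=0$ at the chain level. Since $\gamma\eta$ is a loop on the Heegaard surface, $A_{R,\gamma\eta,-1}=0$ on $HFL'^{-}_{\mathrm{top}}$ by Proposition \ref{prop:For-each-balled}, and additivity forces $A_{R,\eta,-1}=0$ on $HFL'^{-}_{\mathrm{top}}$. Base-changing concludes. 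The corresponding argument for $B$ uses instead a companion arc disjoint from $\beta$. Finally, $A_{ii}=B_{ii}=0$ because a path from baseball $i$ to itself decomposes as a loop plus an in-baseball arc, both shown to act trivially.

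The main obstacle will be the in-baseball step: verifying that $A_{R,\eta,-1}$ (and $B_{R,\eta,-1}$) act trivially on the top grading for $\eta$ supported inside a single baseball. The argument hinges on the standard local picture of Figure \ref{fig:standard-diagram}, which allows one to complete $\eta$ to a loop using an arc disjoint from the relevant attaching curve; the freeness of $HFL'^{-}_{\mathbb{F}[U^{1/2}]}$ used at the outset is the other nontrivial ingredient, and it is established in the immediately preceding proposition.
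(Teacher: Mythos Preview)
Your argument has a genuine gap in the existence step. The claim that $HFL'^{-}_{\mathbb{F}[U^{1/2}]}(\#^{2}S^{1}\times S^{2},L_{ori};0)$ is free of rank $4$ on $\rho(a),\rho(b),\rho(c),\rho(d)$ is incorrect. In the Heegaard diagram of Figure~\ref{fig:s1s2-heegaard}, the proof of Lemma~\ref{lem:The-homology-} shows that all $16$ intersection points of $(\boldsymbol{\alpha},\boldsymbol{\beta}_b)$ lie in the top homological grading and that the differential over $\mathbb{F}[U^{1/2}]$ vanishes identically; hence the collapsed module is free of rank $16$, not $4$. The map $\rho$ embeds the $4$-dimensional $HFL'^{-}_{top}$ as a proper subspace of the $16$-dimensional top graded piece of the collapsed module. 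Consequently, knowing only that $A_{\mathbb{F}[U^{1/2}],\lambda,-1}(\rho(x))$ lands in grading $\mathrm{top}-1 = U^{1/2}\cdot(HFL'^{-}_{\mathbb{F}[U^{1/2}]})_{top}$ does \emph{not} place it in $U^{1/2}\rho(HFL'^{-}_{top})$; the grading argument alone cannot produce the required $y$. Freeness together with Proposition~\ref{prop:For-each-balled} gives no control on the rank of the collapsed module, since that proposition only describes the uncollapsed top piece.

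The paper obtains existence by a different mechanism: it realizes $HF^{-}_{0}(\boldsymbol{\alpha},\boldsymbol{\beta}_b)$ as the image (in fact an isomorphic image) of
\[
\mu_2\colon HF^{-}_0(\boldsymbol{\alpha},\boldsymbol{\beta}_c)\otimes HF^{-}_0(\boldsymbol{\beta}_c,\boldsymbol{\beta}_b)\to HF^{-}_0(\boldsymbol{\alpha},\boldsymbol{\beta}_b),
\]
and uses Lemma~\ref{lem:mu2-ab} to push the relative homology action onto the tensor factors, where it is known explicitly from Subsection~\ref{subsec:Model-computations}. This yields $A_{\mathbb{F}[U^{1/2}],\kappa_{12},-1}(\rho(\mu_2(s\otimes t)))=U^{1/2}\rho(\mu_2(s'\otimes t))$ for the specific path $\kappa_{12}$ (and similarly for $B_{\kappa_{23}}$), after which the path-independence argument---where your approach is essentially correct---extends existence to all $\lambda$.

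A smaller issue: your treatment of $B$ posits a companion arc disjoint from $\boldsymbol{\beta}$ inside the baseball, but for a link baseball the $\boldsymbol{\beta}$-circle separates $z_i$ from $w_i$ within $BB\cap\Sigma$, so no such in-baseball arc exists. The paper avoids this by invoking Lemma~\ref{lem:a+b} to reduce $B$ to $A$ (the difference $A_\lambda-B_\lambda=U^{1/2}(\Phi_i+\Phi_j)$ depends only on $i,j$), and then runs the $\boldsymbol{\alpha}$-disjoint companion-arc argument only for $A$.
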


Hence, we introduce the following definition.
\begin{defn}
\label{def:relative-hom}Define 
\[
A_{ij},B_{ij}:HFL'{}_{top}^{-}(\#^{2}S^{1}\times S^{2},L_{ori};0)\to HFL'{}_{top}^{-}(\#^{2}S^{1}\times S^{2},L_{ori};0)
\]
such that for $x\in HFL'{}_{top}^{-}(\#^{2}S^{1}\times S^{2},L_{ori};0)$
and $\lambda$ a path between a link basepoint in the $i$th link
baseball and a link basepoint in the $j$th link baseball, 
\[
A_{\mathbb{F}[U^{1/2}],\lambda,-1}(\rho(x))=U^{1/2}\rho(A_{ij}(x)),\ B_{\mathbb{F}[U^{1/2}],\lambda,-1}(\rho(x))=U^{1/2}\rho(B_{ij}(x)).
\]
\end{defn}

We can compute these maps $A_{ij},B_{ij}$.
\begin{prop}
\label{prop:85}We can choose a basis $a,b,c,d$ of $HFL'{}_{top}^{-}(\#^{2}S^{1}\times S^{2},L_{ori};0)$
that is homogeneous with respect to the ${\rm Spin}^{c}$-splitting
(equivalently, the relative Alexander $\mathbb{Z}/2$-grading), such
that the $A_{ij}$ and $B_{ij}$ maps on $HFL'{}_{top}^{-}(\#^{2}S^{1}\times S^{2},L_{ori};0)$
are as follows.
\[\begin{tikzcd}[sep=large]
	a & b & a & b & a & b & a & b \\
	c & d & c & d & c & d & c & d
	\arrow["{\Phi _2 ,\Phi _3}", from=1-1, to=1-2]
	\arrow["{\Phi _1 ,\Phi _2 }"{description}, from=1-1, to=2-1]
	\arrow["{\Phi _1 ,\Phi _2 }"{description}, from=1-2, to=2-2]
	\arrow["{B_{23}}"{description}, curve={height=-12pt}, from=1-3, to=1-4]
	\arrow["{A_{12}}"{description}, curve={height=12pt}, from=1-3, to=2-3]
	\arrow["{B_{23}}"{description}, curve={height=-12pt}, from=1-4, to=1-3]
	\arrow["{A_{12}}"{description}, curve={height=12pt}, from=1-4, to=2-4]
	\arrow["{A_{23}}"{description}, curve={height=-12pt}, from=1-5, to=1-6]
	\arrow["{A_{23}}"{description}, from=1-5, to=2-5]
	\arrow["{A_{23}}"{description}, curve={height=-12pt}, from=1-6, to=1-5]
	\arrow["{A_{23}}"{description}, from=1-6, to=2-6]
	\arrow["{A_{13}}"{description}, curve={height=-12pt}, from=1-7, to=1-8]
	\arrow["{A_{13}}"{description}, curve={height=-12pt}, from=1-8, to=1-7]
	\arrow["{\Phi _2 ,\Phi _3}", from=2-1, to=2-2]
	\arrow["{A_{12}}"{description}, curve={height=12pt}, from=2-3, to=1-3]
	\arrow["{B_{23}}"{description}, curve={height=-12pt}, from=2-3, to=2-4]
	\arrow["{A_{12}}"{description}, curve={height=12pt}, from=2-4, to=1-4]
	\arrow["{B_{23}}"{description}, curve={height=-12pt}, from=2-4, to=2-3]
	\arrow["{A_{23}}"{description}, curve={height=-12pt}, from=2-5, to=2-6]
	\arrow["{A_{23}}"{description}, curve={height=-12pt}, from=2-6, to=2-5]
	\arrow["{A_{13}}"{description}, from=2-7, to=1-7]
	\arrow["{A_{13}}"{description}, curve={height=-12pt}, from=2-7, to=2-8]
	\arrow["{A_{13}}"{description}, from=2-8, to=1-8]
	\arrow["{A_{13}}"{description}, curve={height=-12pt}, from=2-8, to=2-7]
\end{tikzcd}\]
\end{prop}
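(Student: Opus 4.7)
The plan is to combine the structural constraints that $A_{ij}, B_{ij}$ must satisfy with a computation carried out via an unoriented skein exact triangle. I will first fix a basis $a, b, c, d$ of $HFL'{}_{top}^{-}(\#^{2}S^{1}\times S^{2},L_{ori};0)$ that is homogeneous with respect to the ${\rm Spin}^c$-splitting and realizes the $\Phi$-actions of Proposition \ref{prop:For-each-balled}; this basis is determined by the $\Phi$-actions only up to a small discrete ambiguity, which I will fix at the end so that the $A, B$ maps take the displayed form.

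Next I will derive structural constraints from Subsection \ref{subsec:Relative-homology-actions}. By Lemma \ref{lem:a+b}, Definition \ref{def:relative-hom}, and the injectivity statement in Proposition \ref{prop:83}, one obtains the identity
\[ A_{ij} + B_{ij} = \Phi_i + \Phi_j. \]
By Lemma \ref{lem:ab-additive} combined with the vanishing $A_{ii} = B_{ii} = 0$ (Proposition \ref{prop:83}), one obtains the additivity
\[ A_{ik} = A_{ij} + A_{jk}, \qquad B_{ik} = B_{ij} + B_{jk} \]
for distinct $i, j, k \in \{1,2,3\}$. Moreover $A_{ij}$ and $B_{ij}$ shift the relative Alexander $\mathbb{Z}/2$-grading by $1$, since $A_{\mathbb{F}[U^{1/2}],\lambda,-1}$ preserves this grading while dividing by $U^{1/2}$ shifts it. These together reduce the task to computing a small number of $\mathbb{F}$-coefficients in, say, $A_{12}$ and $A_{23}$.

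To determine these coefficients, I will construct an unoriented skein triple involving $L_{ori}$ and two simpler balled links (obtained by resolving a crossing of $L_{ori}$), at least one of which is planar or reduces via further band moves to planar links in $\#^{2}S^{1}\times S^{2}$. For planar pieces the $A, B$ maps are read off from Proposition \ref{prop:Let--be-1} and the Khovanov basepoint actions; for any $K_{ori}$-type pieces they are read off from the model computations of Subsection \ref{subsec:Model-computations}. Theorem \ref{thm:Let--be} gives the exact triangle, and the naturality of the relative homology actions with respect to the band maps (Lemma \ref{lem:mu2-ab}) transports the known $A, B$ structure from the simpler ends to $L_{ori}$, forcing the displayed values once the residual basis ambiguity is fixed.

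The main obstacle will be choosing the skein triple and the arcs $\lambda$ representing the relative homology classes so that $\lambda$ deforms transparently across the band moves, which is essential for applying the naturality lemma cleanly, and then matching the bases through the exact triangle to read off $A_{ij}, B_{ij}$ on $L_{ori}$ in the chosen basis. This is in the same spirit as the computation of swap maps in Proposition \ref{prop:swap-quasi-iso-balled}.
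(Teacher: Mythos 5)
Your proposal matches the paper's approach (Subsection \ref{subsec:relative-homology}, especially Subsubsection \ref{subsec:841}): the paper likewise uses Lemmas \ref{lem:a+b} and \ref{lem:ab-additive} to reduce the computation to two of the maps, and determines $A_{12}$ and $B_{23}$ by expressing $HFL'{}_{top}^{-}(\#^2S^1\times S^2, L_{ori};0)$ as the $\mu_2$-image from the adjacent skein resolution $L_c$ of Figure \ref{fig:s1s2-links-1} and transporting the $A$- and $B$-actions via Lemma \ref{lem:mu2-ab} together with the model computations of Subsection \ref{subsec:Model-computations}. Your choice to compute $A_{12}$ and $A_{23}$ rather than $A_{12}$ and $B_{23}$ is equivalent, given the constraint $A_{ij}+B_{ij}=\Phi_i+\Phi_j$.
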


It is possible to prove the above propositions by finding Heegaard
diagrams for $L_{nonori}$ and $L_{ori}$ such that the differential
does not depend on the almost complex structure and directly computing
the differentials (and the domains that contribute to the differentials):
see Appendix \ref{subsec:Direct-computations-for}. In this section,
however, we present proofs that instead utilize a couple of unoriented
skein exact sequences and avoid direct computation of the differentials.
Throughout this section, by $HF^{-}$ or $HFL'{}^{-}$, we always
mean the $c_{1}=0$ summands of these homology groups.

\begin{figure}[h]
\begin{centering}
\includegraphics{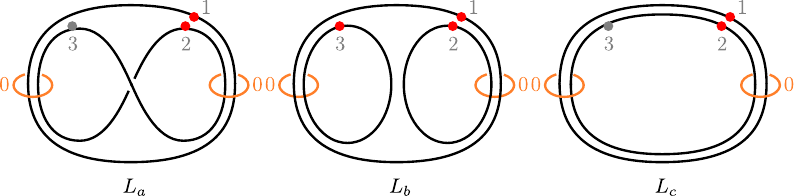}
\par\end{centering}
\caption{\label{fig:s1s2-links}An unoriented skein triple in $\#^{2}S^{1}\times S^{2}$.
The link baseballs are colored red, and the free baseballs are colored
grey.}
\end{figure}

The first unoriented skein triple we use is Figure \ref{fig:s1s2-links}.
We can compute the entirety of the unoriented link Floer homology
of $L_{c}$, since $(\#^{2}S^{1}\times S^{2},L_{c})$ is the connected
sum of $(S^{1}\times S^{2},K_{ori})$ and $(S^{1}\times S^{2},\emptyset)$.
Hence, we get 
\[
HFL'{}^{-}(\#^{2}S^{1}\times S^{2},L_{c};0)=\left(\mathbb{F}[U^{1/2}]f\oplus\mathbb{F}[U^{1/2}]g\right)\oplus\left(\mathbb{F}[U^{1/2}]f'\oplus\mathbb{F}[U^{1/2}]g'\right)
\]
where $f,g$ are as in Definition \ref{def:f-and-g} (they lie in
the top homological $\mathbb{Z}$-grading), and ${\rm gr}(f)-{\rm gr}(f')={\rm gr}(g)-{\rm gr}(g')=1$.
In fact, $f'=A_{\gamma}f$ and $g'=A_{\gamma}g$ where $\gamma$ is
a curve that goes through exactly one of the $0$-framed, orange circles,
and goes through it exactly once. Also, $f$ and $g$ lie in different
${\rm Spin}^{c}$-structures, and $\Phi_{1}f=\Phi_{2}f=g$ and $\Phi_{1}g=\Phi_{2}g=0$.

\begin{figure}[h]
\begin{centering}
\includegraphics{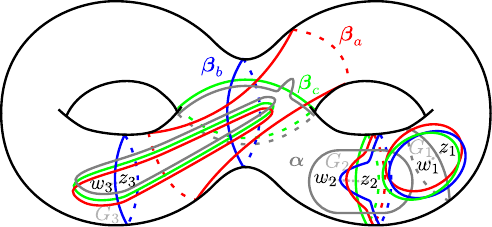}
\par\end{centering}
\caption{\label{fig:s1s2-heegaard}A weakly admissible pair-pointed Heegaard
diagram for the band maps between $L_{a},L_{b},L_{c}$; for each attaching
curve $\boldsymbol{\gamma}$ and each basepoint pair $(z_{i},w_{i})$,
let $(z_{i},w_{i})$ be a link basepoint pair for $\boldsymbol{\gamma}$
if and only if $\boldsymbol{\gamma}$ intersects the forbidding arc
$G_{i}$. }
\end{figure}

We will use the unoriented skein exact triangle to partially compute
the unoriented link Floer homology of $L_{a}$ and $L_{b}$. Consider
the pair-pointed Heegaard diagram given in Figure \ref{fig:s1s2-heegaard}.
The Heegaard diagram is weakly admissible, and all the intersection
points have $c_{1}=0$. Lift the relative homological $\mathbb{Z}$-gradings
to an absolute homological $\mathbb{Z}$-grading such that the top
grading of $CF^{-}(\boldsymbol{\alpha},\boldsymbol{\beta}_{i})$ is
$0$. Then, it is possible to check that the non-orientable band map
$L_{c}\to L_{a}$ and the split map $L_{a}\to L_{b}$ have degree
$0$ and that the merge map $L_{b}\to L_{c}$ has degree $-1$. Hence,
we have a long exact sequence
\begin{equation}
\cdots\to HF_{i}^{-}(\boldsymbol{\alpha},\boldsymbol{\beta}_{a})\to HF_{i}^{-}(\boldsymbol{\alpha},\boldsymbol{\beta}_{b})\to HF_{i-1}^{-}(\boldsymbol{\alpha},\boldsymbol{\beta}_{c})\to HF_{i-1}^{-}(\boldsymbol{\alpha},\boldsymbol{\beta}_{a})\to\cdots.\label{eq:unoriented-exact-sequence1}
\end{equation}

\subsection{\label{subsec:A-lower-bound-of-la}A lower bound of \texorpdfstring{$\dim_{\mathbb{F}}HFL_{top}^{-}(Y,L_{nonori};0)$}{dim HF-top(Y, Lnonori; 0)}}

Consider the composition map
\begin{equation}
\mu_{2}:HF_{0}^{-}(\boldsymbol{\alpha},\boldsymbol{\beta}_{c})\otimes HF_{0}^{-}(\boldsymbol{\beta}_{c},\boldsymbol{\beta}_{a})\to HF_{0}^{-}(\boldsymbol{\alpha},\boldsymbol{\beta}_{a}).\label{eq:mu2-topgrading-nonori}
\end{equation}
Let $f,g\in HF_{0}^{-}(\boldsymbol{\alpha},\boldsymbol{\beta}_{c})$
and $f,g\in HF_{0}^{-}(\boldsymbol{\beta}_{c},\boldsymbol{\beta}_{a})$
be as in Definition \ref{def:f-and-g}. Then, $HF_{0}^{-}(\boldsymbol{\alpha},\boldsymbol{\beta}_{c})\otimes HF_{0}^{-}(\boldsymbol{\beta}_{c},\boldsymbol{\beta}_{a})$
has rank $4$ over $\mathbb{F}$, and the $\Phi$ actions are as follows.
\[\begin{tikzcd}
	{f\otimes f} & {f\otimes g} \\
	{g\otimes f} & {g\otimes g}
	\arrow["{\Phi _2 }", from=1-1, to=1-2]
	\arrow["{\Phi _1,\Phi _2 }"', from=1-1, to=2-1]
	\arrow["{\Phi _1,\Phi _2 }", from=1-2, to=2-2]
	\arrow["{\Phi _2 }"', from=2-1, to=2-2]
\end{tikzcd}\]Since $HF_{1}^{-}(\boldsymbol{\alpha},\boldsymbol{\beta}_{b})=0$,
the non-orientable band map $\mu_{2}(-\otimes g):HF_{0}^{-}(\boldsymbol{\alpha},\boldsymbol{\beta}_{c})\to HF_{0}^{-}(\boldsymbol{\alpha},\boldsymbol{\beta}_{a})$
is injective. Since $\mu_{2}$ is equivariant with respect to the
$\Phi$ actions, we can deduce that Equation (\ref{eq:mu2-topgrading-nonori})
is injective, and so $\dim_{\mathbb{F}}HF_{0}^{-}(\boldsymbol{\alpha},\boldsymbol{\beta}_{a})\ge4$.

Also, we can check that the sub Heegaard diagram with attaching curves
$\boldsymbol{\alpha},\boldsymbol{\beta}_{c},\boldsymbol{\beta}_{a}$
is Alexander $\mathbb{Z}/2$-splittable (in fact, there is a general
statement, Proposition \ref{prop:combi-claims} (\ref{enu:The--category--1})),
and so Equation (\ref{eq:mu2-topgrading-nonori}) is homogeneous with
respect to the relative Alexander $\mathbb{Z}/2$-grading. Hence,
the images $\mu_{2}(f\otimes f)$, $\mu_{2}(f\otimes g)$, $\mu_{2}(g\otimes f)$,
and $\mu_{2}(g\otimes g)$ are also homogeneous with respect to the
relative Alexander $\mathbb{Z}/2$-grading.

\subsection{\label{subsec:An-upper-bound}An upper bound of \texorpdfstring{$\dim_{\mathbb{F}}HFL_{top}^{-}(Y,L_{nonori};0)$}{dim HF-top(Y, Lnonori; 0)}}

We claim that $\dim_{\mathbb{F}}HF_{0}^{-}(\boldsymbol{\alpha},\boldsymbol{\beta}_{a})\le4$,
and hence that Equation \ref{eq:mu2-topgrading-nonori} is an isomorphism.

This follows from a purely algebraic statement. First, we can destabilize
the diagram $(\boldsymbol{\alpha},\boldsymbol{\beta}_{a})$ (remove
the region with $w_{3},z_{3}$) and consider a genus $2$ diagram
with three circles for each attaching curve. Call the corresponding
attaching curves $\boldsymbol{\alpha}^{u},\boldsymbol{\beta}_{a}^{u}$.
All the generators are in the top homological $\mathbb{Z}$-grading;
let this grading be $0$. Write the differential $\partial=U_{1}^{1/2}\partial_{1}+U_{2}^{1/2}\partial_{2}$
where $\partial_{1},\partial_{2}$ are $\mathbb{F}[U_{1}^{1/2},U_{2}^{1/2}]$-linear,
and they map $CF_{0}^{-}(\boldsymbol{\alpha}^{u},\boldsymbol{\beta}_{a}^{u})$
into $CF_{0}^{-}(\boldsymbol{\alpha}^{u},\boldsymbol{\beta}_{a}^{u})$.
Then, $\partial^{2}=0$ gives 
\[
\partial_{1}^{2}=0,\ \partial_{2}^{2}=0,\ \partial_{1}\partial_{2}+\partial_{2}\partial_{1}=0.
\]
Let $x\in CF_{0}^{-}(\boldsymbol{\alpha}^{u},\boldsymbol{\beta}_{a}^{u})$
satisfy $\partial x=0$. Then, since multiplication by $U_{1}$ is
homotopic to multiplication by $U_{2}$, there exists some $y\in CF^{-}(\boldsymbol{\alpha}^{u},\boldsymbol{\beta}_{a}^{u})$
such that $\partial y=(U_{1}+U_{2})x$. Let $y_{1},y_{2}\in CF_{0}^{-}(\boldsymbol{\alpha}^{u},\boldsymbol{\beta}_{a}^{u})$
be such that $y=U_{1}^{1/2}y_{1}+U_{2}^{1/2}y_{2}$. Expanding 
\[
(U_{1}^{1/2}\partial_{1}+U_{2}^{1/2}\partial_{2})(U_{1}^{1/2}y_{1}+U_{2}^{1/2}y_{2})=(U_{1}+U_{2})x,
\]
we get 
\[
U_{1}\partial_{1}y_{1}+U_{1}^{1/2}U_{2}^{1/2}(\partial_{2}y_{1}+\partial_{1}y_{2})+U_{2}\partial_{2}y_{2}=(U_{1}+U_{2})x.
\]
Hence, in particular, $x\in{\rm Im}\partial_{1}$. Since 
\[
\dim_{\mathbb{F}}({\rm Im}\partial_{1}\cap CF_{0}^{-}(\boldsymbol{\alpha}^{u},\boldsymbol{\beta}_{a}^{u}))\le\frac{1}{2}\dim_{\mathbb{F}}CF_{0}^{-}(\boldsymbol{\alpha}^{u},\boldsymbol{\beta}_{a}^{u})=4,
\]
we get that $\dim_{\mathbb{F}}HF_{0}^{-}(\boldsymbol{\alpha}^{u},\boldsymbol{\beta}_{a}^{u})\le4$.
The statement of Proposition \ref{prop:For-each-balled} for $L_{nonori}$
follows.

\subsection{\label{subsec:An-upper-bound-1}An upper bound for $L_{ori}$}

We can in fact say a bit more in the setting of Subsection \ref{subsec:An-upper-bound}:
for any nonzero $x\in HF_{0}^{-}(\boldsymbol{\alpha}^{u},\boldsymbol{\beta}_{a}^{u})$,
we have $U_{1}^{1/2}x,U_{2}^{1/2}x\neq0$ in $HF_{-1}^{-}(\boldsymbol{\alpha}^{u},\boldsymbol{\beta}_{a}^{u})$.
Indeed, assume $U_{1}^{1/2}x=0$. Then there exists some $y\in CF_{0}^{-}(\boldsymbol{\alpha}^{u},\boldsymbol{\beta}_{a}^{u})$
such that $\partial_{1}y=x$, $\partial_{2}y=0$. However, since $\dim_{\mathbb{F}}{\rm ker}\partial\cap CF_{0}^{-}(\boldsymbol{\alpha}^{u},\boldsymbol{\beta}_{a}^{u})=4$
and 
\[
{\rm ker}\partial\cap CF_{0}^{-}(\boldsymbol{\alpha}^{u},\boldsymbol{\beta}_{a}^{u})\le{\rm Im}\partial_{i}\cap CF_{0}^{-}(\boldsymbol{\alpha}^{u},\boldsymbol{\beta}_{a}^{u})
\]
for $i=1,2$, they are equal. Hence, 
\[
{\rm Im}\partial_{i}\cap CF_{0}^{-}(\boldsymbol{\alpha}^{u},\boldsymbol{\beta}_{a}^{u})={\rm ker}\partial_{i}\cap CF_{0}^{-}(\boldsymbol{\alpha}^{u},\boldsymbol{\beta}_{a}^{u})
\]
for $i=1,2$ as well. Now, since $y\in{\rm ker}\partial_{2}$, we
have $y\in\ker\partial_{1}$, and so $x=0$.

Let us return to the original setting (Figure \ref{fig:s1s2-heegaard}).
We claim $\dim_{\mathbb{F}}HF_{0}^{-}(\boldsymbol{\alpha},\boldsymbol{\beta}_{b})\le4$.
The homology group $HF_{-1}^{-}(\boldsymbol{\alpha},\boldsymbol{\beta}_{c})$
has basis $U^{1/2}f$, $U^{1/2}g$, $f'$, $g'$. We have shown that
any nonzero element in the $2$-dimensional subspace ${\rm span}_{\mathbb{F}}\left\langle U^{1/2}f,U^{1/2}g\right\rangle $
does not vanish under $HF_{-1}^{-}(\boldsymbol{\alpha},\boldsymbol{\beta}_{c})\to HF_{-1}^{-}(\boldsymbol{\alpha},\boldsymbol{\beta}_{a})$.
Hence, the kernel of $HF_{-1}^{-}(\boldsymbol{\alpha},\boldsymbol{\beta}_{c})\to HF_{-1}^{-}(\boldsymbol{\alpha},\boldsymbol{\beta}_{a})$
has dimension $\le2$. The claim follows from the unoriented skein
exact sequence, Equation (\ref{eq:unoriented-exact-sequence1}). 

\subsection{\label{subsec:relative-homology}A lower bound for $L_{ori}$ and
actions on \texorpdfstring{$HFL_{top}^{-}(Y,L_{ori};0)$}{dim HF-top(Y, Lori; 0)}}

\begin{figure}[h]
\begin{centering}
\includegraphics{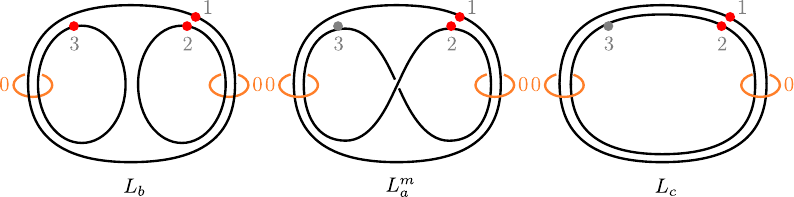}
\par\end{centering}
\caption{\label{fig:s1s2-links-1}Another unoriented skein triple in $\#^{2}S^{1}\times S^{2}$.
The link baseballs are colored red, and the free baseballs are colored
grey. In fact, $L_{a}^{m}$ and $L_{a}$ are isotopic.}
\end{figure}

\begin{figure}[h]
\begin{centering}
\includegraphics{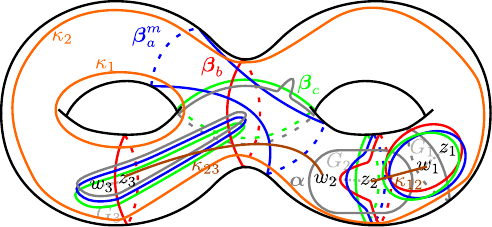}
\par\end{centering}
\caption{\label{fig:s1s2-heegaard-1}A weakly admissible pair-pointed Heegaard
diagram for the band maps between the links in Figure \ref{fig:s1s2-links-1}
together with loops $\kappa_{1},\kappa_{2}$ and paths $\kappa_{12},\kappa_{23}$
on the Heegaard surface; for each attaching curve $\boldsymbol{\gamma}$
and each basepoint pair $(z_{i},w_{i})$, let $(z_{i},w_{i})$ be
a link basepoint pair for $\boldsymbol{\gamma}$ if and only if $\boldsymbol{\gamma}$
intersects the forbidding arc $G_{i}$.}
\end{figure}

We will use the unoriented skein triple in Figure \ref{fig:s1s2-links-1}
to show that $\dim_{\mathbb{F}}HF_{0}^{-}(\boldsymbol{\alpha},\boldsymbol{\beta}_{b})=4$
and compute the $\Phi$, $H_{1}$, and relative $H_{1}$ actions on
$HF_{0}^{-}(\boldsymbol{\alpha},\boldsymbol{\beta}_{b})$. Note that
the attaching curves $\boldsymbol{\alpha},\boldsymbol{\beta}_{b},\boldsymbol{\beta}_{c}$
of Figures \ref{fig:s1s2-heegaard} and \ref{fig:s1s2-heegaard-1}
are the same.

All the generators have $c_{1}=0$; lift the relative homological
$\mathbb{Z}$-grading to an absolute homological $\mathbb{Z}$-grading
such that the top homological $\mathbb{Z}$-grading generators of
$CF^{-}(\boldsymbol{\alpha},\boldsymbol{\beta}_{c})$, $CF^{-}(\boldsymbol{\alpha},\boldsymbol{\beta}_{b})$,
and $CF^{-}(\boldsymbol{\alpha},\boldsymbol{\beta}_{a}^{m})$ have
$\mathbb{Z}$-grading $0$. Then, we have an unoriented skein exact
sequence
\[
\cdots\to HF_{i}^{-}(\boldsymbol{\alpha},\boldsymbol{\beta}_{b})\to HF_{i}^{-}(\boldsymbol{\alpha},\boldsymbol{\beta}_{a}^{m})\to HF_{i-1}^{-}(\boldsymbol{\alpha},\boldsymbol{\beta}_{c})\to HF_{i-1}^{-}(\boldsymbol{\alpha},\boldsymbol{\beta}_{b})\to\cdots.
\]
In particular, the band map $HF_{0}^{-}(\boldsymbol{\alpha},\boldsymbol{\beta}_{c})\to HF_{0}^{-}(\boldsymbol{\alpha},\boldsymbol{\beta}_{b})$
is injective. Since $HF_{0}^{-}(\boldsymbol{\alpha},\boldsymbol{\beta}_{c})={\rm span}_{\mathbb{F}}\left\langle f,g\right\rangle \neq0$,
the top homological $\mathbb{Z}$-grading of $HF^{-}(\boldsymbol{\alpha},\boldsymbol{\beta}_{b})$
is $0$.

We can now prove the following lemma.
\begin{lem}
\label{lem:The-homology-}The homology $HFL'{}_{\mathbb{F}[U^{1/2}]}^{-}(\#^{2}S^{1}\times S^{2},L_{ori};0)$
(Definition \ref{def:unoriented-collapse}) is a free $\mathbb{F}[U^{1/2}]$-module,
and $U^{1/2}\rho$ is injective on $HFL'{}_{top}^{-}(\#^{2}S^{1}\times S^{2},L_{ori};0)$.
\end{lem}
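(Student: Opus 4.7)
The plan is to combine the base-changed unoriented skein long exact sequence from Figure~\ref{fig:s1s2-links-1} with a direct chain-level analysis of the two neighbors $L_a^m$ and $L_c$.

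Since the vanishing twisted complex of Theorem~\ref{thm:Let--be} is preserved by the base change $R \to \mathbb{F}[U^{1/2}]$, we have a long exact sequence
\[
\cdots \to HF^-_{i,\mathbb{F}[U^{1/2}]}(\boldsymbol{\alpha},\boldsymbol{\beta}_b) \to HF^-_{i,\mathbb{F}[U^{1/2}]}(\boldsymbol{\alpha},\boldsymbol{\beta}_a^m) \to HF^-_{i-1,\mathbb{F}[U^{1/2}]}(\boldsymbol{\alpha},\boldsymbol{\beta}_c) \to HF^-_{i-1,\mathbb{F}[U^{1/2}]}(\boldsymbol{\alpha},\boldsymbol{\beta}_b) \to \cdots.
\]
Since $L_c$ is the connected sum of $(S^1\times S^2,K_{ori})$ with $(S^1\times S^2,\emptyset)$, the K\"unneth formula of Remark~\ref{rem:If-the-Heegaard} together with the model computations of Subsection~\ref{subsec:Model-computations} give that $HF^-_{\mathbb{F}[U^{1/2}]}(\boldsymbol{\alpha},\boldsymbol{\beta}_c)$ is a free rank-$4$ $\mathbb{F}[U^{1/2}]$-module, with generators $f,g$ in grading $0$ and $f',g'$ in grading $-1$. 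For $L_a^m$, which is isotopic to $L_{nonori}$, I would extend the destabilization argument of Subsection~\ref{subsec:An-upper-bound} to the base change: the destabilized genus-$2$ diagram has all $8$ generators in grading $0$, the base-changed differential takes the form $U^{1/2}\partial_+$ for some $\mathbb{F}$-linear endomorphism $\partial_+$ of the generator space $V$, and a careful combination of the chain-level nullhomotopy between $U_1$ and $U_2$, the anticommutation $\partial_1\partial_2+\partial_2\partial_1=0$, and the rank count from Subsections~\ref{subsec:A-lower-bound-of-la}--\ref{subsec:An-upper-bound} should force $\partial_+\equiv 0$, yielding freeness of $HF^-_{\mathbb{F}[U^{1/2}]}(\boldsymbol{\alpha},\boldsymbol{\beta}_a^m)$. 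Granted that both neighbors are free, since $\mathbb{F}[U^{1/2}]$ is a PID a diagram chase in the long exact sequence forces $HF^-_{\mathbb{F}[U^{1/2}]}(\boldsymbol{\alpha},\boldsymbol{\beta}_b)$ to be free as well.

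With freeness in hand, the injectivity of $U^{1/2}\rho$ on $HFL'^-_{top}(L_{ori};0)$ reduces to injectivity of $\rho$ on the top grading, since $U^{1/2}$ acts injectively on a free $\mathbb{F}[U^{1/2}]$-module. But on any destabilized diagram in which all generators lie in homological grading~$0$, the chain-level map $\rho$ restricts to the identity on the generator space~$V$ --- the base-change relations $U_i^{1/2}-U_j^{1/2}$ have homological degree $-1$ and therefore cannot identify any two elements in the pure grading-$0$ layer --- so on top-grading homology $\rho$ is the inclusion $\ker\partial_1\cap\ker\partial_2\cap\ker\partial_3 \hookrightarrow \ker\partial_+$, which is tautologically injective.

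The main obstacle is the claim $\partial_+\equiv 0$ in the analysis of $L_a^m$: the rank count of Subsection~\ref{subsec:An-upper-bound} bounds only $\ker\partial_1\cap\ker\partial_2$ and not $\ker\partial_+$ itself, so closing the gap requires carefully exploiting the cross-term relations produced by the chain-level nullhomotopy of $U_1-U_2$, rather than just the diagonal ones used in Subsection~\ref{subsec:An-upper-bound}.
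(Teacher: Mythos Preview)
Your approach has a genuine gap beyond the one you flag. The assertion that freeness of two terms in the skein long exact sequence over the PID $\mathbb{F}[U^{1/2}]$ forces freeness of the third is not valid, even with the grading constraints in play. A minimal counterexample: set $B=C=\mathbb{F}[U^{1/2}]$ with generators in grading~$0$, $A=\mathbb{F}$ concentrated in grading~$0$ with $U^{1/2}$ acting by zero, and take the map $A\to B$ to be zero, the map $B\to C$ to be multiplication by $U^{1/2}$ (degree~$-1$), and the map $C\to A$ to be the quotient (degree~$0$). This is a $3$-periodic long exact sequence of graded $\mathbb{F}[U^{1/2}]$-modules with $B,C$ free and $A$ pure torsion; moreover $A$ is realized as the homology of a two-generator complex concentrated in grading~$0$, so nothing about the shape of the situation rules it out. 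Thus even granting freeness for $L_{a^m}$, you would need specific control over the map $HF^-_{\mathbb{F}[U^{1/2}]}(L_{a^m})\to HF^-_{\mathbb{F}[U^{1/2}]}(L_c)$ to exclude torsion in $HF^-_{\mathbb{F}[U^{1/2}]}(L_b)$, and you have not supplied it. Together with the unresolved $\partial_+\equiv 0$ claim for $L_{a^m}$, the argument does not close.

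The paper avoids both problems by arguing directly on the Heegaard diagram for $L_{ori}$. Since every intersection point of $\boldsymbol{\alpha}\cap\boldsymbol{\beta}_b$ lies in homological grading~$0$, each matrix entry of the differential over $\mathcal{R}=\mathbb{F}[\{Z_i\},\{W_i\}]$ is an $\mathbb{F}$-linear form in the $Z_i,W_i$; hence it vanishes at $Z_i=W_i=U^{1/2}$ iff it vanishes at $Z_i=W_i=1$. Now specialize instead to $Z_i=U$, $W_i=1$: this computes $HF^-_{\boldsymbol{z}}$ of $\#^2S^1\times S^2$ with three basepoints, which is free over $\mathbb{F}[U]$ of rank~$16$, equal to the number of generators --- so the differential is identically zero there. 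For a linear form, vanishing at $(Z_i,W_i)=(U,1)$ forces the $Z$-coefficient sum and the $W$-coefficient sum separately to vanish, hence vanishing at $Z_i=W_i=1$ as well. Thus the $\mathbb{F}[U^{1/2}]$-differential is identically zero and freeness follows directly, with no appeal to the exact triangle or to $L_{nonori}$. Your argument for the injectivity of $\rho$ on the top grading is correct and is essentially what the paper does.
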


\begin{proof}
Consider the attaching curves $\boldsymbol{\alpha},\boldsymbol{\beta}_{b}$
of Figures \ref{fig:s1s2-heegaard} and \ref{fig:s1s2-heegaard-1}.
First, $\rho$ is injective on $HFL'{}_{top}^{-}(\#^{2}S^{1}\times S^{2},L_{ori};0)$
since all the intersection points $\boldsymbol{\alpha}\cap\boldsymbol{\beta}_{b}$
lie in the top homological grading.

Now, we show that $HFL'{}_{\mathbb{F}[U^{1/2}]}^{-}(\#^{2}S^{1}\times S^{2},L_{ori};0)$
is a free $\mathbb{F}[U^{1/2}]$-module; this implies that multiplication
by $U^{1/2}$ is injective. In fact, we claim that the differential
on $CF_{\mathbb{F}[U^{1/2}]}^{-}(\boldsymbol{\alpha},\boldsymbol{\beta}_{b})$
is identically zero. If we work over the ring ${\cal R}=\mathbb{F}[\{Z_{i}\},\{W_{i}\}]$,
for any two intersection points $a$ and $b$, the coefficient of
$b$ in $a$ is some linear combination of the $Z_{i}$'s and the
$W_{i}$'s. We would like to show that if we identify $Z_{i}=W_{i}=U^{1/2}$
for all $i$, then the coefficient is identically zero. Since all
the intersection points lie in the same homological grading, the coefficient
is an $\mathbb{F}$-linear combination of the $Z_{i}$'s and the $W_{i}$'s,
and so it is sufficient to show that the coefficient is identically
zero if we identify $Z_{i}=W_{i}=1$.

Observe that $(\Sigma,\boldsymbol{\alpha},\boldsymbol{\beta}_{b},\{z_{1},z_{2},z_{3}\})$
is weakly admissible and that all the intersection points have $c_{1}=0$.
Its homology group $HF_{\mathbb{F}[U],\boldsymbol{z}}^{-}(\boldsymbol{\alpha},\boldsymbol{\beta}_{b})$
is the Heegaard Floer homology of $\#^{2}S^{1}\times S^{2}$ with
three basepoints, so it is free over $\mathbb{F}[U]$, of rank $16$.
Hence, the differential is zero; in other words, the differential
over ${\cal R}$ is zero after identifying $Z_{i}=U$, $W_{i}=1$
for all $i$.
\end{proof}
Consider the composition 
\begin{equation}
\mu_{2}:HF_{0}^{-}(\boldsymbol{\alpha},\boldsymbol{\beta}_{c})\otimes HF_{0}^{-}(\boldsymbol{\beta}_{c},\boldsymbol{\beta}_{b})\to HF_{0}^{-}(\boldsymbol{\alpha},\boldsymbol{\beta}_{b}).\label{eq:mu2-topgrading-nonori-1}
\end{equation}
Using the $\Phi$ and relative $H_{1}$-actions, we will show that
Equation (\ref{eq:mu2-topgrading-nonori-1}) is an isomorphism, and
prove Propositions \ref{prop:For-each-balled}, \ref{prop:83}, and
\ref{prop:85}.

\subsubsection{\label{subsec:841}The $\Phi$ and relative $H_{1}$ actions on $HF_{0}^{-}(\boldsymbol{\alpha},\boldsymbol{\beta}_{b})$}

Recall that the coefficient rings of $HF^{-}(\boldsymbol{\alpha},\boldsymbol{\beta}_{c})$,
$HF^{-}(\boldsymbol{\beta}_{c},\boldsymbol{\beta}_{b})$, and $HF^{-}(\boldsymbol{\alpha},\boldsymbol{\beta}_{b})$
are not the same; the composition $\mu_{2}^{\boldsymbol{\alpha},\boldsymbol{\beta}_{c},\boldsymbol{\beta}_{b}}$
is defined as the composition (Equation (\ref{eq:mud-defn}))
\[
HF^{-}(\boldsymbol{\alpha},\boldsymbol{\beta}_{c})\otimes HF^{-}(\boldsymbol{\beta}_{c},\boldsymbol{\beta}_{b})\xrightarrow{\iota\otimes\iota}HF_{R}^{-}(\boldsymbol{\alpha},\boldsymbol{\beta}_{c})\otimes HF_{R}^{-}(\boldsymbol{\beta}_{c},\boldsymbol{\beta}_{b})\xrightarrow{\mu_{2,R}}HF_{R}^{-}(\boldsymbol{\alpha},\boldsymbol{\beta}_{b})=HF^{-}(\boldsymbol{\alpha},\boldsymbol{\beta}_{b}),
\]
where $R=\mathbb{F}[U_{1}^{1/2},U_{2}^{1/2},U_{3}^{1/2}]$. Note that
$\iota$ restricts to isomorphisms $HF_{0}^{-}(\boldsymbol{\alpha},\boldsymbol{\beta}_{c})\to HF_{R,0}^{-}(\boldsymbol{\alpha},\boldsymbol{\beta}_{c})$
and $HF_{0}^{-}(\boldsymbol{\beta}_{c},\boldsymbol{\beta}_{b})\to HF_{R,0}^{-}(\boldsymbol{\beta}_{c},\boldsymbol{\beta}_{b})$.
Let  
\[
f,g\in HF_{0}^{-}(\boldsymbol{\alpha},\boldsymbol{\beta}_{c})=HF_{R,0}^{-}(\boldsymbol{\alpha},\boldsymbol{\beta}_{c}),\ f,g\in HF_{0}^{-}(\boldsymbol{\beta}_{c},\boldsymbol{\beta}_{b})=HF_{R,0}^{-}(\boldsymbol{\beta}_{c},\boldsymbol{\beta}_{b})
\]
be as in Definition \ref{def:f-and-g}.

We understand the action $A_{\mathbb{F}[U^{1/2}],\kappa_{12},-1}$
on $HF_{\mathbb{F}[U^{1/2}],0}^{-}(\boldsymbol{\alpha},\boldsymbol{\beta}_{c})$
and the action $B_{\mathbb{F}[U^{1/2}],\kappa_{23},-1}$ on $HF_{\mathbb{F}[U^{1/2}],0}^{-}(\boldsymbol{\beta}_{c},\boldsymbol{\beta}_{b})$
by Subsections \ref{subsec:Model-computations} and \ref{subsec:Connected-sums}.
We have 
\begin{gather*}
A_{\mathbb{F}[U^{1/2}],\kappa_{12},-1}\rho(f)=U^{1/2}\rho(g),\ A_{\mathbb{F}[U^{1/2}],\kappa_{12},-1}\rho(g)=U^{1/2}\rho(f)\in HF_{\mathbb{F}[U^{1/2}],-1}^{-}(\boldsymbol{\alpha},\boldsymbol{\beta}_{c}),\\
B_{\mathbb{F}[U^{1/2}],\kappa_{23},-1}\rho(f)=U^{1/2}\rho(g),\ B_{\mathbb{F}[U^{1/2}],\kappa_{23},-1}\rho(g)=U^{1/2}\rho(f)\in HF_{\mathbb{F}[U^{1/2}],-1}^{-}(\boldsymbol{\beta}_{c},\boldsymbol{\beta}_{b}).
\end{gather*}
Hence, by Lemma \ref{lem:mu2-ab} and since $\rho$ commutes with
$\mu_{2}$, for any $x\in\mu_{2}(HF_{0}^{-}(\boldsymbol{\alpha},\boldsymbol{\beta}_{c})\otimes HF_{0}^{-}(\boldsymbol{\beta}_{c},\boldsymbol{\beta}_{b}))$,
there exists some $y\in\mu_{2}(HF_{0}^{-}(\boldsymbol{\alpha},\boldsymbol{\beta}_{c})\otimes HF_{0}^{-}(\boldsymbol{\beta}_{c},\boldsymbol{\beta}_{b}))$
such that $A_{\mathbb{F}[U^{1/2}],\kappa_{12},-1}\rho(x)=U^{1/2}\rho(y)$.
Define $A_{\kappa_{12},0}(x):=y$; then 
\[
A_{\kappa_{12},0}:\mu_{2}(HF_{0}^{-}(\boldsymbol{\alpha},\boldsymbol{\beta}_{c})\otimes HF_{0}^{-}(\boldsymbol{\beta}_{c},\boldsymbol{\beta}_{b}))\to\mu_{2}(HF_{0}^{-}(\boldsymbol{\alpha},\boldsymbol{\beta}_{c})\otimes HF_{0}^{-}(\boldsymbol{\beta}_{c},\boldsymbol{\beta}_{b}))
\]
is well-defined since $U^{1/2}\rho$ is injective on $HF_{0}^{-}(\boldsymbol{\alpha},\boldsymbol{\beta}_{b})$.
The same statement holds for $B_{\mathbb{F}[U^{1/2}],\kappa_{23},-1}$;
similarly define $B_{\kappa_{23,0}}$.

The $\Phi$, $A_{\kappa_{12},0}$, and $B_{\kappa_{23},0}$ actions
on $\mu_{2}(HF_{0}^{-}(\boldsymbol{\alpha},\boldsymbol{\beta}_{c})\otimes HF_{0}^{-}(\boldsymbol{\beta}_{c},\boldsymbol{\beta}_{b}))$
are as in Diagram (\ref{diag:8actions}). 
\begin{equation}
\label{diag:8actions}
\begin{tikzcd}[sep=large]
	{\mu_2 (f\otimes f)} & {\mu_2 (f\otimes g)} & {\mu_2 (f\otimes f)} & {\mu_2 (f\otimes g)} \\
	{\mu_2 (g\otimes f)} & {\mu_2 (g\otimes g)} & {\mu_2 (g\otimes f)} & {\mu_2 (g\otimes g)}
	\arrow["{\Phi _2 ,\Phi _3}", from=1-1, to=1-2]
	\arrow["{\Phi _1 ,\Phi _2 }"{description}, from=1-1, to=2-1]
	\arrow["{\Phi _1 ,\Phi _2 }"{description}, from=1-2, to=2-2]
	\arrow["{B_{\kappa_{23} , 0}}"{description}, curve={height=-12pt}, from=1-3, to=1-4]
	\arrow["{A_{\kappa _{12} , 0}}"', curve={height=12pt}, from=1-3, to=2-3]
	\arrow["{B_{\kappa_{23} , 0}}"{description}, curve={height=-12pt}, from=1-4, to=1-3]
	\arrow["{A_{\kappa _{12} , 0}}"', curve={height=12pt}, from=1-4, to=2-4]
	\arrow["{\Phi _2 ,\Phi _3}", from=2-1, to=2-2]
	\arrow["{A_{\kappa _{12} , 0}}"', curve={height=12pt}, from=2-3, to=1-3]
	\arrow["{B_{\kappa_{23} , 0}}"{description}, curve={height=-12pt}, from=2-3, to=2-4]
	\arrow["{A_{\kappa _{12} , 0}}"', curve={height=12pt}, from=2-4, to=1-4]
	\arrow["{B_{\kappa_{23} , 0}}"{description}, curve={height=-12pt}, from=2-4, to=2-3]
\end{tikzcd}
\end{equation}Also, the band map $HF_{0}^{-}(\boldsymbol{\alpha},\boldsymbol{\beta}_{c})\to HF_{0}^{-}(\boldsymbol{\alpha},\boldsymbol{\beta}_{b})$
is $\mu_{2}(-\otimes f)$, and so $\mu_{2}(-\otimes f)$ is injective
on $HF_{0}^{-}(\boldsymbol{\alpha},\boldsymbol{\beta}_{c})$. From
this together with the algebraic structures in Diagram (\ref{diag:8actions}),
one can deduce that Equation (\ref{eq:mu2-topgrading-nonori-1}) is
injective. Hence, $\dim_{\mathbb{F}}HF_{0}^{-}(\boldsymbol{\alpha},\boldsymbol{\beta}_{b})\ge4$,
and so by Subsection \ref{subsec:An-upper-bound-1}, Equation (\ref{eq:mu2-topgrading-nonori-1})
is an isomorphism.

Similarly to Subsection \ref{subsec:A-lower-bound-of-la}, we can
check that the sub Heegaard diagram with attaching curves $\boldsymbol{\alpha},\boldsymbol{\beta}_{c},\boldsymbol{\beta}_{b}$
is Alexander $\mathbb{Z}/2$-splittable, and so $\mu_{2}(f\otimes f)$,
$\mu_{2}(f\otimes g)$, $\mu_{2}(g\otimes f)$, and $\mu_{2}(g\otimes g)$
are homogeneous with respect to the relative Alexander $\mathbb{Z}/2$-grading.

\subsubsection{The $H_{1}(\#^{2}S^{1}\times S^{2})$ action on $HF_{0}^{-}(\boldsymbol{\alpha},\boldsymbol{\beta}_{b})$}

By Subsection \ref{subsec:Model-computations}, the action $A_{R,\kappa_{2},-1}$
vanishes on $HF_{R,0}^{-}(\boldsymbol{\alpha},\boldsymbol{\beta}_{c})$,
and the action $B_{R,\kappa_{1},-1}$ vanishes on $HF_{R,0}^{-}(\boldsymbol{\beta}_{c},\boldsymbol{\beta}_{b})$.
Since Equation (\ref{eq:mu2-topgrading-nonori-1}) is an isomorphism,
the actions $A_{R,\kappa_{2},-1}$ and $B_{R,\kappa_{1},-1}$ vanish
on $HF_{R,0}^{-}(\boldsymbol{\alpha},\boldsymbol{\beta}_{b})$ by
Lemma \ref{lem:mu2-ab}. Hence, the $H_{1}(\#^{2}S^{1}\times S^{2})$
action is trivial on $HF_{0}^{-}(\boldsymbol{\alpha},\boldsymbol{\beta}_{b})$
by Lemma \ref{lem:ab-additive}. This concludes the proof of Proposition
\ref{prop:For-each-balled}.

\subsubsection{The maps $A_{ij}$ and $B_{ij}$ do not depend on $\lambda$}

Fix $i,j\in\{1,2,3\}$ and $x\in HF_{0}^{-}(\boldsymbol{\alpha},\boldsymbol{\beta}_{b})$.
Let us consider paths $\lambda$ between $u_{i},u_{j}$ such that
$u_{i}\in\{z_{i},w_{i}\}$ and $u_{j}\in\{z_{j},w_{j}\}$. We claim
that $A_{\mathbb{F}[U^{1/2}],\lambda,-1}(\rho(x))$ and $B_{\mathbb{F}[U^{1/2}],\lambda,-1}(\rho(x))$
do not depend on $\lambda,u_{i},u_{j}$. First, it is sufficient to
prove this for $A_{\mathbb{F}[U^{1/2}],\lambda,-1}\rho(x)$ by Lemma
\ref{lem:a+b}. We can find a path $\kappa$ between $z_{i},w_{i}$
such that $\kappa$ is disjoint from $\boldsymbol{\alpha}$, and so
$A_{\mathbb{F}[U^{1/2}],\kappa,-1}\rho(x)=0$. Also, since the $H_{1}(\#^{2}S^{1}\times S^{2})$
action is trivial on $HF_{0}^{-}(\boldsymbol{\alpha},\boldsymbol{\beta}_{b})$,
if $\kappa$ is a loop, then $A_{\mathbb{F}[U^{1/2}],\kappa,-1}(\rho(x))=\rho(A_{R,\lambda,-1}(x))=0$.
Hence, $A_{\mathbb{F}[U^{1/2}],\lambda,-1}(\rho(x))$ does not depend
on the $\lambda,u_{i},u_{j}$ by Lemma \ref{lem:ab-additive}. This
concludes the proof of Proposition \ref{prop:83}.

\subsubsection{The maps $A_{23}$ and $A_{13}$}

We have computed $A_{12}$ and $B_{23}$. The map $A_{23}=B_{23}+\Phi_{2}+\Phi_{3}$
by Lemma \ref{lem:a+b}, and $A_{13}=A_{12}+A_{23}$ by Lemma \ref{lem:ab-additive}.
This concludes the proof of Proposition \ref{prop:85}.

\section{\label{sec:Gradings-for-the}Gradings for the setup}

In this section, we study gradings of the Heegaard diagrams defined
in Section \ref{sec:The-setup-and}.

\subsection{\label{subsec:Standard-generators}Standard generators and the main
proposition}

We call an intersection point ${\bf x}\in\boldsymbol{\beta}(L_{1})\cap\boldsymbol{\beta}(L_{2})$
of the Heegaard diagram ${\cal H}$ \emph{standard} if it lies in
the top homological $\mathbb{Z}$-grading. More precisely:
\begin{defn}
\label{def:standard-gen}Let $L_{1}<L_{2}$. Note that an intersection
point (\emph{``generator''}) ${\bf x}\in\boldsymbol{\beta}(L_{1})\cap\boldsymbol{\beta}(L_{2})$
of ${\cal H}$ consists of one point in each special region. We call
${\bf x}$ a \emph{standard generator} if for each special region,
if $\boldsymbol{\beta}(L_{1})$ and $\boldsymbol{\beta}(L_{2})$ in
that region are standard translates of each other, then the corresponding
point in that special region is the top grading intersection point.

We wound the attaching curves along arcs to obtain ${\cal H}^{wind}$
from ${\cal H}$. This might have created new intersection points,
but the intersection points of ${\cal H}$ still exist. Let the \emph{standard
generators} of ${\cal H}^{wind}$ be the ones that correspond to the
standard generators of ${\cal H}$.
\end{defn}

In this section, we show the following proposition. Henceforth, we
consider ${\cal B}$ as a homologically $\mathbb{Z}$-graded $A_{\infty}$-category,
where the homological grading is defined as in Proposition \ref{prop:combi-claims}
(\ref{enu:It-is-possible}).
\begin{prop}
\label{prop:combi-claims}We have the following.
\begin{enumerate}
\item \label{enu:The--category-}The $A_{\infty}$-category ${\cal B}$
is homologically $\mathbb{Z}$-gradable.
\item \label{enu:The--category--1}The $A_{\infty}$-category ${\cal B}$
can be equipped with an Alexander $\mathbb{Z}/2$-splitting.
\item \label{enu:The--structures-of}The standard generators have $c_{1}=0$.
\item \label{enu:It-is-possible}For $L_{1}<L_{2}$, the standard generators
of $CF^{-}(\boldsymbol{\beta}(L_{1}),\boldsymbol{\beta}(L_{2}))$
lie in the same relative homological $\mathbb{Z}$-grading. Lift the
relative homological $\mathbb{Z}$-gradings of $CF^{-}(\boldsymbol{\beta}(L_{1}),\boldsymbol{\beta}(L_{2}))$
to \emph{absolute homological $\mathbb{Z}$-gradings} such that the
standard generators lie in absolute homological $\mathbb{Z}$-grading
$0$. Then, $\mu_{d}$ has degree $d-2$ (i.e. this defines a homological
$\mathbb{Z}$-grading on ${\cal B}$).
\item \label{enu:If-the-underlying}If the underlying links of $L_{1}<L_{2}$
differ in at most one crossing ball, then the top absolute homological
$\mathbb{Z}$-grading of $HF^{-}(\boldsymbol{\beta}(L_{1}),\boldsymbol{\beta}(L_{2}))$
is $0$.
\item \label{enu:If-,-then}If $L_{1}<L_{2}$, then $HF^{-}(\boldsymbol{\beta}(L_{1}),\boldsymbol{\beta}(L_{2}))$
is supported in absolute homological $\mathbb{Z}$-grading $\le0$.
\end{enumerate}
\end{prop}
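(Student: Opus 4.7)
The plan is to prove parts (1)--(6) by combining direct local analysis of the Heegaard diagram with input from the computations of Section \ref{sec:Some-important-balled}. The structural parts (1)--(4) follow from features built into the construction in Subsection \ref{subsec:The-Heegaard-diagram}, while the homological bounds (5)--(6) require a local Maslov-index calculation together with the models from Sections \ref{sec:Band-maps-for}, \ref{sec:Swap-maps}, and \ref{sec:Some-important-balled}.

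For (1) and (3), the key observation is that for any $L_1 < L_2 \in \boldsymbol{L}$, both $\boldsymbol{\beta}(L_1)$ and $\boldsymbol{\beta}(L_2)$ present the same handlebody $H^{oneh}(L)$ after forgetting the basepoint data, so the three-manifold $Y_{12}$ they describe is $\#^g S^1 \times S^2$ for $g$ equal to the genus of $\Sigma$. Since $H^2(\#^g S^1\times S^2;\mathbb{Z})$ is torsion-free, Remark \ref{rem:In-particular,-} gives (1). For (3), I would use Equation (\ref{eq:c1}): standard generators are fixed under the local swap $z_i \leftrightarrow w_i$ in each baseball, which forces $PD(\overrightarrow{L_{12}}) = 0$; the Spin$^c$-structure $\mathfrak{s}_{\boldsymbol{w}}({\bf x})$ on $\#^g S^1\times S^2$ is then the torsion one, with $c_1 = 0$. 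For (2), I verify the parity condition (\ref{enu:alex3}) of Lemma \ref{lem:alexander-two} directly from Figure \ref{fig:standard-diagram}: the meridians of $L_{12}$ are supported in the baseball regions, and any relation among them factors through the obvious relations in each baseball separately, which have even coefficient sum.

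For (4), I would show that any two standard generators of $CF^-(\boldsymbol{\beta}(L_1),\boldsymbol{\beta}(L_2))$ lie in the same relative grading by constructing explicit Maslov-index-zero domains connecting them, built locally inside each special region where $\boldsymbol{\beta}(L_1)$ and $\boldsymbol{\beta}(L_2)$ are not standard translates of one another (i.e.\ crossing balls with $L_1(r) \neq L_2(r)$ and baseballs that change type). Once standard generators are normalized to grading $0$, the degree formula $\deg \mu_d = d - 2$ would be checked on the subcategory of standard generators first: for any chain $L_1 < \cdots < L_d$, the $\mu_d$-product of the locally defined canonical elements $\theta$ in each slot is a standard generator of $HF^-(\boldsymbol{\beta}(L_1),\boldsymbol{\beta}(L_d))$ modulo strictly lower gradings, and this forces the claimed degree shift. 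The extension to all of $\mathcal{B}$ follows from the $\mathbb{Z}$-gradability established in (1).

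For (5) and (6), the main work is (5); part (6) then follows from a direct local Maslov-index bound. For (5), when $L_1$ and $L_2$ differ in at most one crossing ball (or only in baseball types), the Heegaard sub-diagram in the non-standard-translate regions is a stabilization of one of $K_{nonori}, K_{ori}$ of Figure \ref{fig:s1s2-important-1-1} or of $L_{nonori}, L_{ori}$ of Figure \ref{fig:Some-important-balled}, whose top gradings are completely described in Section \ref{sec:Some-important-balled}; by the stabilization invariance of the top homological $\mathbb{Z}$-grading (Subsection \ref{subsec:Connected-sums}), the top grading of the full $HF^-$ equals $0$. For (6), I would argue at the chain-complex level: every generator $\mathbf{x} \in \boldsymbol{\beta}(L_1)\cap\boldsymbol{\beta}(L_2)$ decomposes as a choice of intersection point in each special region, each standard-translate region contributes $0$ to the grading when one picks the top intersection point and $-1$ otherwise, and (by (4)) all choices in the non-standard-translate regions contribute the same grading. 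Hence every generator has grading $\leq 0$, and so does the homology. The main obstacle will be the case-by-case local Maslov-index analysis needed for (4) in the non-standard-translate regions, particularly when multiple crossing-ball resolutions combine with baseball type changes, but this is a finite check using the rational-tangle description from Subsection \ref{subsec:The-general-setup}.
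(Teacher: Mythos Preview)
Your approach to parts (1)--(5) is broadly aligned with the paper, though your verification of the degree formula in (4) is sketchy: arguing via actual $\mu_d$-products of canonical elements requires knowing those products are nonzero and land in the expected grading, which is established only later in the paper. The paper instead proves (4) by a purely combinatorial Maslov-index computation (Lemma~\ref{lem:wind-maslov}): for any chain $L_1<L_2<L_3$ and any triangle domain ${\cal D}$ with standard-generator vertices, one computes $P({\cal D})-\mu({\cal D})$ region by region, exhibiting explicit local domains with the required indices. This avoids any holomorphic input.

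The real gap is in (6). Your chain-level argument---every generator decomposes into local intersection points, each contributing $0$ or $-1$---applies only to the \emph{unwound} diagram ${\cal H}$. But ${\cal H}$ is in general not weakly admissible, so $CF^-$ is not defined there; you must work in ${\cal H}^{wind}$, where the winding creates \emph{new} intersection points that do not decompose as products over the special regions and could a priori sit in positive grading. The paper explicitly flags this obstacle in Subsection~\ref{sec:Higher-compositions-vanish} and resolves it by a quasi-stabilization trick: quasi-stabilize four times inside each crossing ball (Figure~\ref{fig:quasi-stabilize-4val}), unwind and re-isotope to obtain an admissible diagram $(\boldsymbol{\alpha}^{qs,st},\boldsymbol{\beta}^{qs,st})$ in which \emph{all} generators are standard (hence in grading $\le 0$), then transfer this bound back to $(\boldsymbol{\alpha}^{wind},\boldsymbol{\beta}^{wind})$ via degree-zero change-of-diagram isomorphisms and the quasi-stabilization grading control of \cite[Proposition~5.3]{MR3709653}. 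Without something like this, your argument for (6) does not go through.
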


\begin{figure}[h]
\begin{centering}
\includegraphics{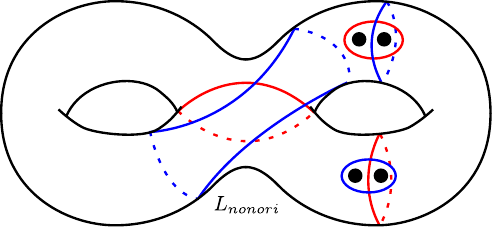}\includegraphics{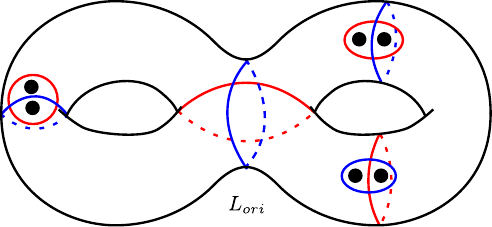}
\par\end{centering}
\caption{\label{fig:s1s2-important-1}Standard (not weakly admissible) Heegaard
diagrams}
\end{figure}

First, Proposition \ref{prop:combi-claims} (\ref{enu:The--category-})
follows from Subsection \ref{subsec:ainf}.

If the underlying links of $L_{1}<L_{2}$ differ in at most one crossing
ball, then the sub Heegaard diagram of ${\cal H}$ with attaching
curves $\boldsymbol{\beta}(L_{1})$ and $\boldsymbol{\beta}(L_{2})$
is a stabilization of the disjoint union of one of the Heegaard diagrams
in Figures \ref{fig:s1s2-important-1-1} and \ref{fig:s1s2-important-1}
(but we might have to switch the red and blue curves; also, the labels
of the basepoints are omitted) and some number of Heegaard diagrams
labelled $K_{ori}$ in Figure \ref{fig:s1s2-important-1-1}. Hence,
Proposition \ref{prop:combi-claims} (\ref{enu:If-the-underlying})
follows from Subsection \ref{subsec:Model-computations} and Section
\ref{sec:Some-important-balled}.

\subsection{\label{subsec:Proposition--()}Proposition \ref{prop:combi-claims}
(\ref{enu:The--category--1}) and (\ref{enu:The--structures-of})}

In this subsection, we prove Proposition \ref{prop:combi-claims}
(\ref{enu:The--category--1}) and (\ref{enu:The--structures-of}).
It is possible to enlarge $\boldsymbol{L}$ (ignoring Condition (\ref{enu:For-every-directed})
of Subsection \ref{subsec:The-general-setup}) such that whenever
$L_{1}<L_{2}$ are consecutive, then their underlying links differ
in at most one crossing ball. Without loss of generality, we can assume
this in this subsection.

For $L_{1}<L_{2}$, we can directly check that there is a domain between
any two generators in $\boldsymbol{\beta}(L_{1})\cap\boldsymbol{\beta}(L_{2})$
in ${\cal H}$, and so there is a domain between any two standard
generators in $\boldsymbol{\beta}(L_{1})\cap\boldsymbol{\beta}(L_{2})$
of ${\cal H}^{wind}$. Note that we do not need any admissibility
hypothesis to define the ${\rm Spin}^{c}$-structure of a generator
or the relative homological grading of two generators and that the
${\rm Spin}^{c}$-structure of a generator and the relative homological
grading of two generators are invariant under winding the curves.

If $L_{1}<L_{2}$ are consecutive, then we can check that every intersection
point ${\bf x}\in\boldsymbol{\beta}(L_{1})\cap\boldsymbol{\beta}(L_{2})$
in ${\cal H}$ has $c_{1}=0$, and so every intersection point ${\bf x}\in\boldsymbol{\beta}(L)\cap\boldsymbol{\beta}(L')$
in ${\cal H}$ for any $L<L'$ has $c_{1}=0$ by the first part of
Lemma \ref{lem:If--is-1}. Hence, we get Proposition \ref{prop:combi-claims}
(\ref{enu:The--structures-of}).

We can similarly show Proposition \ref{prop:combi-claims} (\ref{enu:The--category--1}).
Given a cornerless two-chain, we can write it as the sum of cornerless
$\boldsymbol{\beta}(L_{1}),\boldsymbol{\beta}(L_{2})$-domains ${\cal D}^{wind}$
(i.e. its boundary is contained in $\boldsymbol{\beta}(L_{1})\cup\boldsymbol{\beta}(L_{2})$,
or equivalently, it is in $D({\bf x},{\bf x})$ for some ${\bf x}\in\boldsymbol{\beta}(L_{1})\cap\boldsymbol{\beta}(L_{2})$)
of ${\cal H}^{wind}$ for consecutive $L_{1}<L_{2}$'s (\cite[Lemma 2.64]{nahm2025unorientedskeinexacttriangle}),
and $P({\cal D}^{wind})$ is even for cornerless $\boldsymbol{\beta}(L_{1}),\boldsymbol{\beta}(L_{2})$-domains
${\cal D}^{wind}$ of ${\cal H}^{wind}$: one way to show this is
to use Lemma \ref{lem:alexander-two}. Another way is to consider
the corresponding cornerless domain ${\cal D}$ of ${\cal H}$ that
has the ``same boundary''; we have $P({\cal D}^{wind})=P({\cal D})$,
and we can directly check that $P({\cal D})$ is even.

\subsection{An absolute homological $\mathbb{Z}$-grading}

Proposition \ref{prop:combi-claims} (\ref{enu:It-is-possible}) follows
from the following lemma. 
\begin{lem}
\label{lem:wind-maslov}Let $L_{1},L_{2},L_{3}\in\boldsymbol{L}$
be such that $L_{1}<L_{2}<L_{3}$ and let ${\cal D}$ be a $\boldsymbol{\beta}(L_{1})\boldsymbol{\beta}(L_{2})\boldsymbol{\beta}(L_{3})$-domain
whose vertices are standard generators. Then, $P({\cal D})-\mu({\cal D})$
equals the number of link basepoint pairs of $\boldsymbol{\beta}(L_{2})$
that are not link basepoint pairs for both $\boldsymbol{\beta}(L_{1})$
and $\boldsymbol{\beta}(L_{3})$.
\end{lem}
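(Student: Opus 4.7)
The plan is to show that $P(\mathcal{D}) - \mu(\mathcal{D})$ is independent of the choice of $\mathcal{D}$ and of the standard vertices, and then compute it by a local decomposition over the special regions of the Heegaard diagram. For the invariance: by Proposition \ref{prop:combi-claims} part (\ref{enu:The--structures-of}), standard generators have $c_1 = 0$, so Lemma \ref{lem:If--is-1} part 2 (with $c = 0$, in the pure-$\boldsymbol{\beta}$ setting) gives $\mu(\mathcal{D}) - \mu(\mathcal{D}') = P(\mathcal{D}) - P(\mathcal{D}')$ whenever $\mathcal{D}, \mathcal{D}'$ share the same standard vertex triple, so $P - \mu$ depends only on the vertex triple. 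Independence of the choice of standard vertices follows from the first assertion of Proposition \ref{prop:combi-claims} part (\ref{enu:It-is-possible}), which I would prove before the current lemma: two standard generators in the same $\boldsymbol{\beta}(L_i) \cap \boldsymbol{\beta}(L_j)$ lie in the same relative homological grading, so a two-chain $E$ between them satisfies $\mu(E) = P(E)$, and splicing $E$ onto a triangle preserves $P - \mu$. Thus $P - \mu$ takes a single value $N$ on every triangle with standard vertices.

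To compute $N$, I would exhibit a reference triangle $\mathcal{D}_0$ whose support decomposes as $\mathcal{D}_0 = \sum_r \mathcal{D}_0^{(r)} + \mathcal{E}$, with each $\mathcal{D}_0^{(r)}$ a local model triangle supported in a special region $r$ (containing all corners of $\mathcal{D}_0$ lying in that region) and $\mathcal{E}$ a corner-free two-chain supported on the complement. This is possible because the attaching curves $\boldsymbol{\beta}(L_i)$ are contained in the pairwise disjoint special regions, so outside them the boundary of $\mathcal{D}_0$ is zero and $\mathcal{E}$ has no corners. By the Lipshitz/Sarkar formula $\mu = e + \sum n_{{\bf x}_i}$ for triangles and the additivity of $P$, one has $P(\mathcal{D}_0) - \mu(\mathcal{D}_0) = \sum_r (P^{(r)} - \mu^{(r)}) - \mu(\mathcal{E})$, and Lemma \ref{lem:If--is} (with $c_1 = 0$ and $P(\mathcal{E}) = 0$, since $\mathcal{E}$ has no basepoints) gives $\mu(\mathcal{E}) = 0$. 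Local contributions are then evaluated using Figure \ref{fig:standard-diagram}. One-handle attaching regions (all three curves mutual standard translates) and crossing-ball regions (no basepoints, and an easy local check that $\mu^{(r)} = 0$) each contribute $0$. In a baseball region with basepoint pair $p$, the consecutive condition (last clause of Definition \ref{def:A-pair-pointed-Heegaard}) restricts the link-set of $p$ among $\{L_1, L_2, L_3\}$ to one of the seven consecutive subsets. The four cases $\emptyset, \{L_1\}, \{L_3\}, \{L_1, L_2, L_3\}$---in which $p$ is either free for $\boldsymbol{\beta}(L_2)$ or link for all three---each contribute $0$, while the three remaining cases $\{L_2\}, \{L_1, L_2\}, \{L_2, L_3\}$ each contribute $+1$. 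Summing over special regions recovers the claimed count.

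The main obstacle is the local computation in the three asymmetric baseball cases $\{L_2\}, \{L_1, L_2\}, \{L_2, L_3\}$: in each case one must set up the correct red/blue assignment of attaching curves in Figure \ref{fig:standard-diagram} right, identify the support of the local model triangle (particularly near the forbidding arc and the two basepoints $z, w$), compute the local basepoint multiplicities $n_z, n_w$ and the Euler measure plus corner-multiplicity contributions to $\mu$ via the Sarkar formula, and verify that $P^{(r)} - \mu^{(r)} = 1$ on the nose. A secondary technical point is arranging the local pieces $\mathcal{D}_0^{(r)}$ compatibly along the boundaries of the special regions so that the complementary two-chain $\mathcal{E}$ really is cornerless with no basepoint multiplicity; this is a topological matching condition that can be arranged region by region.
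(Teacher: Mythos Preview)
Your approach is essentially the paper's: both establish invariance of $P - \mu$ over domains with standard vertices and then compute it via a reference triangle assembled from local pieces over the special regions. The paper organizes the local case analysis by which pairs among $\boldsymbol{\beta}(L_1),\boldsymbol{\beta}(L_2),\boldsymbol{\beta}(L_3)$ are standard translates rather than by the link/free pattern of the basepoint pair; this is cleaner, since whenever two of the three curves in a region are standard translates there is an obvious thin triangle with $P=\mu=0$.

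You have, however, misread the statement: ``not link basepoint pairs for both $\boldsymbol{\beta}(L_1)$ and $\boldsymbol{\beta}(L_3)$'' is to be parsed as \emph{free for $L_1$ and free for $L_3$}, so only your case $\{L_2\}$ is counted. In your cases $\{L_1,L_2\}$ and $\{L_2,L_3\}$, two of the three curves in the baseball region are standard translates of each other (both link, respectively both free), and the thin triangle there gives local contribution $P-\mu=0$, not $+1$; you would discover this upon carrying out the computation you flag as the main obstacle. That this is the intended reading is also forced by how the lemma feeds into Proposition~\ref{prop:combi-claims}(\ref{enu:It-is-possible}): the grading shift $U_F^{-1/2}$ in the definition of $\mu_d$ (Equation~(\ref{eq:mud-defn})) involves precisely the basepoint pairs that are free for both endpoint curves and link for some curve in between.

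A minor point: Lemma~\ref{lem:If--is} does not directly apply to your $\mathcal{E}$, which is not a domain in any $D({\bf x},{\bf x})$; but $\mu(\mathcal{E})=e(\mathcal{E})=0$ holds anyway, since the complement of the special regions in $\Sigma$ is a union of annuli and $\mathcal{E}$ has no corners.
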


\begin{proof}
One can first directly check that for any $L<L'$, the standard generators
in $\boldsymbol{\beta}(L)\cap\boldsymbol{\beta}(L')$ have the same
relative homological grading. Hence, we only have to show that for
each such $L_{1}<L_{2}<L_{3}$, there exists such a domain ${\cal D}$.
We can check this by checking the corresponding statement in ${\cal H}$
instead of ${\cal H}^{wind}$. Indeed, as in Subsection \ref{subsec:Proposition--()},
if ${\cal D}$ is a two-chain (or a domain) in ${\cal H}$, then we
can consider the corresponding domain ${\cal D}^{wind}$ in ${\cal H}^{wind}$,
which is characterized by the property that ${\cal D}$ and ${\cal D}^{wind}$
have the ``same boundary''. Since the winding arcs do not intersect
any basepoints nor intersection points, the total multiplicity and
Maslov indices of ${\cal D}$ and ${\cal D}^{wind}$ are the same.

For each of the special regions, we have one of the following cases.
\begin{enumerate}
\item All three circles $\boldsymbol{\beta}(L_{1})$, $\boldsymbol{\beta}(L_{2})$,
and $\boldsymbol{\beta}(L_{3})$ are standard translates of each other.
\item $\boldsymbol{\beta}(L_{1})$ and $\boldsymbol{\beta}(L_{2})$, or
$\boldsymbol{\beta}(L_{2})$ and $\boldsymbol{\beta}(L_{3})$ are
standard translates but the third one is not.
\item $\boldsymbol{\beta}(L_{1})$ and $\boldsymbol{\beta}(L_{3})$ are
standard translates but the third one is not.
\item None of the above
\end{enumerate}
In the special regions that correspond to the one-handles, only the
first case can happen. In each crossing ball, the first, second, and
fourth cases can happen. In each baseball, the first three cases can
happen, and the third case corresponds to the link baseballs in $L_{2}$
that are not link baseballs in both $L_{1}$ and $L_{3}$. For each
of these cases, we can find a domain supported in these special regions
whose vertices are the standard intersection points. For the first,
second, and fourth cases, we can let $P({\cal D})=\mu({\cal D})=0$;
for the third case, we can let $P({\cal D})=1$ and $\mu({\cal D})=0$.
See Figure \ref{fig:zstandard-heegaard-triple} for the third and
fourth cases.

\begin{figure}[h]
\begin{centering}
\includegraphics{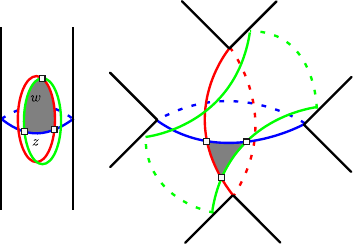}
\par\end{centering}
\caption{\label{fig:zstandard-heegaard-triple}Domains for the third and fourth
cases}
\end{figure}

\end{proof}

\subsection{\label{sec:Higher-compositions-vanish}A bound of the top homological
$\mathbb{Z}$-grading of \texorpdfstring{$HF^{-}(\boldsymbol{\beta}(L_{1}),\boldsymbol{\beta}(L_{2}))$}{HF-(beta(L1), beta(L2))}}

In this subsection, we show Proposition \ref{prop:combi-claims} (\ref{enu:If-,-then}).
Let us only focus on the sub Heegaard diagram of ${\cal H}$ and ${\cal H}^{wind}$
(which we also denote as ${\cal H}$ and ${\cal H}^{wind}$) with
attaching curves $\boldsymbol{\beta}(L_{1})$ and $\boldsymbol{\beta}(L_{2})$.
Let $\gamma$ be the set of winding arcs we used to get ${\cal H}^{wind}$
from ${\cal H}$. For simplicity, let us call $\boldsymbol{\beta}(L_{1}),\boldsymbol{\beta}(L_{2})$
in ${\cal H}$ (resp. ${\cal H}^{wind}$) $\boldsymbol{\alpha},\boldsymbol{\beta}$
(resp. $\boldsymbol{\alpha}^{wind},\boldsymbol{\beta}^{wind}$). In
this section, all Heegaard Floer homology groups refer to the $c_{1}=0$
summands.

We want to show that the top homological $\mathbb{Z}$-grading of
$HF^{-}(\boldsymbol{\alpha}^{wind},\boldsymbol{\beta}^{wind})$ is
lower than or equal to the homological $\mathbb{Z}$-grading of the
standard generators. We would be done if ${\cal H}$ is weakly admissible,
since the standard generators are the top homological $\mathbb{Z}$-grading
generators, but ${\cal H}$ is in general not weakly admissible. It
seems likely that there is always a way to wind the curves such that
the only generators with $c_{1}=0$ are the standard ones, which would
imply the claim, but we prove the claim using a different method.

\begin{figure}[h]
\begin{centering}
\includegraphics{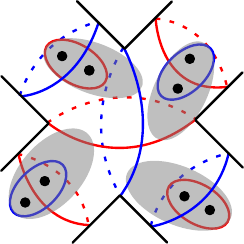}\qquad{}\includegraphics{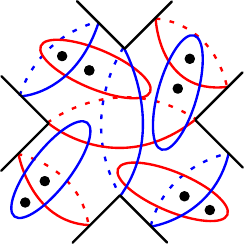}
\par\end{centering}
\caption{\label{fig:quasi-stabilize-4val}Quasi-stabilize four times inside
each crossing ball}
\end{figure}

Let us quasi-stabilize the diagram in each special region that corresponds
to the crossing balls. More precisely, let $(\Sigma,\boldsymbol{\alpha},\boldsymbol{\beta},\boldsymbol{u}\sqcup\boldsymbol{v})$
be the associated Heegaard diagram of ${\cal H}$ (Definition \ref{def:translate}),
and modify this Heegaard diagram in each special region that corresponds
to the crossing balls, as in the left side of Figure \ref{fig:quasi-stabilize-4val}
(for each crossing ball, add four attaching circles for each attaching
curve and eight basepoints), and also such that the winding arcs $\gamma$
do not intersect the gray regions. Let $(\Sigma,\boldsymbol{\alpha}^{qs},\boldsymbol{\beta}^{qs},\boldsymbol{u}^{qs}\sqcup\boldsymbol{v})$
be this new Heegaard diagram.

Below, we will consider various attaching curves $\widetilde{\boldsymbol{\alpha}^{qs}}$,
$\widetilde{\boldsymbol{\beta}^{qs}}$ on $(\Sigma,\boldsymbol{u}^{qs}\sqcup\boldsymbol{v})$.
Let $Y$, $L^{qs}$ be the three-manifold and the link that $\widetilde{\boldsymbol{\alpha}^{qs}},\widetilde{\boldsymbol{\beta}^{qs}}$
represents. Unlike the setting of Subsection \ref{subsec:Unoriented-links-and},
$L^{qs}$ is not minimally pointed. However, we can still define ${\rm Spin}^{c}$-structures
$\mathfrak{s}:\boldsymbol{\alpha}^{qs}\cap\boldsymbol{\beta}^{qs}\to{\rm Spin}^{c}(Y(L^{qs}))$
and the first Chern class $c_{1}:{\rm Spin}^{c}(Y(L^{qs}))\to H^{2}(Y;\mathbb{Z})$
similarly to Subsection \ref{subsec:Unoriented-links-and}. If $(\Sigma,\widetilde{\boldsymbol{\alpha}^{qs}},\widetilde{\boldsymbol{\beta}^{qs}},\boldsymbol{u}^{qs}\sqcup\boldsymbol{v})$
is weakly admissible, then we define the chain complex $CF^{-}(\widetilde{\boldsymbol{\alpha}^{qs}},\widetilde{\boldsymbol{\beta}^{qs}})$
as follows:
\begin{itemize}
\item Let the underlying module of $CF^{-}(\widetilde{\boldsymbol{\alpha}^{qs}},\widetilde{\boldsymbol{\beta}^{qs}})$
be the free $R$-module freely generated by the intersection points
${\bf x}\in\widetilde{\boldsymbol{\alpha}^{qs}}\cap\widetilde{\boldsymbol{\beta}^{qs}}$
with $c_{1}(\mathfrak{s}({\bf x}^{qs}))=0$, where $R:=R_{\boldsymbol{\alpha}^{wind},\boldsymbol{\beta}^{wind}}$
is the coefficient ring of $CF^{-}(\boldsymbol{\alpha}^{wind},\boldsymbol{\beta}^{wind})$.
\item Recall that if $\boldsymbol{p}$ is the set of baseball pairs of ${\cal H}$,
then $R=\mathbb{F}[\{U_{p}^{k_{p}}\}_{p\in\boldsymbol{p}}]$ for some
$k_{p}\in\{1/2,1\}$. Assign a weight $w(v)\in R$ to each basepoint
$v\in\boldsymbol{u}^{qs}\sqcup\boldsymbol{v}$ as follows:
\begin{itemize}
\item For $v\in\boldsymbol{v}$, let $w(v)=U_{p}$ where $p\in\boldsymbol{p}$
is the basepoint pair that $v$ is in.
\item If $v\in\boldsymbol{u}^{qs}$, then $v\in L^{qs}$. Let $p\in\boldsymbol{p}$
be the basepoint pair that is on the same link component of $L^{qs}$
as $v$. Let $w(v)=U_{p}^{1/2}$. 
\end{itemize}
\item Define the differential $\partial$ on $CF^{-}(\widetilde{\boldsymbol{\alpha}^{qs}},\widetilde{\boldsymbol{\beta}^{qs}})$
as
\[
\partial{\bf x}=\sum_{{\cal D}\in D({\bf x},{\bf y}),\ \mu({\cal D})=1}\#{\cal M}({\cal D})\prod_{v\in\boldsymbol{u}^{qs}\sqcup\boldsymbol{v}}w(v)^{n_{v}({\cal D})}{\bf y}.
\]
\end{itemize}

Call a generator \emph{${\bf x}\in\boldsymbol{\alpha}^{qs}\cap\boldsymbol{\beta}^{qs}$
standard} if it is standard if we ignore the attaching circles that
we added to quasi-stabilize. (Note that each of the red (resp. blue)
circles contained in the gray region intersect exactly one blue (resp.
red) circle.) We will obtain attaching curves from $\boldsymbol{\alpha}^{qs},\boldsymbol{\beta}^{qs}$
by winding them along arcs. Call a generator of such Heegaard diagram
\emph{standard} if it comes from a standard generator ${\bf x}\in\boldsymbol{\alpha}^{qs}\cap\boldsymbol{\beta}^{qs}$.
All the standard generators have $c_{1}=0$ and also the same relative
homological $\mathbb{Z}$-grading. Lift the relative homological $\mathbb{Z}$-grading
to an absolute homological $\mathbb{Z}$-grading such that the standard
generators have homological $\mathbb{Z}$-grading $0$.

Wind the curves along the winding arcs $\gamma$, and call these new
attaching curves $\boldsymbol{\alpha}^{qs,wind},\boldsymbol{\beta}^{qs,wind}$.
This Heegaard diagram can be obtained from ${\cal H}^{wind}$ by a
number of \emph{standard quasi-stabilizations}, where a \emph{standard
quasi-stabilization} is adding two basepoints and a circle for each
attaching curve in a way described in \cite[Subsection 5.1]{MR3709653}
(but the role of $\alpha$ and $\beta$ can be reversed; also see
\cite[Subsection 4.1]{MR3905679}). Now, we can unwind all the curves
along the winding arcs $\gamma$ and slightly isotope (wind) the circles
inside the gray region to get a diagram that looks like the right
side of Figure \ref{fig:quasi-stabilize-4val} inside each crossing
ball, and agrees with ${\cal H}$ inside each baseball and also in
the special regions that correspond to the one-handles. Call these
attaching curves $\boldsymbol{\alpha}^{qs,st},\boldsymbol{\beta}^{qs,st}$;
the Heegaard diagram $(\Sigma,\boldsymbol{\alpha}^{qs,st},\boldsymbol{\beta}^{qs,st},\boldsymbol{u}^{qs}\sqcup\boldsymbol{v})$
is weakly admissible. All the generators in this diagram are standard
generators, and so $HF^{-}(\boldsymbol{\alpha}^{qs,st},\boldsymbol{\beta}^{qs,st})$
is supported in homological $\mathbb{Z}$-gradings $\le0$.

We claim that $HF^{-}(\boldsymbol{\alpha}^{qs,wind},\boldsymbol{\beta}^{qs,wind})$
is also supported in homological $\mathbb{Z}$-gradings $\le0$. This
would be immediate if the Heegaard diagram with attaching curves $\boldsymbol{\alpha}^{qs,st}$,
$\boldsymbol{\beta}^{qs,st}$, $\boldsymbol{\alpha}^{qs,wind}$, $\boldsymbol{\beta}^{qs,wind}$
is weakly admissible, since then we could consider the canonical isomorphism
$HF^{-}(\boldsymbol{\alpha}^{qs,st},\boldsymbol{\beta}^{qs,st})\simeq HF^{-}(\boldsymbol{\alpha}^{qs,wind},\boldsymbol{\beta}^{qs,wind})$
which has degree $0$. If it is not weakly admissible, we further
wind $\boldsymbol{\alpha}^{qs,st},\boldsymbol{\beta}^{qs,st}$ (and
call them $\boldsymbol{\alpha}^{qs,tmp},\boldsymbol{\beta}^{qs,tmp}$)
such that the Heegaard diagram with attaching curves $\boldsymbol{\alpha}^{qs,st}$,
$\boldsymbol{\beta}^{qs,st}$, $\boldsymbol{\alpha}^{qs,tmp}$, $\boldsymbol{\beta}^{qs,tmp}$
is admissible, and also that the Heegaard diagram with attaching curves
$\boldsymbol{\alpha}^{qs,tmp}$, $\boldsymbol{\beta}^{qs,tmp}$, $\boldsymbol{\alpha}^{qs,wind}$,
$\boldsymbol{\beta}^{qs,wind}$ is admissible. Then, the canonical
isomorphisms 
\[
HF^{-}(\boldsymbol{\alpha}^{qs,st},\boldsymbol{\beta}^{qs,st})\simeq HF^{-}(\boldsymbol{\alpha}^{qs,tmp},\boldsymbol{\beta}^{qs,tmp})\simeq HF^{-}(\boldsymbol{\alpha}^{qs,wind},\boldsymbol{\beta}^{qs,wind})
\]
have degree zero. 

By \cite[Proposition 5.3]{MR3709653}, $HF^{-}(\boldsymbol{\alpha}^{wind},\boldsymbol{\beta}^{wind})$
is also supported in homological $\mathbb{Z}$-gradings $\le0$.

\section{\label{sec:Canonical-generators}Canonical elements for the setup}

In this section, we define\emph{ canonical elements }$\theta\in HFL_{top}^{-}(Y,L;0)$
for $(Y,L)$ that satisfies Condition \ref{cond:We-consider-balled}.
\begin{condition}
\label{cond:We-consider-balled}We consider balled links $L\subset Y$
for which $(Y,L)$ is obtained by adding some number of free baseballs
and performing some number of $0$-surgeries on the disjoint union
of the following:
\begin{enumerate}
\item \label{enu:canon1}$(S^{1}\times S^{2},K_{nonori})$, $(S^{1}\times S^{2},K_{ori})$,
$(\#^{2}S^{1}\times S^{2},L_{nonori})$, or $(\#^{2}S^{1}\times S^{2},L_{ori})$
\item \label{enu:some-number-of}Some number of $(S^{1}\times S^{2},K_{ori})$'s
\item \label{enu:canon1-1}Some number of $(S^{3},{\rm unlink})$'s
\end{enumerate}
\end{condition}

In Definition \ref{def:f-and-g}, we in particular defined the canonical
elements of $HFL'{}^{-}(S^{1}\times S^{2},K_{nonori};0)$ and $HFL'{}^{-}(S^{1}\times S^{2},K_{ori};0)$
(see Remark \ref{rem:canonical-simple}). Let us first define the
canonical elements of $HFL'{}^{-}(\#^{2}S^{1}\times S^{2},L_{nonori};0)$
and $HFL'{}^{-}(\#^{2}S^{1}\times S^{2},L_{ori};0)$.
\begin{defn}
Recall the notations from Proposition \ref{prop:For-each-balled}.
\begin{itemize}
\item The \emph{canonical element} of $HFL'{}^{-}(\#^{2}S^{1}\times S^{2},L_{nonori};0)$
is $b$.
\item The \emph{canonical element} of $HFL'{}^{-}(\#^{2}S^{1}\times S^{2},L_{ori};0)$
is either $a$ or $a+d$, and we need to use the relative $H_{1}$-action
to pin it down. The canonical element satisfies $A_{13}\Phi_{1}x=x$
(Definition \ref{def:relative-hom}), i.e. $A_{\mathbb{F}[U^{1/2}],\lambda,-1}\rho(\Phi_{1}x)=U^{1/2}\rho(x)$
for some curve $\lambda$ (hence for all such curves, by Proposition
\ref{prop:83}) between a link basepoint in the first link baseball
and a link basepoint in the third link baseball.
\end{itemize}
\end{defn}

The top homological $\mathbb{Z}$-grading part $HFL_{top}^{-}(Y,L;0)$
is the tensor product (over $\mathbb{F}$) of the $HFL_{top}^{-}$'s
of the summands of $(Y,L)$ ((\ref{enu:canon1}), (\ref{enu:some-number-of}),
and (\ref{enu:canon1-1}) of Condition \ref{cond:We-consider-balled});
we define the canonical element $\theta\in HFL_{top}^{-}(Y,L;0)$
as the tensor product of the canonical elements. Note that the stabilization
map $S^{+}$ (Definition \ref{def:A-pair-pointed-Heegaard-1}) maps
$\theta$ to $\theta$.

In Subsections \ref{subsec:The-simpler-cases} and \ref{subsec:The-case},
we show that the canonical element is indeed well-defined, and we
set up conventions in Subsection \ref{subsec:Canonical-generators}.

\subsection{\label{subsec:The-simpler-cases}The simpler cases}

Let us consider the case $L_{?}\neq L_{ori}$: the canonical element
$\theta$ is simpler to characterize in this case. One can obtain
a Heegaard diagram for $(Y,L)$ by stabilizing a disjoint union of
a Heegaard diagram for $(\#^{\ell}S^{1}\times S^{2},L_{?})$ and some
number of Heegaard diagrams for $(S^{1}\times S^{2},K_{ori})$. Using
this, we can compute $HFL_{top}^{-}(Y,L;0)$ and the $\Phi$-actions
on it; the \emph{canonical element} can be canonically characterized
as follows. Here, consider the homology class of a link component
of $L$ as an element of $H_{1}(Y;\mathbb{Z})$ up to sign.
\begin{defn}
Let $L_{?}=K_{nonori}$, $K_{ori}$, or $L_{nonori}$. The \emph{canonical
element $\theta$} is the unique nonzero element in $HFL_{top}^{-}(Y,L;0)$
such that:
\begin{itemize}
\item $\theta$ is homogeneous with respect to the ${\rm Spin}^{c}$-splitting.
\item ($K_{ori})$ For each link component $L_{k}$ of $L$ such that there
exists another link component $L_{\ell}$ of $L$ such that their
homology classes $0\neq[L_{k}]=\pm[L_{\ell}]\in H_{1}(Y;\mathbb{Z})$,
we have $\Phi_{k}\theta\neq0$, where $\Phi_{k}$ is the $\Phi$-action
of (the link baseball of) $L_{k}$ (Definition \ref{def:phi-balled}).
\item ($K_{nonori}$) For any link component $L_{k}$ whose homology class
is nonzero and divisible by $2$, $\Phi_{k}\theta=0$. (Note that
such link component exists if and only if $L_{?}=L_{k}$.)
\item ($L_{nonori}$) If there are two link components $L_{k},L_{\ell}$
whose homology classes are nonzero and do not fit into the above categories,
then $\Phi_{k}\theta=\Phi_{\ell}\theta\neq0$. (Note that there are
either no such link components or exactly two; the latter case happens
if and only if $L_{?}=L_{nonori}$.) 
\end{itemize}
\end{defn}

\subsection{\label{subsec:The-case}The case $L_{?}=L_{ori}$}

Let $L_{?}=L_{ori}$. In this case, we need to make an additional
choice: an order on the three components of $L_{?}$\footnote{The canonical element only depends on the parity of this order.}.
Given such an order, call them $L_{1}<L_{2}<L_{3}$.

Let us study the $H_{1}$ and relative $H_{1}$ actions on $HFL_{top}^{\prime-}(Y,L;0)$
and $HFL_{\mathbb{F}[U^{1/2}],top}^{\prime-}(Y,L;0)$ (Definition
\ref{def:unoriented-collapse}). One can obtain a Heegaard diagram
for it by stabilizing a disjoint union of a Heegaard diagram for $(\#^{2}S^{1}\times S^{2},L_{ori})$
and some number of Heegaard diagrams for $(S^{1}\times S^{2},K_{ori})$.
Using this, one can show that $HFL_{\mathbb{F}[U^{1/2}]}^{\prime-}(Y,L;0)$
is a free $\mathbb{F}[U^{1/2}]$-module, and that the base change
map
\[
\rho:HFL^{\prime-}(Y,L;0)\to HFL_{\mathbb{F}[U^{1/2}]}^{\prime-}(Y,L;0)
\]
maps the top homological grading to the top homological grading, and
is injective on the top homological grading. Hence $U^{1/2}\rho$
is injective on $HFL_{top}^{\prime-}(Y,L;0)$.

We understand the $A_{\mathbb{F}[U^{1/2}],\lambda,-1}$-action on
$HFL_{\mathbb{F}[U^{1/2}],top}^{\prime-}(Y,L;0)$, for closed loops
$\lambda$ on the Heegaard surface: if it homologically intersects
nontrivially (mod $2$) with any of the cocores of the $0$-surgery
(i.e. the boundary of the cocores of the $1$-handle of the trace
of the surgery), then for any nonzero $x\in HFL_{\mathbb{F}[U^{1/2}],top}^{\prime-}(Y,L;0)$,
$A_{\mathbb{F}[U^{1/2}],\lambda,-1}x\notin U^{1/2}HFL_{\mathbb{F}[U^{1/2}],top}^{\prime-}(Y,L;0)$.
\begin{defn}
Let $L_{?}=L_{ori}$. There are exactly three link components of $L$
whose homology class is nonzero and unique (among the homology classes
of link components of $L$) in $H_{1}(Y;\mathbb{F})$. Assume that
we are given an order of these three link components; call them $L_{1}<L_{2}<L_{3}$.
The \emph{canonical element $\theta$} is the unique nonzero element
in $HFL_{top}^{-}(Y,L;0)$ such that
\begin{itemize}
\item $\theta$ is homogeneous with respect to the ${\rm Spin}^{c}$-splitting.
\item For each link component $L_{k}$ of $L$ such that there exists another
link component $L_{\ell}\in L$ such that their homology classes $0\neq[L_{k}]=\pm[L_{\ell}]\in H_{1}(Y;\mathbb{Z})$,
we have $\Phi_{k}\theta\neq0$. (Note that $L_{k},L_{\ell}\neq L_{1},L_{2},L_{3}$.)
\item $\Phi_{1}\theta,\Phi_{2}\theta,\Phi_{3}\theta\neq0$, and there exists
a curve $\lambda$ between a link basepoint of $L_{1}$ and a link
basepoint of $L_{3}$ such that $A_{\mathbb{F}[U^{1/2}],\lambda,-1}\rho(\Phi_{1}\theta)=U^{1/2}\rho(\theta).$
\end{itemize}
\end{defn}

\subsection{\label{subsec:Canonical-generators}Canonical elements for the setup}

Let us return to the setup of Section \ref{sec:The-setup-and}. We
consider the case where the underlying links of $L_{0}<L_{1}$ differ
in at most one crossing ball. The attaching curves $\boldsymbol{\beta}(L_{0}),\boldsymbol{\beta}(L_{1})$
(which we simply write $\boldsymbol{\beta}_{0},\boldsymbol{\beta}_{1}$
respectively) represent a balled link $(Y,L)$ that satisfies Condition
\ref{cond:We-consider-balled}.

\subsubsection*{The simpler cases}

If $L_{?}=K_{nonori}$, $K_{ori}$, or $L_{nonori}$, then define
the \emph{canonical element} $\theta\in HF_{0}^{-}(\boldsymbol{\beta}_{0},\boldsymbol{\beta}_{1})$
as the canonical element in $HFL'{}_{top}^{-}(Y,L;0)$ (by Proposition
\ref{prop:combi-claims} (\ref{enu:If-the-underlying}), the top homological
$\mathbb{Z}$-grading is $0$).

\subsubsection*{The case $L_{?}=L_{ori}$}

This case corresponds to the following case:
\begin{itemize}
\item the underlying links of $L_{0},L_{1}$ differ in exactly one crossing
ball,
\item the associated band $B:L_{0}\to L_{1}$ is a merge or split band,
and
\item the three link components of $L_{0}$, $L_{1}$ that $B$ intersects
have \emph{pairwise distinct link baseballs}. 
\end{itemize}
Recall from Subsection \ref{subsec:The-case} that we have to make
an additional choice: an order of these three link components of $L_{0},L_{1}$
that $B$ intersects, or equivalently an order of the link baseballs
on these three link components.

In this paper, whenever we have to define the canonical element $\theta\in HF_{0}^{-}(\boldsymbol{\beta}_{0},\boldsymbol{\beta}_{1})$
for the case $L_{?}=L_{ori}$, the baseballs will be labelled as $(i,x)$
(i.e. called $BB_{i,x}$) for some $i,x$ (sometimes it will be called
$BB_{i,a,b}$, in which case $x:=a,b$). These labels and the type
of the baseballs will determine the order of the three link baseballs.
The following conditions will be satisfied:
\begin{enumerate}
\item \label{enu:merge-lori}If $B$ is a merge band, then there exist $x,y,z$
such that $BB_{i,x}$ and $BB_{j,y}$ are the link baseballs of the
two link components of $L_{0}$ that $B$ intersects, and $BB_{i,z}$
is the link baseball of the link component of $L_{1}$ that $B$ intersects. 
\item \label{enu:split-lori}If $B$ is a split band, then there exist $x,y,z$
such that $BB_{i,x}$ is the link baseball of the link component of
$L_{0}$ that $B$ intersects, and $BB_{i,y}$ and $BB_{j,z}$ are
the link baseballs of the two link components of $L_{1}$ that $B$
intersects.
\end{enumerate}
In fact, the $i$'s will be totally ordered, and we will have $i<j$
for the above cases.

If $B$ is a merge band, then let $i,j,x,y,z$ be as in Condition
(\ref{enu:merge-lori}), and choose the order $BB_{i,x}<BB_{j,y}<BB_{i,z}$.
If $B$ is a split band, then let $i,j,x,y,z$ be as in Condition
(\ref{enu:split-lori}), and choose the order $BB_{i,x}<BB_{j,z}<BB_{i,y}$.
Define the \emph{canonical element} $\theta\in HF_{0}^{-}(\boldsymbol{\beta}_{0},\boldsymbol{\beta}_{1})$
as the canonical element in $HFL'{}_{top}^{-}(Y,L;0)$ determined
by the above order of the three link components of $L_{0},L_{1}$
that $B$ intersects.

\begin{defn}
\label{def:If--are}If $L_{0},L_{1}$ are related by a swap map (Definition
\ref{def:Two-balled-links}), call $\theta$ \emph{swap}. If $L_{0},L_{1}$
are related by a band map (Definition \ref{def:Two-balled-links-1}),
call $\theta$ the type of the band (\emph{non-orientable}, \emph{merge},
split). We also say that 
\[
\theta\in HF_{0}^{-}(\boldsymbol{\beta}_{0},\boldsymbol{\beta}_{1}),\ \mu_{2}(-\otimes\theta):HF^{-}(\boldsymbol{\alpha},\boldsymbol{\beta}_{0})\to HF^{-}(\boldsymbol{\alpha},\boldsymbol{\beta}_{1})
\]
are \emph{swap maps} (resp. \emph{non-orientable band map}s, \emph{merge
map}s, \emph{split maps}).
\end{defn}

It is easy to see that a swap map is a composition of the restricted
type of swap maps that we considered in Section \ref{sec:Swap-maps}.
Similarly, using Proposition \ref{prop:swap-merge-2}, one can show
that a band map is the composition of a restricted type of band map
that we considered in Section \ref{sec:Band-maps-for} and a swap
map. Hence, we get the following.
\begin{prop}
\label{prop:The-swap,-merge,}The swap, merge, and split maps for
planar links and planar bands coincide with Khovanov homology in the
sense of Propositions \ref{prop:Let--be-1} and \ref{prop:Let--be}.
\end{prop}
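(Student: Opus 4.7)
The plan is to reduce Proposition \ref{prop:The-swap,-merge,} to the two results already established for the restricted setups, namely Proposition \ref{prop:Let--be-1} (which handles restricted band maps in the sense of Condition \ref{cond:If--is}) and Proposition \ref{prop:Let--be} (which handles restricted swap maps in the sense of Condition \ref{cond:In-this-section,}). The text just before the proposition flags precisely the two factorizations that make this reduction possible: a general swap map decomposes as a composition of restricted swap maps, and a general band map decomposes as the composition of a restricted band map with a swap map, the latter via Proposition \ref{prop:swap-merge-2}.

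For the swap case, consider balled links $L_0, L_1$ related by a general swap map, so the underlying links agree but the labelling of baseballs as link/free differs. I would argue that any such change of labelling can be written as a sequence of elementary steps, each of which only exchanges the link/free role of one link baseball with one free baseball on the same link component, so each intermediate step satisfies Condition \ref{cond:In-this-section,}. Composing canonical elements, the corresponding composition of restricted swap maps equals the general swap map on homology. For planar links, Proposition \ref{prop:Let--be} identifies each elementary swap map with the identity under the isomorphism $HFL'^{-} \simeq Kh^-$, so the composition is the identity as well, matching the trivial swap action on Khovanov homology.

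For the band case, given $B \colon L_0 \to L_1$, I would use Proposition \ref{prop:swap-merge-2} to factor the associated map as a swap map $L_0 \to L_0'$ (redistributing which baseballs are link baseballs so that the relevant link baseball assumptions of Condition \ref{cond:If--is} hold for $B \colon L_0' \to L_1$) followed by the restricted band map $L_0' \to L_1$. For a planar link and a planar band, the restricted band map is identified with the Khovanov band map by Proposition \ref{prop:Let--be-1}, and the preceding swap factor is identified with the identity by the previous paragraph. Thus the composition equals the Khovanov band map under the identification of Proposition \ref{prop:Let--be-1}.

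The main obstacle is verifying that the canonical elements defined in Subsection \ref{subsec:Canonical-generators} are in fact well behaved under these factorizations: one must check that the canonical element $\theta$ for the full map agrees (on homology, under $\mu_2$) with the product of the canonical elements of the factors. This is essentially a statement about the $\Phi$-actions, the ${\rm Spin}^c$-splitting, and the relative $H_1$-actions $A_{ij}, B_{ij}$ of Definition \ref{def:relative-hom} being compatible with inserting an intermediate swap; it is exactly what Proposition \ref{prop:swap-merge-2} supplies for band maps, and the analogous factorization statement for swaps follows from the uniqueness characterization of $\theta$ together with the computations of Section \ref{sec:Some-important-balled}. Granting these compositional identities, the proof is a direct application of Propositions \ref{prop:Let--be-1} and \ref{prop:Let--be}.
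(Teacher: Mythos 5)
Your proof matches the paper's own argument: the paper also factors a general swap map into restricted swap maps (identified with the identity via Proposition \ref{prop:Let--be}) and a general band map into a restricted band map composed with a swap map via Proposition \ref{prop:swap-merge-2}, then applies Proposition \ref{prop:Let--be-1}. Your added caution about verifying compatibility of canonical elements under the factorizations is exactly the content of the paragraph preceding the proposition and of Proposition \ref{prop:swap-merge-2}, so the reasoning is the same.
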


\begin{proof}
By the above discussion, the statement for swap maps follows from
Proposition \ref{prop:Let--be}; the statements for merge and split
maps follow from Propositions \ref{prop:swap-merge-2} and \ref{prop:Let--be-1}.
\end{proof}
As in Propositions \ref{prop:For-any-} and \ref{prop:For-any--1},
the swap maps and band maps are equivariant with respect to the action
of any $q\in L_{i}$ outside the crossing ball. We do not need to
use this fact for band maps; however, we need the following lemma
(which in particular shows that the swap maps are equivariant).
\begin{lem}
\label{lem:swap-equivariant}Let $L_{0}$ and $L_{1}$ be related
by a swap map. For any $q\in L_{i}$ outside the crossing balls and
for any $x\in HF_{0}^{-}(\boldsymbol{\beta}_{0},\boldsymbol{\beta}_{1})$,
\[
\mu_{2}(-\otimes x):HF^{-}(\boldsymbol{\alpha},\boldsymbol{\beta}_{0})\to HF^{-}(\boldsymbol{\alpha},\boldsymbol{\beta}_{1})
\]
is equivariant with respect to the action of $q$ (Definition \ref{def:hflaction}).
\end{lem}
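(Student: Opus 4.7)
We need to show that for every $y \in HF^-(\boldsymbol{\alpha}, \boldsymbol{\beta}_0)$,
\[
\mu_2(U^{1/2}_{BB_{0,q}} y \otimes x) = U^{1/2}_{BB_{1,q}} \mu_2(y \otimes x)
\]
on homology, where $BB_{i,q}$ is the link baseball of $L_i$ on the component containing $q$. When $BB_{0,q} = BB_{1,q}$, the common baseball is a link baseball for both $\boldsymbol{\beta}_0$ and $\boldsymbol{\beta}_1$, so $U^{1/2}_{BB_{1,q}}$ lies in the coefficient ring $R_{\boldsymbol{\alpha}, \boldsymbol{\beta}_1}$, and the claim follows from a simpler version of the $R_{tot}$-linearity argument sketched below. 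The substantive case is $BB_{0,q} \neq BB_{1,q}$; write $BB_0 := BB_{0,q}$, $BB_1 := BB_{1,q}$.

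The crucial identity is that for every $x \in HF_0^-(\boldsymbol{\beta}_0, \boldsymbol{\beta}_1)$,
\[
U^{1/2}_{BB_0} x = U^{1/2}_{BB_1} x \quad \text{in} \quad HF^-(\boldsymbol{\beta}_0, \boldsymbol{\beta}_1).
\]
The sub Heegaard diagram with attaching curves $\boldsymbol{\beta}_0, \boldsymbol{\beta}_1$ is, after isotopies, a stabilization of a disjoint union of copies of the right hand side of Figure \ref{fig:local-ori}, one of which involves the basepoint pairs of $BB_0$ and $BB_1$. In the model, by Lemma \ref{lem:ori-diagram} and the discussion around Definition \ref{def:f-and-g}, $HF_{top}^-$ is a free $\mathbb{F}[U^{1/2}]$-module on which $U_1^{1/2}$ and $U_2^{1/2}$ both act as multiplication by $U^{1/2}$; this is realized explicitly by $\partial(ax) = (U_1^{1/2} + U_2^{1/2}) f$ and $\partial(ay) = (U_1^{1/2} + U_2^{1/2}) g$. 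The stabilization map $S^+$ of Definition \ref{def:A-pair-pointed-Heegaard-1} is an isomorphism onto the top homological grading and equivariant under multiplication by $U_p^{1/2}$, so the identity extends to the stabilized diagram.

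Granting the identity, equivariance follows by an $R_{tot}$-linearity argument using Equation (\ref{eq:mud-defn}):
\begin{align*}
\mu_2(U^{1/2}_{BB_0} y \otimes x)
&= U_F^{-1/2} \pi\, \mu_{2, R_{tot}}(\iota(y) \otimes U^{1/2}_{BB_0} \iota(x)) \\
&\simeq U_F^{-1/2} \pi\, \mu_{2, R_{tot}}(\iota(y) \otimes U^{1/2}_{BB_1} \iota(x)) \\
&= U_F^{-1/2} \pi\, (U^{1/2}_{BB_1} \mu_{2, R_{tot}}(\iota(y) \otimes \iota(x)))
= U^{1/2}_{BB_1} \mu_2(y \otimes x),
\end{align*}
where the first equality moves $U^{1/2}_{BB_0}$ across the $R_{tot}$-tensor, the second applies the crucial identity (since $\mu_{2, R_{tot}}$ is a chain map and hence preserves homology), the third uses $R_{tot}$-linearity of $\mu_{2, R_{tot}}$, and the last uses that $BB_1 \notin F_{\boldsymbol{\alpha}, \boldsymbol{\beta}_1}$ (because $BB_1$ is a link baseball for $\boldsymbol{\beta}_1$), so that $U^{1/2}_{BB_1} \in R_{\boldsymbol{\alpha}, \boldsymbol{\beta}_1}$ commutes with the projection $\pi$. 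The main obstacle is verifying the crucial identity in the stabilized setting and carefully tracking the interaction of the half-power variables with $\pi$; the remainder is a formal manipulation.
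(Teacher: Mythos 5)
Your proposal is correct and follows essentially the same route as the paper: reduce to the single identity $U_{BB_0}^{1/2}x=U_{BB_1}^{1/2}x$ in $HF^{-}(\boldsymbol{\beta}_0,\boldsymbol{\beta}_1)$, verify it on the $K_{ori}$ model of Figure \ref{fig:local-ori} and transport it through the stabilization map $S^{+}$, then conclude by formal $R_{tot}$-linearity of $\mu_2$. The only difference is that you spell out the "formal consequence" step (the $\iota$, $\pi$, $U_F^{-1/2}$ bookkeeping) that the paper leaves implicit by reference to Propositions \ref{prop:For-any-} and \ref{prop:For-any--1}, and your checks there (that $U_{BB_1}^{1/2}$ lies in $R_{\boldsymbol{\alpha},\boldsymbol{\beta}_1}$ and hence commutes with $\pi$) are accurate.
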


\begin{proof}
Let $BB_{i}$ be the link baseball of the component of $L_{i}$ that
$q$ is on. We may assume $BB_{0}\neq BB_{1}$, since the lemma is
immediate otherwise. Recall that we simply call $U_{i}:=U_{BB_{i}}$.
Then, as in Propositions \ref{prop:For-any-} and \ref{prop:For-any--1},
it is sufficient to check $U_{0}^{1/2}x=U_{1}^{1/2}x\in HF_{-1}^{-}(\boldsymbol{\beta}_{0},\boldsymbol{\beta}_{1})$.
After isotoping $\boldsymbol{\beta}_{0},\boldsymbol{\beta}_{1}$,
we may assume that the sub Heegaard diagram with attaching curves
$\boldsymbol{\beta}_{0},\boldsymbol{\beta}_{1}$ is a stabilization
of a disjoint union of some number of Heegaard diagram for $(S^{1}\times S^{2},K_{ori})$;
call this $(\Sigma^{u},\boldsymbol{\beta}_{0}^{u},\boldsymbol{\beta}_{1}^{u},\boldsymbol{p}^{u},\boldsymbol{G}^{u})$.
The top homological grading part $HF_{0}^{-}(\boldsymbol{\beta}_{0},\boldsymbol{\beta}_{1})$
is contained in the image of the stabilization map 
\[
S^{+}:HF^{-}(\boldsymbol{\beta}_{0}^{u},\boldsymbol{\beta}_{1}^{u})\to HF^{-}(\boldsymbol{\beta}_{0},\boldsymbol{\beta}_{1}).
\]
Say $x=S^{+}(y)$. Then, $U_{0}^{1/2}x=U_{1}^{1/2}x$ since $U_{0}^{1/2}y=U_{1}^{1/2}y$,
which follows from Subsection \ref{subsec:Model-computations}.
\end{proof}

\section{\label{sec:Iterating-the-unoriented}Iterating the unoriented skein
exact triangle}

In this section, we set up the stage, construct the giant twisted
complex, and show Theorem \ref{thm:main-thm}, assuming a series of
local computation results that we prove throughout the remainder of
this paper. To define the spectral sequence, we work in the $A_{\infty}$-category
${\cal A}_{c}$ defined as in Subsection \ref{subsec:The-Heegaard-diagram},
and define a twisted complex $\underline{\boldsymbol{\beta}}$ that
has a\emph{ cube filtration}, and a quotient $\underline{\boldsymbol{\beta}_{01}}$
of $\underline{\boldsymbol{\beta}}$. The induced filtration on $CF^{-}(\boldsymbol{\alpha},\underline{\boldsymbol{\beta}_{01}})$
will give the desired spectral sequence. Compare \cite{MR2141852,MR2964628}.

\subsection{\label{subsec:The-setup-for}The setup for the spectral sequence}

Let $L$ be a link in a three-manifold $Y$ with some crossing balls,
such that $L\cap CB_{r}\subset CB_{r}$ is $2$ of Figure \ref{fig:skein-moves-1}
for all $r$. For simplicity, assume that $Y$ is connected.

\subsubsection*{The cube of resolutions}

Let $X=\{1,\cdots,s\}$ and for each $f:X\to\{0,1,2\}$, let $L(f)$
be the link obtained from $L$ by replacing $CB_{r}\cap L$ with $f(r)$
of Figure \ref{fig:skein-moves-1} for each $r$. Consider the partial
order on $\{0,1,2\}^{X}$ given by $f\le g$ if and only if $f(r)\le g(r)$
for all $r$. Define $|f|:=\sum_{r}|f(r)|$, $|g-f|:=\sum_{r}|g(r)-f(r)|$.
For consecutive $f<g$, let $B(f,g):L(f)\to L(g)$ be the band given
by Figure \ref{fig:skein-moves-1}.

\subsubsection*{The baseballs}

Let $\boldsymbol{CC}$ be the set of connected components of $L\backslash\bigsqcup_{r}CB_{r}$.
For each connected component $CC\in\boldsymbol{CC}$ and $f:X\to\{0,1,2\}$,
put a baseball $BB_{CC,f}\subset Y$ such that $BB_{CC,f}$ is disjoint
from $\bigsqcup_{r}CB_{r}$ and intersects $CC$ (and these baseballs
$BB_{CC,f}$ are pairwise distinct, hence pairwise disjoint).

\subsubsection*{A total order on $\boldsymbol{CC}$}

Fix a bijective function $\sigma:\{1,\cdots,N\}\to\boldsymbol{CC}$.
We identify bijective functions $\{1,\cdots,N\}\to\boldsymbol{CC}$
with total orders on $\boldsymbol{CC}$. Write $BB_{i,f}:=BB_{\sigma(i),f}$
for simplicity.

\subsubsection*{The balled links $L(f)$}

For each link component of $L(f)$, we declare that $BB_{i,f}$ is
a \emph{link baseball}, where $i$ is the smallest $i$ such that
$\sigma(i)$ is on that link component. All the baseballs that we
have not declared as link baseballs are \emph{free baseballs}.

It turns out that we have to slightly modify the band maps (also called
the canonical element) $\theta\in HF_{0}^{-}(\boldsymbol{\beta}(f),\boldsymbol{\beta}(g))$,
$F_{B}:=\mu_{2}(-\otimes\theta)$ (Definition \ref{def:If--are})
for consecutive $f<g$ to be able to iterate the unoriented skein
exact triangle. We call these modified band maps $\delta$, $G_{B}:=\mu_{2}(-\otimes\delta)$;
they also give rise to an unoriented skein exact triangle (Proposition
\ref{prop:We-have-the} (\ref{enu:6})). Sometimes, we write $\theta(f,g)$
(resp. $\delta(f,g)$) to emphasize that we are referring to the canonical
element (resp. the modified band map) of $HF_{0}^{-}(\boldsymbol{\beta}(f),\boldsymbol{\beta}(g))$.
\begin{defn}
\label{def:For-each-consecutive}For each consecutive pair $f<g$,
define the \emph{modified band map} $\delta(f,g)\in HF_{0}^{-}(\boldsymbol{\beta}(f),\boldsymbol{\beta}(g))$
as follows. Let $\theta(f,g)\in HF_{0}^{-}(\boldsymbol{\beta}(f),\boldsymbol{\beta}(g))$
be the canonical element, and consider the band $B(f,g):L(f)\to L(g)$.
\begin{itemize}
\item If $B(f,g)$ is non-orientable, let $\delta(f,g):=\theta(f,g)$.
\item If $B(f,g)$ is a merge band, then let $i,j$ ($i<j)$ be such that
the band intersects the $i$th and $j$th link components of $L(f)$.
Let
\[
\delta(f,g):=\theta(f,g)+\Phi_{j,f}\left(\sum_{k=i+1}^{j-1}\Phi_{k,f}\right)\theta(f,g)\in HF_{0}^{-}(\boldsymbol{\beta}(f),\boldsymbol{\beta}(g)).
\]
\item If $B(f,g)$ is a split band, then let $i,j$ ($i<j)$ be such that
the band intersects the $i$th and $j$th link components of $L(g)$.
Let
\[
\delta(f,g):=\theta(f,g)+\Phi_{j,g}\left(\sum_{k=i+1}^{j-1}\Phi_{k,f}\right)\theta(f,g)\in HF_{0}^{-}(\boldsymbol{\beta}(f),\boldsymbol{\beta}(g)).
\]
\end{itemize}
\end{defn}

\begin{rem}
In Subsection \ref{subsec:The-reduced-hat}, we consider a slight
variant of the setup of this subsection to define the spectral sequence
for the reduced hat version. Definition \ref{def:For-each-consecutive}
still applies word for word in that setting. 
\end{rem}

Let us state our main theorem for the minus version $HFL'{}^{-}$
and the unreduced hat version $\widehat{HFL'}$. For the unreduced
hat version, choose a baseball $BB$. Let us define the \emph{modified
band maps} for $\widehat{CFL'}(Y,L(f),BB;c)$: recall that 
\[
\widehat{CFL'}(Y,L(f),BB;c)=CFL'{}^{-}(Y,L(f);c)/U_{BB}.
\]
Define the \emph{modified band map }
\[
G_{B(f,g)}:\widehat{HFL'}(Y,L(f),BB;c)\to\widehat{HFL'}(Y,L(g),BB;c)
\]
as the map induced by 
\[
\mu_{2}(-\otimes\delta):CFL'{}^{-}(Y,L(f);c)\to CFL'(Y,L(g);c),
\]
by quotienting out by $U_{BB}$ and taking the map induced on homology.
\begin{thm}
\label{thm:spectral-1}Let $c\in H^{2}(Y;\mathbb{Z})$. By $HFL'$,
we mean $\widehat{HFL'}$ (for a choice of a baseball $BB$) or $HFL'{}^{-}$.
There exists a spectral sequence that converges to $HFL'(Y,L;c)$,
whose $E_{1}$ page is 
\[
\left(\bigoplus_{f:X\to\{0,1\}}HFL'(Y,L(f);c),d_{1}\right)
\]
where $d_{1}$ is the sum of the\emph{ modified band maps}
\[
G_{B(f,g)}:HFL'(Y,L(f);c)\to HFL'(Y,L(g);c)
\]
for consecutive $f<g$. 
\end{thm}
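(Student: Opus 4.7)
The plan is to realize both $HFL'(Y,L;c)$ and the $E_1$-page of the desired spectral sequence as the homology of a single twisted complex in the $A_\infty$-category ${\cal A}_c$ of Subsection \ref{subsec:The-Heegaard-diagram}, built by iterating the unoriented skein exact triangle of Theorem \ref{thm:Let--be} once at each crossing ball $CB_r$. First I would construct a twisted complex $\underline{\boldsymbol{\beta}_{01}}$ whose objects are $\boldsymbol{\beta}(f)$ for $f\colon X\to\{0,1\}$, with $\mu_1$-edges the chain-level lifts of $\delta(f,g)$ from Definition \ref{def:For-each-consecutive} for consecutive $f<g$, together with higher homotopies $\eta(f_0<\cdots<f_k)\in CF^-(\boldsymbol{\beta}(f_0),\boldsymbol{\beta}(f_k))$ of appropriate homological degree filling out the cube. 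The order-$1$ relations on squares are guaranteed by the commutativity of consecutive modified band maps stated in Proposition \ref{prop:We-have-the}; the higher relations are produced by induction on $|g-f|$. The obstructions to extending the $\eta$'s inductively lie in $HF^-_j(\boldsymbol{\beta}(f_0),\boldsymbol{\beta}(f_k))$ for $j$ above the top homological $\mathbb{Z}$-grading, and they vanish for grading reasons by Proposition \ref{prop:combi-claims} (\ref{enu:If-,-then}), exactly as in \cite{MR2964628}.

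Second, I would exploit the cube filtration on $\underline{\boldsymbol{\beta}_{01}}$ by $|f|$. Applying $CF^-(\boldsymbol{\alpha},-)$ gives a filtered chain complex whose associated spectral sequence has
\[
E_0^{k,*} = \bigoplus_{|f|=k} CF^-(\boldsymbol{\alpha},\boldsymbol{\beta}(f)), \qquad E_1^{k,*} = \bigoplus_{|f|=k} HFL'(Y,L(f);c),
\]
with $d_1$-differential the sum of the $\mu_1$-edge contributions $\delta(f,g)$ for consecutive $f<g$ in $\{0,1\}^X$. By Definition \ref{def:For-each-consecutive}, this $d_1$ is exactly the sum of the modified band maps $G_{B(f,g)}$, yielding the desired $E_1$-page.

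Third, I would show that $\underline{\boldsymbol{\beta}_{01}}$ is quasi-isomorphic to the single object $\boldsymbol{\beta}(L)$ in ${\cal A}_c$ (where $L = L(2,\ldots,2)$), which implies convergence of the spectral sequence to $HFL'(Y,L;c)$. This is where Proposition \ref{prop:We-have-the} (\ref{enu:6}), the unoriented skein exact triangle for the modified band maps, enters. The argument proceeds by induction on $s$. For $s=1$, the exact triangle directly yields $\boldsymbol{\beta}(2)\simeq\mathrm{Cone}\bigl(\boldsymbol{\beta}(0)\xrightarrow{\delta}\boldsymbol{\beta}(1)\bigr)$ in ${\cal A}_c$, up to a shift. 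For the inductive step, I would apply the exact triangle at the crossing ball $CB_s$ to write $\boldsymbol{\beta}(\cdots,2)$ as a mapping cone over the modified band map between $\boldsymbol{\beta}(\cdots,0)$ and $\boldsymbol{\beta}(\cdots,1)$, where the $\cdots$ range over the remaining $s-1$ crossings, and then invoke the inductive hypothesis on the two $(s-1)$-crossing sub-cubes obtained by fixing $f(s)=0$ and $f(s)=1$. Coherence of the gluing of higher homotopies between levels of the induction is again forced by the grading argument. Concretely, one presents $\underline{\boldsymbol{\beta}}$ as the twisted complex fitting $\underline{\boldsymbol{\beta}_{01}}$ and $\boldsymbol{\beta}(L)$ into a larger cube over $\{0,1,2\}^X$, and extracts $\underline{\boldsymbol{\beta}_{01}}$ as the quotient by the acyclic sub-twisted-complex built from the $\zeta$-terms of the skein triangles.

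The hardest part is the construction of the modified band maps $\delta(f,g)$ and the verification of Proposition \ref{prop:We-have-the} (\ref{enu:4}) and (\ref{enu:6}), that is, that consecutive modified band maps commute up to chain homotopy and that triples form an unoriented skein exact triangle. That content is established by the local Heegaard-diagram computations carried out in the later sections of the paper; the spectral sequence proof presented here treats those results as black boxes and assembles them into the cube. Finally, the unreduced hat version is obtained by quotienting the entire twisted complex by $U_{BB}$; this is compatible because each modified band map is $U_{BB}$-equivariant (both the canonical element and the $\Phi$-correction terms defining $\delta$ commute with multiplication by $U_{BB}$), so the quotient chain complex still carries the cube filtration and its $E_1$-page is $\bigoplus_f \widehat{HFL'}(Y,L(f),BB;c)$, with $d_1$ the induced modified band maps.
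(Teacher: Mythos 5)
Your proposal is correct and follows essentially the same route as the paper: build the cube of resolutions as a twisted complex with edges the chain-level modified band maps and higher homotopies supplied inductively (the $|g-f|=2$ obstructions killed by Proposition \ref{prop:We-have-the} (\ref{enu:4}) and the first part of (\ref{enu:6}), the $|g-f|\ge 3$ obstructions by the grading bound of Proposition \ref{prop:combi-claims} (\ref{enu:If-,-then})), take the spectral sequence of the cube filtration on $CF^{-}(\boldsymbol{\alpha},\underline{\boldsymbol{\beta}_{01}})$, and identify the total homology with $HF^{-}(\boldsymbol{\alpha},\boldsymbol{\beta}(\boldsymbol{2}))$ by resolving one crossing at a time via the exact triangle of (\ref{enu:6}), exactly the $\{0,1,2\}^{X}$-cube argument of \cite{MR2141852} that the paper uses. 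The treatment of the unreduced hat version by quotienting by $U_{BB}$ also matches the paper's Remark \ref{rem:unreduced-hat}, where the key point is that the quasi-isomorphism is an $\mathbb{F}[U_{BB}]$-linear map of free $\mathbb{F}[U_{BB}]$-complexes.
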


The remainder of this section is organized as follows: we show Theorem
\ref{thm:spectral-1} in Subsection \ref{subsec:The-twisted-complex},
and a similar statement for $\widetilde{HFL'}$ (Theorem \ref{thm:spectral-1-1})
in Subsection \ref{subsec:The-reduced-hat}. We study the basepoint
actions (Definition \ref{def:hflaction}) in Subsection \ref{subsec:The-action}.
In Subsection \ref{subsec:The-Khovanov-chain}, we show that if the
crossing balls correspond to the crossings of a planar diagram of
a link in $S^{3}$, then the $E_{1}$ page of the spectral sequence
is the Khovanov cube of resolutions chain complex.

\subsection{\label{subsec:The-twisted-complex}The twisted complex}

Fix $c\in H^{2}(Y;\mathbb{Z})$, and consider the Heegaard diagram
and the $A_{\infty}$-categories ${\cal A}_{c},{\cal B}$ constructed
in Subsection \ref{subsec:The-Heegaard-diagram} for the setting of
Subsection \ref{subsec:The-setup-for}. We work in the $A_{\infty}$-category
${\cal A}_{c}$ and construct a differential $\delta\in{\rm Hom}_{\Sigma{\cal A}_{c}}(\underline{\boldsymbol{\beta}},\underline{\boldsymbol{\beta}})$
on $\underline{\boldsymbol{\beta}}:=\bigoplus_{f:X\to\{0,1,2\}}\boldsymbol{\beta}(f)$
such that $\mu_{1}(\delta)=0$, i.e. $(\underline{\boldsymbol{\beta}},\delta)$
is a twisted complex. To build the twisted complex, we only work with
the beta-attaching curves; hence, $\delta\in{\rm Hom}_{\Sigma{\cal B}}(\underline{\boldsymbol{\beta}},\underline{\boldsymbol{\beta}})$.
Recall that the subcategory ${\cal B}$ with objects $\boldsymbol{\beta}(f)$
is homologically $\mathbb{Z}$-graded (Proposition \ref{prop:combi-claims}).
The differential $\delta$ will consist of elements 
\[
\delta(f,g)\in CF_{|g-f|-1}^{-}(\boldsymbol{\beta}(f),\boldsymbol{\beta}(g)),
\]
which we call the \emph{$f\to g$ component of $\delta$}, for $f<g$. 

First, for consecutive $f<g$, we abuse notation and let $\delta(f,g)\in CF_{0}^{-}(\boldsymbol{\beta}(f),\boldsymbol{\beta}(g))$
be any cycle that represents the homology class (``modified band
map'') $\delta(f,g)\in HF_{0}^{-}(\boldsymbol{\beta}(f),\boldsymbol{\beta}(g))$.
\begin{rem}
We are also abusing notation in the sense that if $|g-f|\ge2$, then
$\delta(f,g)\in CF_{|g-f|-1}^{-}(\boldsymbol{\beta}(f),\boldsymbol{\beta}(g))$
does \emph{not} mean the modified band map. This should be unambiguous
since we only consider the modified band map for consecutive $f<g$.
\end{rem}

Since $\delta(f,g)=0$ for $f<g$, we can filter $\underline{\boldsymbol{\beta}}$
by the \emph{cube filtration}, or also the \emph{``flattened'' $\mathbb{Z}$-filtration}
\[
\underline{\boldsymbol{\beta}}=\bigoplus_{|f|\ge0}\boldsymbol{\beta}(f)\supset\cdots\supset\bigoplus_{|f|\ge d}\boldsymbol{\beta}(f)\supset\bigoplus_{|f|\ge d+1}\boldsymbol{\beta}(f)\supset\cdots.
\]
By abuse of notation, denote the twisted complex $(\underline{\boldsymbol{\beta}},\delta)$
also as $\underline{\boldsymbol{\beta}}$. 
\begin{rem}
\label{rem:homologically-graded-twisted}If we want to view $\underline{\boldsymbol{\beta}}$
as a homologically graded twisted complex, i.e. such that $\delta$
has homological degree $-1$, then simply shift the $\boldsymbol{\beta}(f)$'s
in homological grading and consider $\bigoplus_{f:X\to\{0,1,2\}}\boldsymbol{\beta}(f)[|f|]$\footnote{The convention is that for a homogeneous $f\in CF^{-}(\boldsymbol{\alpha},\boldsymbol{\beta})$,
the corresponding element in $CF^{-}(\boldsymbol{\alpha}[k],\boldsymbol{\beta}[\ell])$
has degree ${\rm deg}f+k-\ell$.}. To avoid confusions, we work with the unshifted twisted complex.
\end{rem}

\begin{prop}
\label{prop:We-have-the}We have the following.
\begin{enumerate}
\item \label{enu:4}(Modified band maps commute) Let $f_{00}<f_{01}<f_{11}$
be consecutive, and assume that $f_{00},f_{11}$ differ in two $x\in X$'s.
Then there exists exactly one $f_{10}\neq f_{01}$ such that $f_{00}<f_{10}<f_{11}$
are consecutive. We have 
\[
\mu_{2}(\delta(f_{00},f_{01})\otimes\delta(f_{01},f_{11}))=\mu_{2}(\delta(f_{00},f_{10})\otimes\delta(f_{10},f_{11}))\in HF_{0}^{-}(\boldsymbol{\beta}(f_{00}),\boldsymbol{\beta}(f_{11})).
\]
\item \label{enu:6}(Modified band maps give rise to an unoriented skein
exact triangle) Let $f_{0}<f_{1}<f_{2}$ be consecutive and differ
in only one $x\in X$. Then 
\[
\mu_{2}(\delta(f_{0},f_{1})\otimes\delta(f_{1},f_{2}))=0\in HF_{0}^{-}(\boldsymbol{\beta}(f_{0}),\boldsymbol{\beta}(f_{2})).
\]
Let $\delta(f_{0},f_{2})\in CF_{1}^{-}(\boldsymbol{\beta}(f_{0}),\boldsymbol{\beta}(f_{2}))$
be such that
\[
\mu_{1}(\delta(f_{0},f_{2}))=\mu_{2}(\delta(f_{0},f_{1})\otimes\delta(f_{1},f_{2})),
\]
where we abuse notation and let $\delta(f_{i},f_{i+1})\in CF_{0}^{-}(\boldsymbol{\beta}(f_{i}),\boldsymbol{\beta}(f_{i+1}))$
be any cycle that represents the corresponding homology class. Then,
\[\begin{tikzcd}
	{\underline{\boldsymbol{\beta} (f_{0},f_{1},f_{2})}:=} & {\boldsymbol{\beta}(f_{0})} & {\boldsymbol{\beta}(f_{1})} & {\boldsymbol{\beta}(f_{2})}
	\arrow["{\delta(f_{0},f_{1})}"', from=1-2, to=1-3]
	\arrow["{\delta(f_{0},f_{2})}", curve={height=-12pt}, from=1-2, to=1-4]
	\arrow["{\delta(f_{1},f_{2})}"', from=1-3, to=1-4]
\end{tikzcd}\]is a twisted complex, and for any such $\delta(f_{0},f_{2})$, we
have $HF^{-}(\boldsymbol{\alpha},\underline{\boldsymbol{\beta}(f_{0},f_{1},f_{2})})=0$.
\end{enumerate}
\end{prop}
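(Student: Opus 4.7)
The plan is to prove (2) essentially as a consequence of Theorem \ref{thm:Let--be} combined with $\Phi$-equivariance of $\mu_2$, and to prove (1) by a case analysis that reduces to local Floer-theoretic computations in the two-dimensional cube face, using the model diagrams studied in Section \ref{sec:Some-important-balled}.

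For (2), I would expand $\mu_2(\delta(f_0,f_1) \otimes \delta(f_1,f_2))$ using Definition \ref{def:For-each-consecutive}, which writes each $\delta$ as $\theta$ plus a term of the form $\Phi_j(\sum_k \Phi_k)\theta$. On homology, the $\Phi_q$-actions satisfy the Leibniz rule $\Phi_q \mu_2(a \otimes b) = \mu_2(\Phi_q a \otimes b) + \mu_2(a \otimes \Phi_q b)$ recalled in Subsection \ref{subsec:The--action}. Using this, every term in the expansion can be rewritten as a $\Phi$-operator applied to $\mu_2(\theta \otimes \theta)$ or to $\mu_2$-products involving only one correction factor, which are themselves $\Phi$-images of $\mu_2(\theta \otimes \theta)$. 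Theorem \ref{thm:Let--be} supplies a chain $\zeta$ with $\mu_1(\zeta) = \mu_2(\theta \otimes \theta)$, and the desired chain $\delta(f_0,f_2)$ is then constructed by applying the corresponding combination of $\Phi$-operators (which are chain maps) to $\zeta$. The vanishing of $HF^-(\boldsymbol{\alpha}, \underline{\boldsymbol{\beta}(f_0,f_1,f_2)})$ is inherited from Theorem \ref{thm:Let--be} by the twisted-complex-quasi-isomorphism argument used there (compare Diagram (\ref{eq:unoriented-twisted})), with the differences $\delta - \theta$ serving as additional diagonal maps.

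For (1), I would first localize: after suitable stabilization, the Heegaard sub-diagram for the four attaching curves $\boldsymbol{\beta}(f_{00}), \boldsymbol{\beta}(f_{01}), \boldsymbol{\beta}(f_{10}), \boldsymbol{\beta}(f_{11})$ decomposes as a product of a local model at each of the two crossing balls where $f_{00}$ and $f_{11}$ differ, together with a portion on which all four curves agree (up to handleslides). When the two bands act on disjoint connected components of the complement of the baseballs, the K\"unneth-type decomposition of Remark \ref{rem:If-the-Heegaard} makes the two $\mu_2$-compositions equal trivially. The substantive cases are those in which the two bands share a link component, so that the link-baseball indexing changes in different ways along the two paths of the square. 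For each such case, catalogued by the types of the bands (non-orientable, merge, split) and the pattern of shared components and baseballs, I would compute both compositions in the relevant local model assembled from $K_{nonori}, K_{ori}, L_{nonori}, L_{ori}$ of Section \ref{sec:Some-important-balled}, and verify that the correction terms $\Phi_j(\sum_k \Phi_k)\theta$ in Definition \ref{def:For-each-consecutive} are exactly what is needed to cancel the obstruction to commutativity of the unmodified compositions on homology.

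The hard part will be the exhaustive case analysis in (1). The correction terms are delicate: for each local configuration they must produce precisely the right compensation, which depends on how the ordering of link baseballs in $\sigma$ restricts to the link components touched by the two bands. Identifying the obstruction in each case and checking that it is eliminated by the modifications requires a detailed understanding of the $\Phi$- and relative $H_1$-actions on the model links, supplied by Propositions \ref{prop:For-each-balled}, \ref{prop:83}, and \ref{prop:85}, together with the nonvanishing machinery and the composition computations that make up the bulk of the paper.
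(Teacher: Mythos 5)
Your high-level architecture matches the paper's, but there are two concrete gaps, one in each part.

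For (\ref{enu:6}): you invoke Theorem \ref{thm:Let--be} directly to produce $\zeta$ with $\mu_{1}(\zeta)=\mu_{2}(\theta\otimes\theta)$ for the triple $\boldsymbol{\beta}(f_{0}),\boldsymbol{\beta}(f_{1}),\boldsymbol{\beta}(f_{2})$. That theorem only applies to an \emph{unoriented skein triple} in the sense of Definition \ref{def:unoriented-skein-triple}, which requires the link-baseball sets to differ by a single baseball $BB$; in the setup of Subsection \ref{subsec:The-setup-for} the baseballs $BB_{i,f}$ depend on $f$, so $L(f_{0}),L(f_{1}),L(f_{2})$ violate this condition and the canonical elements $\theta(f_{k},f_{k+1})$ are not the restricted band maps of Section \ref{sec:Band-maps-for}. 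The paper (Section \ref{sec:Proofs-of-claims}) repairs this by introducing auxiliary balled links $L_{k}'$ with a common set of auxiliary baseballs, applying Theorem \ref{thm:Let--be} there, and transferring both the vanishing of $\mu_{2}(\theta\otimes\theta)$ and the acyclicity of the twisted complex through a cycle of (modified) swap maps, which are quasi-isomorphisms by Proposition \ref{prop:Something-is-an}; the diagonal maps of the comparison square are filled in by the grading bound of Proposition \ref{prop:combi-claims} (\ref{enu:If-,-then}). Your ``differences $\delta-\theta$ as diagonal maps'' does not substitute for this: the comparison has to be against a triple with \emph{different baseballs}, not against the $\theta$-complex on the same objects. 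Once the transfer is done, your $\Phi$-equivariance argument for passing from $\theta$ to $\delta$ is fine.

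For (\ref{enu:4}): two issues. First, you propose to verify, configuration by configuration and ordering by ordering, that the correction terms in Definition \ref{def:For-each-consecutive} cancel the obstruction. The paper instead first proves that modified band maps commute with modified swap maps (Proposition \ref{prop:(Modified-band-maps}, itself the large case analysis of Figure \ref{fig:one-band}) and that modified swap maps are quasi-isomorphisms (Proposition \ref{prop:Something-is-an-1}); this reduces (\ref{enu:4}) to a \emph{single convenient total order} per band configuration, chosen so that $\delta=\theta$ on every edge. Without some such reduction your case analysis is not merely long but underdetermined — you never say how the obstruction is identified. Second, and more seriously, you claim the local models can be ``assembled from $K_{nonori},K_{ori},L_{nonori},L_{ori}$.'' They cannot: the configurations with $|L_{00}|=|L_{11}|=1$ (both bands attached to a single knot component, Figure \ref{fig:two-bands}) produce links in $\#^{2}S^{1}\times S^{2}$ that are none of those four, and the paper needs genuinely new computations for them (Subsection \ref{subsec:new-cases}, in particular Lemma \ref{lem:structure-z11-2} for a null-homologous knot in $\#^{2}S^{1}\times S^{2}$, where the $\Phi$-actions vanish on the relevant summand and one must use $H_{1}$- and relative $H_{1}$-actions instead). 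This missing input is where most of the new analytic content of the commutativity statement lives.
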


\begin{proof}
We prove (\ref{enu:4}) in Section \ref{sec:Composition-of-band},
and (\ref{enu:6}) in Section \ref{sec:Proofs-of-claims}.
\end{proof}
\begin{rem}
It is necessary to modify the band maps as some band maps do \emph{not}
commute. See Remark \ref{rem:Similarly,-we-can}.
\end{rem}

Let us first assume these claims and prove Theorem \ref{thm:spectral-1}
for $HFL'{}^{-}$ (the statement for $\widehat{HFL'}$ follows from
this: see Remark \ref{rem:unreduced-hat}). We induct on $d$ and
define $\delta(f,g)\in CF_{d-1}^{-}(\boldsymbol{\beta}(f),\boldsymbol{\beta}(g))$
(the $f\to g$ component of $\delta\in{\rm Hom}_{\Sigma{\cal B}}(\underline{\boldsymbol{\beta}},\underline{\boldsymbol{\beta}})$)
for $f,g$ such that $f<g$ and $|g-f|=d$. More precisely, we recursively
define $\delta_{\le d}\in{\rm Hom}_{\Sigma{\cal B}}(\underline{\boldsymbol{\beta}},\underline{\boldsymbol{\beta}})$,
such that
\begin{enumerate}
\item $\delta_{\le d}(f,g)=0$ (here, $\delta_{\le d}(f,g)$ means the $f\to g$
component of $\delta_{\le d}$) for $f,g$ such that $f\not<g$, or
$f<g$ and $|g-f|>d$,
\item $\delta_{\le d}(f,g)=\delta_{\le(d-1)}(f,g)$ for $f<g$ such that
$|g-f|\le d-1$, and
\item $\mu_{1}(\delta_{\le d})(f,g)=0$ for $f<g$ such that $|g-f|\le d$.
\end{enumerate}
We let $\delta:=\delta_{\le2s}$; then we have $\mu_{1}(\delta)=0$.

The base case is $d=1$: for consecutive $f<g$, we have already defined
$\delta(f,g)\in CF_{0}^{-}(\boldsymbol{\beta}(f),\boldsymbol{\beta}(g))$
as any cycle that represents the homology class $\delta(f,g)$; let
$\delta_{\le1}$ consist of them.

Let $d\ge2$, and assume the induction hypotheses for $\delta_{\le(d-1)}$.
Let us define $\delta_{\le d}$. By the first and second induction
hypotheses, this reduces to defining $\delta_{\le d}(f,g)$ for $f<g$
such that $|g-f|=d$. Let $f<g$, $|g-f|=d$. First, $\mu_{1}(\delta_{\le(d-1)})(f,g)\in CF_{d-2}^{-}(\boldsymbol{\beta}(f),\boldsymbol{\beta}(g))$
is a cycle, by the third induction hypothesis and that $\mu_{1}\circ\mu_{1}=0$.
We claim that $\mu_{1}(\delta_{\le(d-1)})(f,g)$ is a boundary. For
$d=2$, this follows from Proposition \ref{prop:We-have-the} (\ref{enu:4})
and the first part of (\ref{enu:6}); for $d\ge3$, this follows from
Proposition \ref{prop:combi-claims} (\ref{enu:If-,-then}). Now,
let $\delta_{\le d}(f,g)\in CF_{d-1}^{-}(\boldsymbol{\beta}(f),\boldsymbol{\beta}(g))$
be such that 
\[
\mu_{1}(\delta_{\le d}(f,g))=\mu_{1}(\delta_{\le(d-1)})(f,g).
\]
Then $\delta_{\le d}$ satisfies the induction hypotheses.

Let $\underline{\boldsymbol{\beta}_{01}}$ be the quotient of $\underline{\boldsymbol{\beta}}$
whose underlying object is $\bigoplus_{f:X\to\{0,1\}}\boldsymbol{\beta}(f)$,
and let $\boldsymbol{2}:X\to\{0,1,2\}$ be the constant function with
value $2$. Then, the cube filtration (or the ``flattened'' $\mathbb{Z}$-filtration)
on $HF^{-}(\boldsymbol{\alpha},\underline{\boldsymbol{\beta}_{01}})$
gives our desired spectral sequence: it remains to show that $HF^{-}(\boldsymbol{\alpha},\underline{\boldsymbol{\beta}_{01}})\simeq HF^{-}(\boldsymbol{\alpha},\boldsymbol{\beta}(\boldsymbol{2}))$.
This follows from Proposition \ref{prop:We-have-the} (\ref{enu:6}),
as in \cite[Proof of Theorem 4.1]{MR2141852}: there in fact exists
a chain map $CF^{-}(\boldsymbol{\alpha},\underline{\boldsymbol{\beta}_{01}})\to CF^{-}(\boldsymbol{\alpha},\boldsymbol{\beta}(\boldsymbol{2}))$
that is a quasi-isomorphism. We briefly recall the proof. Let $F_{k}$
for $k=0,1,\cdots,s$ be the set of $f:X\to\{0,1,2\}$ such that $f(1),\cdots,f(k)=2$
and $f(k+1),\cdots,f(s)\in\{0,1\}$. Let $\underline{\boldsymbol{\beta}^{k}}$
be the twisted complex whose underlying object is $\bigoplus_{f\in F_{k}}\boldsymbol{\beta}(f)$,
and whose differential consists of the $f\to g$ components of $\delta\in{\rm Hom}_{\Sigma{\cal B}}(\underline{\boldsymbol{\beta}},\underline{\boldsymbol{\beta}})$
for $f,g\in F_{k}$. In particular, $\underline{\boldsymbol{\beta}^{0}}=\underline{\boldsymbol{\beta}_{01}}$
and $\underline{\boldsymbol{\beta}^{s}}=\boldsymbol{\beta}(\boldsymbol{2})$.
Consider the cycle $\underline{\delta^{k}}\in CF^{-}(\underline{\boldsymbol{\beta}^{k}},\underline{\boldsymbol{\beta}^{k+1}})$
given by the $f\to g$ components of $\delta\in{\rm Hom}_{\Sigma{\cal B}}(\underline{\boldsymbol{\beta}},\underline{\boldsymbol{\beta}})$
for $f\in F_{k}$ and $g\in F_{k+1}$. Proposition \ref{prop:We-have-the}
(\ref{enu:6}) implies that 
\[
\mu_{2}(-\otimes\underline{\delta^{k}}):CF^{-}(\boldsymbol{\alpha},\underline{\boldsymbol{\beta}^{k}})\to CF^{-}(\boldsymbol{\alpha},\underline{\boldsymbol{\beta}^{k+1}})
\]
is a quasi-isomorphism; hence their composite $CF^{-}(\boldsymbol{\alpha},\underline{\boldsymbol{\beta}^{0}})\to CF^{-}(\boldsymbol{\alpha},\underline{\boldsymbol{\beta}^{s}})$
is a quasi-isomorphism.
\begin{rem}
\label{rem:unreduced-hat}Let $BB$ be a baseball. Then, $CF^{-}(\boldsymbol{\alpha},\underline{\boldsymbol{\beta}_{01}})$
is a filtered, free chain complex over $\mathbb{F}[U_{BB}]$ and the
above map $CF^{-}(\boldsymbol{\alpha},\underline{\boldsymbol{\beta}_{01}})\to CF^{-}(\boldsymbol{\alpha},\boldsymbol{\beta}(\boldsymbol{2}))$
is an $\mathbb{F}[U_{BB}]$-linear chain map between free chain complexes
over $\mathbb{F}[U_{BB}]$, and so tensoring with $\mathbb{F}[U_{BB}]/(U_{BB})$
over $\mathbb{F}[U_{BB}]$ gives the spectral sequence for $\widehat{HFL'}$.
Indeed, if $C$ is a free chain complex over $\mathbb{F}[U]$ and
has zero homology, then $C\otimes_{\mathbb{F}[U]}\mathbb{F}[U]/U$
has zero homology as well, since we have a short exact sequence of
chain complexes 
\[
0\to C\xrightarrow{\cdot U}C\to C\otimes_{\mathbb{F}[U]}\mathbb{F}[U]/U\to0.
\]
Hence the chain map $CF^{-}(\boldsymbol{\alpha},\underline{\boldsymbol{\beta}_{01}})\to CF^{-}(\boldsymbol{\alpha},\boldsymbol{\beta}(\boldsymbol{2}))$
is a quasi-isomorphism after tensoring with $\mathbb{F}[U_{BB}]/U_{BB}$
over $\mathbb{F}[U_{BB}]$ as well.
\end{rem}

\begin{rem}
Note that we indeed need to consider a slight variant to show an analogue
of Theorem \ref{thm:spectral-1} for $\widetilde{HFL'}$: we would
like to quotient out $CF^{-}(\boldsymbol{\alpha},\underline{\boldsymbol{\beta}})$
by $U_{BB^{link}}^{1/2}$ for some baseball $BB^{link}$, but $CF^{-}(\boldsymbol{\alpha},\underline{\boldsymbol{\beta}})$
is not an $\mathbb{F}[U_{BB^{link}}^{1/2}]$-module for any baseball
$BB^{link}$ since there is no baseball $BB^{link}$ that is a link
baseball for every link $L(f)$.
\end{rem}

\subsection{\label{subsec:The-reduced-hat}The reduced hat version}

In this subsection, we prove Theorem \ref{thm:spectral-1-1}, an analogue
of Theorem \ref{thm:spectral-1} for $\widetilde{HFL'}$.

Let $\sigma:\{1,\cdots,N\}\to\boldsymbol{CC}$ be a bijection. We
consider the following modified setting.
\begin{itemize}
\item For each $i=2,\cdots,N$ and $f:X\to\{0,1,2\}$, put a baseball $BB_{i,f}$
on $\sigma(i)$ .
\item Put a baseball $BB_{1}$ on $\sigma(1)\in\boldsymbol{CC}$. For convenience,
also denote $BB_{1}$ as $BB_{1,f}$.
\end{itemize}
As in Subsection \ref{subsec:The-setup-for}, let $L(f)$ be the balled
link with underlying link $L(f)$ such that for each link component
of $L(f)$, $BB_{i,f}$ is its \emph{link baseball}, where $i$ is
the smallest $i$ such that $\sigma(i)$ is on that link component.
Define the modified band maps $\delta(f,g)\in HF_{0}^{-}(\boldsymbol{\beta}(f),\boldsymbol{\beta}(g))$
for consecutive $f<g$ in the same way as Definition \ref{def:For-each-consecutive}.
\begin{prop}
\label{prop:(Modified-band-maps-2}The same statement as Proposition
\ref{prop:We-have-the} holds in this case as well.
\end{prop}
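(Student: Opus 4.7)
The plan is to observe that the proofs of both parts of Proposition \ref{prop:We-have-the}, deferred to Sections \ref{sec:Composition-of-band} and \ref{sec:Proofs-of-claims}, transfer with only cosmetic changes to the modified setting of Subsection \ref{subsec:The-reduced-hat}. The modified setting differs from that of Subsection \ref{subsec:The-setup-for} only in that the baseballs $BB_{1,f}$ on $\sigma(1)$ are identified into a single baseball $BB_{1}$, which is a link baseball for every $\boldsymbol{\beta}(f)$; the baseballs $BB_{i,f}$ for $i\geq 2$ are unchanged. Each balled link $L(f)$ still satisfies Condition \ref{cond:We-consider-balled} with the same combinatorial link baseball assignments as before (with $BB_{1,f}$ relabeled as $BB_{1}$), so the canonical element $\theta(f,g)\in HF_{0}^{-}(\boldsymbol{\beta}(f),\boldsymbol{\beta}(g))$ and the modified band map $\delta(f,g)$ of Definition \ref{def:For-each-consecutive} are well-defined by the same formulas. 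The grading statements of Proposition \ref{prop:combi-claims} continue to hold, since their proofs in Section \ref{sec:Gradings-for-the} only use the combinatorics of link-versus-free basepoint pair assignments per attaching curve, which is unchanged.

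For part (\ref{enu:4}), the proof in Section \ref{sec:Composition-of-band} reduces commutativity to local computations in stabilized Heegaard diagrams near the two crossing balls where $f_{00}$ and $f_{11}$ differ. These computations depend only on the local Heegaard structure near the bands and on the link baseball structure of the four links $L(f_{ij})$, and are therefore unaffected by the merging of baseballs on $\sigma(1)$. For part (\ref{enu:6}), the proof in Section \ref{sec:Proofs-of-claims} applies Theorem \ref{thm:Let--be} to the unoriented skein triple $(\boldsymbol{\beta}(f_{0}),\boldsymbol{\beta}(f_{1}),\boldsymbol{\beta}(f_{2}))$; the hypothesis that the basepoint pair corresponding to the auxiliary baseball in Definition \ref{def:unoriented-skein-triple} is free for $\boldsymbol{\alpha}$ still holds, since that baseball is distinct from $BB_{1}$ by construction.

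The main point to verify is that the auxiliary canonical-element computations in Section \ref{sec:Some-important-balled} and the subsequent sections apply unchanged to the modified Heegaard diagram in which $BB_{1}$ contributes a single basepoint pair that is a link basepoint pair for every $\boldsymbol{\beta}(f)$. This is the case because those computations are performed in auxiliary Heegaard diagrams for balled links in $S^{3}$, $S^{1}\times S^{2}$, or $\#^{2}S^{1}\times S^{2}$ that depend only on the local link baseball structure near the bands, and are insensitive to the global identification of distant baseballs. Granting this, both parts of Proposition \ref{prop:We-have-the} hold in the modified setting, yielding Proposition \ref{prop:(Modified-band-maps-2}.
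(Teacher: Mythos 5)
Your treatment of part (\ref{enu:6}) is essentially right: Section \ref{sec:Proofs-of-claims} is written to cover both settings (it sets $BB_{1,aux}:=BB_{1}$ in the reduced case), so that half does transfer. But for part (\ref{enu:4}) there is a genuine gap. You assert that the local computations of Section \ref{sec:Composition-of-band} ``depend only on the local Heegaard structure near the bands'' and are ``unaffected by the merging of baseballs on $\sigma(1)$,'' but the identification $BB_{1,f}=BB_{1}$ is not a cosmetic change: it changes which basepoint pairs are link basepoint pairs for which attaching curves, hence the coefficient rings and the applicability of the swap-map isomorphism results. Concretely, the proof of part (\ref{enu:4}) in Section \ref{sec:Composition-of-band} does not verify every configuration of link baseballs directly; it verifies one convenient total order per configuration and transfers the general case via modified swap maps, using Proposition \ref{prop:Something-is-an-1} (hence Proposition \ref{prop:Something-is-an}) to know those comparison maps are isomorphisms. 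In the reduced setting this transfer breaks down, which is exactly why the paper devotes Section \ref{sec:Computations-for-the} to this case.

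The paper's actual argument there is a comparison of the reduced setting (copy $1$) with the already-settled minus setting (copy $2$) for the same $\sigma$: one gets $\mu_{2}((x^{rd,1}+x^{dr,1})\otimes\theta)=0$ from Lemmas \ref{lem:summary-theta} and \ref{lem:summary-delta} and the known vanishing of $x^{rd,2}+x^{dr,2}$, and then must show that $\mu_{2}^{\boldsymbol{\beta}_{00,1},\boldsymbol{\beta}_{11,1},\boldsymbol{\beta}_{11,2}}(-\otimes\theta)$ is injective on $HF_{0}^{-}(\boldsymbol{\beta}_{00,1},\boldsymbol{\beta}_{11,1})$. Proposition \ref{prop:Something-is-an} does \emph{not} apply to this swap map, because the basepoint pair of $BB_{1}$ is a link basepoint pair for $\boldsymbol{\beta}_{11,1}$, free for $\boldsymbol{\beta}_{11,2}$, and a link basepoint pair for the ``$\boldsymbol{\alpha}$'' curve $\boldsymbol{\beta}_{00,1}$, violating its hypothesis. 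The paper supplies a separate argument: factor the swap map through $\boldsymbol{\beta}_{11,tmp}$, reinterpret the remaining factor via $\boldsymbol{\beta}_{11,tmp}^{link}$ as a band map, perform a $0$-surgery along the basepoint pair of $BB_{1}$, and invoke the exactness argument of Section \ref{sec:A-nonvanishing-argument}. Your proposal contains no substitute for this step, so as written it does not establish the counterpart of part (\ref{enu:4}).
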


\begin{proof}
The counterpart of Proposition \ref{prop:We-have-the} (\ref{enu:4})
is shown in Section \ref{sec:Computations-for-the}, and the counterpart
of Proposition \ref{prop:We-have-the} (\ref{enu:6}) is shown in
Section \ref{sec:Proofs-of-claims}.
\end{proof}
Hence, we can build a twisted complex $\underline{\boldsymbol{\beta}}$
as in Subsection \ref{subsec:The-twisted-complex}. Since $BB_{1}$
is a link baseball for all $L(f)$, the chain complex $CF^{-}(\boldsymbol{\alpha},\underline{\boldsymbol{\beta}})$
is a free chain complex over $\mathbb{F}[U_{1}^{1/2}]$ (recall from
Notation \ref{nota:Recall-from-Notation} that $U_{1}:=U_{BB_{1}}$).
Hence, by tensoring it with $\mathbb{F}[U_{1}^{1/2}]/(U_{1}^{1/2})$
over $\mathbb{F}[U_{1}^{1/2}]$, we get Theorem \ref{thm:spectral-1-1}
(recall Remark \ref{rem:unreduced-hat}).

Define the \emph{modified band map }
\[
G_{B(f,g)}:\widetilde{HFL'}(Y,L(f),BB_{1};c)\to\widetilde{HFL'}(Y,L(g),BB_{1};c)
\]
as the map induced by 
\[
\mu_{2}(-\otimes\delta):CFL'{}^{-}(Y,L(f);c)\to CFL'(Y,L(g);c),
\]
by quotienting out by $U_{1}^{1/2}$ and taking the map induced on
homology.
\begin{thm}
\label{thm:spectral-1-1}Let $c\in H^{2}(Y;\mathbb{Z})$. There exists
a spectral sequence that converges to $\widetilde{HFL'}(Y,L,BB_{1};c)$,
whose $E_{1}$ page is 
\[
\left(\bigoplus_{f:X\to\{0,1\}}\widetilde{HFL'}(Y,L(f),BB_{1};c),d_{1}\right),
\]
where $d_{1}$ is the sum of the\emph{ modified band maps}
\[
G_{B(f,g)}:\widetilde{HFL'}(Y,L(f),BB_{1};c)\to\widetilde{HFL'}(Y,L(g),BB_{1};c)
\]
for consecutive $f<g$. 
\end{thm}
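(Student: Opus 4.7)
The plan is to run the same twisted-complex construction that proves Theorem \ref{thm:spectral-1} for $HFL'{}^{-}$, but feeding in Proposition \ref{prop:(Modified-band-maps-2} in place of Proposition \ref{prop:We-have-the}, and then to reduce modulo $U_{1}^{1/2}$ at the end. Concretely, I would work in the $A_{\infty}$-category ${\cal A}_{c}$ built from a $c$-strongly admissible Heegaard diagram for the modified setup of Subsection \ref{subsec:The-reduced-hat} and inductively assemble a differential $\delta \in {\rm Hom}_{\Sigma{\cal B}}(\underline{\boldsymbol{\beta}},\underline{\boldsymbol{\beta}})$ on $\underline{\boldsymbol{\beta}} = \bigoplus_{f\colon X \to \{0,1,2\}} \boldsymbol{\beta}(f)$, whose $f \to g$ components $\delta(f,g) \in CF^{-}_{|g-f|-1}(\boldsymbol{\beta}(f),\boldsymbol{\beta}(g))$ satisfy $\mu_{1}(\delta) = 0$. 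The base case of the induction (length $|g-f| = 1$) is by definition of the modified band maps $\delta(f,g) \in HF^{-}_{0}$. The inductive step requires that $\mu_{1}(\delta_{\le d-1})(f,g)$ be a boundary; for $d=2$ this uses Proposition \ref{prop:(Modified-band-maps-2}\,(\ref{enu:4}) together with the first half of (\ref{enu:6}), and for $d\ge 3$ this uses the grading bound Proposition \ref{prop:combi-claims}\,(\ref{enu:If-,-then}), exactly as in Subsection \ref{subsec:The-twisted-complex}.

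Next I would pass to the quotient twisted complex $\underline{\boldsymbol{\beta}_{01}}$ whose underlying object is $\bigoplus_{f\colon X \to \{0,1\}} \boldsymbol{\beta}(f)$ and equip $CF^{-}(\boldsymbol{\alpha},\underline{\boldsymbol{\beta}_{01}})$ with the cube filtration by $|f|$. The $E_{1}$-page is then, tautologically, $\bigoplus_{f\colon X \to \{0,1\}} HF^{-}(\boldsymbol{\alpha},\boldsymbol{\beta}(f); c) = \bigoplus_{f} HFL'{}^{-}(Y,L(f);c)$, and the $d_{1}$-differential is the sum of the homology-level maps $\mu_{2}(-\otimes \delta(f,g))$ over consecutive pairs $f<g$, which are by definition the modified band maps. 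To identify the limit, I would use the same cascade argument as in Subsection \ref{subsec:The-twisted-complex}: for $k = 0,\ldots,s$ let $F_{k}$ be the set of $f$ with $f(1),\ldots,f(k) = 2$ and $f(k+1),\ldots,f(s) \in \{0,1\}$, let $\underline{\boldsymbol{\beta}^{k}}$ denote the restriction of $\underline{\boldsymbol{\beta}}$ to $F_{k}$, and let $\underline{\delta^{k}} \in CF^{-}(\underline{\boldsymbol{\beta}^{k}},\underline{\boldsymbol{\beta}^{k+1}})$ collect the $f \to g$ components of $\delta$ for $f \in F_{k}$, $g \in F_{k+1}$. Proposition \ref{prop:(Modified-band-maps-2}\,(\ref{enu:6}) (applied in each single coordinate in turn) forces $\mu_{2}(-\otimes \underline{\delta^{k}})$ to be a quasi-isomorphism, so the composition yields a quasi-isomorphism $CF^{-}(\boldsymbol{\alpha},\underline{\boldsymbol{\beta}_{01}}) \to CF^{-}(\boldsymbol{\alpha},\boldsymbol{\beta}(\boldsymbol{2})) = CFL'{}^{-}(Y,L; c)$.

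Finally I would reduce modulo $U_{1}^{1/2}$. Because $BB_{1}$ is a link baseball for \emph{every} $L(f)$ in the modified setup, the chain complex $CF^{-}(\boldsymbol{\alpha},\underline{\boldsymbol{\beta}_{01}})$ is a free chain complex over $\mathbb{F}[U_{1}^{1/2}]$, the filtration is $U_{1}^{1/2}$-equivariant, and the quasi-isomorphism above is an $\mathbb{F}[U_{1}^{1/2}]$-linear map between complexes that are free over $\mathbb{F}[U_{1}^{1/2}]$. By the short exact sequence
\[
0 \to C \xrightarrow{\cdot U_{1}^{1/2}} C \to C\otimes_{\mathbb{F}[U_{1}^{1/2}]} \mathbb{F}[U_{1}^{1/2}]/(U_{1}^{1/2}) \to 0
\]
(as in Remark \ref{rem:unreduced-hat}), tensoring with $\mathbb{F}[U_{1}^{1/2}]/(U_{1}^{1/2})$ preserves quasi-isomorphisms, so the filtered chain complex $CF^{-}(\boldsymbol{\alpha},\underline{\boldsymbol{\beta}_{01}})/(U_{1}^{1/2})$ computes $\widetilde{HFL'}(Y,L,BB_{1};c)$. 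Its associated spectral sequence has $E_{1}$-page $\bigoplus_{f} \widetilde{HFL'}(Y,L(f),BB_{1};c)$ with $d_{1}$ equal to the modified band maps, as claimed.

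The only real obstacle lies outside this assembly, namely verifying Proposition \ref{prop:(Modified-band-maps-2}: the commutativity of the modified band maps and the exact-triangle property both reduce to local model computations carried out in later sections, and a priori the commutation of the \emph{unmodified} band maps fails (see Remark \ref{rem:Similarly,-we-can}), which is precisely why the correction terms involving the $\Phi_{k,f}$-actions appear in Definition \ref{def:For-each-consecutive}. Once Proposition \ref{prop:(Modified-band-maps-2} is in hand, the argument above is essentially formal.
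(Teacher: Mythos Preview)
Your proposal is correct and follows essentially the same approach as the paper: build the twisted complex $\underline{\boldsymbol{\beta}}$ exactly as in Subsection \ref{subsec:The-twisted-complex} but using Proposition \ref{prop:(Modified-band-maps-2}, observe that in the modified setup $BB_{1}$ is a link baseball for every $L(f)$ so that $CF^{-}(\boldsymbol{\alpha},\underline{\boldsymbol{\beta}})$ is free over $\mathbb{F}[U_{1}^{1/2}]$, and then tensor with $\mathbb{F}[U_{1}^{1/2}]/(U_{1}^{1/2})$ and invoke the short exact sequence argument of Remark \ref{rem:unreduced-hat}. Your write-up in fact spells out more of the intermediate steps (the inductive construction, the cascade of quasi-isomorphisms) than the paper does, since the paper simply points back to Subsection \ref{subsec:The-twisted-complex} and Remark \ref{rem:unreduced-hat}.
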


\subsection{\label{subsec:The-action}The basepoint actions}

In this subsection, we show that the spectral sequences constructed
in Subsection \ref{subsec:The-twisted-complex} respect the basepoint
actions (Definition \ref{def:hflaction}). We first make this precise.

Let $R_{\boldsymbol{CC}}=\mathbb{F}[\{X_{CC}\}_{CC\in\boldsymbol{CC}}]$.
For $f:X\to\{0,1,2\}$ and a connected component $CC$ of $L\backslash\bigsqcup_{r}CB_{r}$,
define the action of $X_{CC}$ on $HFL'{}^{-}(Y,L(f);c)$ as multiplication
by $U_{i,f}^{1/2}$, where $BB_{i,f}$ is the link baseball on the
connected component of $L(f)$ that $CC$ is contained in. Then, $HFL'{}^{-}(Y,L(f);c)$
is an $R_{\boldsymbol{CC}}$-module, and the action of $X_{CC}^{2}$
is the same as multiplication by $U_{BB}$ for any baseball $BB$
(recall that $Y$ is connected, so multiplication by $U_{BB}$ for
different baseballs $BB$ are chain homotopic). Define the $R_{\boldsymbol{CC}}$-action
on $\widehat{HFL'}$ and $\widetilde{HFL'}$ similarly.
\begin{thm}
\label{thm:spectral-1-2}Let $c\in H^{2}(Y;\mathbb{Z})$. For $\widehat{HFL'}$
(for a choice of a baseball $BB$) and $HFL'{}^{-}$, there exist
spectral sequences of $R_{\boldsymbol{CC}}$-modules that satisfy
the conditions of Theorem \ref{thm:spectral-1}.
\end{thm}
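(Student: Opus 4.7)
The plan is to verify that the construction of the spectral sequences in Theorems \ref{thm:spectral-1} and \ref{thm:spectral-1-1} can be made compatible with the $R_{\boldsymbol{CC}}$-action. Each summand $CF^{-}(\boldsymbol{\alpha},\boldsymbol{\beta}(f))$ carries a natural $R_{\boldsymbol{CC}}$-module structure via $X_{CC}\cdot x := U_{i,f}^{1/2}x$, where $BB_{i,f}$ is the link baseball on the component of $L(f)$ containing $CC$. The goal is to show that, for a suitable choice of twisted complex data $\delta\in{\rm Hom}_{\Sigma{\cal B}}(\underline{\boldsymbol{\beta}},\underline{\boldsymbol{\beta}})$, every component of the differential intertwines these module structures (at least on homology), and hence the filtered complex computes an honest $R_{\boldsymbol{CC}}$-module spectral sequence.

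The first step is to verify $R_{\boldsymbol{CC}}$-equivariance of the $E_{1}$-page differential $d_{1}$. For consecutive $f<g$, and any connected component $CC\in\boldsymbol{CC}$, pick $q\in CC$ away from the crossing balls. Proposition \ref{prop:For-any-} shows that the unmodified band map $F_{B(f,g)}$ intertwines the action of $q$, which by definition coincides with the action of $X_{CC}$. The modified band maps of Definition \ref{def:For-each-consecutive} differ from the unmodified ones by $\Phi_{j}$-corrections; since $\Phi$-actions are $R$-linear and commute with the $U$-actions on homology (Subsection \ref{subsec:The--action}), the modified map $G_{B(f,g)}$ is also $R_{\boldsymbol{CC}}$-equivariant on homology. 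Thus $d_{1}$ is an $R_{\boldsymbol{CC}}$-equivariant map of $R_{\boldsymbol{CC}}$-modules.

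The second step is to refine the inductive construction of $\delta_{\le d}$ in Subsection \ref{subsec:The-twisted-complex} to produce $R_{\boldsymbol{CC}}$-equivariant lifts at the chain level. At each inductive step where $\delta_{\le d}(f,g)$ must be chosen with $|g-f|=d\ge 2$, the obstruction $\mu_{1}(\delta_{\le(d-1)})(f,g)$ is $R_{\boldsymbol{CC}}$-equivariant in an appropriate sense, by the equivariance established for smaller $d$ and the $\mu_{n}$-compatibility of $\Phi$-actions and $R$-linearity. Using Proposition \ref{prop:combi-claims} (\ref{enu:If-,-then}) together with the fact that $HF^{-}(\boldsymbol{\beta}(f),\boldsymbol{\beta}(g))$ is naturally an $R_{\boldsymbol{CC}}$-module, one may choose a lift respecting the $R_{\boldsymbol{CC}}$-action. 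The same reasoning applied to the quasi-isomorphism $CF^{-}(\boldsymbol{\alpha},\underline{\boldsymbol{\beta}_{01}})\to CF^{-}(\boldsymbol{\alpha},\boldsymbol{\beta}(\boldsymbol{2}))$ built at the end of Subsection \ref{subsec:The-twisted-complex} shows that the isomorphism of abutments is $R_{\boldsymbol{CC}}$-equivariant. The statement for $\widehat{HFL'}$ then follows from Remark \ref{rem:unreduced-hat}, since tensoring with $\mathbb{F}[U_{BB}]/(U_{BB})$ is automatically $R_{\boldsymbol{CC}}$-equivariant (the action of $X_{CC}$ on the quotient is well-defined, using that $X_{CC}^{2}$ equals $U_{BB'}$ up to homotopy for any baseball $BB'$ in the same component of $Y$).

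The main obstacle is passing from homology-level equivariance to the chain-level compatibility needed for the higher differentials $d_{r}$ ($r\ge 2$) to be defined as $R_{\boldsymbol{CC}}$-equivariant maps on each page. The essential inputs are Proposition \ref{prop:For-any-}, Lemma \ref{lem:swap-equivariant}, and the chain-level $\Phi$-equivariance of $\mu_{n}$ recorded in Subsection \ref{subsec:The--action}; together they ensure that every ingredient of the construction is $R_{\boldsymbol{CC}}$-equivariant wherever a priori it could be, and the only obstacle (the fact that different $\boldsymbol{\beta}(f)$ carry different $R_{\boldsymbol{CC}}$-actions indexed by $i$) is precisely what Proposition \ref{prop:For-any-} handles. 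Once the inductive construction is performed compatibly, the cube filtration on $CF^{-}(\boldsymbol{\alpha},\underline{\boldsymbol{\beta}_{01}})$ gives the desired $R_{\boldsymbol{CC}}$-module spectral sequence.
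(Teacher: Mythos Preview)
Your approach has a genuine gap. You propose to make the $R_{\boldsymbol{CC}}$-action compatible at the chain level by choosing the higher components $\delta_{\le d}(f,g)$ ``$R_{\boldsymbol{CC}}$-equivariantly,'' but this is not well-posed. The action of $X_{CC}$ on $CF^{-}(\boldsymbol{\alpha},\boldsymbol{\beta}(f))$ is multiplication by $U_{i,f}^{1/2}$, where the index $i$ and hence the variable itself depend on $f$; different $\boldsymbol{\beta}(f)$'s involve different polynomial variables. The equivariance statements you invoke (Proposition~\ref{prop:For-any-}, Lemma~\ref{lem:swap-equivariant}) hold only on homology, as their proofs rely on identities like $U_{BB}^{1/2}\theta = U_{BB'}^{1/2}\theta$ that are homology-level facts. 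There is no chain-level $R_{\boldsymbol{CC}}$-module structure on $CF^{-}(\boldsymbol{\alpha},\underline{\boldsymbol{\beta}_{01}})$ with respect to which the $A_\infty$-operations $\mu_n$ are linear, so the inductive lifting argument you sketch cannot be carried out. You acknowledge this obstacle in your final paragraph but do not actually overcome it.

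The paper's proof takes a different route. For each $CC\in\boldsymbol{CC}$, it uses the setup of Subsection~\ref{subsec:The-reduced-hat} in which one baseball $BB_1$ on $\sigma(1)$ is \emph{shared} across all $L(f)$; in that setup $CF^{-}(\boldsymbol{\alpha},\underline{\boldsymbol{\beta}(1)})$ is genuinely an $\mathbb{F}[U_1^{1/2}]$-chain complex, so the resulting spectral sequence is one of $\mathbb{F}[X_{\sigma(1)}]$-modules by construction. This $\mathbb{F}[X_{\sigma(1)}]$-structure is then transported to the original spectral sequence via swap maps (Proposition~\ref{prop:(Modified-band-maps-1}), which are $R_{\boldsymbol{CC}}$-equivariant on homology (Lemma~\ref{lem:swap-equivariant}) and induce isomorphisms of spectral sequences. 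Finally, varying the total order $\sigma$ and comparing via modified swap maps (Proposition~\ref{prop:(Modified-band-maps}, Proposition~\ref{prop:Something-is-an-1}) shows that the spectral sequences for different $\sigma$ coincide, assembling the actions of all $X_{CC}$ into a single $R_{\boldsymbol{CC}}$-module structure. The key idea you are missing is this use of a shared baseball to obtain chain-level linearity for one variable at a time.
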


We focus on the minus version; the corresponding statement for the
unreduced hat version follows by tensoring the chain complexes with
$\mathbb{F}[U_{BB}]/(U_{BB})$ over $\mathbb{F}[U_{BB}]$.
\begin{rem}
We do not know whether the spectral sequence for the reduced hat version
is a spectral sequence of $R_{\boldsymbol{CC}}$-modules.
\end{rem}

We show Theorem \ref{thm:spectral-1-2} in two steps: first, we show
that the spectral sequence constructed as in Subsection \ref{subsec:The-twisted-complex}
is a spectral sequence of $\mathbb{F}[X_{\sigma(1)}]$-modules, and
then show that it is a spectral sequence of $R_{\boldsymbol{CC}}$-modules
by showing that the spectral sequences given by different $\sigma$'s
are isomorphic.

\subsubsection{\label{subsec:Step-1}Step 1}

Fix a total order $\sigma:\{1,\cdots,N\}\to\boldsymbol{CC}$ as before.
\begin{itemize}
\item For each $i=2,\cdots,N$ and $f:X\to\{0,1,2\}$, put two baseballs
$BB_{i,f,1}$ and $BB_{i,f,2}$ on $\sigma(i)$.
\item Put a baseball $BB_{1}$ on $\sigma(1)$, and for each $f:X\to\{0,1,2\}$,
put a baseball $BB_{1,f,2}$ on $\sigma(1)$. For convenience, also
denote $BB_{1}$ as $BB_{1,f,1}$.
\end{itemize}
Let $L(f,1)$ and $L(f,2)$ be the balled links with underlying link
$L(f)$ such that for each link component of $L(f,k)$ ($k=1,2$),
$BB_{i,f,k}$ is its \emph{link baseball}, where $i$ is the smallest
$i$ such that $\sigma(i)$ is on that link component.

Define differentials on $\bigoplus_{f:X\to\{0,1,2\}}\boldsymbol{\beta}(f,k)$
as in Subsections \ref{subsec:The-twisted-complex} and \ref{subsec:The-reduced-hat},
and let $\underline{\boldsymbol{\beta}(k)}$ be the corresponding
twisted complex. By Proposition \ref{prop:(Modified-band-maps-1},
we can define a cycle $\underline{\theta}\in CF^{-}(\underline{\boldsymbol{\beta}(1)},\underline{\boldsymbol{\beta}(2)})$
such that for each $f$, the $\boldsymbol{\beta}(f,1)\to\boldsymbol{\beta}(f,2)$
component of $\underline{\theta}$ is a cycle in $CF_{0}^{-}(\boldsymbol{\beta}(f,1),\boldsymbol{\beta}(f,2))$
that represents the canonical element (``swap map'') $\theta\in HF_{0}^{-}(\boldsymbol{\beta}(f,1),\boldsymbol{\beta}(f,2))$,
and $\underline{\theta}$ preserves the cube filtration (if $f\not\le g$,
then the $\boldsymbol{\beta}(f,1)\to\boldsymbol{\beta}(g,2)$ component
of $\underline{\theta}$ is $0$). 
\begin{prop}
\label{prop:(Modified-band-maps-1}(Modified band maps sometimes commute
with swap maps) If $f<g$ are consecutive, then 
\[
\mu_{2}^{\boldsymbol{\beta}(f,1),\boldsymbol{\beta}(g,1),\boldsymbol{\beta}(g,2)}(\delta\otimes\theta)=\mu_{2}^{\boldsymbol{\beta}(f,1),\boldsymbol{\beta}(f,2),\boldsymbol{\beta}(g,2)}(\theta\otimes\delta)\in HF_{0}^{-}(\boldsymbol{\beta}(f,1),\boldsymbol{\beta}(g,2)).
\]
\end{prop}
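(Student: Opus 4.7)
The plan mirrors the approach taken for Proposition \ref{prop:We-have-the}(\ref{enu:4}) on the commutativity of modified band maps among themselves, carried out in Section \ref{sec:Composition-of-band}. The idea is to reduce the desired identity to a local computation by exploiting the $\Phi$-equivariance of $\mu_2$ (Subsection \ref{subsec:The--action}), and then to verify that the $\Phi$-corrections built into $\delta$ (Definition \ref{def:For-each-consecutive}) cancel the failure of the unmodified canonical element $\theta(f,g)$ to commute with the swap map $\theta_{\mathrm{sw}}$.

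First, I factor the swap $\theta_{\mathrm{sw}}\in HF_0^-(\boldsymbol{\beta}(f,1),\boldsymbol{\beta}(f,2))$ (and similarly for $g$) as a composition, via $\mu_2$, of \emph{elementary swaps}, each of which swaps the link baseball labeling on a single connected component of $L(f)$, by interpolating through auxiliary attaching curves corresponding to partial labelings $\alpha:\{2,\dots,N\}\to\{1,2\}$. Associativity of $\mu_2$ together with the vanishing of higher multiplications in top homological $\mathbb{Z}$-grading (a consequence of Proposition \ref{prop:combi-claims}(\ref{enu:If-,-then})) reduces the problem to the case where $\theta_{\mathrm{sw}}$ is elementary. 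If the swapped component is disjoint from $B(f,g)$, the Heegaard diagram neighborhoods for the swap and the band lie in disjoint regions; Remark \ref{rem:If-the-Heegaard} gives the claim immediately, since the relevant canonical elements decompose as tensor products over disjoint pieces.

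The main case is when the elementary swap acts on a connected component of $L(f)$ meeting $B(f,g)$, so that the correction indices $i,j$ in Definition \ref{def:For-each-consecutive} are involved. Expanding $\delta(f,g)=\theta(f,g)+\Phi_j(\sum_{i<k<j}\Phi_k)\theta(f,g)$ and repeatedly invoking $\Phi$-equivariance of $\mu_2$, both $\mu_2(\delta\otimes\theta_{\mathrm{sw}})$ and $\mu_2(\theta_{\mathrm{sw}}\otimes\delta)$ can be rewritten as $\mu_2(\theta(f,g)\otimes\theta_{\mathrm{sw}})$ plus explicit $\Phi$-correction terms acting on this common product. The essential algebraic input for comparing the two sides is the identity $U_{BB_{?,f,1}}^{1/2}\theta_{\mathrm{sw}}=U_{BB_{?,f,2}}^{1/2}\theta_{\mathrm{sw}}$ in $HF^-(\boldsymbol{\beta}(f,1),\boldsymbol{\beta}(f,2))$, which underlies the proof of Lemma \ref{lem:swap-equivariant}; combined with the fact that $\Phi_{BB}$ vanishes on $HF^-$ when $BB$ is a free baseball for both attaching curves, this relates the $\Phi_{k,f,1}$- and $\Phi_{k,f,2}$-actions on either side of $\theta_{\mathrm{sw}}$.

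The proposition then reduces to a local algebraic identity in a Heegaard diagram neighborhood containing $B(f,g)$ and the single swapped component, of the same flavor as the identities handled in Sections \ref{sec:Band-maps-and} through \ref{sec:Computations-for-the}. These can be attacked by the schematic arguments of Section \ref{sec:Interpreting-schematics} together with the structural computations for the model links of Figures \ref{fig:s1s2-important-1-1} and \ref{fig:Some-important-balled} carried out in Section \ref{sec:Some-important-balled}. The main obstacle is the combinatorial bookkeeping justifying the precise form of the correction $\Phi_j(\sum_{i<k<j}\Phi_k)\theta$: the indices $i,j$ encode which baseball becomes (for a merge) or was (for a split) the link baseball on the merged/split component, namely the one with smallest index, and one must verify that this choice exactly matches the discrepancy introduced by the swap when the link baseball labeling is reshuffled.
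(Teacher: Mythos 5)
Your high-level strategy tracks the paper's: reduce to a local model configuration and then absorb the $\Phi$-correction in $\delta$ by $\Phi$-equivariance of $\mu_2$. The paper simply invokes Lemma \ref{lem:summary-theta} (non-orientable case, where $\delta=\theta$) and Lemma \ref{lem:summary-delta} (merge/split case), and those lemmas are where your ``local algebraic identity'' lives. That said, there are a few genuine gaps in how the reduction is set up.

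The most serious is that you treat $\mu_{2}(\theta(f,g)\otimes\theta_{\mathrm{sw}})$ as a ``common product'' from the start, i.e.\ you are implicitly assuming $\mu_{2}(\theta\otimes\theta_{\mathrm{sw}})=\mu_{2}(\theta_{\mathrm{sw}}\otimes\theta)$ and then correcting both sides by $\Phi$-terms. But that base identity \emph{is} the content of Lemma \ref{lem:summary-theta}, and it is nontrivial precisely in the $L_{ori}$ configuration (Figure \ref{fig:split-swap-com}), where the $\Phi$-actions alone do not pin down the canonical element $\theta$ in $HFL'^-_{top}(\#^2S^1\times S^2,L_{ori};0)$. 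Proposition \ref{prop:swap-merge-2} proves that case using the relative $H_1$-actions $A_{ij},B_{ij}$ (Propositions \ref{prop:83} and \ref{prop:85}), and Proposition \ref{prop:swap-merge-2-1} shows that the ``wrong'' order of the two involved baseballs gives a genuine failure $x^{rd}=x^{dr}+\Phi_1\Phi_2 x^{dr}$. Your proposal never engages with which of these two situations the swap in Subsubsection \ref{subsec:Step-1} produces, so the crucial step is not secured. Two further inaccuracies: the algebraic input you single out, $U_{BB_{?,f,1}}^{1/2}\theta_{\mathrm{sw}}=U_{BB_{?,f,2}}^{1/2}\theta_{\mathrm{sw}}$, is what drives Lemma \ref{lem:swap-equivariant} (basepoint equivariance), but what actually lets you move the $\Phi_{b,t}\Phi_{ab}$-corrections across the swap is Lemma \ref{lem:phi-swap}, which compares $\Phi_{1,1},\Phi_{1,2},\Phi_{1,3}$ through $\mu_2$. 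And your concluding remark about ``the discrepancy introduced by the swap when the link baseball labeling is reshuffled'' belongs to Proposition \ref{prop:(Modified-band-maps}, where $\tau=\sigma\circ(b\ b{+}1)$ permutes the order; here both $L(f,1)$ and $L(f,2)$ use the \emph{same} order $\sigma$, so the link-vs.-free assignment does not change and the indices $i,j$ in Definition \ref{def:For-each-consecutive} are literally identical on the two sides. Finally, the ``disjoint case reduces immediately via Remark \ref{rem:If-the-Heegaard}'' step is too quick: even when the elementarily swapped component $\sigma(t)$ is disjoint from $B(f,g)$, the correction $\Phi_{j}\bigl(\sum_{i<k<j}\Phi_k\bigr)\theta$ can involve $\Phi_{t}$ if $i<t<j$, so the Heegaard diagram does not split as a product across the swap.
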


\begin{proof}
If the band $B:\boldsymbol{\beta}(f,1)\to\boldsymbol{\beta}(g,1)$
is non-orientable, then $\delta=\theta$, and so this follows from
Lemma \ref{lem:summary-theta}. If $B$ is a merge or a split band,
this follows from Lemma \ref{lem:summary-delta}.
\end{proof}
Consider Diagram (\ref{diag:spectral-com}), where $\underline{\boldsymbol{\beta}(k)_{01}}$
is the quotient of $\underline{\boldsymbol{\beta}(k)}$ whose underlying
object is $\bigoplus_{f:X\to\{0,1\}}\boldsymbol{\beta}(f,k)$, and
$\boldsymbol{\beta}(k)(\boldsymbol{2}):=\boldsymbol{\beta}(\boldsymbol{2},k)$
where $\boldsymbol{2}:X\to\{0,1,2\}$ is the constant function $2$.
Here, $A$ and $D$ are the chain maps that are quasi-isomorphisms
obtained as in Subsection \ref{subsec:The-twisted-complex}, and $B$
(resp. $C$) are the chain maps $\mu_{2}(-\otimes\underline{\theta})$
(resp. $\mu_{2}(-\otimes\theta)$), which are quasi-isomorphisms as
well, since the $\boldsymbol{\beta}(f,1)\to\boldsymbol{\beta}(f,2)$
components of $\underline{\theta}$ are swap maps and we can use Proposition
\ref{prop:Something-is-an}. 
\begin{equation}\label{diag:spectral-com}
\begin{tikzcd}
	{CF^- (\boldsymbol{\alpha},\underline{\boldsymbol{\beta}(1)_{01}})} & {CF^- (\boldsymbol{\alpha},\boldsymbol{\beta}(1)(\boldsymbol{2}))} \\
	{CF^- (\boldsymbol{\alpha},\underline{\boldsymbol{\beta}(2)_{01}})} & {CF^- (\boldsymbol{\alpha},\boldsymbol{\beta}(2)(\boldsymbol{2}))}
	\arrow["A", from=1-1, to=1-2]
	\arrow["B", from=1-1, to=2-1]
	\arrow["C", from=1-2, to=2-2]
	\arrow["D", from=2-1, to=2-2]
\end{tikzcd}
\end{equation}The chain map $B$ induces an isomorphism between the associated spectral
sequences, and also an isomorphism of filtered $\mathbb{F}[U]$-modules
between $HF^{-}(\boldsymbol{\alpha},\underline{\boldsymbol{\beta}(1)_{01})}$
and $HF^{-}(\boldsymbol{\alpha},\underline{\boldsymbol{\beta}(2)_{01}})$.
Diagram (\ref{diag:spectral-com}) commutes up to homotopy.

The basepoint pair corresponding to the baseball $BB_{1}$ is a link
basepoint pair for all $\boldsymbol{\beta}(f,1)$. Hence, the filtered
chain complex $CF^{-}(\boldsymbol{\alpha},\underline{\boldsymbol{\beta}(1)})$
is a chain complex of $\mathbb{F}[U_{1}^{1/2}]$-modules, and in particular
$A$ is an $\mathbb{F}[U_{1}^{1/2}]$-linear chain map. Hence, the
spectral sequence induced by the cube filtration on $CF^{-}(\boldsymbol{\alpha},\underline{\boldsymbol{\beta}(1)_{01}})$
is a spectral sequence of $\mathbb{F}[X_{\sigma(1)}]$-modules, and
it converges to $HF^{-}(\boldsymbol{\alpha},\boldsymbol{\beta}(1)(\boldsymbol{2}))$
as an $\mathbb{F}[X_{\sigma(1)}]$-module. Here, to be precise, we
are considering the filtration (of $\mathbb{F}[U_{1}^{1/2}]$-modules)
on $HF^{-}(\boldsymbol{\alpha},\boldsymbol{\beta}(1)(\boldsymbol{2}))$
induced by $A$ and the cube filtration on $HF^{-}(\boldsymbol{\alpha},\underline{\boldsymbol{\beta}(1)_{01}})$.

The chain map $B$ is a quasi-isomorphism. Use $B$ to equip an $\mathbb{F}[X_{\sigma(1)}]$-module
structure on the cube filtration of $HF^{-}(\boldsymbol{\alpha},\underline{\boldsymbol{\beta}(2)_{01}})$.
The map $B$ induces an $\mathbb{F}[X_{\sigma(1)}]$-linear (in fact
$R_{\boldsymbol{CC}}$-linear by Lemma \ref{lem:swap-equivariant})
isomorphism on the underlying modules of the $E_{1}$ page, and so
all the differentials $d_{k}$ of the spectral sequence induced by
the cube filtration on $CF^{-}(\boldsymbol{\alpha},\underline{\boldsymbol{\beta}(2)_{01}})$
are $\mathbb{F}[X_{\sigma(1)}]$-linear, and this spectral sequence
is a spectral sequence of $\mathbb{F}[X_{\sigma(1)}]$-modules.

Now, we have to identify the filtered $\mathbb{F}[X_{\sigma(1)}]$-module
$HF^{-}(\boldsymbol{\alpha},\underline{\boldsymbol{\beta}(2)_{01}})$
with the $\mathbb{F}[X_{\sigma(1)}]$-module $HF^{-}(\boldsymbol{\alpha},\underline{\boldsymbol{\beta}(2)(\boldsymbol{2})})$.
The chain map $C$ is a quasi-isomorphism, and is $\mathbb{F}[X_{\sigma(1)}]$-linear
(in fact $R_{\boldsymbol{CC}}$-linear by Lemma \ref{lem:swap-equivariant})
on homology. Since the above diagram commutes up to homotopy, the
filtered $\mathbb{F}[X_{\sigma(1)}]$-module structure on $HF^{-}(\boldsymbol{\alpha},\boldsymbol{\beta}(2)(\boldsymbol{2}))$
induced by $C$ agrees with the filtration induced by $D$, and so
$D$ on homology is in particular an isomorphism of filtered $\mathbb{F}[X_{\sigma(1)}]$-modules.

\subsubsection{\label{subsec:Step-2}Step 2}

Let $\boldsymbol{TO}$ be the set of bijective $\sigma:\{1,\cdots,N\}\to\boldsymbol{CC}$'s
(``total orders on $\boldsymbol{CC}$''). Put a baseball $BB_{CC,f,\sigma}$
on $CC$ for each $CC\in\boldsymbol{CC}$, $f:X\to\{0,1,2\}$, and
$\sigma\in\boldsymbol{TO}$, and let $L(f,\sigma)$ be the balled
link with underlying link $L(f)$ that is obtained as in Subsection
\ref{subsec:The-setup-for} but using the total order $\sigma$. Order
the $L(f,\sigma)$'s using the lexicographic order on $(f,\sigma)$
(choose any total order on $\boldsymbol{TO}$). Consider the associated
Heegaard diagram with attaching curves $\boldsymbol{\beta}(f,\sigma)$.
For consecutive $f<g$, let $\delta\in HF_{0}^{-}(\boldsymbol{\beta}(f,\sigma),\boldsymbol{\beta}(g,\sigma))$
be the modified band map defined as in Definition \ref{def:For-each-consecutive},
using the total order $\sigma$. Then, for each $\sigma\in\boldsymbol{TO}$,
we obtain a twisted complex $\underline{\boldsymbol{\beta}(\sigma)}$
as in Subsection \ref{subsec:The-twisted-complex}.
\begin{defn}
\label{def:sigmatau}Let $\sigma<\tau$ be such that there exists
some $b=1,\cdots,N-1$ such that $\tau=\sigma\circ(b\ b+1)$ for some
$b=1,\cdots,N-1$, where $(b\ b+1)\in S_{N}$ is the transposition
that swaps $b$ and $b+1$. Define the \emph{modified swap map} $\chi\in HF_{0}^{-}(\boldsymbol{\beta}(f,\sigma),\boldsymbol{\beta}(f,\tau))$
as follows.
\begin{itemize}
\item If both $BB_{\sigma(b),f,\sigma}$ and $BB_{\sigma(b+1),f,\sigma}$
are link baseballs of $L(f,\sigma)$, then 
\[
\chi:=\theta+\Phi_{\sigma(b),f,\sigma}\Phi_{\sigma(b+1),f,\sigma}\theta.
\]
\item Otherwise, $\chi:=\theta$.
\end{itemize}
\end{defn}

Let $\sigma<\tau=\sigma\circ(b\ b+1)$ be as in Definition \ref{def:sigmatau}.
By Proposition \ref{prop:(Modified-band-maps}, we can define a cycle
$\underline{{\rm \chi}}\in CF^{-}(\underline{\boldsymbol{\beta}(\sigma)},\underline{\boldsymbol{\beta}(\tau)})$
such that for each $f$, the $\boldsymbol{\beta}(f,\sigma)\to\boldsymbol{\beta}(f,\tau)$
component of $\underline{\chi}$ is a cycle in $CF_{0}^{-}(\boldsymbol{\beta}(f,\sigma),\boldsymbol{\beta}(f,\tau))$
that represents the corresponding modified swap map $\chi$, and $\underline{\chi}$
preserves the cube filtration.
\begin{prop}
\label{prop:(Modified-band-maps}If $f<g$ are consecutive, then 
\[
\mu_{2}^{\boldsymbol{\beta}(f,\sigma),\boldsymbol{\beta}(g,\sigma),\boldsymbol{\beta}(g,\tau)}(\delta\otimes\chi)=\mu_{2}^{\boldsymbol{\beta}(f,\sigma),\boldsymbol{\beta}(f,\tau),\boldsymbol{\beta}(g,\tau)}(\chi\otimes\delta)\in HF_{0}^{-}(\boldsymbol{\beta}(f,\sigma),\boldsymbol{\beta}(g,\tau)).
\]
\end{prop}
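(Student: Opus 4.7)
The plan is to prove $\mu_2(\delta\otimes\chi) = \mu_2(\chi\otimes\delta)$ on homology by a case analysis that reduces to Proposition \ref{prop:(Modified-band-maps-1} together with the $\Phi$-equivariance of $\mu_2$. Since both $\delta$ and $\chi$ are defined by explicit formulas (Definitions \ref{def:For-each-consecutive} and \ref{def:sigmatau}) that differ from the canonical elements $\theta$ only by $\Phi$-correction terms, the identity reduces to finitely many algebraic identities in $HF_0^-(\boldsymbol{\beta}(f,\sigma),\boldsymbol{\beta}(g,\tau))$ involving only $\mu_2$, $\theta$, and the commuting square-zero actions $\Phi_p$.

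First I would dispose of the generic case: whenever $\chi = \theta$, which happens as soon as one of $BB_{\sigma(b),f,\sigma}, BB_{\sigma(b+1),f,\sigma}$ fails to be a link baseball of $L(f,\sigma)$, the desired identity is exactly Proposition \ref{prop:(Modified-band-maps-1}. The remaining case is $\chi = \theta + \Phi_b\Phi_{b+1}\theta$, and modulo Proposition \ref{prop:(Modified-band-maps-1} it reduces to showing
\[
\mu_2(\delta\otimes \Phi_b\Phi_{b+1}\theta) = \mu_2(\Phi_b\Phi_{b+1}\theta\otimes\delta).
\]
Applying the $\Phi$-equivariance relation $\Phi_p\mu_2(x\otimes y) = \mu_2(\Phi_p x\otimes y) + \mu_2(x\otimes \Phi_p y)$ twice on each side moves $\Phi_b\Phi_{b+1}$ outside $\mu_2$, producing $\Phi_b\Phi_{b+1}\mu_2(\delta\otimes\theta) = \Phi_b\Phi_{b+1}\mu_2(\theta\otimes\delta)$ as the principal contribution (again by Proposition \ref{prop:(Modified-band-maps-1}), together with error terms of the shape $\mu_2(\Phi_p\delta\otimes\Phi_q\theta)$ and $\mu_2(\Phi_p\Phi_q\delta\otimes\theta)$. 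I would handle these by expanding $\delta$ according to Definition \ref{def:For-each-consecutive} and repeatedly invoking $\Phi_p^2 = 0$ and $\Phi_p\Phi_q = \Phi_q\Phi_p$; this lets me rewrite each error term on the left as the corresponding one on the right, up to a further application of Proposition \ref{prop:(Modified-band-maps-1} with $\theta$ in the role of $\delta$.

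The hard part will be the configurations in which $\{\sigma(b), \sigma(b+1)\}$ overlaps the indices $i, j$ (or some intermediate $k$) appearing in the $\Phi$-corrections of $\delta$, since then the relabeling of link components caused by the swap changes the formula for $\delta$ with respect to $\sigma$ versus $\tau$, and the correction term $\Phi_b\Phi_{b+1}\theta$ in $\chi$ (dictated by Definition \ref{def:sigmatau}) is precisely what absorbs the discrepancy. I expect a clean proof to proceed case by case on the type of $B$ (non-orientable, merge, split) and on how $\{\sigma(b), \sigma(b+1)\}$ meets the link components touched by $B$, in parallel with the computations of Section \ref{sec:Computations-for-the}, each case reducing to a finite check of $\Phi$-polynomial identities against Proposition \ref{prop:(Modified-band-maps-1} and the summary Lemmas \ref{lem:summary-theta}, \ref{lem:summary-delta}.
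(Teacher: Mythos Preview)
Your overall strategy---case analysis plus $\Phi$-equivariance plus reduction to a base commutativity result---is the right shape, and it is essentially how the paper proceeds in Subsection~\ref{subsec:Modified-band-maps}. But there is a genuine gap in the toolkit you allow yourself.

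The key issue is your assumption that $\mu_2^{rd}(\delta\otimes\theta)=\mu_2^{dr}(\theta\otimes\delta)$ (your ``base case'' via Proposition~\ref{prop:(Modified-band-maps-1}) always holds and that the $\Phi_b\Phi_{b+1}$-correction in $\chi$ only serves to absorb discrepancies between the formulas $\delta_\sigma$ and $\delta_\tau$. This is false. In the cases where the transposition $(b\ b+1)$ exchanges the two connected components of $L(f)\setminus CB$ that the band touches (cases~5 and~11 in Figure~\ref{fig:one-band}), one has $\delta_\sigma=\delta_\tau=\theta$, so there is no discrepancy in $\delta$ at all---yet $\mu_2^{rd}(\theta\otimes\theta)\neq\mu_2^{dr}(\theta\otimes\theta)$. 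What happens there is recorded in Proposition~\ref{prop:swap-merge-2-1} and Lemma~\ref{lem:composite-phi-1}: the band--swap square fails to commute by exactly $\Phi_b\Phi_{b+1}$, and it is this geometric noncommutativity (proved via the relative $H_1$-actions $A_{ij}$ on $HFL'^-(\#^2 S^1\times S^2,L_{ori})$) that the $\chi$-correction is designed to cancel. Lemmas~\ref{lem:summary-theta} and~\ref{lem:summary-delta}, which you cite, only assert commutativity; they cannot supply the specific failure term.

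A second, smaller point: Proposition~\ref{prop:(Modified-band-maps-1} is stated in the Step~1 setup, where the swap keeps the total order fixed and hence both $\delta$'s in the square are literally the same formula. In the Step~2 setup here, $\delta$ on one side uses $\sigma$ and on the other uses $\tau$; even when $\chi=\theta$, these can differ (e.g.\ cases~9,~10,~12,~13). So ``$\chi=\theta$'' is not literally Proposition~\ref{prop:(Modified-band-maps-1}; you still need Lemma~\ref{lem:summary-delta} plus a separate algebraic check that the $\delta_\sigma$-vs-$\delta_\tau$ discrepancy vanishes after $\mu_2$. The paper handles this via Lemma~\ref{lem:minimal-reduce} and the case-by-case computations following it. Your acknowledgment of a ``hard part'' suggests you anticipate this, but the sketch does not make clear that Lemma~\ref{lem:composite-phi-1} is an independent geometric input you must add to your list of allowed facts.
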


\begin{proof}
We prove this in Subsection \ref{subsec:Modified-band-maps}.
\end{proof}
Using an argument similar to that of Subsubsection \ref{subsec:Step-1}
(use Lemma \ref{lem:swap-equivariant} for $x=\chi$; and Proposition
\ref{prop:Something-is-an-1} in place of Proposition \ref{prop:Something-is-an}),
we deduce that the spectral sequence induced by the cube filtration
on $CF^{-}(\boldsymbol{\alpha},\boldsymbol{\beta}(\sigma)_{01})$
converges to $HF^{-}(\boldsymbol{\alpha},\boldsymbol{\beta}(\sigma)(\boldsymbol{2}))$
as $R_{\boldsymbol{CC}}$-modules.

\subsection{\label{subsec:The-Khovanov-chain}The Khovanov chain complex}

In this subsection, we specialize to the case where the crossing balls
correspond to the crossings of a planar diagram of a link in $S^{3}$.
We identify the $E_{1}$ page of the spectral sequences that we constructed
in Subsections \ref{subsec:The-twisted-complex} and \ref{subsec:The-reduced-hat}
with the Khovanov cube of resolutions chain complex, in all the versions
(reduced hat, unreduced hat, and minus), and prove Corollary \ref{cor:khovanov-main}.
Define the relative homological grading and the relative Alexander
$\mathbb{Z}/2$-grading on the chain complexes $\widetilde{CFL'}$
and $\widehat{CFL'}$ as the gradings induced by the corresponding
gradings on $CFL'{}^{-}$. 
\begin{cor}
\label{cor:khovanov-main}Let $L$ be a balled link in $S^{3}$. For
each version (reduced hat, unreduced hat, and minus) of unoriented
link Floer homology, there exists a spectral sequence that converges
to it, whose $E_{2}$ page is the corresponding version of Khovanov
homology (in fact the $E_{1}$ page is the corresponding version of
the Khovanov chain complex):
\[
\widetilde{Kh}(m(L,x))\Rightarrow\widetilde{HFL'}(S^{3},L,BB^{link}),\ \widehat{Kh}(m(L))\Rightarrow\widehat{HFL'}(S^{3},L,BB),\ Kh^{-}(m(L))\Rightarrow HFL'{}^{-}(S^{3},L),
\]
where, for the reduced hat version, we choose a link baseball $BB^{link}$
of $L$ and a distinguished point $x\in L\cap BB^{link}$, and for
the unreduced hat version, we choose a baseball $BB$ of $L$. The
spectral sequences for $\widehat{Kh}$ and $Kh^{-}$ are spectral
sequences of $R_{\boldsymbol{CC}}$-modules.

For $k\ge1$, the differential $d_{k}$ on the $E_{k}$ page has $(q,h)$
bidegree $(2k-2,k)$, and $d_{2\ell}\equiv0$ for $\ell\ge1$. The
relative $\delta=q/2-h$ grading descends to the relative homological
grading (``the $\delta$-grading'') on unoriented link Floer homology,
and the relative $q/2$ grading (i.e. the quantum grading divided
by $2$) modulo $2$ descends to the relative Alexander $\mathbb{Z}/2$-grading
on unoriented link Floer homology.
\end{cor}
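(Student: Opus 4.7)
\medskip

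\textbf{Proof plan for Corollary \ref{cor:khovanov-main}.}

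The plan is to apply Theorems \ref{thm:spectral-1}, \ref{thm:spectral-1-1}, and \ref{thm:spectral-1-2} to the setup of Subsection \ref{subsec:The-setup-for} in which the crossing balls $CB_1,\dots,CB_s$ are placed at the crossings of a planar diagram of $L\subset S^3$, so that each $L(f)$ for $f\colon X\to\{0,1\}$ is a planar link (the complete $f$-resolution) and each planar band $B(f,g)$ for consecutive $f<g$ realizes the edge map in the Khovanov cube of resolutions of the mirror $m(L)$ (this is the convention built into Figures \ref{fig:skein-moves-1} and \ref{fig:skein-moves-1-1}, cf.\ the remark preceding Section \ref{sec:The-setup-and}).

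First I would identify each $E_1$-summand with Khovanov. Since each $L(f)$ is planar, Proposition \ref{prop:Let--be-1} gives $HFL'^{-}(S^3,L(f);c)\simeq Kh^{-}(L(f))$ as relatively graded $R_{\boldsymbol{CC}}$-modules (via the action of $q\in L$ outside the crossing balls, which matches multiplication by $x_C$ on Khovanov), and matches relative $\delta=q/2-h$ with the relative homological grading and relative $q/2\pmod 2$ with the relative Alexander $\mathbb Z/2$-grading. For the reduced and unreduced hat versions, one tensors with $\mathbb F[U]/U$ or $\mathbb F[U^{1/2}]/U^{1/2}$ over the appropriate polynomial ring as in Remark \ref{rem:unreduced-hat}.

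Next I would identify the $E_1$-differential with the Khovanov differential. It is the sum of modified band maps $G_{B(f,g)}=\mu_2(-\otimes \delta(f,g))$ for consecutive $f<g$, where $\delta(f,g)=\theta(f,g)+\Phi_{j,f}\!\left(\sum_{k=i+1}^{j-1}\Phi_{k,f}\right)\theta(f,g)$ in the merge/split case. By Proposition \ref{prop:The-swap,-merge,}, $\mu_2(-\otimes \theta(f,g))$ induces the Khovanov band map under the identification with $Kh^{-}$. The crux is to show that the correction terms induce zero as maps on Khovanov homology for planar inputs. For each $k$ with $i<k<j$, the baseball $BB_{k,f}$ lies on a component $C_k$ of $L(f)$ not involved in the band, and under Proposition \ref{prop:Let--be-1} the action $\Phi_{k,f}$ corresponds to the Khovanov $\partial_{x_k}$ (the derivation killing all $x_{C'}$ with $C'\neq C_k$ and sending $x_{C_k}$ to $1$), while the band map $F_{B(f,g)}$ is a $Kh^{-}(L(f))$-algebra map that fixes $x_{C_k}$; therefore $\Phi_{k,f}$ commutes with $F_{B(f,g)}$. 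Combined with the Leibniz-type identity $\mu_2(a\otimes \Phi_k\theta)=\Phi_k\mu_2(a\otimes\theta)+\mu_2(\Phi_k a\otimes\theta)$ from Subsection \ref{subsec:The--action}, this forces $\mu_2(-\otimes \Phi_{k,f}\theta)=0$, and iterating yields $\mu_2(-\otimes \Phi_{j,f}\Phi_{k,f}\theta)=0$, so the correction vanishes on the planar $E_1$-page. Analogous arguments cover the split case (where one of the $\Phi_{j,g}$ actions is a free-baseball action, hence zero by the remark after Definition \ref{def:phi-balled}) and for the reduced/unreduced hat versions.

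Finally, the bidegree statements follow from grading bookkeeping. The cube filtration on $\underline{\boldsymbol{\beta}_{01}}$ places $\boldsymbol{\beta}(f)$ in filtration $|f|$, so $d_k$ raises the filtration by $k$ (the Khovanov homological grading $h$) and lowers the total homological grading of the $A_\infty$-category by $1$. Using the identification of relative $\delta=q/2-h$ with the homological grading on $HFL'^{-}$ (Proposition \ref{prop:Let--be-1}), this gives $d_k$ bidegree $(2k-2,k)$ in $(q,h)$. For the vanishing $d_{2\ell}=0$, the Alexander $\mathbb Z/2$-splitting on ${\cal B}$ (Proposition \ref{prop:combi-claims}(\ref{enu:The--category--1})) forces all $d_k$ to preserve the Alexander $\mathbb Z/2$-grading, which corresponds to $q/2\pmod 2$; since $d_k$ shifts $q/2$ by $k-1$, we must have $k$ odd. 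The main obstacle I anticipate is the careful verification that the correction terms in $\delta(f,g)$ give zero contribution on planar inputs to all orders in $k$, which requires combining the Leibniz identity with the ring structure of $Kh^{-}$ for the complete resolution and handling the distinction between link and free baseballs on $L(f)$ versus $L(g)$.
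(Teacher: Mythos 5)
Your overall strategy (identify the $E_1$ page via Proposition \ref{prop:Let--be-1}, show the correction terms $\Phi_{j}\Phi_{k}\theta$ in $\delta(f,g)$ contribute zero to $d_1$ on planar resolutions, then do grading bookkeeping) matches the paper's proof. However, your justification for the vanishing of the correction terms has a genuine error.

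You claim that under the identification of Proposition \ref{prop:Let--be-1}, the action $\Phi_{k,f}$ on $HF^{-}(\boldsymbol{\alpha},\boldsymbol{\beta}(f))$ corresponds to the Khovanov derivation $\partial_{x_k}$. This is false and contradicts your own conclusion: the derivation $\partial_{x_k}$ on $Kh^{-}(L(f))=\mathbb{F}[x_{C_1},\dots,x_{C_n},U]/(x_{C_i}^2-U)$ is certainly nonzero when $L(f)$ has more than one component, whereas the paper's Lemma \ref{lem:planar-phi} shows that $\Phi_{p}$ is \emph{identically zero} on $HF^{-}(\boldsymbol{\alpha},\boldsymbol{\beta}(f))$ when $(\boldsymbol{\alpha},\boldsymbol{\beta}(f))$ represents a planar link in $S^{3}$. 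The correct argument is a short grading computation: $\Phi_{p}$ has homological degree $0$ and Alexander $\mathbb{Z}/2$-degree $1$, but the top homological grading summand of $HF^{-}(\boldsymbol{\alpha},\boldsymbol{\beta}(f))$ for a planar link is one-dimensional and homogeneous in the Alexander $\mathbb{Z}/2$-grading, so $\Phi_{p}$ kills the generator and hence everything. Once you have $\Phi_p\equiv 0$ on the domain and codomain, the $\Phi$-equivariance of $\mu_2$ (the Leibniz identity you cite) gives $\mu_{2}(a\otimes\Phi_{p}x)=\Phi_{p}\mu_{2}(a\otimes x)+\mu_{2}(\Phi_{p}a\otimes x)=0$ directly, with no need to establish that $\Phi_{k}$ commutes with the band map as a separate fact (let alone to appeal to a derivation structure). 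Your split-case aside, ``one of the $\Phi_{j,g}$ actions is a free-baseball action,'' is also wrong: $BB_{j,g}$ is by construction a link baseball of $L(g)$, so $\Phi_{j,g}$ is not the free-baseball $\Phi$-action and does not vanish for that reason. It vanishes for the same reason as the others, by Lemma \ref{lem:planar-phi}. With these corrections your argument collapses to the paper's; the grading and $d_{2\ell}=0$ discussion in your last paragraph agrees with what the paper does.
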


\begin{rem}
Recall (Remark \ref{rem:For-knots-}) that for knots $K\subset S^{3}$,
$\widetilde{HFL'}(S^{3},K,BB^{link})$ is the knot Floer homology
$\widehat{HFK}(S^{3},K)$. Under this identification, the relative
homological grading of $\widetilde{HFL'}$ indeed corresponds to the
relative $\delta$-grading $M-A$ of $\widehat{HFK}$, where $M$
and $A$ are the Maslov and Alexander gradings, respectively.
\end{rem}

We prove Corollary \ref{cor:khovanov-main} in several steps. Assume
that the crossing balls correspond to the crossings of a planar diagram
of a link in $S^{3}$; in particular, the underlying links of $L(f)$
for $f:X\to\{0,1\}$ are planar, and the bands $B(f,g):L(f)\to L(g)$
for consecutive $f<g$, $f,g:X\to\{0,1\}$ are planar as well.

\subsubsection*{The underlying modules of the $E_{1}$ page and the Khovanov chain
complex}

Let us first identify the underlying $R_{\boldsymbol{CC}}$-module
of the $E_{1}$ page with the underlying $R_{\boldsymbol{CC}}$-module
of the Khovanov cube of resolutions chain complex. For the minus version,
this follows from Proposition \ref{prop:Let--be-1}. For the reduced
and unreduced hat versions, this is Lemma \ref{prop:Let--be-1-1}.
\begin{lem}[{\cite[Section 5]{nahm2025unorientedskeinexacttriangle}}]
\label{prop:Let--be-1-1}Let $L$ be a balled link in $S^{3}$, whose
underlying link is a planar link. Let $BB$ be a baseball, and let
$BB^{link}$ be a link baseball. Consider $\widehat{HFL'}(S^{3},L,BB)$
and $\widetilde{HFL'}(S^{3},L,BB^{link})$ as $Kh^{-}(L)$-modules.
Also, consider $\widehat{Kh}(L)$ and $\widetilde{Kh}(L,x)$ for $x\in BB^{link}\cap L$
as $Kh^{-}(L)$-modules. Then, 
\[
\widehat{HFL'}(S^{3},L,BB)\simeq\widehat{Kh}(L),\ \widetilde{HFL'}(S^{3},L,BB^{link})\simeq\widetilde{Kh}(L,x)
\]
as relatively graded $Kh^{-}(L)$-modules (in the sense of Proposition
\ref{prop:Let--be-1}; note that such isomorphisms are unique).
\end{lem}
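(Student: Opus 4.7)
The plan is to bootstrap from Proposition \ref{prop:Let--be-1} (the minus-version identification $HFL'{}^{-}(S^{3},L)\simeq Kh^{-}(L)$), by passing each hat version through a short exact sequence and checking that the quotient elements on the two sides match under the isomorphism of Proposition \ref{prop:Let--be-1}.

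I first record the key algebraic fact that makes the short exact sequence arguments clean. Since $L$ is planar in $S^{3}$, it is an unlink, and Proposition \ref{prop:Let--be-1} gives
\[
Kh^{-}(L)=\mathbb{F}[\{x_{C}\}_{C},U]/(\{x_{C}^{2}-U\}_{C})\simeq\bigotimes_{C}\mathbb{F}[x_{C}]/(x_{C}^{2}-U),
\]
which is a free $\mathbb{F}[U]$-module of rank $2^{\ell}$, and, for any component $C$, a free $\mathbb{F}[x_{C}]$-module (since each tensor factor $\mathbb{F}[x_{C_{i}}]/(x_{C_{i}}^{2}-U)$ is $\mathbb{F}[U]$-free of rank $2$, and $\mathbb{F}[x_{C}]/(x_{C}^{2}-U)\simeq\mathbb{F}[x_{C}]$ as $\mathbb{F}[x_{C}]$-modules). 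Thus multiplication by $U$, and by any $x_{C}$, is injective on $Kh^{-}(L)$. Via Proposition \ref{prop:Let--be-1}, the same is true on $HFL'{}^{-}(S^{3},L)$: under the $Kh^{-}(L)$-module isomorphism, $x_{C}$ acts by multiplication by $U_{BB^{link}}^{1/2}$ for any link baseball $BB^{link}$ on $C$, and $U$ acts by multiplication by any $U_{p}$ for a link basepoint pair $p$.

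For the unreduced hat version, the definition gives the short exact sequence of $S$-chain complexes
\[
0\to CFL'{}^{-}(S^{3},L)\xrightarrow{\cdot U_{BB}}CFL'{}^{-}(S^{3},L)\to\widehat{CFL'}(S^{3},L,BB)\to0.
\]
Since $S^{3}$ is connected, multiplication by $U_{BB}$ is chain homotopic to multiplication by $U_{p}$ for any link basepoint pair $p$ (discussion in Subsection \ref{subsec:Strong-admissibility-and}), so on homology it equals multiplication by $U$ under the identification with $Kh^{-}(L)$. By the injectivity established above, the induced long exact sequence collapses to short exact sequences, yielding
\[
\widehat{HFL'}(S^{3},L,BB)\simeq HFL'{}^{-}(S^{3},L)/U_{BB}\cdot HFL'{}^{-}(S^{3},L)\simeq Kh^{-}(L)/U=\widehat{Kh}(L)
\]
as $Kh^{-}(L)$-modules. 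The relative homological $\mathbb{Z}$-grading and Alexander $\mathbb{Z}/2$-grading on $\widehat{HFL'}$ are by definition induced from those on $CFL'{}^{-}$, and the relative $\delta$ and relative $q/2\bmod 2$ gradings on $\widehat{Kh}$ are induced from those on $CKh^{-}$; since $U$ has degree $(q,h)=(-2,0)$ and the Proposition \ref{prop:Let--be-1} isomorphism identifies these gradings, the identification here is relatively graded.

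For the reduced hat version, the analogous short exact sequence is
\[
0\to CFL'{}^{-}(S^{3},L)\xrightarrow{\cdot U_{BB^{link}}^{1/2}}CFL'{}^{-}(S^{3},L)\to\widetilde{CFL'}(S^{3},L,BB^{link})\to0,
\]
and under Proposition \ref{prop:Let--be-1} the map $\cdot\,U_{BB^{link}}^{1/2}$ corresponds to $\cdot\,x_{C}$ on $Kh^{-}(L)$, where $C$ is the component of $L$ on which $BB^{link}$ (hence $x$) lies. The injectivity of $x_{C}$ established in the first paragraph again collapses the long exact sequence, giving $\widetilde{HFL'}(S^{3},L,BB^{link})\simeq Kh^{-}(L)/x_{C}=\widetilde{Kh}(L,x)$ as relatively graded $Kh^{-}(L)$-modules. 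The only nontrivial ingredient beyond Proposition \ref{prop:Let--be-1} is the identification of $U_{BB}$ (respectively $U_{BB^{link}}^{1/2}$) on the Heegaard Floer side with $U$ (respectively $x_{C}$) on the Khovanov side; once this is in hand, the rest is a formal consequence of the freeness of $Kh^{-}(L)$ and the injectivity that follows from it, so there is no serious obstacle beyond careful bookkeeping of the module and grading structures.
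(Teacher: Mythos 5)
Your argument is correct, but it takes a genuinely different route from the paper's. The paper's proof is a direct computation: a pair-pointed Heegaard diagram for $(S^{3},L)$ is obtained by performing $0$-surgeries (Subsection \ref{subsec:Connected-sums}) on a disjoint union of spheres carrying one basepoint pair each, so the entire chain complex $CFL'{}^{-}(S^{3},L)$ can be written down explicitly, and all three versions are then read off by taking the defining quotients at the chain level. You instead bootstrap purely homologically from Proposition \ref{prop:Let--be-1}: the freeness of $Kh^{-}(L)\simeq\bigotimes_{C}\mathbb{F}[x_{C}]/(x_{C}^{2}-U)$ over $\mathbb{F}[U]$ and over each $\mathbb{F}[x_{C}]$ makes multiplication by $U_{BB}$ (resp.\ $U_{BB^{link}}^{1/2}$) injective on $HFL'{}^{-}(S^{3},L)$, so the long exact sequence of the quotient short exact sequence collapses and each hat version is the corresponding quotient of the minus version. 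Your route requires no further Heegaard-diagram input beyond the cited minus-version identification and the homotopy between the $U_{p}$-actions, whereas the paper's route also produces the explicit chain complex; both are complete proofs. One small correction: $U=X_{p}^{2}$ has $(q,h)$-bidegree $(-4,0)$ and hence $\delta$-degree $-2$ (matching the homological degree $-2$ of $U_{p}$), not $(-2,0)$ as you wrote; this slip does not affect your conclusion that the identifications are relatively graded, since what matters is only that the degrees of the elements being quotiented out agree on the two sides, which they do.
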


\begin{proof}
A pair-pointed Heegaard diagram for $(S^{3},L)$ can be obtained from
performing some number of $0$-surgeries (Subsection \ref{subsec:Connected-sums})
to the Heegaard diagram given by the disjoint union of some number
of $S^{2}$'s, where each connected component has exactly one basepoint
pair. Hence, one can compute the corresponding chain complex.
\end{proof}

\subsubsection*{The differential $d_{1}$ on the $E_{1}$ page and the differential
of the Khovanov chain complex}

We show that the differential $d_{1}$ on the $E_{1}$ page agrees
with the differential of the Khovanov chain complex under the above
identification of the underlying modules of the $E_{1}$ page and
the Khovanov chain complex. Recall that $d_{1}$ is the sum of the
modified band maps $G_{B(f,g)}$ (for consecutive $f<g$, $f,g:X\to\{0,1\}$),
which are induced by $\mu_{2}(-\otimes\delta(f,g))$.

First, we show that it is sufficient to show this for the minus version.
Indeed, recall that the chain complexes for the unreduced and reduced
hat versions are defined as 
\[
\widehat{CFL'}(S^{3},L,BB)=CFL'{}^{-}(S^{3},L)/U_{BB},\ \widetilde{CFL'}(S^{3},L,BB^{link})=CFL'{}^{-}(S^{3},L)/U_{BB^{link}}.
\]
Hence, we have base change maps 
\[
HFL'{}^{-}(S^{3},L)\to\widehat{HFL'}(S^{3},L,BB),\ HFL'{}^{-}(S^{3},L)\to\widetilde{HFL'}(S^{3},L,BB^{link}).
\]
If the underlying link of $L$ is a planar link, then these maps are
surjective, and are the same as the base change maps (let $x\in BB^{link}\cap L$)
\[
Kh^{-}(L)\to\widehat{Kh}(L),\ Kh^{-}(L)\to\widetilde{Kh}(L,x)
\]
as maps between $Kh^{-}(L)$-modules. Hence, we only have to prove
that $d_{1}$ agrees with the differential on $CKh$ for the minus
version since the modified band maps $G_{B(f,g)}$ commute with the
base change maps.

Now, let us show the statement for the minus version. By Proposition
\ref{prop:The-swap,-merge,}, the sum of the composition maps $\mu_{2}(-\otimes\theta(f,g))$
for consecutive $f<g$, $f,g:X\to\{0,1\}$ agrees with the differential
on $CKh^{-}$. Lemma \ref{lem:planar-phi} implies that 
\[
\mu_{2}(-\otimes\theta(f,g))=\mu_{2}(-\otimes\delta(f,g)):HF^{-}(\boldsymbol{\alpha},\boldsymbol{\beta}(f))\to HF^{-}(\boldsymbol{\alpha},\boldsymbol{\beta}(g))
\]
 for consecutive $f<g$, $f,g:X\to\{0,1\}$. Hence, the differential
$d_{1}$ agrees with the differential on $CKh^{-}$.
\begin{lem}
\label{lem:planar-phi}Let $\boldsymbol{\alpha},\boldsymbol{\beta}_{0}$
and $\boldsymbol{\alpha},\boldsymbol{\beta}_{1}$ represent planar
links in $S^{3}$. Then, for any $x\in HF^{-}(\boldsymbol{\beta}_{0},\boldsymbol{\beta}_{1})$
and any basepoint pair $p$,
\[
\mu_{2}(-\otimes\Phi_{p}x):HF^{-}(\boldsymbol{\alpha},\boldsymbol{\beta}_{0})\to HF^{-}(\boldsymbol{\alpha},\boldsymbol{\beta}_{1})
\]
is identically zero. 
\end{lem}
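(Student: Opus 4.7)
The plan is to reduce the lemma to showing that $\Phi_p = 0$ on $HF^-(\boldsymbol{\alpha}, \boldsymbol{\beta}_i)$ for $i = 0, 1$. First, if $p$ is a free basepoint pair for both $\boldsymbol{\beta}_0$ and $\boldsymbol{\beta}_1$, then $\Phi_p$ already vanishes on $HF^-(\boldsymbol{\beta}_0, \boldsymbol{\beta}_1)$ by the vanishing for doubly-free basepoint pairs discussed in Subsection~\ref{subsec:The--action}, and the statement is immediate. Otherwise, by the $\Phi_p$-equivariance of $\mu_2$ recorded in Subsection~\ref{subsec:The--action}, we have
\[
\mu_2(y \otimes \Phi_p x) = \Phi_p \mu_2(y \otimes x) + \mu_2(\Phi_p y \otimes x),
\]
so if $\Phi_p$ vanishes on both $HF^-(\boldsymbol{\alpha}, \boldsymbol{\beta}_0)$ and $HF^-(\boldsymbol{\alpha}, \boldsymbol{\beta}_1)$, the right-hand side is zero and the lemma follows.

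To establish this vanishing, I would use the naturality of $\Phi_p$ under change-of-diagram maps (Subsection~\ref{subsec:The--action}) to reduce to a specific Heegaard diagram for each $(S^3, L_i)$. Following the proof of Proposition~\ref{prop:Let--be-1-1}, for a planar link $L_i$ in $S^3$ one can take a pair-pointed Heegaard diagram obtained from the disjoint union of copies of $S^2$---one per basepoint pair, each carrying a single basepoint pair and no attaching curves---by performing iterated $0$-surgeries in the sense of Subsection~\ref{subsec:Connected-sums}. On the base diagram there are no holomorphic disks at all, so $\Phi_p$ vanishes trivially at the chain level. Each subsequent $0$-surgery glues in a copy of $S^2$ that carries no basepoints, and by the explicit description of holomorphic disks in Remark~\ref{rem:We-also-understand}, the only new Maslov-index-$1$ disks introduced are the two bigons supported entirely inside the newly glued-in $S^2$; these have multiplicity $0$ at every basepoint of the diagram, so they do not contribute to the count defining $\Phi_p$. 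Inductively, $\Phi_p$ remains identically zero at the chain level after all stabilizations, hence $\Phi_p = 0$ on $HF^-(\boldsymbol{\alpha}, \boldsymbol{\beta}_i)$, as required.

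The only technical point requiring care is the inductive claim that, after each stabilization, every new Maslov-index-$1$ domain is a bigon fully contained in the newly glued-in $S^2$ and therefore avoids all existing basepoints. This is not really an obstacle: it follows directly from the local analysis recalled in Subsection~\ref{subsec:Connected-sums} together with Remark~\ref{rem:We-also-understand}, since Maslov-index-$1$ disks between $\theta^\pm$-generators in iterated stabilizations decompose as a flip in a single stabilization region together with constant factors in all others.
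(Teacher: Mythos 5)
Your reduction is the same as the paper's: both arguments use the $\Phi_p$-equivariance of $\mu_2$ to reduce the lemma to the claim that $\Phi_p$ vanishes on $HF^{-}(\boldsymbol{\alpha},\boldsymbol{\beta}_{i})$ when $(\boldsymbol{\alpha},\boldsymbol{\beta}_{i})$ represents a planar link. The paper proves that vanishing by a grading argument ($\Phi_p$ has homological degree $0$ and Alexander $\mathbb{Z}/2$-degree $1$, while the unique nonzero top homological grading class is Alexander-homogeneous and generates $HF^{-}(\boldsymbol{\alpha},\boldsymbol{\beta}_{i})$ over the coefficient ring), and explicitly remarks that a direct computation is also possible. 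Your direct computation, however, contains a genuine error in its key step.

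The claim that after each $0$-surgery ``the only new Maslov-index-$1$ disks introduced are the two bigons supported entirely inside the newly glued-in $S^2$'' is false, and Remark \ref{rem:We-also-understand} does not assert it: that remark only describes the $\theta^{+}\to\theta^{-}$, $\theta^{+}\to\theta^{+}$, and $\theta^{-}\to\theta^{-}$ components. The $\theta^{-}\to\theta^{+}$ component $F$ of the differential is nonzero in general --- by the footnote in Subsection \ref{subsec:Connected-sums}, $\iota F$ is chain homotopic to multiplication by $U_{p_1}+U_{p_2}$ --- and the Maslov-index-$1$ domains realizing it leave the glued-in $S^2$ and cover basepoints. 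Concretely, for the genus-$0$ diagram of the $2$-component unlink obtained by one $0$-surgery on two spheres, the two index-$1$ domains in $D(\theta^{-},\theta^{+})$ are the two large regions, each containing one basepoint pair, so $\partial\theta^{-}=(U_1+U_2)\theta^{+}$ and $\Phi_{R,z_1}\theta^{-}=U_1^{1/2}\theta^{+}\neq0$ at the chain level. Thus $\Phi_p$ does not vanish on the chain complex, and your induction already fails after the first stabilization. The argument is salvageable within your framework: the only index-$1$ domains leaving the all-$\theta^{+}$ generator are the basepoint-free bigons (there are no domains in the base diagram, which has no attaching curves, to glue onto), so $\Phi_{R,p}$ kills the top generator at the chain level; since the homology is cyclic over the coefficient ring, generated by the class of that top generator (as in the computation behind Lemma \ref{prop:Let--be-1-1}), $R$-linearity of $\Phi_{R,p}$ then gives $\Phi_p=0$ on homology. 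Alternatively, use the paper's grading argument, which avoids any curve counting.
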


\begin{proof}
We claim that $\Phi_{p}$ is identically zero on $HF^{-}(\boldsymbol{\alpha},\boldsymbol{\beta}_{i})$.
The lemma follows from this since then $\mu_{2}({\rm Id}\otimes\Phi_{p})=\Phi_{p}\mu_{2}({\rm Id}\otimes{\rm Id})=0$.

It is easy to compute $\Phi_{p}$ directly, but we present a grading
argument. The map $\Phi_{p}$ has homological degree $0$ and Alexander
$\mathbb{Z}/2$ degree $1$. There is a unique nonzero top homological
grading element $x\in HF^{-}(\boldsymbol{\alpha},\boldsymbol{\beta}_{i})$
($x$ is homogeneous with respect to the relative Alexander $\mathbb{Z}/2$-grading).
Hence, $\Phi_{p}x=0$. Since $HF^{-}(\boldsymbol{\alpha},\boldsymbol{\beta}_{i})$
is generated by $x$ over the coefficient ring, $\Phi_{p}$ is identically
zero on $HF^{-}(\boldsymbol{\alpha},\boldsymbol{\beta}_{i})$.
\end{proof}
Let us study gradings. We focus on the minus version, as the statements
for the reduced and unreduced hat versions follow from the below argument
by considering the induced relative homological $\mathbb{Z}$-gradings
and the relative Alexander $\mathbb{Z}/2$-gradings on $CF^{-}(\boldsymbol{\alpha},\underline{\boldsymbol{\beta}})/U_{1}^{1/2}$
(for the setting of Subsection \ref{subsec:The-reduced-hat}) and
$CF^{-}(\boldsymbol{\alpha},\underline{\boldsymbol{\beta}})/U_{BB}$.
The argument below applies to both the settings of Subsections \ref{subsec:The-twisted-complex}
and \ref{subsec:The-reduced-hat}.

\subsubsection*{Homological $\mathbb{Z}$-grading}

Recall from Subsection \ref{subsec:ainf} that the $A_{\infty}$-category
${\cal A}_{0}$ (recall Subsection \ref{subsec:The-Heegaard-diagram};
its objects are the alpha curve $\boldsymbol{\alpha}$ and all the
beta curves $\boldsymbol{\beta}(f)$) is homologically $\mathbb{Z}$-gradable.
Choose a homological $\mathbb{Z}$-grading on ${\cal A}_{0}$ that
agrees with the homological $\mathbb{Z}$-grading on ${\cal B}$ when
restricted to ${\cal B}$ (i.e. the standard generators have homological
$\mathbb{Z}$-grading $0$).

As in Remark \ref{rem:homologically-graded-twisted}, consider $\underline{\boldsymbol{\beta}}$
as a homologically graded twisted complex, i.e. such that $\delta$
has homological degree $-1$, by shifting the $\boldsymbol{\beta}(f)$'s
in homological grading. Then, the relative homological $\mathbb{Z}$-grading
on $CF^{-}(\boldsymbol{\alpha},\underline{\boldsymbol{\beta}_{01}})$
on the $E_{1}$ page agrees with the relative $\delta=q/2-h$ grading
on $CKh^{-}$. Every differential $d_{k}$ in the spectral sequence
has $\delta$ degree $-1$. If $k\ge1$, then its $(q,h)$ bidegree
is defined, and since its $h$ degree is $k$, its $(q,h)$ bidegree
is $(2k-2,k)$. The chain map $CF^{-}(\boldsymbol{\alpha},\underline{\boldsymbol{\beta}_{01}})\to CF^{-}(\boldsymbol{\alpha},\boldsymbol{\beta}(\boldsymbol{2}))$
(defined as in Subsection \ref{subsec:The-twisted-complex}) is a
quasi-isomorphism and is homogeneous with respect to the relative
homological grading.

\subsubsection*{Alexander $\mathbb{Z}/2$-grading}

The $A_{\infty}$-category ${\cal A}_{0}$ has an Alexander $\mathbb{Z}/2$-splitting,
by the same argument as in Subsection \ref{subsec:Proposition--()}:
indeed, any cornerless two-chain is the sum of a cornerless $\boldsymbol{\alpha},\boldsymbol{\beta}(f)$-domain
(i.e. the boundary of the domain lies in $\boldsymbol{\alpha}\cup\boldsymbol{\beta}(f)$)
for some $f:X\to\{0,1,2\}$ and cornerless $\boldsymbol{\beta}(g),\boldsymbol{\beta}(h)$-domains
for consecutive $g<h$'s (\cite[Lemma 2.64]{nahm2025unorientedskeinexacttriangle}).
Hence, we only have to additionally check that $P(D)$ is even for
any cornerless $\boldsymbol{\alpha},\boldsymbol{\beta}(f)$-domain.
This holds since $\boldsymbol{\alpha},\boldsymbol{\beta}(f)$ represent
a link in $S^{3}$ (see Lemma \ref{lem:alexander-two}).

For each $f<g$, the relative Alexander $\mathbb{Z}/2$-grading of
$CF^{-}(\boldsymbol{\beta}(f),\boldsymbol{\beta}(g))$ is uniquely
determined, but there are many ways to lift them to an Alexander $\mathbb{Z}/2$-splitting
of ${\cal A}_{0}$.
\begin{lem}
\label{lem:alexz/2}We can choose an Alexander $\mathbb{Z}/2$-splitting
of ${\cal A}_{0}$ and let the differential $\delta$ of the twisted
complex $\underline{\boldsymbol{\beta}}$ be such that all the components
$\delta(f,g)$ ($f<g$) of the differential $\delta$ have Alexander
$\mathbb{Z}/2$-grading $0$.
\end{lem}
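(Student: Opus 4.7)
The plan is to first establish that, for each consecutive $f<g$, the component $\delta(f,g)\in HF_{0}^{-}(\boldsymbol{\beta}(f),\boldsymbol{\beta}(g))$ is homogeneous with respect to the relative Alexander $\mathbb{Z}/2$-grading. The canonical element $\theta(f,g)$ is $\mathrm{Spin}^{c}$-homogeneous by its characterization in Subsection~\ref{subsec:Canonical-generators}, hence Alexander $\mathbb{Z}/2$-homogeneous. Next, each basepoint-pair action $\Phi_{p}$ has Alexander $\mathbb{Z}/2$-degree $1$: a contributing term equals $U_{p}^{-1/2}\prod_{q}U_{q}^{n_{q}(\mathcal{D})/2}\,\mathbf{y}$, whose Alexander $\mathbb{Z}/2$-degree is $1+P(\mathcal{D})+\mathrm{gr}_{A}^{\mathbb{Z}/2}(\mathbf{y})\equiv 1+\mathrm{gr}_{A}^{\mathbb{Z}/2}(\mathbf{x})\pmod 2$ by the formula of Subsection~\ref{subsec:An-Alexander--splitting}. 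Consequently $\Phi_{j}\Phi_{k}$ preserves Alexander $\mathbb{Z}/2$-grading, so since $\delta(f,g)$ is either $\theta$ or $\theta+\Phi_{j}\Phi_{k}\theta$ by Definition~\ref{def:For-each-consecutive}, it is Alexander-homogeneous of some degree $a(f,g)\in\mathbb{Z}/2$ in any fixed initial splitting.

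Next, I would exploit the freedom in choosing Alexander $\mathbb{Z}/2$-splittings. Up to a global shift, any two splittings differ by a function $s:\boldsymbol{L}\cup\{\boldsymbol{\alpha}\}\to\mathbb{Z}/2$, with the absolute Alexander grading on $CF^{-}(\boldsymbol{\beta}(f),\boldsymbol{\beta}(g))$ shifted by $s(g)-s(f)$; this preserves the degree-zero property of every $\mu_{d}$. Arranging $\delta(f,g)$ to have Alexander degree $0$ for all consecutive $f<g$ amounts to solving $s(g)-s(f)\equiv a(f,g)\pmod 2$, which is possible iff $a$ is a coboundary on the Hasse diagram of $\boldsymbol{L}$. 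This in turn reduces to the cocycle condition around each Hasse square $f_{00}<f_{01},f_{10}<f_{11}$: that $a(f_{00},f_{01})+a(f_{01},f_{11})\equiv a(f_{00},f_{10})+a(f_{10},f_{11})\pmod 2$. Proposition~\ref{prop:We-have-the} (\ref{enu:4}) handles most squares: the chain-level products $\mu_{2}(\delta\otimes\delta)$ through the two paths are Alexander-homogeneous cycles of degrees matching the two sides, and they represent a common homology class; whenever that class is nonzero, the degrees must coincide. The hard part will be the residual case where both products are null-homologous, which I plan to handle via direct computation of $a(f,g)$ from the Heegaard diagram using Lemma~\ref{lem:wind-maslov}, exploiting that the two crossing balls involved in the square act on disjoint regions of the Heegaard surface and contribute additively to $P(\mathcal{D})$, so that the local Alexander contributions sum symmetrically across the two paths.

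Once the splitting is fixed so that all consecutive $\delta(f,g)$ have Alexander degree $0$, I extend to non-consecutive $\delta(f,g)$ by the induction of Subsection~\ref{subsec:The-twisted-complex}. At each stage, $\mu_{1}(\delta_{\leq d-1})(f,g)$ is a sum of $\mu_{j}$-applications (for $j\geq 2$) to components of Alexander degree $0$; since each $\mu_{j}$ preserves Alexander $\mathbb{Z}/2$-grading, this sum is itself Alexander-homogeneous of degree $0$. Because $\mu_{1}$ also respects the Alexander $\mathbb{Z}/2$-splitting of the chain complex, I can pick the preimage $\delta_{\leq d}(f,g)$ under $\mu_{1}$ to lie in Alexander degree $0$, completing the inductive step and yielding the desired twisted complex.
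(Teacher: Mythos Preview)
Your approach is essentially the same as the paper's: establish Alexander-homogeneity of the consecutive $\delta(f,g)$ from the $\mathrm{Spin}^{c}$-homogeneity of $\theta$ and the degree-$1$ property of $\Phi$, invoke Proposition~\ref{prop:We-have-the}~(\ref{enu:4}) for the compatibility (cocycle) condition on squares, then extend inductively. The ``hard part'' you anticipate does not arise: in the proof of Proposition~\ref{prop:We-have-the}~(\ref{enu:4}) in Section~\ref{sec:Composition-of-band}, the composites $x^{rd}=x^{dr}$ are shown to be nonzero in every case (via the nonvanishing argument of Section~\ref{sec:A-nonvanishing-argument}), so equality of the nonzero homology classes immediately forces the Alexander degrees on the two sides of each square to agree, and no direct computation fallback is needed.
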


\begin{proof}
Let us first consider consecutive $f<g$. Since the canonical elements
$\theta(f,g)\in HF_{0}^{-}(\boldsymbol{\beta}(f),\boldsymbol{\beta}(g))$
are homogeneous with respect to the relative Alexander $\mathbb{Z}/2$-grading
and the $\Phi$ actions have Alexander $\mathbb{Z}/2$ degree $1$,
the homology classes $\delta(f,g)\in HF_{0}^{-}(\boldsymbol{\beta}(f),\boldsymbol{\beta}(g))$
are also homogeneous with respect to the relative Alexander $\mathbb{Z}/2$-grading.
By Proposition \ref{prop:We-have-the} (\ref{enu:4}) and the corresponding
statement of Proposition \ref{prop:(Modified-band-maps-2}, we can
choose an Alexander $\mathbb{Z}/2$-splitting of ${\cal A}_{0}$ such
that all the homology classes $\delta(f,g)$ have Alexander $\mathbb{Z}/2$-grading
$0$. Hence, we could have let the cycles $\delta(f,g)\in CF_{0}^{-}(\boldsymbol{\beta}(f),\boldsymbol{\beta}(g))$
that represent the homology classes $\delta(f,g)$ have Alexander
$\mathbb{Z}/2$-grading $0$ as well. The lemma follows from this
since when we recursively defined $\delta(f,g)$ for $f<g$, $|g-f|\ge2$,
we could have let $\delta(f,g)$ have Alexander $\mathbb{Z}/2$-grading
$0$.
\end{proof}
Choose an Alexander $\mathbb{Z}/2$-splitting of ${\cal A}_{0}$ and
$\delta$ as in Lemma \ref{lem:alexz/2}. (Also, let $\delta$ have
homological degree $-1$ as above.) This induces an Alexander $\mathbb{Z}/2$-splitting
on $CF^{-}(\boldsymbol{\alpha},\underline{\boldsymbol{\beta}})$.
The differential on $CF^{-}(\boldsymbol{\alpha},\underline{\boldsymbol{\beta}_{01}})$
has Alexander $\mathbb{Z}/2$-degree $0$, and the chain map $CF^{-}(\boldsymbol{\alpha},\underline{\boldsymbol{\beta}_{01}})\to CF^{-}(\boldsymbol{\alpha},\boldsymbol{\beta}(\boldsymbol{2}))$
(defined as in Subsection \ref{subsec:The-twisted-complex}) is a
quasi-isomorphism, is homogeneous with respect to the relative homological
grading, and preserves the Alexander $\mathbb{Z}/2$-grading. 

The underlying module of the $E_{1}$ page is $\bigoplus_{f:X\to\{0,1\}}HF^{-}(\boldsymbol{\alpha},\boldsymbol{\beta}(f))$,
which we identified with $CKh^{-}$. Under this identification, for
each $f:X\to\{0,1\}$, the relative $q/2$-grading modulo $2$ on
$HF^{-}(\boldsymbol{\alpha},\boldsymbol{\beta}(f))$ agrees with the
relative Alexander $\mathbb{Z}/2$-grading. Also, since $d_{1}$ preserves
the Alexander $\mathbb{Z}/2$-grading, we can deduce that $q/2$ modulo
$2$ agrees with the Alexander $\mathbb{Z}/2$-grading up to an overall
shift. Since the differentials $d_{k}$ preserve the Alexander $\mathbb{Z}/2$-grading,
$d_{2\ell}$ vanishes for $\ell\ge1$.

\section{\label{sec:Some-simple-examples}Some simple examples}

For certain links, it is possible to compute the spectral sequence
using its formal properties. For instance, for alternating knots,
the spectral sequence for the reduced hat version 
\[
\widetilde{Kh}(m(K))\Rightarrow\widetilde{HFL'}(S^{3},K)=\widehat{HFK}(S^{3},K)
\]
is trivial as the ranks of $\widetilde{Kh}$ and $\widehat{HFK}$
are the same. However, we will see that the spectral sequence for
the unreduced hat version is already nontrivial for the trefoils.
Also, we can sometimes pin down the homological filtration on $HFL'{}^{-}(Y,L)$
using the $R_{\boldsymbol{CC}}$-module structure.

\subsection{The Hopf link}

\begin{figure}[h]
\begin{centering}
\includegraphics{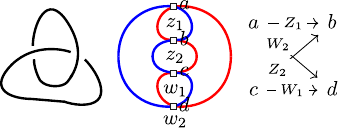}
\par\end{centering}
\caption{\label{fig:hopf}The Hopf link $H$, a Heegaard diagram for it, and
its chain complex over $\mathbb{F}[Z_{1},Z_{2},W_{1},W_{2}]$ }
\end{figure}

Let $H$ be the Hopf link. See Figure \ref{fig:hopf}: we have 
\[
HFL'{}^{-}(S^{3},H)=\mathbb{F}[U_{1}^{1/2}]\{b\}\oplus\mathbb{F}[U_{1}^{1/2}]\{b+d\}
\]
as $\mathbb{F}[U_{1}^{1/2}]$-modules, and in the basis $(b,b+d)$,
$U_{2}^{1/2}$ acts as
\[
U_{1}^{1/2}\begin{pmatrix}1 & 0\\
1 & 1
\end{pmatrix}.
\]
On the other hand, $Kh^{-}(m(H))=\mathbb{F}[U_{1}^{1/2}]\{x\}\oplus\mathbb{F}[U_{1}^{1/2}]\{y\}$,
and the action of $U_{2}^{1/2}$ is the same as $U_{1}^{1/2}$.

The spectral sequence collapses on the $E_{2}$ page, but the $R_{\boldsymbol{CC}}$-module
structures of $Kh^{-}(m(H))$ and $HFL'{}^{-}(S^{3},H)$ are different.
From this, we can deduce that the induced homological filtration on
the $HFL'{}^{-}(S^{3},H)$ (the $E_{\infty}$ page) is (the \emph{absolute}
homological filtration degree is not defined since we have not fixed
an orientation on the link)
\[
HFL'{}^{-}(S^{3},H)=F_{-2}=F_{-1}\supset F_{0}=\mathbb{F}[U_{1}^{1/2}]\{b+d\}\supset F_{1}=\{0\}.
\]

\subsection{A trefoil}

\begin{figure}[h]
\begin{centering}
\includegraphics{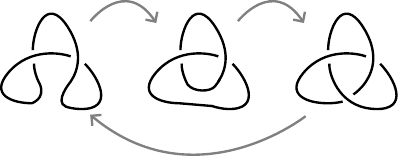}
\par\end{centering}
\caption{\label{fig:hopf-1}An unoriented skein exact triple (unknot, Hopf
link $H$, right handed trefoil $T$)}
\end{figure}

Let $T$ be the right handed trefoil. As observed above, the spectral
sequence for the reduced hat version collapses on the $E_{2}$ page.
However, since the unoriented link Floer chain complex for the trefoil
$T$ is 
\[
\mathbb{F}[U^{1/2}]\{b\}\oplus\left(\mathbb{F}[U^{1/2}]\{c\}\xrightarrow{\cdot U^{1/2}}\mathbb{F}[U^{1/2}]\{a\}\right),
\]
($a,b,c$ are in the same $\delta$-grading), the unreduced hat version
$\widehat{HFL'}(S^{3},T)$ has rank $4$, and the minus version $HFL'{}^{-}(S^{3},T)\simeq\mathbb{F}[U^{1/2}]\oplus\mathbb{F}[U^{1/2}]/(U^{1/2})$.
On the other hand, $\widehat{Kh}(m(T))$ is as in Figure \ref{fig:hopf-1-1}
and 
\[
Kh^{-}(m(T))=\mathbb{F}[U^{1/2}]\{x\}\oplus\mathbb{F}[U^{1/2}]\{y\}\oplus\mathbb{F}[U^{1/2}]\{z\},
\]
and so the spectral sequence does \emph{not} collapse on the $E_{2}$
page for both the unreduced hat version and the minus version.

Let us focus on the unreduced hat version. Since the differential
$d_{k}$ has bidegree $(q,h)=(2k-2,k)$ and $d_{2\ell}$ vanishes
for $\ell\ge1$, there is only one possibility for the spectral sequence:
the only nontrivial differential is $d_{3}:z\mapsto U^{1/2}x$.
\begin{figure}[h]
\begin{centering}
\includegraphics{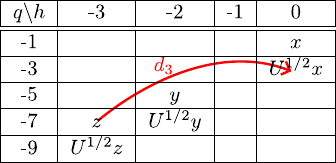}
\par\end{centering}
\caption{\label{fig:hopf-1-1}The Khovanov homology of the left handed trefoil
$\widehat{Kh}(m(T))$ and the differential $d_{3}$}
\end{figure}

\begin{rem}
Since the filtered chain complex for the Hopf link is a filtered sub
chain complex of the filtered chain complex for the right handed trefoil,
we get a map between spectral sequences, from the spectral sequence
of the Hopf link to the spectral sequence of the trefoil. On the $E_{2}$
page, this map is $\widehat{Kh}(m(H))\to\widehat{Kh}(m(T))$, which
is an isomorphism onto the homological grading $-2,-1,0$ summands.
\end{rem}

Using the map from the minus version of the spectral sequence to the
unreduced hat version, we see that the spectral sequence for the minus
version is given by $d_{3}z=U^{1/2}x$ (and $d_{3}$ is $\mathbb{F}[U^{1/2}]$-linear).

Since for knots, the $E_{\infty}$ page for the minus version has
one $\mathbb{F}[U^{1/2}]$-tower, we can ask what its homological
filtration degree is. For the trefoils, it is $\pm2$. (We can also
instead consider the infinity version of the spectral sequence, for
which the $E_{\infty}$ page is one copy of $\mathbb{F}[U^{1/2},U^{-1}]$.)
\begin{rem}
It is interesting to compare this spectral sequence with Rasmussen's
$E(-1)$ spectral sequence. For knots, the $E_{\infty}$ page is $\mathbb{F},\mathbb{F}[x]/x^{2},\mathbb{F}[x]$
for the reduced, unreduced, and the minus version, respectively. For
the left handed trefoil, in all these versions, there exists a nontrivial
$d_{3}$, from $z$ to $x$.

The $t$-invariant defined by Ballinger is the homological filtration
degree of the generator on the $E_{\infty}$ page for the $E(-1)$
spectral sequence. This is a concordance invariant, and gives non-orientable
four ball genus bounds like the $v$-invariant defined by Ozsv\'{a}th,
Stipsicz, and Szab\'{o} in \cite{MR3694597}, using unoriented link
Floer homology. For the trefoils, the $t$-invariant is $\pm2$, and
in fact $t(K)=v(K)$ for alternating knots.
\end{rem}

\section{\label{sec:Interpreting-schematics}Interpreting schematics}

Throughout the remainder of the paper, we will claim and show that
certain diagrams commute. We state these statements in the form of
schematics, as in Figures \ref{fig:swap-swap} and \ref{fig:nori-swap-com}
(see the later sections for more examples). In this section, we explain
how to interpret these schematics and introduce notations. 

Each schematic is either a triangle or a square, as in Figure \ref{fig:schematic}\footnote{In the first case, $0<1<2$, in the second case, $00<01<11$, $00<10<11$,
and $01$ and $10$ are incomparable.}. For each schematic, we are in the setting of Subsection \ref{subsec:The-general-setup}.
Since we will only consider beta-attaching curves, the ambient manifold
is unimportant. The schematics describe balled links $L_{v}$ for
each vertex $v$. The collection of crossing balls $CB_{1},\cdots,CB_{s}$
will be specified in the schematics\footnote{With the exception that in Subsection \ref{subsec:Modified-band-maps},
not all crossing balls are shown.} ($s=0$ for Figure \ref{fig:swap-swap} and $s=1$ for Figure \ref{fig:nori-swap-com}).
For each $L_{v}$ and $CB_{s}$, we identify $L_{v}\cap CB_{s}$ in
$CB_{s}$ with either $0$ or $1$ of Figure \ref{fig:skein-moves-1}.

The baseballs can be placed anywhere on the specified connected component
of $L\backslash\bigsqcup_{r}CB_{r}$. Hence, for convenience, we might
draw multiple link basepoints at the same place. The link baseballs
are colored red, and the free baseballs are sometimes not drawn. We
are free to add any number of baseballs that are free baseballs for
all $L_{v}$.

\begin{figure}[h]
\begin{centering}
\includegraphics{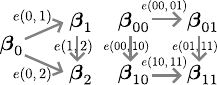}
\par\end{centering}
\caption{\label{fig:schematic}Commutative diagrams}
\end{figure}

Subsection \ref{subsec:The-Heegaard-diagram} gives us a Heegaard
diagram ${\cal H}^{wind}$ with Heegaard surface $-\partial H(L)$
and attaching curve $\boldsymbol{\beta}_{v}$ for each vertex $v$
of the schematic. The following are some concrete examples of the
corresponding Heegaard diagrams: Figure \ref{fig:z11-3-heegaard}
(after performing a $0$-surgery along $z_{1}$ and $w_{1}$) corresponds
to Figure \ref{fig:z11-3}, and Figure \ref{fig:z21-2-heegaard-2}
(after performing a $0$-surgery along $z_{1}$ and $w_{1}$) corresponds
to Figure \ref{fig:z21-2}.

An element $e(v,v')\in HF^{-}(\boldsymbol{\beta}_{v},\boldsymbol{\beta}_{v'})$
will be assigned for each edge $v\to v'$; $e(v,v')$ is sometimes
described directly in the schematic (recall from Definition \ref{def:If--are}
that ``swap'', ``non-orientable'', ``merge'', and ``split''
refer to the canonical element $\theta$). We sometimes write $\theta(v,v')$
(resp. $\delta(v,v')$) to emphasize that it is the canonical element
$\theta$ (resp. the modified band map $\delta$) in $HF^{-}(\boldsymbol{\beta}_{v},\boldsymbol{\beta}_{v'})$.

If the schematic is a triangle, the statement is that 
\[
\mu_{2}(e(0,1)\otimes e(1,2))=e(0,2)\in HF^{-}(\boldsymbol{\beta}_{0},\boldsymbol{\beta}_{2}).
\]
If the schematic is a square, we consider the compositions $\mu_{2}^{\boldsymbol{\beta}_{00},\boldsymbol{\beta}_{01},\boldsymbol{\beta}_{11}}$
and $\mu_{2}^{\boldsymbol{\beta}_{00},\boldsymbol{\beta}_{10},\boldsymbol{\beta}_{11}}$
which will sometimes be denoted as $\mu_{2}^{rd}$ and $\mu_{2}^{dr}$,
respectively; the statement is that 
\[
x^{rd}:=\mu_{2}^{rd}(e(00,01)\otimes e(01,11))=\mu_{2}^{dr}(e(00,10)\otimes e(10,11))=:x^{dr}\in HF^{-}(\boldsymbol{\beta}_{00},\boldsymbol{\beta}_{11}).
\]

\begin{rem}
\label{rem:alex-homogeneous}The composition maps $\mu_{2}$ are homogeneous
with respect to the relative Alexander $\mathbb{Z}/2$-grading by
Proposition \ref{prop:combi-claims} (\ref{enu:The--category--1}).
Note that $e(v,v')\in HF^{-}(\boldsymbol{\beta}_{v},\boldsymbol{\beta}_{v'})$
will always be homogeneous with respect to the relative Alexander
$\mathbb{Z}/2$-grading. Hence, the compositions $\mu_{2}(e(0,1)\otimes e(1,2))$,
$x^{rd}$, and $x^{dr}$ are also homogeneous with respect to the
relative Alexander $\mathbb{Z}/2$-grading.
\end{rem}

\begin{rem}
As above, we work on the homology level (instead of the chain level)
in Sections \ref{sec:A-nonvanishing-argument}, \ref{sec:Band-maps-and},
\ref{sec:Modified-band-maps}, \ref{sec:Composition-of-band}, and
\ref{sec:Computations-for-the}; in particular, $\mu_{2}$ means the
induced composition map on homology.
\end{rem}

\section{\label{sec:A-nonvanishing-argument}A nonvanishing argument}

In this section, we introduce an argument that we will use often,
which shows that some composition $\mu_{2}(x\otimes y)$ is nonzero
on homology.

Say we are in the setting of Section \ref{sec:The-setup-and} and
that there are beta-attaching curves $\boldsymbol{\beta}<\boldsymbol{\beta}_{0}<\boldsymbol{\beta}_{1}$
such that $\boldsymbol{\beta}_{0},\boldsymbol{\beta}_{1}$ differ
in exactly one crossing ball $CB$, in which $\boldsymbol{\beta}_{i}\cap CB$
is $i$ of Figure \ref{fig:skein-moves-1}\footnote{This is similar to Definition \ref{def:Two-balled-links-1} but stronger
since $\boldsymbol{\beta}$ is included. Indeed, in particular, $\boldsymbol{\beta}\cap CB$
must be $0$ of Figure \ref{fig:skein-moves-1} since $\boldsymbol{\beta}<\boldsymbol{\beta}_{0}$.}, and $\boldsymbol{\beta}_{0},\boldsymbol{\beta}_{1}$ satisfy Condition
\ref{cond:If--is}. Then, we can find a $\boldsymbol{\beta}_{2}>\boldsymbol{\beta}_{1}$
such that $\boldsymbol{\beta}_{0},\boldsymbol{\beta}_{1},\boldsymbol{\beta}_{2}$
form an unoriented skein triple (Definition \ref{def:unoriented-skein-triple}).
Let $BB$ be the baseball from Condition \ref{cond:If--is} and Definition
\ref{def:unoriented-skein-triple}. If the basepoint pair for $BB$
is a free basepoint pair for $\boldsymbol{\beta}$, then by Theorem
\ref{thm:Let--be}, 
\[
HF_{1}^{-}(\boldsymbol{\beta},\boldsymbol{\beta}_{2})\to HF_{0}^{-}(\boldsymbol{\beta},\boldsymbol{\beta}_{0})\xrightarrow{\mu_{2}(-\otimes\theta)}HF_{0}^{-}(\boldsymbol{\beta},\boldsymbol{\beta}_{1})
\]
is exact (the homological $\mathbb{Z}$-gradings are pinned down by
Proposition \ref{prop:combi-claims} (\ref{enu:It-is-possible}) and
Lemma \ref{lem:Let--be}). Furthermore $HF_{1}^{-}(\boldsymbol{\beta},\boldsymbol{\beta}_{2})=0$
by Proposition \ref{prop:combi-claims} (\ref{enu:If-,-then}), and
so $\mu_{2}(-\otimes\theta):HF_{0}^{-}(\boldsymbol{\beta},\boldsymbol{\beta}_{0})\to HF_{0}^{-}(\boldsymbol{\beta},\boldsymbol{\beta}_{1})$
is injective.

Similarly, if there are beta-attaching curves $\boldsymbol{\beta}_{1}<\boldsymbol{\beta}_{2}<\boldsymbol{\beta}$,
such that $\boldsymbol{\beta}_{1},\boldsymbol{\beta}_{2}$ differ
in exactly one crossing ball $CB$, in which $\boldsymbol{\beta}_{i}\cap CB$
is $i$ of Figure \ref{fig:skein-moves-1} and if $\boldsymbol{\beta}_{1},\boldsymbol{\beta}_{2}$
satisfy Condition \ref{cond:If--is}, then $\mu_{2}(\theta\otimes-):HF_{0}^{-}(\boldsymbol{\beta}_{2},\boldsymbol{\beta})\to HF_{0}^{-}(\boldsymbol{\beta}_{1},\boldsymbol{\beta})$
is injective. (Note that if $\boldsymbol{\beta}_{i}\cap CB$ is $i-1$
of Figure \ref{fig:skein-moves-1} and $\boldsymbol{\beta}\cap CB$
is $1$ of Figure \ref{fig:skein-moves-1}, then we can reparametrize
$CB$ such that the above conditions are satisfied.)
\begin{rem}
We have in fact used this exact argument in Subsections \ref{subsec:A-lower-bound-of-la}
and \ref{subsec:relative-homology}.
\end{rem}

\begin{rem}
\label{rem:If-we-only}If we only want to show that $\mu_{2}^{\boldsymbol{\beta}_{u},\boldsymbol{\beta}_{v},\boldsymbol{\beta}_{w}}(\theta\otimes\theta)\neq0$,
another way that works in certain cases is using that the composition
of the corresponding band maps is nonzero for planar links. In other
words, we find an attaching curve $\boldsymbol{\alpha}$ such that
\[
\mu_{2}(-\otimes\mu_{2}(\theta\otimes\theta)):HF^{-}(\boldsymbol{\alpha},\boldsymbol{\beta}_{u})\to HF^{-}(\boldsymbol{\alpha},\boldsymbol{\beta}_{w})
\]
is nonzero (and $(\boldsymbol{\alpha},\boldsymbol{\beta}_{s})$ represent
planar links and the bands are planar bands). In this case, we also
get that $\mu_{2}(\theta\otimes\theta)$ is not in the subspace of
$HF^{-}(\boldsymbol{\beta}_{u},\boldsymbol{\beta}_{w})$ spanned by
the image of the $\Phi$-actions, by Lemma \ref{lem:planar-phi}.
\end{rem}

\section{\label{sec:Band-maps-and}Swap maps and band maps}

\subsection{Swap maps}

In Proposition \ref{prop:swap-quasi-iso-balled}, we showed that swap
maps, in the restrictive sense of Section \ref{sec:Swap-maps} (Condition
\ref{cond:In-this-section,}), are quasi-isomorphisms. From this,
we deduce the following.
\begin{prop}
\label{prop:Something-is-an}Let $L_{0}$, $L_{1}$ be two balled
links that are related by a swap map (Definition \ref{def:Two-balled-links}),
and consider a weakly admissible pair-pointed Heegaard diagram $(\Sigma,\boldsymbol{\beta}_{0},\boldsymbol{\beta}_{1},\boldsymbol{p},\boldsymbol{G})$
as in Subsection \ref{subsec:The-Heegaard-diagram}. Let $\boldsymbol{\alpha}$
be an attaching curve such that $(\Sigma,\boldsymbol{\alpha},\boldsymbol{\beta}_{0},\boldsymbol{\beta}_{1},\boldsymbol{p},\boldsymbol{G})$
is $c$-strongly admissible, and that a basepoint pair $p$ is a free
basepoint pair for $\boldsymbol{\alpha}$ whenever $p$ is a free
basepoint pair for one of $\boldsymbol{\beta}_{0},\boldsymbol{\beta}_{1}$
and a link basepoint pair for the other. Then the map
\[
\mu_{2}(-\otimes\theta):HF^{-}(\boldsymbol{\alpha},\boldsymbol{\beta}_{0})\to HF^{-}(\boldsymbol{\alpha},\boldsymbol{\beta}_{1})
\]
is an isomorphism. Similarly, (if $\boldsymbol{\beta}_{0},\boldsymbol{\beta}_{1}<\boldsymbol{\alpha}$,
then)
\[
\mu_{2}(\theta\otimes-):HF^{-}(\boldsymbol{\beta}_{1},\boldsymbol{\alpha})\to HF^{-}(\boldsymbol{\beta}_{0},\boldsymbol{\alpha})
\]
is an isomorphism.
\end{prop}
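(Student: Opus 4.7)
The plan is to reduce to the restricted-swap case of Proposition~\ref{prop:swap-quasi-iso-balled}, invoking the observation right after Definition~\ref{def:If--are} that a general swap map is a composition of the restricted type of swap maps considered in Section~\ref{sec:Swap-maps}. First, I would choose a sequence of balled links $L_0 = M_0, M_1, \ldots, M_n = L_1$ with the same underlying link, where each consecutive pair $M_{i-1}, M_i$ is related by a swap that satisfies Condition~\ref{cond:In-this-section,} (i.e., exchanges the link/free roles of two baseballs sitting on the same link component). Such a chain exists because the link-baseball assignments of $L_0$ and $L_1$ can be interpolated one component at a time.

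Next I would lift this to the $A_\infty$-categorical setting: choose attaching curves $\boldsymbol{\beta}(M_i)$ as in Subsection~\ref{subsec:The-Heegaard-diagram}, winding them so that the full Heegaard diagram with $\boldsymbol{\alpha}$ and all $\boldsymbol{\beta}(M_i)$ is $c$-strongly admissible, and so that $\boldsymbol{\alpha} < \boldsymbol{\beta}(M_0) < \cdots < \boldsymbol{\beta}(M_n)$. Let $\theta_i \in HF^{-}_{0}(\boldsymbol{\beta}(M_{i-1}), \boldsymbol{\beta}(M_i))$ be the canonical element; each $\theta_i$ is a restricted swap. The characterization of canonical elements via $\Phi$-actions and, in the $L_{ori}$-case, relative $H_1$-actions in Subsection~\ref{subsec:Canonical-generators} (together with Propositions~\ref{prop:For-each-balled} and~\ref{prop:83}, which pin down top-grading generators) implies that the iterated composition $\mu_2(\theta_1 \otimes \mu_2(\theta_2 \otimes \cdots))$ equals the canonical element $\theta \in HF^{-}_{0}(\boldsymbol{\beta}_0, \boldsymbol{\beta}_1)$ for the total swap; this is the homology-level incarnation of the ``composition'' observation preceding Proposition~\ref{prop:The-swap,-merge,}.

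By $A_\infty$-associativity on homology, $\mu_2(-\otimes \theta)$ then agrees with the composition
\[
HF^{-}(\boldsymbol{\alpha}, \boldsymbol{\beta}(M_0)) \xrightarrow{\mu_2(-\otimes \theta_1)} HF^{-}(\boldsymbol{\alpha}, \boldsymbol{\beta}(M_1)) \to \cdots \to HF^{-}(\boldsymbol{\alpha}, \boldsymbol{\beta}(M_n)).
\]
Each factor is an isomorphism by Proposition~\ref{prop:swap-quasi-iso-balled} once we check the admissibility hypothesis on $\boldsymbol{\alpha}$ at each stage: any basepoint pair whose link/free type differs somewhere along the chain $M_0, \ldots, M_n$ must have differing types between $\boldsymbol{\beta}_0$ and $\boldsymbol{\beta}_1$, and is therefore free for $\boldsymbol{\alpha}$ by the hypothesis of the proposition. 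Hence the composition is an isomorphism. The second statement, for $\mu_2(\theta \otimes -)$, follows by the symmetric argument, using the same chain in reverse and the corresponding version of Proposition~\ref{prop:swap-quasi-iso-balled}.

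The main obstacle is the middle step, namely verifying that the iterated composition of restricted-swap canonical elements is the general-swap canonical element. This requires carefully tracking the ${\rm Spin}^c$-, $\Phi$-, and relative $H_1$-characterizations in Subsections~\ref{subsec:The-simpler-cases} and~\ref{subsec:The-case} through the chain of intermediate swaps, and using that canonical elements are preserved by the stabilization map $S^+$ (Definition~\ref{def:A-pair-pointed-Heegaard-1}); the other steps are essentially bookkeeping with the $A_\infty$-structure.
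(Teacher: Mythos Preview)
Your proposal is correct and follows essentially the same approach as the paper's proof: reduce the general swap to a chain of restricted swaps (one component at a time) and apply Proposition~\ref{prop:swap-quasi-iso-balled} at each stage. The paper compresses this into two sentences, invoking the assertion (just before Proposition~\ref{prop:The-swap,-merge,}) that a swap map is a composition of restricted swap maps, and then applying Proposition~\ref{prop:swap-quasi-iso-balled} directly.

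One small remark: you are over-worrying about the composition step. For a swap, the underlying links of $L_0$ and $L_1$ coincide, so the sub-diagram $(\boldsymbol{\beta}_0,\boldsymbol{\beta}_1)$ is a stabilization of a disjoint union of copies of $K_{ori}$ only; the $L_{ori}$ situation and the relative $H_1$-characterization from Subsection~\ref{subsec:The-case} never arise here. The fact that $\mu_2(\theta\otimes\theta)=\theta$ for consecutive restricted swaps is exactly Proposition~\ref{prop:swap-commute}, whose proof is short (nonvanishing via Remark~\ref{rem:If-we-only}, plus an Alexander-$\mathbb{Z}/2$ and $\Phi$-action pinning). So the ``main obstacle'' you flag is in fact the easy part.
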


\begin{proof}
We can reduce to the case where there exists exactly one component
for which the link baseball of $L_{0}$ and $L_{1}$ are different.
This is possible since the $\mu_{2}(-\otimes\theta)$ of the general
case is a composition of these cases. This case follows from Proposition
\ref{prop:swap-quasi-iso-balled}.
\end{proof}

\subsection{Swap maps and swap maps}

We show that swap maps always commute.
\begin{prop}
\label{prop:swap-commute}The diagram in Figure \ref{fig:swap-swap}
commutes.
\end{prop}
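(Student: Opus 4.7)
The plan is to reduce to a local computation via stabilization. If the two swap moves involve link baseballs on disjoint components of the underlying link $L$, then the relevant sub-Heegaard diagram for $\boldsymbol{\beta}_{00},\boldsymbol{\beta}_{01},\boldsymbol{\beta}_{10},\boldsymbol{\beta}_{11}$ splits as a disjoint union, and the commutativity follows immediately from Remark \ref{rem:If-the-Heegaard} together with that each single swap in the tensor factor where the other swap is trivial is simply the identity on that tensor factor. Hence we may assume that both swaps occur on the same component of $L$, so that (up to relabeling) we have three distinguished link baseballs $BB_0, BB_1, BB_2$ on one component and the four attaching curves $\boldsymbol{\beta}_{ij}$ differ only in the placement of the link baseball among these three.

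Next I would stabilize in the sense of Definition \ref{def:A-pair-pointed-Heegaard-1} to arrange that the sub-Heegaard diagram for $\boldsymbol{\beta}_{00},\boldsymbol{\beta}_{01},\boldsymbol{\beta}_{10},\boldsymbol{\beta}_{11}$ is isotopic to a stabilization of the $K_{ori}$ diagram of Figure \ref{fig:s1s2-important-1-1}, glued with free-stabilization data that is the same for all four curves. In this local model, $HF^-_0(\boldsymbol{\beta}_{ij},\boldsymbol{\beta}_{k\ell})$ is a rank-two module whose canonical element $\theta$ (Definition \ref{def:f-and-g}) and $\Phi$-actions are completely described by Lemma \ref{lem:ori-diagram} and Subsection \ref{subsec:Model-computations}. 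A direct computation of the two triangle maps in the local model then shows that both $x^{rd}$ and $x^{dr}$ coincide with the canonical element (the $f$ of Definition \ref{def:f-and-g}) for the "composite swap" going from $BB_0$ to $BB_2$.

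Conceptually, one can also phrase the argument as a uniqueness statement: both $x^{rd}$ and $x^{dr}$ lie in $HF^-_0(\boldsymbol{\beta}_{00},\boldsymbol{\beta}_{11})$ (the top homological $\mathbb{Z}$-grading by Proposition \ref{prop:combi-claims} (\ref{enu:It-is-possible}), (\ref{enu:If-the-underlying})); both are homogeneous with respect to the relative Alexander $\mathbb{Z}/2$-grading (Remark \ref{rem:alex-homogeneous}, hence homogeneous with respect to the ${\rm Spin}^c$-splitting); both are equivariant with respect to all $\Phi$-actions on link baseballs disjoint from $BB_0, BB_1, BB_2$ (by the $\Phi$-equivariance of $\mu_2$); and Proposition \ref{prop:Something-is-an} forces both to induce quasi-isomorphisms. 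In the local model these constraints pin down an element uniquely, and the same element is obtained along either path.

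The main obstacle will be the bookkeeping in the local computation: since all four attaching curves are handlebody-equivalent and the balled links differ only in which baseball on a single component is declared to be the link baseball, one must carefully match the basis conventions from Definition \ref{def:f-and-g} and Subsection \ref{subsec:Canonical-generators} across the four sides of the square, and verify using the $\Phi$-action tables of Section \ref{sec:Some-important-balled} and Subsection \ref{subsec:Model-computations} that the two compositions produce the same top-graded generator rather than merely generators that agree up to the image of some $\Phi$-action.
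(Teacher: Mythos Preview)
You have misread the shape of the diagram: Figure~\ref{fig:swap-swap} is a \emph{triangle}, not a square. The statement to be proved is $\mu_2(\theta(0,1)\otimes\theta(1,2))=\theta(0,2)$, i.e.\ that the composite of two swap elements is again the swap element. This is strictly stronger than the square commutativity $x^{rd}=x^{dr}$ you aim for, and your case analysis with four attaching curves $\boldsymbol{\beta}_{00},\boldsymbol{\beta}_{01},\boldsymbol{\beta}_{10},\boldsymbol{\beta}_{11}$ is not what is being asked.

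Your third paragraph is closest in spirit to the paper's argument, but the paper's route is both shorter and avoids the local computation entirely. The paper applies Remark~\ref{rem:If-we-only}: pair everything with an attaching curve $\boldsymbol{\alpha}$ so that the links are planar. Since the composite of swap maps for planar links is nonzero (Proposition~\ref{prop:Let--be}), $\mu_2(\theta(0,1)\otimes\theta(1,2))$ is nonzero and lies in the top homological grading. Then Lemma~\ref{lem:planar-phi} (that $\mu_2(-\otimes\Phi_p x)$ vanishes for planar links) shows the composite is not in the image of any $\Phi$-action. Combined with Alexander $\mathbb{Z}/2$-homogeneity (Remark~\ref{rem:alex-homogeneous}), this forces the composite to equal $\theta(0,2)$, since in $HF_0^-(\boldsymbol{\beta}_0,\boldsymbol{\beta}_2)$ the canonical element is the unique nonzero Alexander-homogeneous element not in the image of $\Phi$.

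The gap in your conceptual version is the step ``these constraints pin down an element uniquely.'' You invoke quasi-isomorphism (via Proposition~\ref{prop:Something-is-an}) as the distinguishing constraint, but you never explain why that singles out one Alexander $\mathbb{Z}/2$-grading over the other. In the local model the top grading has rank two, with generators $f$ and $g$ in different Alexander gradings; ruling out $g$ requires exactly an argument like Lemma~\ref{lem:planar-phi} (indeed, $\mu_2(-\otimes g)$ vanishes on planar links because $g=\Phi f$). Your direct-computation plan in the second paragraph would need you to actually count holomorphic triangles in the local model, which you do not do; the paper's approach sidesteps any such count.
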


\begin{figure}[h]
\begin{centering}
\includegraphics{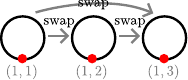}
\par\end{centering}
\caption{\label{fig:swap-swap}Swap maps commute}
\end{figure}

\begin{proof}
We use the argument of Remark \ref{rem:If-we-only}. Considering the
composition of swap maps for planar links (which is nonzero), we can
deduce that $\mu_{2}(\theta(0,1)\otimes\theta(1,2))$ is nonzero and
is in the same homological $\mathbb{Z}$-grading as $\theta(0,2)$.
Furthermore, by Lemma \ref{lem:planar-phi}, $\mu_{2}(\theta(0,1)\otimes\theta(1,2))$
is not in the image of any $\Phi$-action either, and so it is in
the same relative Alexander $\mathbb{Z}/2$-grading as $\theta(0,2)$\footnote{We can also show these statements about gradings directly from the
Heegaard diagrams.} (recall Remark \ref{rem:alex-homogeneous}). Hence they must be equal.
\end{proof}
We record the following lemma.
\begin{lem}
\label{lem:phi-swap}Consider the attaching curves given by Figure
\ref{fig:swap-swap}. The map 
\[
\mu_{2}:HF^{-}(\boldsymbol{\beta}_{0},\boldsymbol{\beta}_{1})\otimes HF^{-}(\boldsymbol{\beta}_{1},\boldsymbol{\beta}_{2})\to HF_{}^{-}(\boldsymbol{\beta}_{0},\boldsymbol{\beta}_{2})
\]
satisfies 
\[
\mu_{2}(\Phi_{1,1}\otimes{\rm Id})=\mu_{2}(\Phi_{1,2}\otimes{\rm Id})=\mu_{2}({\rm Id}\otimes\Phi_{1,2})=\mu_{2}({\rm Id}\otimes\Phi_{1,3})=\Phi_{1,1}\mu_{2}({\rm Id}\otimes{\rm Id})=\Phi_{1,3}\mu_{2}({\rm Id}\otimes{\rm Id}).
\]
\end{lem}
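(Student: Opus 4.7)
The plan is to establish the six-fold equality by combining the $\Phi$-equivariance of $\mu_2$ from Subsection \ref{subsec:The--action} with the vanishing of certain $\Phi$-actions and with a model computation of $\Phi$-actions on stabilizations of $K_{ori}$.

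First I record which $\Phi$-actions vanish on which $HF^-$. In Figure \ref{fig:swap-swap}, $BB_{1,j}$ is a link baseball precisely for $\boldsymbol{\beta}_{j-1}$ and is a free baseball for the other two attaching curves, so
\[
\Phi_{1,3}=0 \text{ on } HF^-(\boldsymbol{\beta}_0,\boldsymbol{\beta}_1), \quad \Phi_{1,1}=0 \text{ on } HF^-(\boldsymbol{\beta}_1,\boldsymbol{\beta}_2), \quad \Phi_{1,2}=0 \text{ on } HF^-(\boldsymbol{\beta}_0,\boldsymbol{\beta}_2).
\]
Substituting each of these into the equivariance relation $\mu_2(\Phi_p\otimes{\rm Id})+\mu_2({\rm Id}\otimes\Phi_p)=\Phi_p\circ\mu_2$ yields three of the six required equalities immediately:
\[
\mu_2(\Phi_{1,1}\otimes {\rm Id})=\Phi_{1,1}\circ\mu_2, \quad \mu_2({\rm Id}\otimes\Phi_{1,3})=\Phi_{1,3}\circ\mu_2, \quad \mu_2(\Phi_{1,2}\otimes{\rm Id})=\mu_2({\rm Id}\otimes\Phi_{1,2}),
\]
the last using that we work in characteristic two.

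To bridge these, I will use the claim that on any pair-pointed Heegaard diagram that is a stabilization of $K_{ori}$ in the sense of Definition \ref{def:A-pair-pointed-Heegaard-1}, the two $\Phi$-actions associated with its two link basepoint pairs agree on the full $HF^-$. The computation of Subsection \ref{subsec:Model-computations} verifies this on $K_{ori}$ itself, since both actions send $ay+bx\mapsto ax+by$ and annihilate $ax+by$, hence agree as endomorphisms in every homological grading. The identity then propagates to an arbitrary stabilization: for the disjoint-union part of a stabilization the chain complex splits as a tensor product over $S=\mathbb{F}[\{U_p\}]$ and both $\Phi$-actions act only on the $K_{ori}$-factor, while for the $0$-surgery part one uses that the stabilization map $S^+$ is $\Phi$-equivariant together with the explicit upper-triangular description of $CF^-$ from Subsection \ref{subsec:Connected-sums}. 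Applied to the two sub-diagrams at hand (each being a stabilization of $K_{ori}$ since $\boldsymbol{\beta}_i,\boldsymbol{\beta}_{i+1}$ are related by a swap), this yields $\Phi_{1,1}=\Phi_{1,2}$ on $HF^-(\boldsymbol{\beta}_0,\boldsymbol{\beta}_1)$ and $\Phi_{1,2}=\Phi_{1,3}$ on $HF^-(\boldsymbol{\beta}_1,\boldsymbol{\beta}_2)$.

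Combining everything yields the short chain
\[
\mu_2(\Phi_{1,1}\otimes{\rm Id})=\mu_2(\Phi_{1,2}\otimes{\rm Id})=\mu_2({\rm Id}\otimes\Phi_{1,2})=\mu_2({\rm Id}\otimes\Phi_{1,3})=\Phi_{1,3}\circ\mu_2,
\]
and the first expression also equals $\Phi_{1,1}\circ\mu_2$ by the first equivariance consequence above, completing the lemma. The main obstacle is propagating the identity $\Phi_1=\Phi_2$ (which already holds on the full $HF^-$ of $K_{ori}$ itself) to the full $HF^-$ of an arbitrary stabilization, rather than merely to the top homological $\mathbb{Z}$-grading where the stabilization map is an isomorphism; I expect this to follow routinely from the $S$-linear K\"{u}nneth-type decomposition accompanying the disjoint-union and $0$-surgery steps of Definition \ref{def:A-pair-pointed-Heegaard-1}.
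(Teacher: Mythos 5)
Your argument is essentially identical to the paper's proof: the paper likewise combines the $\Phi$-equivariance of $\mu_{2}$ with the vanishing of $\Phi_{1,1}$ on $HF^{-}(\boldsymbol{\beta}_{1},\boldsymbol{\beta}_{2})$ and of $\Phi_{1,2}$ on $HF^{-}(\boldsymbol{\beta}_{0},\boldsymbol{\beta}_{2})$ (and the symmetric statements) with the identity $\Phi_{1,i}=\Phi_{1,i+1}$ on $HF^{-}(\boldsymbol{\beta}_{i-1},\boldsymbol{\beta}_{i})$, assembled into the same chain of equalities. The one point you flag as a residual obstacle --- that $\Phi_{1}=\Phi_{2}$ on the \emph{full} homology of a stabilization of $K_{ori}$, not just its top homological grading --- is asserted by the paper without further justification, so your proposal is if anything slightly more careful than the published argument.
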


\begin{proof}
Since $\Phi_{1,1}=0$ on $HF^{-}(\boldsymbol{\beta}_{1},\boldsymbol{\beta}_{2})$,
we have $\mu_{2}(\Phi_{1,1}\otimes{\rm Id})=\Phi_{1,1}\mu_{2}({\rm Id}\otimes{\rm Id})$.
Since $\Phi_{1,2}=0$ on $HF^{-}(\boldsymbol{\beta}_{0},\boldsymbol{\beta}_{2})$,
we have $\mu_{2}(\Phi_{1,2}\otimes{\rm Id})=\mu_{2}({\rm Id}\otimes\Phi_{1,2})$.
Since $\Phi_{1,1}=\Phi_{1,2}$ on $HF^{-}(\boldsymbol{\beta}_{0},\boldsymbol{\beta}_{1})$,
we have $\mu_{2}(\Phi_{1,1}\otimes{\rm Id})=\mu_{2}(\Phi_{1,2}\otimes{\rm Id})$.
The rest follow similarly.
\end{proof}

\subsection{Swap maps and band maps}

In this subsection, we show that band maps and swap maps commute in
certain cases. Non-orientable band maps and swap maps always commute,
but swap maps do not always commute with merge or split bands (Proposition
\ref{prop:swap-merge-2-1}).

The non-orientable case is simpler.
\begin{prop}
\label{prop:non-ori-swap}The diagrams in Figure \ref{fig:nori-swap-com}
commute. Furthermore, the compositions are the canonical element defined
in Subsection \ref{subsec:Canonical-generators}. 
\begin{figure}[h]
\begin{centering}
\includegraphics{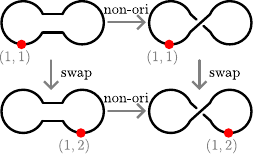}\qquad{}\includegraphics{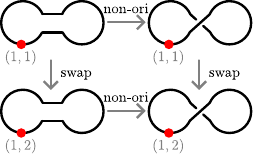}
\par\end{centering}
\caption{\label{fig:nori-swap-com}Non-orientable band maps and swap maps commute}
\end{figure}
\end{prop}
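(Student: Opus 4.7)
The plan is to mirror the proof of Proposition \ref{prop:swap-commute}. By Proposition \ref{prop:combi-claims} (\ref{enu:It-is-possible})--(\ref{enu:If-the-underlying}) and Remark \ref{rem:alex-homogeneous}, both diagonal compositions $x^{rd}$ and $x^{dr}$ land in $HF_0^-(\boldsymbol{\beta}_{00},\boldsymbol{\beta}_{11})$ and are homogeneous with respect to the relative Alexander $\mathbb{Z}/2$-grading. The pair $(\boldsymbol{\beta}_{00},\boldsymbol{\beta}_{11})$ represents a balled link whose underlying link differs from that of $\boldsymbol{\beta}_{00}$ in exactly the crossing ball through which the non-orientable band runs (the swap leaves the underlying link unchanged), so this pair falls under Condition \ref{cond:We-consider-balled} with $L_? = K_{nonori}$, and the canonical element $\theta \in HF_0^-(\boldsymbol{\beta}_{00},\boldsymbol{\beta}_{11})$ of Subsection \ref{subsec:Canonical-generators} is uniquely characterized by its bigrading together with the $\Phi$-action conditions from Subsection \ref{subsec:The-simpler-cases}.

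First I would show that both compositions are nonzero on homology. Since the underlying links and bands involved become planar after trivially resolving the ambient crossing balls (if any), we can pair with an appropriate $\boldsymbol{\alpha}$ and apply Remark \ref{rem:If-we-only}: the Khovanov-level composition of a non-orientable band with a swap on a planar link is the non-orientable band map (by Proposition \ref{prop:The-swap,-merge,} together with Propositions \ref{prop:Let--be-1} and \ref{prop:Let--be}), which is nonzero, and which furthermore does not lie in the image of any $\Phi$-action by Lemma \ref{lem:planar-phi}. Hence both $x^{rd}$ and $x^{dr}$ are nonzero and do not lie in the image of any $\Phi$-action. Alternatively, one can apply the nonvanishing argument of Section \ref{sec:A-nonvanishing-argument} directly by completing the non-orientable band into an unoriented skein triple and invoking Theorem \ref{thm:Let--be}, combined with Proposition \ref{prop:Something-is-an} to absorb the swap factor into an isomorphism.

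Next, I would match each composition to the canonical element via its $\Phi$-action characterization. Using that $\mu_2$ is $\Phi$-equivariant (Subsection \ref{subsec:The--action}) together with the known $\Phi$-action behavior of the constituent canonical elements (Remark \ref{rem:canonical-simple} and Section \ref{sec:Canonical-generators}) and the identities of Lemma \ref{lem:phi-swap}, one computes $\Phi_k x^{rd}$ and $\Phi_k x^{dr}$ for each basepoint pair $k$ in $(\boldsymbol{\beta}_{00},\boldsymbol{\beta}_{11})$. Since $x^{rd}, x^{dr}$ are nonzero, lie in the top homological grading, are homogeneous of the same Alexander $\mathbb{Z}/2$-grading as $\theta$, and satisfy the distinguishing $\Phi$-identities, uniqueness of the canonical element forces $x^{rd} = x^{dr} = \theta$.

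The main obstacle will be the last step: the precise pattern of link vs.\ free baseballs along the pair $(\boldsymbol{\beta}_{00},\boldsymbol{\beta}_{11})$ is dictated by the two configurations in Figure \ref{fig:nori-swap-com}, and these must be carefully translated into the correct system of $\Phi$-vanishing conditions. If the bookkeeping becomes unwieldy in some subcase, I would decompose the swap into a creation, mergesplit pair, annihilation sequence (as in Figure \ref{fig:swap-schematic}) to reduce to basepoint-level identities where the $\Phi$-equivariance computations are transparent.
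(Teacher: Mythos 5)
Your overall strategy (show both composites are nonzero, compute their $\Phi$-actions via $\Phi$-equivariance of $\mu_{2}$, and invoke uniqueness of the canonical element) is the same as the paper's, but two concrete steps fail as written. First, you identify the pair $(\boldsymbol{\beta}_{00},\boldsymbol{\beta}_{11})$ with the local model $K_{nonori}$. This is wrong: the swap edge changes which baseball is the link baseball, so $\boldsymbol{\beta}_{00}$ and $\boldsymbol{\beta}_{11}$ differ both in the crossing ball and in the link-baseball assignment, and the pair represents $(\#^{2}S^{1}\times S^{2},L_{nonori})$ — the genus-two model of Figure \ref{fig:Some-important-balled} — not the genus-one model $K_{nonori}$ of Figure \ref{fig:s1s2-important-1-1}. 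The canonical element you must hit is therefore $b$ in the rank-$4$ group of Proposition \ref{prop:For-each-balled}, characterized by $\Phi_{1}b=\Phi_{2}b\neq0$ inside its Alexander $\mathbb{Z}/2$-summand, not by the $K_{nonori}$ vanishing condition from Subsection \ref{subsec:The-simpler-cases}; with the wrong model the $\Phi$-identities you would verify do not single out the right element.

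Second, your primary nonvanishing route via Remark \ref{rem:If-we-only} and Proposition \ref{prop:The-swap,-merge,} does not apply here: a non-orientable band is never a planar band (a planar band on a planar link is always a merge or a split), and Proposition \ref{prop:The-swap,-merge,} covers only swap, merge, and split maps. Your fallback via Section \ref{sec:A-nonvanishing-argument} is the right tool, but "absorbing the swap factor into an isomorphism" via Proposition \ref{prop:Something-is-an} is delicate: its basepoint hypothesis (the swapped basepoint pair must be free for the third attaching curve) can fail in exactly these triangles — this is the issue the paper confronts explicitly in Section \ref{sec:Computations-for-the}. The paper instead applies Section \ref{sec:A-nonvanishing-argument} only to the non-orientable edge to get injectivity of $\mu_{2}(-\otimes\theta)$, and then uses $\Phi$-equivariance together with the $\Phi$-module structure of $HF_{0}^{-}(\boldsymbol{\beta}_{00},\boldsymbol{\beta}_{10})\otimes HF_{0}^{-}(\boldsymbol{\beta}_{10},\boldsymbol{\beta}_{11})$ (as in Subsection \ref{subsec:A-lower-bound-of-la}) to upgrade this to injectivity of the full $\mu_{2}^{dr}$ onto the rank-$4$ target; this last upgrade is also what guarantees that $\Phi_{1}x^{dr}$ and $\Phi_{2}x^{dr}$ are themselves nonzero, a point your proposal needs but does not supply.
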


\begin{proof}
Note that the sub Heegaard diagram with attaching curves $\boldsymbol{\beta}_{00},\boldsymbol{\beta}_{11}$
represents $(\#^{2}S^{1}\times S^{2},L_{nonori})$. Let us prove that
the left hand side of Figure \ref{fig:nori-swap-com} commutes (the
right hand side follows similarly).

Consider 
\begin{equation}
\mu_{2}^{dr}:HF_{0}^{-}(\boldsymbol{\beta}_{00},\boldsymbol{\beta}_{10})\otimes HF_{0}^{-}(\boldsymbol{\beta}_{10},\boldsymbol{\beta}_{11})\to HF_{0}^{-}(\boldsymbol{\beta}_{00},\boldsymbol{\beta}_{11}).\label{eq:mu2drnonori}
\end{equation}
Let $f,g\in HF_{0}^{-}(\boldsymbol{\beta}_{00},\boldsymbol{\beta}_{10})$,
$f,g\in HF_{0}^{-}(\boldsymbol{\beta}_{10},\boldsymbol{\beta}_{11})$
be as in Definition \ref{def:f-and-g}. Then, the $\Phi$ actions
on the left hand side of Equation (\ref{eq:mu2drnonori}) are as follows:
\[\begin{tikzcd}
	{f\otimes f} & {f\otimes g} \\
	{g\otimes f} & {g\otimes g}
	\arrow["{\Phi _{1,2}}", from=1-1, to=1-2]
	\arrow["{\Phi _{1,1},\Phi _{1,2}}"{description}, from=1-1, to=2-1]
	\arrow["{\Phi _{1,1},\Phi _{1,2}}"{description}, from=1-2, to=2-2]
	\arrow["{\Phi _{1,2}}"', from=2-1, to=2-2]
\end{tikzcd}\]The map $\mu_{2}^{dr}(-\otimes\theta)$ is injective on $HF_{0}^{-}(\boldsymbol{\beta}_{00},\boldsymbol{\beta}_{10})$
by Section \ref{sec:A-nonvanishing-argument}. Also, recall that $\mu_{2}^{dr}$
is equivariant with respect to the $\Phi$ actions. Hence, we can
deduce (in the same way as in Subsection \ref{subsec:A-lower-bound-of-la})
that Equation (\ref{eq:mu2drnonori}) is injective and hence an ($\Phi$-equivariant)
isomorphism that is homogeneous with respect to the relative Alexander
$\mathbb{Z}/2$-grading. From this, it follows that $x^{dr}$ is the
canonical element.

Similarly, the $\Phi$-actions on $HF_{0}^{-}(\boldsymbol{\beta}_{00},\boldsymbol{\beta}_{01})\otimes HF_{0}^{-}(\boldsymbol{\beta}_{01},\boldsymbol{\beta}_{11})$
are as follows:
\[\begin{tikzcd}
	{f\otimes f} & {f\otimes g} \\
	{g\otimes f} & {g\otimes g}
	\arrow["{\Phi _{1,1},\Phi _{1,2}}", from=1-1, to=1-2]
	\arrow["{\Phi _{1,1}}"{description}, from=1-1, to=2-1]
	\arrow["{\Phi _{1,1}}"{description}, from=1-2, to=2-2]
	\arrow["{\Phi _{1,1},\Phi _{1,2}}"', from=2-1, to=2-2]
\end{tikzcd}\]Hence, we can show that
\[
\mu_{2}^{rd}:HF_{0}^{-}(\boldsymbol{\beta}_{00},\boldsymbol{\beta}_{01})\otimes HF_{0}^{-}(\boldsymbol{\beta}_{01},\boldsymbol{\beta}_{11})\to HF_{0}^{-}(\boldsymbol{\beta}_{00},\boldsymbol{\beta}_{11})
\]
is an ($\Phi$-equivariant) isomorphism that is homogeneous with respect
to the relative Alexander $\mathbb{Z}/2$-grading, and hence that
$x^{rd}=\mu_{2}^{rd}(g\otimes f)$ is the canonical element.
\end{proof}

We can similarly show the following.
\begin{prop}
\label{prop:swap-merge-1}The diagrams in Figure \ref{fig:merge-swap-com}
commute.
\end{prop}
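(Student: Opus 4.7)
The plan is to mimic the proof of Proposition \ref{prop:non-ori-swap}, showing that for each square in Figure \ref{fig:merge-swap-com} both compositions $x^{rd}$ and $x^{dr}$ land in $HF_0^-(\boldsymbol{\beta}_{00},\boldsymbol{\beta}_{11})$ and satisfy all of the defining properties of the canonical element $\theta(00,11)$. Because merge maps live in the $L_{ori}$-world rather than the $L_{nonori}$-world, the relevant invariants are richer: we need not only the ${\rm Spin}^c$-splitting, the relative Alexander $\mathbb{Z}/2$-grading, and the $\Phi$-actions, but also the relative $H_1$-action maps $A_{ij}, B_{ij}$ of Definition \ref{def:relative-hom}. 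The target $HF_0^-(\boldsymbol{\beta}_{00},\boldsymbol{\beta}_{11})$ will be a stabilization of an $L_{ori}$-type diagram, so its structure is completely pinned down by Propositions \ref{prop:For-each-balled}, \ref{prop:83}, and \ref{prop:85}.

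First I would identify, for each square, the sub-Heegaard diagrams at the four corners and along the four edges: one pair of opposite edges is swap (an $L_{ori}$-type $K_{ori}$ diagram where only the baseball assignment changes), and the other pair is merge. For the merge edges, $\theta$ is characterized as in Subsection \ref{subsec:Canonical-generators} together with the ordering convention of Section \ref{sec:Canonical-generators}, which is what makes $\delta$ and $\theta$ behave predictably under swapping the roles of the link baseballs. Next, using the nonvanishing argument of Section \ref{sec:A-nonvanishing-argument}, I would establish that the compositions $\mu_2^{rd}$ and $\mu_2^{dr}$, restricted to $HF_0^-\otimes HF_0^-$, are injective on the relevant subspaces; this reduces the problem to matching algebraic invariants.

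Then I would compute the $\Phi$-actions on the tensor products as in the proof of Proposition \ref{prop:non-ori-swap}, using the $\Phi$-equivariance of $\mu_2$ together with Lemma \ref{lem:phi-swap} to propagate the known $\Phi$-structure of the swap edges and the known $\Phi$-structure of the merge edges into a $\Phi$-structure on the image. This will constrain $x^{rd}$ and $x^{dr}$ to lie in the same ${\rm Spin}^c$-summand (and relative Alexander $\mathbb{Z}/2$-grading) as the canonical element, with matching $\Phi$-behavior. At that point the only remaining ambiguity in the $L_{ori}$ case is between the canonical element and its translate by $\Phi_i\Phi_j\theta$-type terms, since these are $\Phi$-indistinguishable.

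The main obstacle will be resolving this last ambiguity, which is precisely what the relative $H_1$-action was introduced to do in Section \ref{sec:Some-important-balled}. I would pick appropriate paths $\lambda$ between link basepoints on the two sides of the square so that, using Lemma \ref{lem:mu2-ab} and the computation of $A_{ij}, B_{ij}$ in Proposition \ref{prop:85}, the value $A_{\mathbb{F}[U^{1/2}],\lambda,-1}\rho(x^{rd})$ can be pushed from one edge to the other around the square. Tracking signs and the $A_{12},B_{23},A_{13}$ actions across the swap-vs-merge order requires care because the swap map permutes the ordering of link baseballs, which permutes the labels $i,j$ indexing $A_{ij}$; however, this is exactly the freedom encoded in the modification from $\theta$ to $\delta$ in Definition \ref{def:For-each-consecutive}, and it should be a straightforward bookkeeping check once the $\Phi$-structure is in place. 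The output is then that both $x^{rd}$ and $x^{dr}$ satisfy every characterizing property of $\theta(00,11)$ and hence are equal.
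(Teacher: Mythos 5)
Your proposal is correct in the sense that it would prove the statement, but it takes a genuinely different — and substantially heavier — route than the paper. The paper's proof of Proposition \ref{prop:swap-merge-1} is a two-line appeal to the argument of Proposition \ref{prop:swap-commute}, i.e.\ Remark \ref{rem:If-we-only}: pairing with a planar $\boldsymbol{\alpha}$ shows $x^{rd}$ and $x^{dr}$ are nonzero; Lemma \ref{lem:planar-phi} shows they are not in the span of the images of the $\Phi$-actions; Remark \ref{rem:alex-homogeneous} gives homogeneity; and in the configurations of Figure \ref{fig:merge-swap-com} these three properties already characterize $\theta(00,11)$ uniquely. The machinery you invoke — injectivity of $\mu_2^{rd},\mu_2^{dr}$ via Section \ref{sec:A-nonvanishing-argument}, $\Phi$-equivariance, and the relative $H_1$-actions $A_{ij},B_{ij}$ of Definition \ref{def:relative-hom} with Propositions \ref{prop:83} and \ref{prop:85} — is exactly what the paper deploys for the companion Proposition \ref{prop:swap-merge-2} (Figure \ref{fig:split-swap-com}), where the composite diagram $(\boldsymbol{\beta}_{00},\boldsymbol{\beta}_{11})$ genuinely represents $L_{ori}$ and the ambiguity between $a$ and $a+d$ must be broken. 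For the squares of Figure \ref{fig:merge-swap-com} that ambiguity does not arise, so your premise that the target is an $L_{ori}$-type diagram is off (if it were, the relative $H_1$-step would indeed be forced, as in Proposition \ref{prop:swap-merge-2}); your step resolving the $\Phi_i\Phi_j\theta$-ambiguity is therefore unnecessary here, though harmless. Two small corrections: the appeal to Definition \ref{def:For-each-consecutive} is misplaced, since this proposition concerns the unmodified canonical elements $\theta$ rather than the modified band maps $\delta$ (the $\theta$-versus-$\delta$ bookkeeping only enters in Sections \ref{sec:Modified-band-maps} and \ref{sec:Composition-of-band}); and for the swap edges the injectivity input is Proposition \ref{prop:Something-is-an} rather than the exact-triangle argument of Section \ref{sec:A-nonvanishing-argument}, which applies to the band edges. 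What your approach buys is uniformity — it would prove Propositions \ref{prop:swap-merge-1} and \ref{prop:swap-merge-2} simultaneously — at the cost of the much shorter argument that suffices here.
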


\begin{figure}[h]
\begin{centering}
\includegraphics{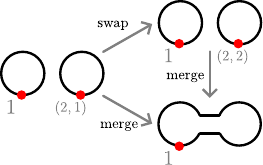}\qquad{}\includegraphics{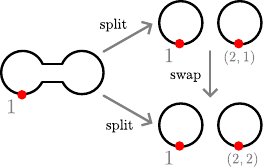}
\par\end{centering}
\caption{\label{fig:merge-swap-com}Some orientable band maps and swap maps
commute}
\end{figure}

\begin{proof}
This follows from the exact same argument as Proposition \ref{prop:swap-commute}.
\end{proof}
We use the relative homology actions $A_{ij}$ and $B_{ij}$'s from
Definition \ref{def:relative-hom} to show the next proposition.
\begin{prop}
\label{prop:swap-merge-2}The diagrams in Figure \ref{fig:split-swap-com}
commute. Furthermore, the compositions are the canonical element defined
in Subsection \ref{subsec:Canonical-generators}.
\end{prop}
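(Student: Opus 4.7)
The plan is to follow the same strategy as Proposition \ref{prop:non-ori-swap}, but one now needs the relative $H_1$-actions of Proposition \ref{prop:85} in order to pin down the canonical element, since the $\Phi$-actions alone only determine it up to the ambiguity $a \leftrightarrow a+d$ described in Subsection \ref{subsec:The-case}. I would first note that in each diagram of Figure \ref{fig:split-swap-com} the sub Heegaard diagram with attaching curves $\boldsymbol{\beta}_{00}, \boldsymbol{\beta}_{11}$ represents a stabilization of $(\#^2 S^1 \times S^2, L_{ori})$, so by Proposition \ref{prop:For-each-balled} the top homological grading part $HF_0^-(\boldsymbol{\beta}_{00}, \boldsymbol{\beta}_{11})$ has rank $4$, with basis $a,b,c,d$ and $\Phi$-actions as described there.

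Next I would study the composition maps
\[
\mu_2^{rd} : HF_0^-(\boldsymbol{\beta}_{00}, \boldsymbol{\beta}_{01}) \otimes HF_0^-(\boldsymbol{\beta}_{01}, \boldsymbol{\beta}_{11}) \to HF_0^-(\boldsymbol{\beta}_{00}, \boldsymbol{\beta}_{11})
\]
and the analogous $\mu_2^{dr}$. By the nonvanishing argument of Section \ref{sec:A-nonvanishing-argument} (applied, after one merge or one split move, to get injectivity of band composition with $\theta$), together with $\Phi$-equivariance of $\mu_2$ and the $\Phi$-action computation on $HF_0^-(\boldsymbol{\beta}_{0k},\boldsymbol{\beta}_{k1})$ from Definition \ref{def:f-and-g}, I would show that both $\mu_2^{rd}$ and $\mu_2^{dr}$ are injective, hence isomorphisms by dimension count, and are homogeneous with respect to the relative Alexander $\mathbb{Z}/2$-grading. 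This immediately forces both $x^{rd}$ and $x^{dr}$ to lie in the two-dimensional subspace spanned by the two ${\rm Spin}^c$-homogeneous elements with the correct $\Phi$-behavior (i.e.\ $\Phi_1 x, \Phi_2 x, \Phi_3 x \ne 0$), i.e.\ in $\{a, a+d\}$.

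To distinguish $a$ from $a+d$ and identify both $x^{rd}$ and $x^{dr}$ with the canonical element, I would verify the condition $A_{13}\Phi_1 x = x$ (equivalently $A_{\mathbb{F}[U^{1/2}],\lambda,-1}\rho(\Phi_1 x) = U^{1/2}\rho(x)$ for some path $\lambda$ between link basepoints in the first and third link baseballs). This is the step where I expect the main work: I would choose $\lambda$ entirely within one of the two sub-triangles — say as the concatenation of a path in the $\boldsymbol{\beta}_{00} \to \boldsymbol{\beta}_{01}$ region with a path in the $\boldsymbol{\beta}_{01} \to \boldsymbol{\beta}_{11}$ region — and then push $A_{\mathbb{F}[U^{1/2}],\lambda,-1}$ across $\mu_2$ using Lemmas \ref{lem:mu2-ab} and \ref{lem:ab-additive}. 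The contributions from each piece are computed in the model diagrams of Subsection \ref{subsec:Model-computations} and agree with the formulas of Subsubsection \ref{subsec:841}, so one can reduce the verification to a factor-by-factor check already carried out for $(\#^2 S^1\times S^2, L_{ori})$. Combined with the vanishing of $\Phi$-actions that multiply a path into a disjoint baseball, this gives $A_{13}\Phi_1(x^{rd}) = x^{rd}$ and similarly for $x^{dr}$.

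The main obstacle is bookkeeping: one must carefully match up the three link baseballs in the merge/split configuration of Figure \ref{fig:split-swap-com} with the ordering $BB_{1}<BB_2<BB_3$ used in Subsection \ref{subsec:Canonical-generators}, and check that the path $\lambda$ used in the $A_{13}$-computation indeed goes between the first and third link baseballs in that ordering (rather than e.g.\ the first and second). Once the ordering is correct and the $A_{13}\Phi_1$ identity is verified on $x^{rd}$ (and by the same argument on $x^{dr}$), both compositions satisfy the defining properties of the canonical element from Subsection \ref{subsec:The-case}, so $x^{rd} = x^{dr} = \theta(00,11)$, which proves both commutativity and the identification with the canonical element.
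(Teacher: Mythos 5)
Your proposal is correct and follows essentially the same route as the paper's proof: reduce to $(\#^{2}S^{1}\times S^{2},L_{ori})$, show $x^{rd},x^{dr}$ are nonzero, Alexander-homogeneous, and have nontrivial $\Phi$-action (so they lie in $\{a,a+d\}$), and then pin down the canonical element by computing the relative $H_{1}$-action composed with $\Phi$ on the images of $\mu_{2}^{rd}$ and $\mu_{2}^{dr}$, pushing $A_{\lambda,-1}$ and $B_{\lambda,-1}$ through $\mu_{2}$ exactly as in Subsubsection \ref{subsec:841}. The bookkeeping issue you flag (matching the ordering $BB_{i,x}<BB_{j,z}<BB_{i,y}$ from Subsection \ref{subsec:Canonical-generators} with the indices used for $A_{ij}$, so that the defining condition becomes $A_{(1,1)(1,2)}\Phi_{1,1}x=x$ in the paper's labels) is precisely the point the paper handles via the explicit correspondence with the diagram of Subsection \ref{subsec:relative-homology}.
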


\begin{figure}[h]
\begin{centering}
\includegraphics{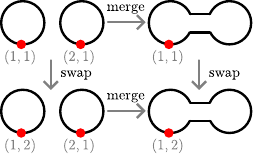}\qquad{}\includegraphics{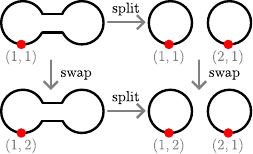}
\par\end{centering}
\caption{\label{fig:split-swap-com}More orientable band maps and swap maps
that commute}
\end{figure}

\begin{proof}
Note that the sub Heegaard diagram with attaching curves $\boldsymbol{\beta}_{00},\boldsymbol{\beta}_{11}$
represents $(\#^{2}S^{1}\times S^{2},L_{ori})$. Let us prove that
the right hand side of Figure \ref{fig:split-swap-com} commutes (the
left hand side follows similarly). Recall from Remark \ref{rem:alex-homogeneous}
that $x^{rd},x^{dr}$ are homogeneous with respect to the relative
Alexander $\mathbb{Z}/2$-grading. By Remark \ref{rem:If-we-only},
$x^{rd}$ and $x^{dr}$ are nonzero and do not lie in the span of
the images of the $\Phi$-actions. Hence, they both lie in the relative
Alexander $\mathbb{Z}/2$-grading summand ${\rm span}_{\mathbb{F}}\left\langle a,d\right\rangle $
(using the notations of Propositions \ref{prop:For-each-balled} and
\ref{prop:85}). We claim that both $x^{rd}$ and $x^{dr}$ are the
unique nonzero element $x\in HF_{0}^{-}(\boldsymbol{\beta}_{00},\boldsymbol{\beta}_{11})$
in the corresponding relative Alexander $\mathbb{Z}/2$-grading, such
that $A_{(1,1)(1,2)}\Phi_{1,1}x=x$. 

Similarly to Subsubsection \ref{subsec:841}, we can compute the $\Phi$
and $A_{(1,1),(1,2)}$ actions on $\mu_{2}^{dr}(HF_{0}^{-}(\boldsymbol{\beta}_{00},\boldsymbol{\beta}_{10})\otimes HF_{0}^{-}(\boldsymbol{\beta}_{10},\boldsymbol{\beta}_{11}))$.
(In fact,
\[
\mu_{2}^{dr}:HF_{0}^{-}(\boldsymbol{\beta}_{00},\boldsymbol{\beta}_{10})\otimes HF_{0}^{-}(\boldsymbol{\beta}_{10},\boldsymbol{\beta}_{11})\to HF_{0}^{-}(\boldsymbol{\beta}_{00},\boldsymbol{\beta}_{11})
\]
is an isomorphism. Also, in fact, the attaching curves $\boldsymbol{\alpha},\boldsymbol{\beta}_{c},\boldsymbol{\beta}_{b}$
that we considered in Subsection \ref{subsec:relative-homology} correspond
to the attaching curves $\boldsymbol{\beta}_{00},\boldsymbol{\beta}_{10},\boldsymbol{\beta}_{11}$
(respectively), where the basepoint pairs $(z_{1},w_{1}),(z_{2},w_{2}),(z_{3},w_{3})$
of Figure \ref{fig:s1s2-heegaard-1} correspond to the baseballs $BB_{1,1},BB_{1,2},BB_{2,1}$,
respectively.) Let $f,g\in HF_{0}^{-}(\boldsymbol{\beta}_{00},\boldsymbol{\beta}_{10})$,
$f,g\in HF_{0}^{-}(\boldsymbol{\beta}_{10},\boldsymbol{\beta}_{11})$
be as in Definition \ref{def:f-and-g}; then we have the following:
\[\begin{tikzcd}
	{\mu_2 ^{dr}(f\otimes f)} & {\mu_2 ^{dr}(f\otimes g)} & {\mu_2 ^{dr}(f\otimes f)} & {\mu_2 ^{dr}(f\otimes g)} \\
	{\mu_2 ^{dr}(g\otimes f)} & {\mu_2 ^{dr}(g\otimes g)} & {\mu_2 ^{dr}(g\otimes f)} & {\mu_2 ^{dr}(g\otimes g)}
	\arrow["{\Phi_{1,2},\Phi_{2,1}}", from=1-1, to=1-2]
	\arrow["{\Phi_{1,1},\Phi_{1,2}}"{description}, from=1-1, to=2-1]
	\arrow["{\Phi_{1,1},\Phi_{1,2}}"{description}, from=1-2, to=2-2]
	\arrow[curve={height=12pt}, from=1-3, to=2-3]
	\arrow[curve={height=12pt}, from=1-4, to=2-4]
	\arrow["{\Phi_{1,2},\Phi_{2,1}}"', from=2-1, to=2-2]
	\arrow["{A_{(1,1),(1,2)}}"', curve={height=12pt}, from=2-3, to=1-3]
	\arrow[curve={height=12pt}, from=2-4, to=1-4]
\end{tikzcd}\]Since $x^{dr}=\mu_{2}^{dr}(f\otimes f)$, we have $A_{(1,1)(1,2)}\Phi_{1,1}x^{dr}=x^{dr}$.

Similarly, we can compute the $\Phi$ and $B_{(1,1),(1,2)}$ actions
on $\mu_{2}^{rd}(HF_{0}^{-}(\boldsymbol{\beta}_{00},\boldsymbol{\beta}_{01})\otimes HF_{0}^{-}(\boldsymbol{\beta}_{01},\boldsymbol{\beta}_{11}))$,
and hence the map $A_{(1,1),(1,2)}=B_{(1,1),(1,2)}+\Phi_{1,1}+\Phi_{1,2}$.
They are as follows:
\[\begin{tikzcd}
	{\mu_2 ^{rd}(f\otimes f)} & {\mu_2 ^{rd}(f\otimes g)} & {\mu_2 ^{rd}(f\otimes f)} & {\mu_2 ^{rd}(f\otimes g)} & {\mu_2 ^{rd}(f\otimes f)} & {\mu_2 ^{rd}(f\otimes g)} \\
	{\mu_2 ^{rd}(g\otimes f)} & {\mu_2 ^{rd}(g\otimes g)} & {\mu_2 ^{rd}(g\otimes f)} & {\mu_2 ^{rd}(g\otimes g)} & {\mu_2 ^{rd}(g\otimes f)} & {\mu_2 ^{rd}(g\otimes g)}
	\arrow["{\Phi_{1,1},\Phi_{1,2}}", from=1-1, to=1-2]
	\arrow["{\Phi_{1,1},\Phi_{2,1}}"{description}, from=1-1, to=2-1]
	\arrow["{\Phi_{1,1},\Phi_{2,1}}"{description}, from=1-2, to=2-2]
	\arrow[curve={height=-12pt}, from=1-3, to=1-4]
	\arrow["{B_{(1,1),(1,2)}}", curve={height=-12pt}, from=1-4, to=1-3]
	\arrow[curve={height=-12pt}, from=1-5, to=1-6]
	\arrow[from=1-5, to=2-5]
	\arrow["{A_{(1,1),(1,2)}}", curve={height=-12pt}, from=1-6, to=1-5]
	\arrow[from=1-6, to=2-6]
	\arrow["{\Phi_{1,1},\Phi_{1,2}}"', from=2-1, to=2-2]
	\arrow[curve={height=-12pt}, from=2-3, to=2-4]
	\arrow[curve={height=-12pt}, from=2-4, to=2-3]
	\arrow[curve={height=-12pt}, from=2-5, to=2-6]
	\arrow[curve={height=-12pt}, from=2-6, to=2-5]
\end{tikzcd}\]Hence, $A_{(1,1)(1,2)}\Phi_{1,1}x^{rd}=x^{rd}$.

\end{proof}
Similarly, we can show the following.
\begin{prop}
\label{prop:swap-merge-2-1}The diagrams in Figure \ref{fig:split-swap-com-1}
do not commute. In fact, we have $x^{rd}=x^{dr}+\Phi_{1}\Phi_{2}x^{dr}$.
\end{prop}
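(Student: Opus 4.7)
The plan is to run the same template as the proof of Proposition \ref{prop:swap-merge-2}, working inside the relative Alexander $\mathbb{Z}/2$-grading summand $\mathrm{span}_{\mathbb{F}}\langle a, d\rangle$ of $HFL_{top}'^{-}(\#^{2}S^{1}\times S^{2},L_{ori};0)$ (using the notation of Proposition \ref{prop:For-each-balled}). The sub Heegaard diagram on $\boldsymbol{\beta}_{00},\boldsymbol{\beta}_{11}$ still represents $(\#^{2}S^{1}\times S^{2},L_{ori})$ up to the standard stabilizations by copies of $(S^{1}\times S^{2},K_{ori})$ and unlinks. As in Proposition \ref{prop:swap-merge-2}, Remark \ref{rem:If-we-only} applied to the underlying planar links and planar bands, combined with Lemma \ref{lem:planar-phi}, shows that $x^{rd}$ and $x^{dr}$ are both nonzero and do not lie in the $\mathbb{F}$-span of the images of the $\Phi$-actions; Remark \ref{rem:alex-homogeneous} shows that they are homogeneous in the Alexander $\mathbb{Z}/2$-grading. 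Hence both land in the two-dimensional summand $\mathrm{span}_{\mathbb{F}}\langle a,d\rangle$.

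Next I would compute the $\Phi$-actions and the degree $-1$ relative $H_1$-actions on the intermediate tensor products
\[
HF_{0}^{-}(\boldsymbol{\beta}_{00},\boldsymbol{\beta}_{10})\otimes HF_{0}^{-}(\boldsymbol{\beta}_{10},\boldsymbol{\beta}_{11}) \quad \text{and} \quad HF_{0}^{-}(\boldsymbol{\beta}_{00},\boldsymbol{\beta}_{01})\otimes HF_{0}^{-}(\boldsymbol{\beta}_{01},\boldsymbol{\beta}_{11}),
\]
exactly as in the proof of Proposition \ref{prop:swap-merge-2}, using the model computations of Subsection \ref{subsec:Model-computations} and Lemma \ref{lem:mu2-ab}. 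Pushing these down via $\mu_{2}^{dr}$ and $\mu_{2}^{rd}$ and comparing against the action tables of Propositions \ref{prop:For-each-balled} and \ref{prop:85}, I will identify $x^{dr}$ and $x^{rd}$ as explicit elements of the $(a,b,c,d)$-basis. The crucial input distinguishing this case from Proposition \ref{prop:swap-merge-2} is the placement of the baseballs in Figure \ref{fig:split-swap-com-1}: this relabels which basepoint pairs are shared between which pairs of attaching curves, altering the $\Phi$- and $A_{ij}$-equivariance tables on the intermediate summands. The expected outcome is that this shift sends one of $x^{dr}, x^{rd}$ to the class $a$ and the other to $a+d$.

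The final step is the algebraic identity $\Phi_{1}\Phi_{2}\,a = d$ in $HFL_{top}'^{-}(\#^{2}S^{1}\times S^{2},L_{ori};0)$, which follows directly from the convention of Subsection \ref{subsec:Conventions-and-notations} applied to the right-hand diagram of Proposition \ref{prop:For-each-balled}: one reads off $\Phi_{2}a = b+c$ and then $\Phi_{1}(b+c) = d + 0 = d$. This, together with $\Phi_{1}\Phi_{2}\,d = 0$, turns the discrepancy between the two identifications into precisely the relation $x^{rd} = x^{dr} + \Phi_{1}\Phi_{2}\,x^{dr}$, and in particular shows non-commutativity. The main obstacle is simply the bookkeeping: since both candidate elements $a$ and $a+d$ are nonzero, Alexander-homogeneous, and outside the image of the $\Phi$-actions, distinguishing them requires a careful and correct tracking of the relative $H_1$-action tables through $\mu_{2}^{rd}$ and $\mu_{2}^{dr}$, which is exactly the refinement of the argument that was used to identify $x^{rd} = x^{dr}$ in Proposition \ref{prop:swap-merge-2}.
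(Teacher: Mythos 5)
Your proposal follows the same route the paper intends: the paper prefaces this statement with ``Similarly, we can show the following,'' i.e.\ the proof is the variant of the proof of Proposition \ref{prop:swap-merge-2} that you outline, and your final algebraic step is correctly read off Proposition \ref{prop:For-each-balled}, where $\Phi_2 a = b+c$, $\Phi_1 b = d$, $\Phi_1 c = 0$ give $\Phi_1\Phi_2 a = d$ and $\Phi_1\Phi_2 d = 0$, so that $x\mapsto x+\Phi_1\Phi_2 x$ is the nontrivial involution of $\mathrm{span}_{\mathbb{F}}\langle a,d\rangle$ swapping $a$ and $a+d$.

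One caution on calibration: the structural input you invoke (nonvanishing via Remark \ref{rem:If-we-only}, not in the $\Phi$-image via Lemma \ref{lem:planar-phi}, Alexander-homogeneity via Remark \ref{rem:alex-homogeneous}) only places each of $x^{rd}$, $x^{dr}$ in the two-element set $\{a, a+d\}$ --- it is symmetric in the two diagonals and cannot, by itself, distinguish them. The entire content of the non-commutativity claim is that the relative $H_1$-action computation (tracking $A_{(i,j),(k,\ell)}$ through $\mu_2^{rd}$ and $\mu_2^{dr}$ via Lemma \ref{lem:mu2-ab} and the tables of Proposition \ref{prop:85}, with the baseball placements of Figure \ref{fig:split-swap-com-1}) lands the two composites on \emph{different} elements of that set. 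You describe this step only as an ``expected outcome''; it is not bookkeeping but the actual proof, since if the tables gave the same answer on both sides the proposition would be false. The rest of your argument is correct and matches the paper's approach.
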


\begin{figure}[h]
\begin{centering}
\includegraphics{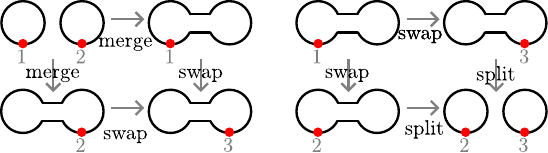}
\par\end{centering}
\caption{\label{fig:split-swap-com-1}Some orientable band maps and swap maps
that do not commute}
\end{figure}

\begin{figure}[h]
\begin{centering}
\includegraphics{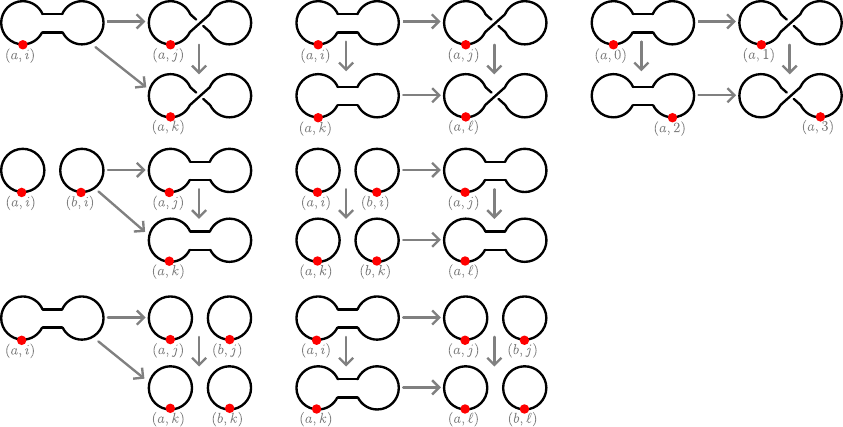}
\par\end{centering}
\caption{\label{fig:composite-phi-all-cases-2}Commutative diagrams ($a<b$
are positive integers)}
\end{figure}

Let us summarize some of our findings.
\begin{lem}
\label{lem:summary-theta}Consider Figure \ref{fig:composite-phi-all-cases-2}.
For each case, there is exactly one crossing ball, and for each $L_{v}$,
there may be some components that are not drawn (but these do not
depend on $v$). For the first and second columns, respectively, let
us consider the following cases:
\[
(i,j,k)=(0,0,1),(0,1,2),\ {\rm resp.}\ (i,j,k,\ell)=(0,0,1,1),(0,0,1,2),(0,1,2,3).
\]
All these cases commute if we assign the canonical element $\theta$
to every edge.
\end{lem}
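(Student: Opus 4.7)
The plan is to handle the cases in Lemma \ref{lem:summary-theta} one at a time by reducing each to one of the commutative diagrams already established in Propositions \ref{prop:swap-commute}, \ref{prop:non-ori-swap}, \ref{prop:swap-merge-1}, and \ref{prop:swap-merge-2}. For each case, I would first identify the type (swap, non-orientable, merge, or split) of each edge $\theta(v,v')$ from the combinatorics of the crossing ball resolutions and the baseball data, and then match the configuration with the appropriate previously proved proposition. The role of the indices $(i,j,k)$ or $(i,j,k,\ell)$ with $a<b$ is to record which components carry link baseballs, and in the listed cases these are chosen so that the orderings of the three distinguished link baseballs induced on the two sides of the square (or on the two factors of the triangle) agree with the orderings used in Subsection \ref{subsec:Canonical-generators} to pin down the canonical element; this is exactly why none of the non-commuting configurations of Proposition \ref{prop:swap-merge-2-1} arise.

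For the triangle cases (first column), the two situations $(i,j,k)=(0,0,1)$ and $(0,1,2)$ express the canonical element of a composition as the canonical element of the overall pair; after identifying one edge as a swap and the other as either a swap or a band, the identity $\mu_{2}(\theta\otimes\theta)=\theta$ follows from Proposition \ref{prop:swap-commute} or Proposition \ref{prop:non-ori-swap} (in the non-orientable case) and from Proposition \ref{prop:swap-merge-2} (in the orientable merge/split case); in the latter, the numerical labels ensure the three link baseballs appear in the same order on both sides, so the relative homology action argument of Proposition \ref{prop:swap-merge-2} applies verbatim. For the square cases (second column), the three listed quadruples $(i,j,k,\ell)=(0,0,1,1),(0,0,1,2),(0,1,2,3)$ correspond to: two non-orientable bands composed with swaps (handled by Proposition \ref{prop:non-ori-swap}), a band composed with a swap in the ``easy'' configuration (Proposition \ref{prop:swap-merge-1}), and the ``hard'' configuration where the extra relative $H_1$-action characterization of Proposition \ref{prop:swap-merge-2} is needed.

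The main obstacle, as suggested above, is verifying that in the orientable band cases the two compositions land on the canonical element with the same induced ordering of the three link baseballs on the $L_{ori}$-summand, because otherwise one would fall into the non-commuting case of Proposition \ref{prop:swap-merge-2-1} where the two compositions differ by a $\Phi_{1}\Phi_{2}$ correction. I expect that the numerical conventions $a<b$ together with the rule in Subsection \ref{subsec:Canonical-generators} for orienting the triple $BB_{i,x}<BB_{j,y}<BB_{i,z}$ (merge) or $BB_{i,x}<BB_{j,z}<BB_{i,y}$ (split) force the same parity on the two sides of each listed square, so that the argument of Proposition \ref{prop:swap-merge-2} using the action $A_{(1,1)(1,2)}\Phi_{1,1}$ (or its analog) certifies the equality $x^{rd}=x^{dr}$ and, moreover, identifies this common element as the canonical element appearing in Subsection \ref{subsec:Canonical-generators}. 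Having done this bookkeeping case by case, no further computation is required beyond what is already recorded in Sections \ref{sec:Band-maps-for}, \ref{sec:Swap-maps}, and \ref{sec:Some-important-balled}.
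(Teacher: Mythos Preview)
There is a genuine misreading of the setup. The tuples $(i,j,k)$ and $(i,j,k,\ell)$ do not switch between band types; the three rows of Figure~\ref{fig:composite-phi-all-cases-2} already fix the band type (non-orientable, merge, split respectively), and the tuples record the second subscripts on the baseballs $BB_{a,-}$ and $BB_{b,-}$ appearing in each corner. Thus your assertion that $(i,j,k,\ell)=(0,0,1,1)$ corresponds to ``two non-orientable bands'' while $(0,1,2,3)$ is ``the hard $L_{ori}$ configuration'' is incorrect: each of the three rows must be verified for \emph{all} listed tuples, and within a fixed row the different tuples describe the same underlying link and band but with different baseball labelings.

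This matters for the argument. Only the base tuples (e.g.\ $(i,j,k)=(0,0,1)$ in the first column, $(i,j,k,\ell)=(0,0,1,1)$ in the second) match the baseball patterns of Figures~\ref{fig:merge-swap-com} and~\ref{fig:split-swap-com}, so Propositions~\ref{prop:swap-merge-1} and~\ref{prop:swap-merge-2} apply directly only there. For the shifted tuples such as $(0,1,2)$ or $(0,1,2,3)$ the labels no longer match and the propositions cannot be invoked verbatim. The paper handles these via a diagram-chasing argument (Equation~\eqref{eq:diagram-chase}): one introduces an auxiliary balled link $L_{aux}$ with an intermediate labeling, factors $\theta(0,1)$ as $\mu_{2}^{\boldsymbol{\beta}_{0},\boldsymbol{\beta}_{aux},\boldsymbol{\beta}_{1}}(\theta\otimes\theta)$ using the already-established base case, applies associativity, and reduces via Proposition~\ref{prop:swap-commute} to the base case again. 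Your proposal does not contain this step, and the hope that ``the numerical conventions force the same parity on the two sides'' is not a substitute: without the auxiliary-link factorization there is no mechanism in your outline to conclude $\mu_{2}(\theta\otimes\theta)=\theta$ in the shifted-label cases.
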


\begin{proof}
\textbf{First row}: these follow from Propositions \ref{prop:non-ori-swap}
and \ref{prop:swap-commute}.

\textbf{Second and third rows, first column}: $(i,j,k)=(0,0,1)$ is
Proposition \ref{prop:swap-merge-2}, and $(i,j,k)=(0,1,2)$ follows
from a \emph{diagram chasing argument} using the $(0,0,1)$ case and
Proposition \ref{prop:swap-commute}. Indeed, let $(i,j,k)=(0,1,2)$,
and let us consider the second row. Let $L_{v}$ and $\boldsymbol{\beta}_{v}$
for $v\in\{0,1,2\}$ be as in Section \ref{sec:Interpreting-schematics}.
Let $L_{aux}$ be the same as $L_{1}$ but let the link baseball of
the component of $L_{aux}$ that $BB_{a,1}$ is on be $BB_{a,0}$
instead of $BB_{a,1}$. Let $\boldsymbol{\beta}_{aux}$ be the corresponding
attaching curve. Then, 
\begin{multline}
\mu_{2}^{\boldsymbol{\beta}_{0},\boldsymbol{\beta}_{1},\boldsymbol{\beta}_{2}}(\theta\otimes\theta)=\mu_{2}^{\boldsymbol{\beta}_{0},\boldsymbol{\beta}_{1},\boldsymbol{\beta}_{2}}\left(\mu_{2}^{\boldsymbol{\beta}_{0},\boldsymbol{\beta}_{aux},\boldsymbol{\beta}_{1}}(\theta\otimes\theta)\otimes\theta\right)\\
=\mu_{2}^{\boldsymbol{\beta}_{0},\boldsymbol{\beta}_{aux},\boldsymbol{\beta}_{2}}\left(\theta\otimes\mu_{2}^{\boldsymbol{\beta}_{aux},\boldsymbol{\beta}_{1},\boldsymbol{\beta}_{2}}(\theta\otimes\theta)\right)=\mu_{2}^{\boldsymbol{\beta}_{0},\boldsymbol{\beta}_{aux},\boldsymbol{\beta}_{2}}\left(\theta\otimes\theta\right)=\theta,\label{eq:diagram-chase}
\end{multline}
where the first and fourth equalities are the $(0,0,1)$ case, and
the third equality is Proposition \ref{prop:swap-commute}. The third
row follows similarly.

\textbf{Second and third rows, second column}: $(i,j,k,\ell)=(0,0,1,1)$
follows from by Propositions \ref{prop:swap-merge-1}, \ref{prop:swap-merge-2},
and \ref{prop:swap-commute}. The other $(i,j,k,\ell)$'s follow from
a diagram chasing argument as in Equation (\ref{eq:diagram-chase}),
using the $(0,0,1,1)$ case, the $(0,0,1)$ case of the first column,
and Proposition \ref{prop:swap-commute}.
\end{proof}

\section{\label{sec:Modified-band-maps}Modified band maps and modified swap
maps}

In this section, we show that modified band maps commute with modified
swap maps, thus showing Proposition \ref{prop:(Modified-band-maps}.
This follows from Section \ref{sec:Band-maps-and} by purely algebraic
arguments.

\subsection{Modified band maps and swap maps}

Recall that band maps sometimes commuted with swap maps but not always.
Similarly, modified band maps sometimes commute with swap maps, but
they do not always commute.
\begin{lem}
\label{lem:summary-delta}Consider the second and third rows of Figure
\ref{fig:composite-phi-all-cases-2}. For each case, there is exactly
one crossing ball. For each $L_{v}$, there are some components that
are not drawn; call these components $C_{t}$ (does not depend on
$v$) for some positive integers $t$ such that $a<t<b$. Assume that
the link baseballs of the $C_{t}$ component of $L_{v}$ are pairwise
distinct; call it $BB_{t,v}$. Define $\Phi_{ab}:HF^{-}(\boldsymbol{\beta}_{v},\boldsymbol{\beta}_{v'})\to HF^{-}(\boldsymbol{\beta}_{v},\boldsymbol{\beta}_{v'})$
as $\Phi_{ab}:=\sum_{k=a+1}^{b-1}\Phi_{k,v}$. For each horizontal
or diagonal edge $e:v\to v'$, let $t$ be the unique $t\in\{i,j,k,\ell\}$
such that $BB_{b,t}$ is a link baseball of $L_{v}$ or $L_{v'}$,
and define the \emph{modified band map} 
\[
\delta:=\theta+\Phi_{b,t}\Phi_{ab}\theta\in HF_{0}^{-}(\boldsymbol{\beta}_{v},\boldsymbol{\beta}_{v'}).
\]
Assign the swap map $\theta$ to each vertical edge and the modified
band map $\delta$ to each horizontal or diagonal edge.

For the first and second columns, respectively, all the following
cases commute:
\[
(i,j,k)=(0,0,1),(0,1,2),\ {\rm resp.}\ (i,j,k,\ell)=(0,0,1,1),(0,0,1,2),(0,1,2,3).
\]
\end{lem}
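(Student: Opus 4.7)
The plan is to reduce Lemma \ref{lem:summary-delta} to Lemma \ref{lem:summary-theta} by expanding $\delta=\theta+\Phi_{b,t}\Phi_{ab}\theta$ on every horizontal or diagonal edge and tracking how the correction terms propagate through $\mu_{2}$. For each triangle or square in the statement, $\mathbb{F}$-bilinearity of $\mu_2$ splits the composition into a ``leading'' term involving only $\theta$'s and several ``correction'' terms involving applications of $\Phi_{b,t}$ and $\Phi_{ab}$. The leading terms on the two sides of the diagram already agree by Lemma \ref{lem:summary-theta}, so the entire argument reduces to matching the correction terms.

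To match the correction terms, I would use the $\Phi$-equivariance of $\mu_2$ from Subsection \ref{subsec:The--action} to migrate operators across tensor products via the identity $\mu_2(\Phi_p x\otimes y)+\mu_2(x\otimes\Phi_p y)=\Phi_p\mu_2(x\otimes y)$ (whenever $\Phi_p$ is defined on all three Hom spaces), together with $\Phi_p^2=0$ and the fact that distinct $\Phi_p$'s commute on homology. The auxiliary operator $\Phi_{ab}=\sum_{a<k<b}\Phi_{k,v}$ involves only the unchanged components $C_t$, whose link baseballs $BB_{t,v}$ are the same across every vertex of the diagram; consequently $\Phi_{ab}$ passes uniformly across every $\mu_2$ and across every vertical swap via Lemma \ref{lem:phi-swap}, so the $\Phi_{ab}$'s on the two sides of the diagram are literally the same operator.

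The real content lies in the operator $\Phi_{b,t}$, since the index $t\in\{i,j,k,\ell\}$ is edge-dependent: it is pinned down by the requirement that $BB_{b,t}$ be a link baseball of one of the two endpoints of the edge. This is precisely the obstruction recorded in Proposition \ref{prop:swap-merge-2-1}, where the unmodified band maps failed to commute with swap maps by an ``extra'' term of shape $\Phi_1\Phi_2 x^{dr}$. The main obstacle will be showing that the correction $\Phi_{b,t}\Phi_{ab}\theta$ is designed so that this discrepancy is absorbed: once the leading terms are identified via Lemma \ref{lem:summary-theta}, the difference between the two sides is exactly the obstruction term, and the correction term coming from the modification on the opposite side cancels it. Concretely, I would check this using Lemma \ref{lem:phi-swap} to identify $\Phi_{b,t}$ actions across vertical swap edges whenever the baseball $BB_b$ is shared, and using the explicit computation of the relative $H_1$- and $\Phi$-actions on $HFL'{}^-(\#^2 S^1\times S^2, L_{ori};0)$ from Propositions \ref{prop:For-each-balled} and \ref{prop:85} to handle the remaining case where the two endpoints of a vertical edge disagree on which component contains the distinguished link baseball.

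Once the base cases $(i,j,k)=(0,0,1)$ and $(i,j,k,\ell)=(0,0,1,1)$ are established by the above direct argument, the remaining cases $(0,1,2)$, $(0,0,1,2)$, and $(0,1,2,3)$ follow by a diagram-chase identical in spirit to the one used in Equation \eqref{eq:diagram-chase} in the proof of Lemma \ref{lem:summary-theta}: I factor the non-base-case band through an auxiliary attaching curve $\boldsymbol{\beta}_{aux}$ in which only one link baseball has been swapped, reducing the composition to two successive invocations of the base case, interleaved with an application of Proposition \ref{prop:swap-commute} (or Lemma \ref{lem:summary-theta}) on the extra swap edges.
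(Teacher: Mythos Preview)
Your high-level strategy is correct and matches the paper: expand $\delta=\theta+\Phi_{b,t}\Phi_{ab}\theta$, use Lemma~\ref{lem:summary-theta} for the leading $\theta\otimes\theta$ terms, and track the corrections through $\mu_2$ via $\Phi$-equivariance and Lemma~\ref{lem:phi-swap}. However, your execution plan misreads the situation and imports machinery that is not needed.

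The central confusion is your invocation of Proposition~\ref{prop:swap-merge-2-1} and the language of ``absorbing an obstruction.'' That proposition concerns the configurations of Figure~\ref{fig:split-swap-com-1}, which are \emph{not} the diagrams in Lemma~\ref{lem:summary-delta}; those non-commuting cases are handled separately in Lemma~\ref{lem:composite-phi-1}. In the present lemma the $\theta$-versions already commute by Lemma~\ref{lem:summary-theta}, so there is no obstruction term: the only task is to show that the two correction terms coincide, not that one cancels against a discrepancy. Relatedly, the structure results on $HFL'{}^-(\#^2 S^1\times S^2,L_{ori};0)$ from Propositions~\ref{prop:For-each-balled} and~\ref{prop:85} play no role here.

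What you are missing is the elementary observation that makes the paper's proof a two-line computation: for each edge, the baseball $BB_{b,t}$ is a \emph{free} baseball for both attaching curves on the other tensor factor, so $\Phi_{b,t}$ vanishes there. Combined with the Leibniz rule for $\Phi$, this gives identities like $\mu_2(\Phi_{b,0}\otimes{\rm Id})=\Phi_{b,0}\,\mu_2({\rm Id}\otimes{\rm Id})$ directly, and both sides of each square reduce to $({\rm Id}+\Phi_{b,?}\Phi_{ab})\,\mu_2(\theta\otimes\theta)$ for the same index. The paper then says the non-base cases $(0,1,2)$, $(0,0,1,2)$, $(0,1,2,3)$ are ``similar,'' meaning the same vanishing argument applies with adjusted indices; your proposed diagram chase via an auxiliary $\boldsymbol{\beta}_{aux}$ would also work but is more elaborate than necessary.
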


\begin{proof}
We deduce these from Lemma \ref{lem:summary-theta} together with
some purely algebraic arguments. First, note that by Lemma \ref{lem:phi-swap},
\begin{equation}
\mu_{2}(\Phi_{ab}\otimes{\rm Id})=\mu_{2}({\rm Id}\otimes\Phi_{ab})=\Phi_{ab}\mu_{2}({\rm Id}\otimes{\rm Id}).\label{eq:swapphi}
\end{equation}

\textbf{First column}: let us consider the $(i,j,k)=(0,0,1)$ cases;
the general cases are similar. For the second row, we have
\[
\mu_{2}(\delta\otimes\theta)=\mu_{2}(({\rm Id}+\Phi_{b,0}\Phi_{ab})\theta\otimes\theta)=({\rm Id}+\Phi_{b,0}\Phi_{ab})\mu_{2}(\theta\otimes\theta)=({\rm Id}+\Phi_{b,0}\Phi_{ab})\theta=\delta,
\]
where the second equality follows from Equation (\ref{eq:swapphi})
and that $\mu_{2}(\Phi_{b,0}\otimes{\rm Id})=\Phi_{b,0}\mu_{2}({\rm Id}\otimes{\rm Id})$
(which holds since $\Phi_{b,0}=0$ on $HF^{-}(\boldsymbol{\beta}_{1},\boldsymbol{\beta}_{2})$).
The third row follows similarly from $\mu_{2}(\Phi_{b,0}\otimes{\rm Id})=\Phi_{b,1}\mu_{2}({\rm Id}\otimes{\rm Id})$.

\textbf{Second column}: the $(i,j,k,\ell)=(0,0,1,1)$ cases follow
from
\begin{gather*}
\mu_{2}^{rd}(\Phi_{b,0}\otimes{\rm Id})=\Phi_{b,0}\mu_{2}^{rd}({\rm Id}\otimes{\rm Id}),\ \mu_{2}^{dr}({\rm Id}\otimes\Phi_{b,1})=\Phi_{b,0}\mu_{2}^{dr}({\rm Id}\otimes{\rm Id}),\ {\rm resp.}\\
\mu_{2}^{rd}(\Phi_{b,0}\otimes{\rm Id})=\Phi_{b,1}\mu_{2}^{rd}({\rm Id}\otimes{\rm Id}),\ \mu_{2}^{dr}({\rm Id}\otimes\Phi_{b,1})=\Phi_{b,1}\mu_{2}^{dr}({\rm Id}\otimes{\rm Id}).
\end{gather*}
The general cases are similar.
\end{proof}
Similarly, we can show the following, using Proposition \ref{prop:swap-merge-2-1}.
\begin{lem}
\label{lem:composite-phi-1}Consider Figure \ref{fig:composite-phi-all-cases-1}.
Some components are not drawn as in Lemma \ref{lem:summary-delta}.
Assign the swap map $\theta$ to each vertical edge and the modified
band map $\delta$ (defined as in Lemma \ref{lem:summary-delta})
to each horizontal edge. Both diagrams do not commute, and $x^{dr}=x^{rd}+\Phi_{a,0}\Phi_{b,3}x^{rd}$
for both cases.
\end{lem}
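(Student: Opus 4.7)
The plan is to deduce Lemma \ref{lem:composite-phi-1} from Proposition \ref{prop:swap-merge-2-1} by a purely algebraic argument that exactly parallels the proof of Lemma \ref{lem:summary-delta}, replacing the use of Lemma \ref{lem:summary-theta} (which asserted strict commutativity of the unmodified diagrams) with the quantitative non-commutativity $\mu_2^{rd}(\theta\otimes\theta) = \mu_2^{dr}(\theta\otimes\theta) + \Phi_1\Phi_2\,\mu_2^{dr}(\theta\otimes\theta)$ provided by Proposition \ref{prop:swap-merge-2-1}. First I would expand
\[
x^{rd} = \mu_2^{rd}\bigl((\theta + \Phi_{b,t_1}\Phi_{ab}\theta)\otimes\theta\bigr),\qquad x^{dr} = \mu_2^{dr}\bigl(\theta\otimes(\theta + \Phi_{b,t_2}\Phi_{ab}\theta)\bigr),
\]
where $t_1,t_2$ are the indices of the relevant endpoints dictated by the definition of $\delta$ in Lemma \ref{lem:summary-delta}.

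Next I would move the $\Phi_{ab}$-factors past $\mu_2$ using Lemma \ref{lem:phi-swap} (since the vertical maps are swap maps in the $b$-th component and the sum $\Phi_{ab}$ only involves the intermediate components $C_t$ with $a<t<b$, none of which participate in the swap or the band move), giving $\mu_2(\Phi_{ab}\otimes\mathrm{Id}) = \mu_2(\mathrm{Id}\otimes\Phi_{ab}) = \Phi_{ab}\mu_2(\mathrm{Id}\otimes\mathrm{Id})$. Analogously, the $\Phi_{b,t_i}$-factor can be transported using equivariance of $\mu_2$ under $\Phi_{b,\cdot}$ (and the fact that $\Phi_{b,v}$ vanishes on the Hom space whenever the $b$-th baseball is free for both endpoints, as recorded in Subsection \ref{subsec:The--action}), producing an identity of the form $\mu_2^{rd}(\Phi_{b,t_1}\otimes\mathrm{Id}) = \Phi_{b,v^{rd}}\mu_2^{rd}(\mathrm{Id}\otimes\mathrm{Id})$ and similarly for $\mu_2^{dr}$. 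The point is that after these commutations, everything reduces to expressions in $\mu_2^{rd}(\theta\otimes\theta)$ and $\mu_2^{dr}(\theta\otimes\theta)$ modified by outer $\Phi$-actions.

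Applying Proposition \ref{prop:swap-merge-2-1} to the remaining $\mu_2^{rd}(\theta\otimes\theta)$ gives
\[
x^{rd} = x^{dr} + \Phi_1\Phi_2\,x^{dr} + (\text{correction from the }\Phi_{b,t_1}\Phi_{ab}\text{ vs.\ }\Phi_{b,t_2}\Phi_{ab}\text{ terms}).
\]
The hard part — though really just bookkeeping — will be verifying that, after rewriting all $\Phi$-actions on the target $HF^-(\boldsymbol{\beta}_{00},\boldsymbol{\beta}_{11})$ using $\Phi_{k,v} = \Phi_{k,v'}$ for indices $k$ whose baseball does not move between $\boldsymbol{\beta}_v$ and $\boldsymbol{\beta}_{v'}$, the cross-terms involving $\Phi_{ab}$ cancel in pairs and the surviving contribution collapses to $\Phi_{a,0}\Phi_{b,3}x^{rd}$ (equivalently $\Phi_{a,0}\Phi_{b,3}x^{dr}$, since $(\Phi_{a,0}\Phi_{b,3})^2 = 0$ and the difference between $x^{rd}$ and $x^{dr}$ is a square-zero correction). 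Concretely, the subscripts $1,2$ in Proposition \ref{prop:swap-merge-2-1} become $a,b$ in the present setting, and the intermediate indices absorbed into $\Phi_{ab}$ drop out because they commute across $\mu_2$ without discrepancy.

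I would treat both squares of Figure \ref{fig:composite-phi-all-cases-1} in parallel, since the two cases differ only in whether the horizontal bands are merges or splits (exchanging the roles of $rd$ and $dr$ above), and in each case the same expansion yields the same final formula $x^{dr} = x^{rd} + \Phi_{a,0}\Phi_{b,3}x^{rd}$. The main obstacle, such as it is, lies in correctly identifying the distinguished indices $t_1, t_2$ appearing in $\delta$ on the two sides of the square and verifying that the $\Phi_{b,t_i}$-terms recombine cleanly under the identification of $\Phi_{b,v}$-actions across attaching curves; once that is done, the rest is a straightforward two-line manipulation of the type carried out at the end of the proof of Lemma \ref{lem:summary-delta}.
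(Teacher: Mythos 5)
Your proposal is correct and takes the same approach as the paper: the paper's proof is the one-line pointer ``Similarly, we can show the following, using Proposition \ref{prop:swap-merge-2-1}'' (i.e., run the algebraic argument of Lemma \ref{lem:summary-delta} with Proposition \ref{prop:swap-merge-2-1} substituted for Lemma \ref{lem:summary-theta}), which is precisely what you flesh out. The bookkeeping step you flag does close cleanly — after commuting $\Phi_{ab}$ and $\Phi_{b,t_i}$ out via Lemma \ref{lem:phi-swap} and $\Phi$-equivariance, the extra cross-terms kill themselves because $\Phi_{b,\cdot}^2 = 0$, and the $rd$/$dr$ asymmetry between the two stated formulas is immaterial over $\mathbb{F}_2$ for the same reason.
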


\begin{figure}[h]
\begin{centering}
\includegraphics{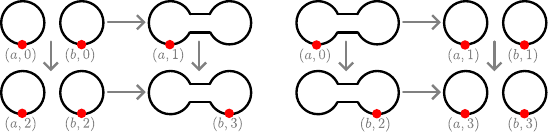}
\par\end{centering}
\caption{\label{fig:composite-phi-all-cases-1}Noncommutative diagrams}
\end{figure}

\subsection{\label{subsec:Modified-band-maps}Modified band maps and modified
swap maps}

In this subsection, we show Proposition \ref{prop:(Modified-band-maps}.
Recall that the bijections (``total order'') $\sigma,\tau:\{1,\cdots,N\}\to\boldsymbol{CC}$
satisfy $\tau=\sigma\circ(b\ b+1)$ for some $b=1,\cdots,N-1$. For
notational convenience, let us rename the balled links as $L_{00}:=L(f,\sigma)$,
$L_{01}:=L(g,\sigma)$, $L_{10}:=L(f,\tau)$, and $L_{11}:=L(g,\tau)$;
and let the attaching curve $\boldsymbol{\beta}_{s}$ for $s\in\{00,01,10,11\}$
be the attaching curve for $L_{s}$. Let $CB$ be the crossing ball
that the underlying links differ in. Identify (by reparametrizing
$CB$) $L_{jk}\cap CB\subset CB$ ($j,k=0,1$) with $k$ of Figure
\ref{fig:skein-moves-1}, and let $B\subset CB$ be the corresponding
band (from $0$ to $1$).

Rename the modified band maps (Subsubsection \ref{subsec:Step-2}
and Definition \ref{def:For-each-consecutive}) as $\delta_{\sigma}:=\delta\in HF^{-}(\boldsymbol{\beta}_{00},\boldsymbol{\beta}_{01})$
and $\delta_{\tau}:=\delta\in HF^{-}(\boldsymbol{\beta}_{10},\boldsymbol{\beta}_{11})$
to avoid confusion, and let us use the notations $\mu_{2}^{rd}$,
$\mu_{2}^{dr}$ as in Subsection \ref{sec:Interpreting-schematics}.
The statement of Proposition \ref{prop:(Modified-band-maps} is 
\begin{equation}
\mu_{2}^{rd}(\delta_{\sigma}\otimes\chi)=\mu_{2}^{dr}(\chi\otimes\delta_{\tau}).\label{eq:modified-band-maps-statement}
\end{equation}

\begin{figure}[h]
\begin{centering}
\includegraphics{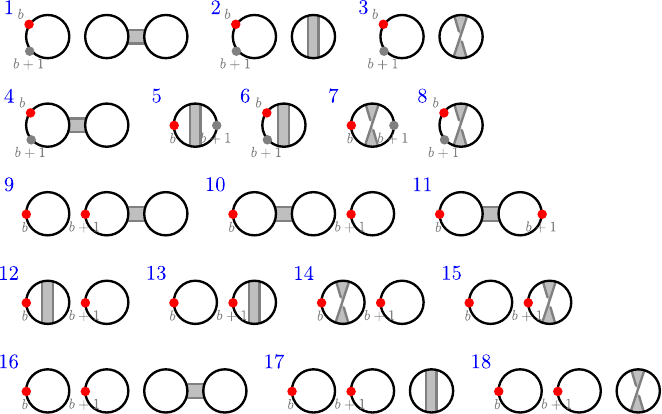}
\par\end{centering}
\caption{\label{fig:one-band}All the cases for Proposition \ref{prop:(Modified-band-maps}}
\end{figure}

For notational convenience, let us rename the baseballs as $BB_{i,00}:=BB_{\sigma(i),f,\sigma}$,
$BB_{i,01}:=BB_{\sigma(i),g,\sigma}$, $BB_{i,10}:=BB_{\tau(i),f,\tau}$,
and $BB_{i,11}:=BB_{\tau(i),g,\tau}$. Figure \ref{fig:one-band}
lists all possible configurations of the link $L_{00}$ (ignoring
the components of $L_{00}$ that do not intersect $B\cup BB_{b,00}\cup BB_{b+1,00}$),
the band $B$, and the baseballs $BB_{b,00}$ and $BB_{b+1,00}$ (simply
labelled $b$ and $b+1$), up to an orientation-preserving diffeomorphism
of $\overline{N}(L_{00}\cup B)\cup BB_{b,00}\cup BB_{b+1,00}$.

We will show Equation (\ref{eq:modified-band-maps-statement}) using
Lemmas \ref{lem:summary-theta} and \ref{lem:summary-delta} together
with a series of reductions. 
\begin{defn}
Let $\boldsymbol{CD}$ be the set of connected components of $L_{00}\backslash CB$.
Recall that $\boldsymbol{CC}$ is the set of connected components
of $L_{00}\backslash\bigsqcup CB_{r}$ where $CB_{r}$ ranges over
all the crossing balls. For each $CC\in\boldsymbol{CC}$, there exists
a unique $CD\in\boldsymbol{CD}$ such that $CC\subset CD$.

Given a total order $\eta$ on $\boldsymbol{CC}$, let ${\rm min}_{\eta}CD\in\boldsymbol{CC}$
for $CD\in\boldsymbol{CD}$ be the smallest $CC\in\boldsymbol{CC}$
(with respect to $\eta$) such that $CC\subset CD$. Define the \emph{induced
total order }$<_{\eta}$ on $\boldsymbol{CD}$ by letting $CD<_{\eta}CD'$
if and only if ${\rm min}_{\eta}CD$ is smaller than ${\rm min}_{\eta}CD'$
with respect to $\eta$.

For $\eta\in\{\sigma,\tau\}$ and $a=1,\cdots,N$, say $a$ is \emph{$\eta$-minimal
}if $\eta(a)={\rm \min}_{\eta}CD$ for the connected component $CD\in\boldsymbol{CD}$
such that $\eta(a)\subset CD$.

Let $CD_{x},CD_{y}\in\boldsymbol{CD}$ be the two connected components
of $L_{00}\backslash CB$ whose closure intersects $CB$, such that
$CD_{x}<_{\sigma}CD_{y}$. Say \emph{$\sigma$ and $\tau$ are compatible
with respect to $B$} if $B$ is non-orientable, or if $B$ is a merge
or a split band and $CD_{x}<_{\tau}CD_{y}$.
\end{defn}

If $B$ is a merge band (resp. a split band), then the link baseball
of $L_{01}$ (resp. $L_{00}$) is on $CD_{x}$. If $CD_{x}<_{\tau}CD_{y}$,
then similarly if $B$ is a merge band (resp. a split band), then
the link baseball of $L_{11}$ (resp. $L_{10}$) is on $CD_{x}$.
Hence, if $CD_{x}<_{\tau}CD_{y}$, then 
\begin{equation}
\mu_{2}^{rd}(\theta\otimes\theta)=\mu_{2}^{dr}(\theta\otimes\theta)\label{eq:thetas-statement}
\end{equation}
by Lemma \ref{lem:summary-theta}. If $B$ is non-orientable, then
Equation (\ref{eq:thetas-statement}) follows from Lemma \ref{lem:summary-theta}
as well. Hence, Equation (\ref{eq:thetas-statement}) holds if $\sigma$
and $\tau$ are compatible with respect to $B$.
\begin{defn}
\label{def:Assume-that-}Assume that $\sigma$ and $\tau$ are compatible
with respect to $B$. For $\eta\in\{\sigma,\tau\}$ and $i\in\{0,1\}$,
define $\delta_{\eta}\in HF^{-}(\boldsymbol{\beta}_{i0},\boldsymbol{\beta}_{i1})$
as follows:
\begin{itemize}
\item If $B$ is non-orientable, then let $\delta_{\eta}:=\theta$.
\item If $B$ is a merge band (resp. split band), let $\Phi_{y}$ be the
$\Phi$-action of the link component of $L_{i0}$ (resp. $L_{i1}$)
that contains $CD_{y}$. For $CD\in\boldsymbol{CD}\backslash\{CD_{x},CD_{y}\}$,
define $\Phi_{CD}$ as the $\Phi$-action of the link component $CD$
of $L_{i0}$. Also, let $\Phi_{xy,\eta}$ be the sum of $\Phi_{CD}$
for $CD\in\boldsymbol{CD}$ such that $CD_{x}<_{\eta}CD<_{\eta}CD_{y}$.
Define
\[
\delta_{\eta}:=\theta+\Phi_{y}\Phi_{xy,\eta}\theta.
\]
\end{itemize}
\end{defn}

\begin{rem}
The elements $\delta_{\sigma}\in HF^{-}(\boldsymbol{\beta}_{00},\boldsymbol{\beta}_{01})$
and $\delta_{\tau}\in HF^{-}(\boldsymbol{\beta}_{10},\boldsymbol{\beta}_{11})$
from Definition \ref{def:Assume-that-} agree with the modified band
maps that we have already defined.
\end{rem}

If $\sigma$ and $\tau$ are compatible with respect to $B$, then
\begin{equation}
\mu_{2}^{rd}(\delta_{\sigma}\otimes\theta)=\mu_{2}^{dr}(\theta\otimes\delta_{\sigma})\label{eq:deltasigma-theta}
\end{equation}
by Lemmas \ref{lem:summary-theta} (if $B$ is non-orientable) and
\ref{lem:summary-delta} (if $B$ is a merge or split band).
\begin{lem}
\label{lem:minimal-reduce}Equation (\ref{eq:modified-band-maps-statement})
holds if $b$ or $b+1$ is not $\sigma$-minimal.
\end{lem}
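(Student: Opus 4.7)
The plan is first to reduce the modified swap maps $\chi$ to canonical swap maps $\theta$, and then to reduce the resulting identity to an instance of Lemma \ref{lem:summary-delta}.

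Suppose $b$ is not $\sigma$-minimal (the case $b+1$ not $\sigma$-minimal is entirely symmetric). By definition of $\sigma$-minimality there exists $a<b$ with $\sigma(a)\in CD_{\sigma(b)}$, the component of $L_{00}\setminus CB$ containing $\sigma(b)$. Because the band $B\subset CB$ is disjoint from $CD_{\sigma(b)}$, the basepoints $\sigma(a)$ and $\sigma(b)$ lie on a common link component of both $L_{00}$ and $L_{01}$. Thus $BB_{b,00}$ is not the minimum-indexed baseball on its link component of $L_{00}$, and similarly $BB_{b,01}$ is not the minimum-indexed baseball on its link component of $L_{01}$; both are therefore free baseballs, so by Definition \ref{def:sigmatau} the two vertical modified swap maps reduce to the ordinary canonical ones:
\[
\chi_{f}=\theta\in HF^{-}(\boldsymbol{\beta}_{00},\boldsymbol{\beta}_{10}),\qquad \chi_{g}=\theta\in HF^{-}(\boldsymbol{\beta}_{01},\boldsymbol{\beta}_{11}).
\]
Equation (\ref{eq:modified-band-maps-statement}) therefore reduces to $\mu_{2}^{rd}(\delta_{\sigma}\otimes\theta)=\mu_{2}^{dr}(\theta\otimes\delta_{\tau})$.

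Next I would compare $\delta_{\sigma}$ and $\delta_{\tau}$. If $B$ is non-orientable then $\delta_{\sigma}=\delta_{\tau}=\theta$ and the identity is the first row of Lemma \ref{lem:summary-theta}. If $B$ is a merge or a split band, Definition \ref{def:For-each-consecutive} expresses $\delta$ as $\theta+\Phi_{j}\bigl(\sum_{k=i+1}^{j-1}\Phi_{k}\bigr)\theta$ for suitable indices $i<j$ (interpreting $\Phi_{k,f}$ as $\Phi_{BB_{k,f}}$, which vanishes on free baseballs as noted in Subsection \ref{subsec:The--action}). Since $BB_{b,00}$ is free, the contribution $\Phi_{b,00}$ drops out of the sum defining $\delta_{\sigma}$; similarly for $\delta_{\tau}$ and $BB_{b,10}$. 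A short case analysis on whether $b,b+1$ fall strictly inside, strictly outside, or at an endpoint of the interval $[i,j]$ of $\sigma$-indices shows that after dropping these vanishing terms $\delta_{\sigma}$ and $\delta_{\tau}$ are given by the \emph{same} formal expression in $\theta$ and the $\Phi$-actions of link components. The desired commutativity is then precisely one of the squares in the second column of Figure \ref{fig:composite-phi-all-cases-2}, supplied by Lemma \ref{lem:summary-delta}.

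The main obstacle is the bookkeeping in the sub-case $b+1=q_{\sigma}$, where $\sigma(b+1)$ is the $\sigma$-minimum of the $y$-side link component while $\sigma(b)$ has an earlier witness on a different component. Here the transposition $(b\ b+1)$ shifts the outer factor $\Phi_{q_{\sigma},00}$ of $\delta_{\sigma}$ to the factor $\Phi_{q_{\tau},10}$ of $\delta_{\tau}$ with $q_{\tau}=b$ rather than $b+1$; since both $\Phi$-actions are those of the same underlying $y$-side link component, the equivariance statements of Lemma \ref{lem:phi-swap} let one identify the two compositions after applying $\mu_{2}(-\otimes\theta)$ or $\mu_{2}(\theta\otimes-)$, closing the argument.
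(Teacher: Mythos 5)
Your plan matches the paper's: first reduce both modified swap maps $\chi$ to the canonical $\theta$ using the non-$\sigma$-minimality hypothesis, then reduce the commutativity statement to Lemma~\ref{lem:summary-delta} (via Equation~(\ref{eq:deltasigma-theta})) by arguing that $\delta_{\sigma}$ and $\delta_{\tau}$ are given by the same formal expression. The first step is carried out correctly: $BB_{b,00}$ and $BB_{b,01}$ are free, so Definition~\ref{def:sigmatau} gives $\chi=\theta$ on both vertical edges.

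Where you diverge is in how you get ``same formal expression.'' The paper rewrites the modified band map intrinsically as $\delta_{\eta}=\theta+\Phi_{y}\Phi_{xy,\eta}\theta$ (Definition~\ref{def:Assume-that-}), where the only $\eta$-dependence is through the induced order $<_{\eta}$ on $\boldsymbol{CD}$, and then makes one observation: if $b$ or $b+1$ is not $\sigma$-minimal, then $<_{\sigma}=<_{\tau}$ on $\boldsymbol{CD}$. That single line gives both compatibility and $\delta_{\sigma}=\delta_{\tau}$ at once. You instead track indices in the raw formula $\theta+\Phi_{j,f}\bigl(\sum_{k=i+1}^{j-1}\Phi_{k,f}\bigr)\theta$, and here there is a slip: the phrase ``similarly for $\delta_{\tau}$ and $BB_{b,10}$'' suggests $BB_{b,10}$ is free, but $BB_{b,10}$ lies over $\tau(b)=\sigma(b+1)$, about whose $\sigma$-minimality the hypothesis says nothing. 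If $\sigma(b+1)$ is the $\sigma$-minimum of a link component strictly between $CD_{x}$ and $CD_{y}$ (and $\sigma(b)$ is non-minimal on some other component), then $BB_{b,10}$ \emph{is} a link baseball of $L_{10}$ and $\Phi_{b,10}$ does not drop out; rather it takes over the role that $\Phi_{b+1,00}$ played in $\delta_{\sigma}$. The baseball that does become free after the swap is $BB_{b+1,10}$, over $\tau(b+1)=\sigma(b)$. More broadly, your ``short case analysis'' is where the content of the argument actually lives and it is left unverified; the one sub-case you spell out in the last paragraph ($b+1=q_{\sigma}$ on the $y$-side) is not the only one in which indices move. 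The outline is correct and the approach is salvageable, but as written it does not establish the key identity $\delta_{\sigma}=\delta_{\tau}$; recognizing that the hypothesis forces $<_{\sigma}=<_{\tau}$ on $\boldsymbol{CD}$ (so that $\Phi_{xy,\sigma}=\Phi_{xy,\tau}$) is the economical way to close it, and is the paper's load-bearing step.
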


\begin{proof}
Say $b$ or $b+1$ is not $\sigma$-minimal. Then, $b$ or $b+1$
is not $\tau$-minimal either, and so for each $s\in\{00,01,10,11\}$,
either $BB_{b,s}$ or $BB_{b+1,s}$ is not a link baseball of $L_{s}$.
Hence, $\theta=\chi$ for $HF^{-}(\boldsymbol{\beta}_{00},\boldsymbol{\beta}_{10})$
and $HF^{-}(\boldsymbol{\beta}_{01},\boldsymbol{\beta}_{11})$.

If $B$ is non-orientable, then $\delta_{\sigma}=\theta\in HF^{-}(\boldsymbol{\beta}_{00},\boldsymbol{\beta}_{10})$
and $\delta_{\tau}=\theta\in HF^{-}(\boldsymbol{\beta}_{01},\boldsymbol{\beta}_{11})$,
and so Equation (\ref{eq:modified-band-maps-statement}) follows from
Equation (\ref{eq:thetas-statement}).

Let $B$ be a merge or a split band. Since $b$ or $b+1$ is not minimal,
$<_{\sigma}=<_{\tau}$ on $\boldsymbol{CD}$. Hence, $\sigma$ and
$\tau$ are compatible with respect to $B$ and $\delta_{\sigma}=\delta_{\tau}$.
Thus, Equation (\ref{eq:modified-band-maps-statement}) follows from
Equation (\ref{eq:deltasigma-theta}).
\end{proof}
By Lemma \ref{lem:minimal-reduce}, we may assume that both $b$ and
$b+1$ are $\sigma$-minimal; in particular, the baseballs ($BB_{b,00}$
and/or $BB_{b+1,00}$) colored red in Figure \ref{fig:one-band} are
indeed link baseballs on $L_{00}$.

For all but the fifth and eleventh cases, $\sigma$ and $\tau$ are
compatible with respect to $B$. Hence, Equation (\ref{eq:deltasigma-theta})
holds.

\subsubsection{The fifth and eleventh cases}

The fifth and eleventh cases demonstrate why we should modify the
vertical swap maps: see Figure \ref{fig:one-band-2}. For both cases,
we have $\delta_{\sigma}(00,01)=\theta(00,01)$ and $\delta_{\tau}(10,11)=\theta(10,11)$.
The fifth case follows from the following, where the second equality
is Lemma \ref{lem:composite-phi-1}:
\[
\mu_{2}^{dr}(\chi\otimes\delta_{\tau})=\mu_{2}^{dr}(\theta\otimes\delta_{\sigma})=({\rm Id}+\Phi_{b+1,11}\Phi_{b,11})\mu_{2}^{rd}(\delta_{\sigma}\otimes\theta)=\mu_{2}^{rd}(\delta_{\sigma}\otimes\chi).
\]
The eleventh case follows similarly.

\begin{figure}[h]
\begin{centering}
\includegraphics{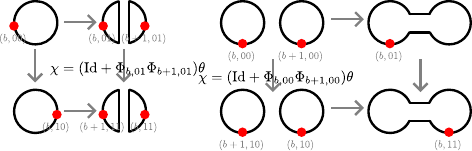}
\par\end{centering}
\caption{\label{fig:one-band-2}The fifth and eleventh cases of Figure \ref{fig:one-band}
(the components that do not intersect $B\cup BB_{b,00}\cup BB_{b+1,00}$
are not drawn)}
\end{figure}

\subsubsection{The ninth, tenth, twelfth, and thirteenth cases}

Let us consider the ninth case (the other three cases follow similarly).
We derive Equation (\ref{eq:modified-band-maps-statement}) from Equation
(\ref{eq:deltasigma-theta}). Let $CD\in\{CD_{x},CD_{y}\}$ be the
connected component of $L_{00}\backslash CB$ whose closure intersects
$CB$ but does not intersect $BB_{b,00},BB_{b+1,00}$. Let $z$ be
the minimum $z$ such that $BB_{z,00}$ is on $CD$. There are two
cases to consider, depending on whether $z<b$ or $z>b+1$: see Figure
\ref{fig:one-band-2}. Here, $a<b$, $b+1<c$.

\begin{figure}[h]
\begin{centering}
\includegraphics{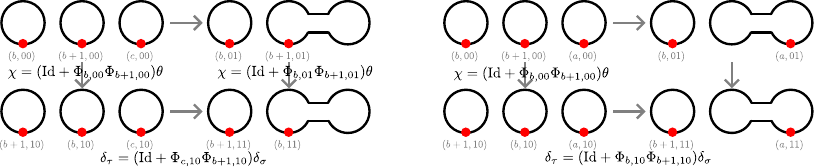}
\par\end{centering}
\caption{\label{fig:one-band-2-1}The ninth case of Figure \ref{fig:one-band}
(the components that do not intersect $B\cup BB_{b,00}\cup BB_{b+1,00}$
are not drawn)}
\end{figure}

In the left hand side case, we have 
\begin{gather*}
\mu_{2}^{dr}(\Phi_{b,00}\otimes{\rm Id})=\Phi_{b,00}\mu_{2}^{dr}({\rm Id}\otimes{\rm Id})=\mu_{2}^{dr}({\rm Id}\otimes\Phi_{b+1,10}),\\
\mu_{2}^{dr}(\Phi_{b+1,00}\otimes{\rm Id})=\Phi_{b+1,00}\mu_{2}^{dr}({\rm Id}\otimes{\rm Id}),\ \mu_{2}^{dr}({\rm Id}\otimes\Phi_{c,10})=\Phi_{c,00}\mu_{2}^{dr}({\rm Id}\otimes{\rm Id}),
\end{gather*}
and so 
\[
\mu_{2}^{dr}(({\rm Id}+\Phi_{b,00}\Phi_{b+1,00})\otimes({\rm Id}+\Phi_{c,10}\Phi_{b+1,10}))=({\rm Id}+\Phi_{b,00}\Phi_{b,11})\mu_{2}^{dr}({\rm Id}\otimes{\rm Id}).
\]
Also, we have 
\[
\mu_{2}^{rd}({\rm Id}\otimes({\rm Id}+\Phi_{b,01}\Phi_{b+1,01}))=({\rm Id}+\Phi_{b,00}\Phi_{b,11})\mu_{2}^{rd}({\rm Id}\otimes{\rm Id}).
\]

In the right hand side case, we have 
\[
\mu_{2}^{dr}(\Phi_{b,00}\otimes{\rm Id})=\Phi_{b,00}\mu_{2}^{dr}({\rm Id}\otimes{\rm Id})=\mu_{2}^{dr}({\rm Id}\otimes\Phi_{b+1,10}),\ \mu_{2}^{dr}({\rm Id}\otimes\Phi_{b,10})=\Phi_{b+1,00}\mu_{2}^{dr}({\rm Id}\otimes{\rm Id}),
\]
and so 
\[
\mu_{2}^{dr}(({\rm Id}+\Phi_{b,00}\Phi_{b+1,00})\otimes({\rm Id}+\Phi_{b,10}\Phi_{b+1,10}))=\mu_{2}^{dr}({\rm Id}\otimes{\rm Id}).
\]

\subsubsection{The fourteenth, fifteenth, sixteenth, seventeenth, and eighteenth
cases}

For these cases, $\delta_{\sigma}(10,11)=\delta_{\tau}(10,11)$, $\chi(00,10)=\Phi_{b,00}\Phi_{b+1,00}\theta(00,10)$,
and $\chi(01,11)=\Phi_{b,01}\Phi_{b+1,01}\theta(01,11)$. Hence, Equation
(\ref{eq:modified-band-maps-statement}) follows from Equation (\ref{eq:deltasigma-theta})
and 
\begin{gather*}
\mu_{2}^{rd}({\rm Id}\otimes\Phi_{b,01}\Phi_{b+1,01})=\Phi_{b,00}\Phi_{b+1,00}\mu_{2}^{rd}({\rm Id}\otimes{\rm Id}),\\
\mu_{2}^{dr}(\Phi_{b,00}\Phi_{b+1,00}\otimes{\rm Id})=\Phi_{b,00}\Phi_{b+1,00}\mu_{2}^{dr}({\rm Id}\otimes{\rm Id}).
\end{gather*}

\subsubsection{The first, second, third, fourth, sixth, seventh, and eighth cases}

For the remaining cases, $\delta_{\sigma}(10,11)=\delta_{\tau}(10,11)$
and $\chi=\theta$. (In fact, $b+1$ is not minimal for all but the
seventh case.) Hence, these cases follow from Equation (\ref{eq:deltasigma-theta}).

\subsection{Modified swap maps and modified swap maps}

The same statement as Proposition \ref{prop:Something-is-an} holds
for modified swap maps.
\begin{prop}
\label{prop:Something-is-an-1}Assume the conditions of Proposition
\ref{prop:Something-is-an}, but further assume that there exist two
components of $L_{0}$ such that their link baseballs in $L_{0}$
are different from their link baseballs in $L_{1}$. Let $\Phi_{b,0},\Phi_{b+1,0}$
be the $\Phi$-actions for these two components of $L_{0}$. Then,
\[
\mu_{2}(-\otimes({\rm Id}+\Phi_{b,0}\Phi_{b+1,0})\theta):HF^{-}(\boldsymbol{\alpha},\boldsymbol{\beta}_{0})\to HF^{-}(\boldsymbol{\alpha},\boldsymbol{\beta}_{1})
\]
is an isomorphism. Similarly, (if $\boldsymbol{\beta}_{0},\boldsymbol{\beta}_{1}<\boldsymbol{\alpha}$,
then)
\[
\mu_{2}(({\rm Id}+\Phi_{b,0}\Phi_{b+1,0})\theta\otimes-):HF^{-}(\boldsymbol{\beta}_{1},\boldsymbol{\alpha})\to HF^{-}(\boldsymbol{\beta}_{0},\boldsymbol{\alpha})
\]
is an isomorphism.
\end{prop}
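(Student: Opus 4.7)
The plan is to reduce Proposition \ref{prop:Something-is-an-1} to Proposition \ref{prop:Something-is-an} by showing that the modified swap map differs from the unmodified swap map by composition with an involution on $HF^{-}(\boldsymbol{\alpha},\boldsymbol{\beta}_{0})$. Let $F := \mu_{2}(-\otimes\theta)$, which is an isomorphism by Proposition \ref{prop:Something-is-an}. Let $p$ (resp. $q$) denote the basepoint pair corresponding to the link baseball of the first (resp. second) ``swapping'' component of $L_{0}$, so that $\Phi_{b,0}=\Phi_{p}$ and $\Phi_{b+1,0}=\Phi_{q}$ on both $HF^{-}(\boldsymbol{\beta}_{0},\boldsymbol{\beta}_{1})$ and $HF^{-}(\boldsymbol{\alpha},\boldsymbol{\beta}_{0})$. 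By construction, $p$ and $q$ are link basepoint pairs for $\boldsymbol{\beta}_{0}$ but free basepoint pairs for $\boldsymbol{\beta}_{1}$, and by the hypothesis on $\boldsymbol{\alpha}$ they are also free for $\boldsymbol{\alpha}$; hence, by the discussion in Subsection \ref{subsec:The--action}, $\Phi_{p}=\Phi_{q}=0$ on $HF^{-}(\boldsymbol{\alpha},\boldsymbol{\beta}_{1})$.

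The key computation will use $\Phi$-equivariance of $\mu_{2}$ on homology, i.e. $\mu_{2}(a\otimes\Phi_{r}b)=\Phi_{r}\mu_{2}(a\otimes b)+\mu_{2}(\Phi_{r}a\otimes b)$. Applying this once gives $\mu_{2}(x\otimes\Phi_{p}\theta)=\Phi_{p}F(x)+F(\Phi_{p}x)=F(\Phi_{p}x)$ since $\Phi_{p}$ vanishes on the target. Applying it twice and using the same vanishing on the target (together with commutativity of $\Phi_{p}$ and $\Phi_{q}$ on homology) yields $\mu_{2}(x\otimes\Phi_{p}\Phi_{q}\theta)=F(\Phi_{p}\Phi_{q}x)$, so that
\[
\mu_{2}\bigl(x\otimes({\rm Id}+\Phi_{p}\Phi_{q})\theta\bigr)=F\bigl(({\rm Id}+\Phi_{p}\Phi_{q})x\bigr).
\]

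Finally, since $\Phi_{p}^{2}=0$ and $\Phi_{p}\Phi_{q}=\Phi_{q}\Phi_{p}$ on homology (see the last lemma of Subsection \ref{subsec:The--action}), in characteristic $2$ we have $({\rm Id}+\Phi_{p}\Phi_{q})^{2}={\rm Id}+2\Phi_{p}\Phi_{q}+\Phi_{p}^{2}\Phi_{q}^{2}={\rm Id}$; thus $({\rm Id}+\Phi_{b,0}\Phi_{b+1,0})$ is an involution on $HF^{-}(\boldsymbol{\alpha},\boldsymbol{\beta}_{0})$, in particular an isomorphism. Composing with the isomorphism $F$ produces the desired isomorphism. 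The second statement, concerning $\mu_{2}(({\rm Id}+\Phi_{b,0}\Phi_{b+1,0})\theta\otimes-):HF^{-}(\boldsymbol{\beta}_{1},\boldsymbol{\alpha})\to HF^{-}(\boldsymbol{\beta}_{0},\boldsymbol{\alpha})$, follows by the mirror image of this argument, again using Proposition \ref{prop:Something-is-an} on the other side and the vanishing of $\Phi_{p},\Phi_{q}$ on $HF^{-}(\boldsymbol{\beta}_{1},\boldsymbol{\alpha})$.

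There is essentially no substantive obstacle here: the argument is a purely algebraic consequence of Proposition \ref{prop:Something-is-an} together with the $\Phi$-equivariance of $\mu_{2}$, and the only point requiring care is verifying that the free/link types of $p$ and $q$ with respect to $\boldsymbol{\alpha}$ and $\boldsymbol{\beta}_{1}$ force the vanishing $\Phi_{p}=\Phi_{q}=0$ on $HF^{-}(\boldsymbol{\alpha},\boldsymbol{\beta}_{1})$—which is exactly what the hypothesis on $\boldsymbol{\alpha}$ in Proposition \ref{prop:Something-is-an} was designed to guarantee.
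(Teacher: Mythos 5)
Your proof is correct, and it takes a somewhat different route from the paper's. The paper proves this by first showing that the square of modified swap maps in Figure \ref{fig:one-band-2-1-1} commutes (via Proposition \ref{prop:swap-commute} and Lemma \ref{lem:phi-swap}), so that the composite of two modified swap maps between beta-curves is an unmodified swap map, and then deducing invertibility from Proposition \ref{prop:Something-is-an}; this requires introducing an auxiliary third attaching curve and fits into the paper's general diagram-chasing machinery. You instead work directly in the triple $(\boldsymbol{\alpha},\boldsymbol{\beta}_{0},\boldsymbol{\beta}_{1})$: using the $\Phi$-equivariance (Leibniz rule) of $\mu_{2}$ together with the vanishing of $\Phi_{p},\Phi_{q}$ on $HF^{-}(\boldsymbol{\alpha},\boldsymbol{\beta}_{1})$ (guaranteed by the hypothesis on $\boldsymbol{\alpha}$), you factor the modified map as $F\circ({\rm Id}+\Phi_{p}\Phi_{q})$ with $F=\mu_{2}(-\otimes\theta)$, and observe that $({\rm Id}+\Phi_{p}\Phi_{q})$ is an involution since $\Phi_{p}^{2}=0$, $\Phi_{p}\Phi_{q}=\Phi_{q}\Phi_{p}$, and we are in characteristic $2$. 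Both arguments rest on the same two pillars (Proposition \ref{prop:Something-is-an} and $\Phi$-equivariance of $\mu_{2}$), but yours is more self-contained and avoids the auxiliary diagram. One small point worth noting in your "mirror image" for the second statement: there the Leibniz rule pushes the $\Phi$'s onto the \emph{target} $HF^{-}(\boldsymbol{\beta}_{0},\boldsymbol{\alpha})$ rather than the source, so the factorization reads $({\rm Id}+\Phi_{p}\Phi_{q})\circ G$ with $G=\mu_{2}(\theta\otimes-)$; the conclusion is the same.
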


\begin{proof}
Using Proposition \ref{prop:swap-commute} and Lemma \ref{lem:phi-swap},
we can show that Figure \ref{fig:one-band-2-1-1} commutes. Hence
the conclusion follows from Proposition \ref{prop:Something-is-an}.
\end{proof}
\begin{figure}[h]
\begin{centering}
\includegraphics{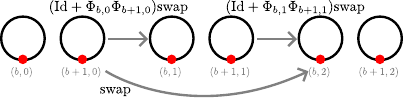}
\par\end{centering}
\caption{\label{fig:one-band-2-1-1}Modified swap maps and modified swap maps}
\end{figure}

\section{\label{sec:Composition-of-band}Modified band maps and modified band
maps}

In this section, we show Proposition \ref{prop:We-have-the} (\ref{enu:4}),
i.e. that modified band maps commute.

\subsection{\label{subsec:The-setup}The setup}

In this subsection, we describe what kind of local computations we
should do to show Proposition \ref{prop:We-have-the} (\ref{enu:4}).

Let $L_{00}=L$ be a link, and let there be two crossing balls $CB_{1}$
and $CB_{2}$. Let $CB_{r}\cap L_{00}\subset CB_{r}$ ($r=1,2$) be
$0$ of Figure \ref{fig:skein-moves-1}, and let $L_{ij}$ ($ij\in\{00,01,10,11\}$)
be the link obtained from $L_{00}$ by replacing $CB_{1}\cap L_{00}\subset CB_{1}$
with $i$ of Figure \ref{fig:skein-moves-1}, and $CB_{2}\cap L_{00}\subset CB_{2}$
with $j$ of Figure \ref{fig:skein-moves-1}. For $r=1,2$, let $B_{r}\subset CB_{r}$
be the band on $L_{00}$ from $0$ to $1$ of Figure \ref{fig:skein-moves-1}.
(Hence $L_{10}$ (resp. $L_{01}$) is obtained from $L_{00}$ by surgering
along $B_{1}$ (resp. $B_{2}$).)

Choose a bijection (``total order'') $\sigma:\{1,\cdots,N\}\to\boldsymbol{CC}$
where $\boldsymbol{CC}$ is the set of connected components of $L_{00}\backslash(CB_{1}\cup CB_{2})$,
and put a baseball $BB_{i,s}$ on $\sigma(i)$ for each $i=1,\cdots,N$
and $s\in\{00,01,10,11\}$. For each link component of $L(f)$, declare
$BB_{i,f}$ as its \emph{link baseball}, where $i$ is the smallest
$i$ such that $\sigma(i)$ is on that link component.

As in Subsection \ref{subsec:The-Heegaard-diagram}, let $H(L_{00})$
be the union of $CB_{1}\cup CB_{2}$ and a tubular neighborhood of
$L_{00}$, and consider a weakly admissible Heegaard diagram with
Heegaard surface $-\partial H(L_{00})$ and attaching curves $\boldsymbol{\beta}_{s}$
for $s\in\{00,01,10,11\}$. Define the modified band maps $\delta\in HF_{0}^{-}(\boldsymbol{\beta}_{s},\boldsymbol{\beta}_{t})$
for consecutive $s<t$ as in Definition \ref{def:For-each-consecutive}.
In this section, we will show
\[
x^{rd}:=\mu_{2}^{rd}(\delta\otimes\delta)=\mu_{2}^{dr}(\delta\otimes\delta)=:x^{dr}.
\]
This implies Proposition \ref{prop:We-have-the} (\ref{enu:4}), since
in the setting of Proposition \ref{prop:We-have-the} (\ref{enu:4}),
the sub Heegaard diagram with the four attaching curves $\boldsymbol{\beta}(f_{00}),\boldsymbol{\beta}(f_{01}),\boldsymbol{\beta}(f_{10}),\boldsymbol{\beta}(f_{11})$,
after isotopies, is a stabilization of a Heegaard diagram obtained
as above.

\begin{figure}[h]
\begin{centering}
\includegraphics{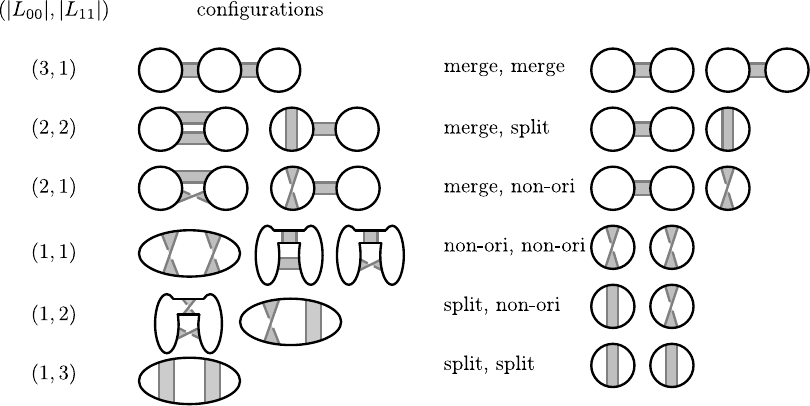}
\par\end{centering}
\caption{\label{fig:two-bands}Every configuration of two bands $B_{1}\cup B_{2}$
on $L_{00}$, such that every link component of $L_{00}$ intersects
$B_{1}\cup B_{2}$. Left: $L_{00}\cup B_{1}\cup B_{2}$ is connected;
right: $L_{00}\cup B_{1}\cup B_{2}$ is disconnected}
\end{figure}

Figure \ref{fig:two-bands} lists all the possible configurations
of two bands $B_{1},B_{2}$ on $L_{00}$, such that every component
of $L_{00}$ intersects $B_{1}\cup B_{2}$, up to an orientation preserving
diffeomorphism of $\overline{N}(L_{00}\cup B_{1}\cup B_{2})$.

We claim that for each configuration of $B_{1},B_{2}$ on $L_{00}$,
it is sufficient to check $x^{rd}=x^{dr}$ for one chosen total order
$\sigma$ of $\boldsymbol{CC}$. Indeed, this follows from Propositions
\ref{prop:(Modified-band-maps} and \ref{prop:Something-is-an-1}:
let $\sigma,\tau$ be total orders of $\boldsymbol{CC}$ and assume
that $x^{rd}=x^{dr}$ for $\sigma$. We show that $x^{rd}=x^{dr}$
for $\tau$. It is sufficient to show this for $\tau=\sigma\circ(b\ b+1)$
for some $b=1,\cdots,N-1$. As in Subsubsection \ref{subsec:Step-2},
for each $CC\in\boldsymbol{CC}$, $s\in\{00,01,10,11\}$, and $\eta\in\{\sigma,\tau\}$,
put a baseball $BB_{CC,s,\eta}$ on $CC$. Define the attaching curves
$\boldsymbol{\beta}_{s,\eta}$, the modified band maps $\delta$,
and the modified swap maps $\chi$ as in Subsubsection \ref{subsec:Step-2}.
Define $\mu_{2}^{rd,\eta}$, $\mu_{2}^{dr,\eta}$ as the $\mu_{2}^{rd}$,
$\mu_{2}^{dr}$'s for the $\boldsymbol{\beta}_{s,\eta}$'s, and define
$x^{rd,\eta},x^{dr,\eta}$ similarly. We assume $x^{rd,\sigma}=x^{rd,\sigma}$
and show $x^{rd,\tau}=x^{rd,\tau}$. By Proposition \ref{prop:(Modified-band-maps},
we have 
\begin{equation}
\mu_{2}^{\boldsymbol{\beta}_{00,\sigma},\boldsymbol{\beta}_{11,\sigma},\boldsymbol{\beta}_{11,\tau}}((x^{rd,\sigma}+x^{dr,\sigma})\otimes\chi)=\mu_{2}^{\boldsymbol{\beta}_{00,\sigma},\boldsymbol{\beta}_{00,\tau},\boldsymbol{\beta}_{11,\tau}}(\chi\otimes(x^{rd,\tau}+x^{dr,\tau})).\label{eq:sumandchi}
\end{equation}
By Proposition \ref{prop:Something-is-an-1}, $\mu_{2}^{\boldsymbol{\beta}_{00,\sigma},\boldsymbol{\beta}_{00,\tau},\boldsymbol{\beta}_{11,\tau}}(\chi\otimes-)$
is an isomorphism, and so $x^{rd,\tau}=x^{rd,\tau}$.

For each configuration of $B_{1},B_{2}$ on $L_{00}$, we will choose
a total order $\sigma$ such that (in particular) $\delta=\theta\in HF_{0}^{-}(\boldsymbol{\beta}_{s},\boldsymbol{\beta}_{t})$
for every consecutive $s<t$. By a ``diagram chasing argument''
(similar to Equation (\ref{eq:diagram-chase})) using Lemma \ref{lem:summary-theta}
and Proposition \ref{prop:swap-commute}, we can reduce to the cases
that we check in Subsections \ref{subsec:new-cases} and \ref{subsec:actions}.

\subsection{\label{subsec:new-cases}New cases}

For most cases, we already know $HF_{0}^{-}(\boldsymbol{\beta}_{00},\boldsymbol{\beta}_{11})$
well enough; let us first cover the other cases. In all the below
cases, the composites $x^{rd},x^{dr}$ are nonzero by Section \ref{sec:A-nonvanishing-argument},
and are homogeneous with respect to the relative Alexander $\mathbb{Z}/2$-grading
by Remark \ref{rem:alex-homogeneous}.

In this subsection, for simplicity, we work with Heegaard diagrams
given by destabilizing the Heegaard diagrams obtained as in Section
\ref{sec:Interpreting-schematics}: for instance, compare Figures
\ref{fig:z11-3} and \ref{fig:z11-3-heegaard}. Define the homological
$\mathbb{Z}$-gradings for the destabilized Heegaard diagrams such
that the stabilization map $S^{+}$ (Definition \ref{def:A-pair-pointed-Heegaard-1})
has degree $0$.

\subsubsection{\label{subsec:113}The third case of $(|L_{00}|,|L_{11}|)=(1,1)$}

\begin{figure}[h]
\begin{centering}
\includegraphics{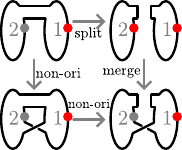}
\par\end{centering}
\caption{\label{fig:z11-3}The third case of $(|L_{00}|,|L_{11}|)=(1,1)$}
\end{figure}

\begin{figure}[h]
\begin{centering}
\includegraphics{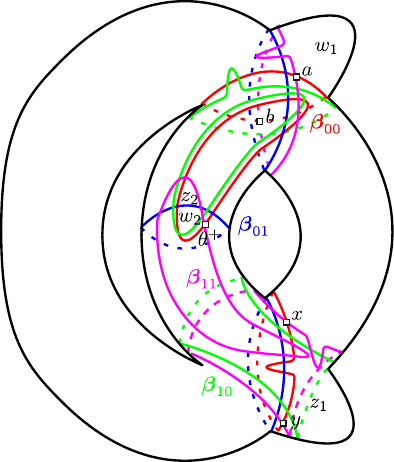}\qquad{}\includegraphics{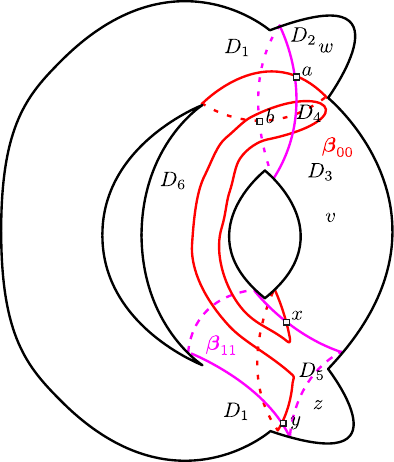}
\par\end{centering}
\caption{\label{fig:z11-3-heegaard}Destabilized Heegaard diagrams for Subsubsection
\ref{subsec:113}.}
\end{figure}

We show that Figure \ref{fig:z11-3} commutes. We claim that $x^{rd},x^{dr}$
is the unique nonzero element in the corresponding relative Alexander
$\mathbb{Z}/2$-grading such that $\Phi_{1}x=0$.

Let us first study the right hand side of Figure \ref{fig:z11-3-heegaard}.
We can check that $(\Sigma,\boldsymbol{\beta}_{00},\boldsymbol{\beta}_{11})$
is weakly admissible both with respect to $\{w,z\}$ and $\{v\}$.
There are four generators, and all of them have $c_{1}=0$. Since
the homology with respect to the basepoint $v$ represents $\#^{2}S^{1}\times S^{2}$,
whose Heegaard Floer homology has rank $4$, the differential is trivial.
Considering gradings, we can deduce that the differential over $\mathbb{F}[W,Z]$
is also trivial. Hence, $HF_{\mathbb{F}[U^{1/2}]}^{-}(\boldsymbol{\beta}_{00},\boldsymbol{\beta}_{11})$
is freely generated over $\mathbb{F}[U^{1/2}]$ by $ax,ay,bx,by$.
There are exactly two Maslov index $1$ domains in $D(ax,bx)$: $D_{2}$
and $D_{5}+D_{6}$, and they both have an odd number of holomorphic
representatives. Hence, $\Phi_{1}(ax)\neq0$, and so $\ker\Phi_{1}\cap{\rm span}_{\mathbb{F}}\left\langle ax,by\right\rangle $
has rank at most $1$.

In the left hand side of Figure \ref{fig:z11-3-heegaard}, the composites
$x^{rd}$ and $x^{dr}$ lie in the same relative Alexander $\mathbb{Z}/2$-grading
as $by\theta^{+}$. Since they are nonzero, we are left to show that
they lie in $\ker\Phi_{1}$. We have 
\begin{multline*}
\Phi_{1}x^{rd}=\Phi_{1}\mu_{2}^{rd}(\theta\otimes\theta)=\mu_{2}^{rd}(\Phi_{1}\theta\otimes\theta)+\mu_{2}^{rd}(\theta\otimes\Phi_{1}\theta)\\
=\mu_{2}^{rd}(\Phi_{2}\theta\otimes\theta)+\mu_{2}^{rd}(\theta\otimes\Phi_{2}\theta)=\Phi_{2}\mu_{2}^{rd}(\theta\otimes\theta)=0,
\end{multline*}
\begin{equation}
\Phi_{1}x^{dr}=\Phi_{1}\mu_{2}^{dr}(\theta\otimes\theta)=\mu_{2}^{dr}(\Phi_{1}\theta\otimes\theta)+\mu_{2}^{dr}(\theta\otimes\Phi_{1}\theta)=0.\label{eq:phi1xdr=00003D0}
\end{equation}

\subsubsection{\label{subsec:111}The first case of $(|L_{00}|,|L_{11}|)=(1,1)$}

We show that Figure \ref{fig:z11-1-heegaard} commutes. We also claim
that $x^{rd},x^{dr}$ is the unique nonzero element in the corresponding
relative Alexander $\mathbb{Z}/2$-grading such that $\Phi_{1}x=0$.
The same argument as in Subsection \ref{subsec:113} works. First,
$\Phi_{1}x^{rd},\Phi_{1}x^{dr}=0$ by the same argument as Equation
(\ref{eq:phi1xdr=00003D0}).

Consider the Heegaard diagram in Figure \ref{fig:z11-1-heegaard}.
We can check that $(\Sigma,\boldsymbol{\beta}_{00},\boldsymbol{\beta}_{11})$
is weakly admissible both with respect to $\{w,z\}$ and $\{v\}$,
and there are exactly four generators, $ax,ay,bx,by$. Hence, the
differential is $0$. The relevant relative Alexander $\mathbb{Z}/2$-grading
is the one that $ax$ and $by$ live in. In Figure \ref{fig:z11-1-heegaard},
there are exactly two Maslov index $1$ domains in $D(ax,bx)$: $D_{1}$
and $D_{2}$, and they both have an odd number of holomorphic representatives.
Hence, $\Phi_{1}(ax)\neq0$, and so $\ker\Phi_{1}\cap{\rm span}_{\mathbb{F}}\left\langle ax,by\right\rangle $
has rank at most $1$.

\begin{figure}[h]
\begin{centering}
\includegraphics{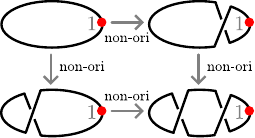}\qquad{}\includegraphics{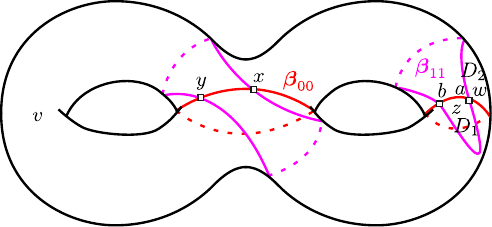}
\par\end{centering}
\caption{\label{fig:z11-1-heegaard}A case for Subsubsection \ref{subsec:111}
and a destabilized Heegaard diagram for it}
\end{figure}

\subsubsection{\label{subsec:112}The second case of $(|L_{00}|,|L_{11}|)=(1,1)$}

\begin{figure}[h]
\begin{centering}
\includegraphics{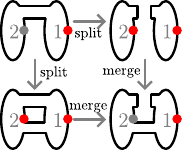}
\par\end{centering}
\caption{\label{fig:z21-2}The second case of $(|L_{00}|,|L_{11}|)=(1,1)$}
\end{figure}

\begin{figure}[h]
\begin{centering}
\includegraphics{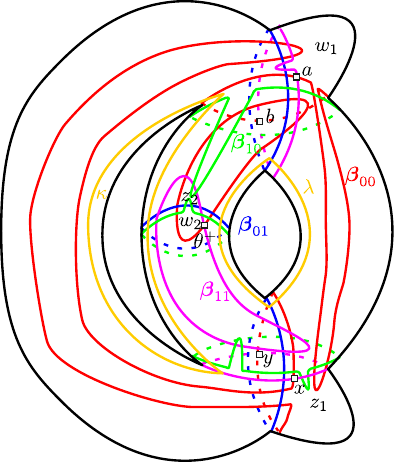}\qquad{}\includegraphics{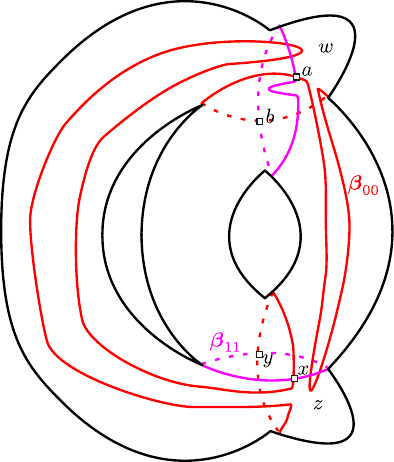}
\par\end{centering}
\caption{\label{fig:z21-2-heegaard-2}Destabilized Heegaard diagrams for Subsubsection
\ref{subsec:112}.}
\end{figure}

We show that Figure \ref{fig:z21-2} commutes. In this case, the $\Phi$
actions on $HF_{0}^{-}(\boldsymbol{\beta}_{00},\boldsymbol{\beta}_{11})$
vanishes; we use the $H_{1}$ action instead. We claim that $x^{rd},x^{dr}$
is the unique nonzero element in the corresponding relative Alexander
$\mathbb{Z}/2$-grading such that the $H_{1}$ action is trivial.

In this case, $(\boldsymbol{\beta}_{00},\boldsymbol{\beta}_{11})$
represents a doubly pointed null-homologous knot in $\#^{2}S^{1}\times S^{2}$.
Lemma \ref{lem:structure-z11-2} is the structure theorem that we
need for $HF_{\mathbb{F}[U^{1/2}],0}^{-}(\boldsymbol{\beta}_{00},\boldsymbol{\beta}_{11})$;
let us show that $x^{rd}=x^{dr}$ assuming Lemma \ref{lem:structure-z11-2}.
First, we can directly check that $x^{rd}$ and $x^{dr}$ lie in the
same relative Alexander $\mathbb{Z}/2$-grading as both $ay\theta^{+}$
and $bx\theta^{+}$. Let $R=\mathbb{F}[U_{1}^{1/2},U_{2}^{1/2}]$,
$S=\mathbb{F}[U_{1}^{1/2},U_{2}]$. By Subsection \ref{subsec:Connected-sums},
the same statement as Lemma \ref{lem:structure-z11-2} holds for the
relative Alexander $\mathbb{Z}/2$-grading ${\rm gr}_{A}^{\mathbb{Z}/2}(ay\theta^{+})$
summand of $HF_{S,0}^{-}(\boldsymbol{\beta}_{00},\boldsymbol{\beta}_{11})$.
Now, consider the homology classes $\kappa,\lambda\in H_{1}(\#^{2}S^{1}\times S^{2};\mathbb{F})$
represented by the yellow circles in Figure \ref{fig:z21-2-heegaard-2}.
We have $A_{R,\kappa,-1}(\theta)=0\in HF_{R}^{-}(\boldsymbol{\beta}_{00},\boldsymbol{\beta}_{01})$,
and so $A_{R,\kappa,-1}\mu_{2,R}^{rd}(\theta\otimes\theta)=0$. Since
$A_{R,\kappa,-1}$ preserves the splitting $HF_{R}^{-}(\boldsymbol{\beta}_{00},\boldsymbol{\beta}_{11})=HF_{S}^{-}(\boldsymbol{\beta}_{00},\boldsymbol{\beta}_{11})\oplus U_{2}^{1/2}HF_{S}^{-}(\boldsymbol{\beta}_{00},\boldsymbol{\beta}_{11})$
and agrees with $A_{S,\kappa,-1}$ on the summands, we have $A_{S,\kappa,-1}x^{rd}=0$.
Similarly, $A_{S,\lambda,-1}x^{dr}=0$ since $A_{R,\lambda,-1}(\theta)=0\in HF_{R}^{-}(\boldsymbol{\beta}_{00},\boldsymbol{\beta}_{10})$.
Hence, $x^{rd}=x^{dr}$.
\begin{lem}
\label{lem:structure-z11-2}Consider the right hand side of Figure
\ref{fig:z21-2-heegaard-2}. Let $C_{0}\le HF_{\mathbb{F}[U^{1/2}],0}^{-}(\boldsymbol{\beta}_{00},\boldsymbol{\beta}_{11})$
be the relative Alexander $\mathbb{Z}/2$-grading ${\rm gr}_{A}^{\mathbb{Z}/2}(ay)$
summand. Then, $C_{0}$ has rank $2$, and for nontrivial $\gamma\in H_{1}(\#^{2}S^{1}\times S^{2};\mathbb{F})$,
${\rm ker}A_{\gamma,-1}\cap C_{0}$ has rank $1$, and it does not
depend on $\gamma$.
\end{lem}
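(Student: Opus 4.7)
The plan is to extract both statements from a direct analysis of the destabilized Heegaard diagram on the right of Figure \ref{fig:z21-2-heegaard-2}. First I would enumerate the intersection points ${\bf x}\in\boldsymbol{\beta}_{00}\cap\boldsymbol{\beta}_{11}$ with $c_{1}(\mathfrak{s}({\bf x}))=0$, compute their relative Alexander $\mathbb{Z}/2$-gradings by picking a domain joining each such generator to $ay$ and reading off the parity of the total basepoint multiplicity, and so pin down which generators lie in $C_{0}$. Verifying weak admissibility both with respect to $\{w,z\}$ and with respect to a single basepoint, and using that the homology taken with the single basepoint computes $HF^{-}(\#^{2}S^{1}\times S^{2})$, should severely constrain the differential on the top relative homological grading exactly as in the model computations of Subsubsections \ref{subsec:113} and \ref{subsec:112}. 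Together with homological grading considerations this yields that $C_{0}$ has rank $2$ over $\mathbb{F}[U^{1/2}]$.

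Next I would pick two embedded loops $\gamma_{1},\gamma_{2}$ on the Heegaard surface whose classes form a basis of $H_{1}(\#^{2}S^{1}\times S^{2};\mathbb{F})\cong\mathbb{F}^{2}$; natural choices are the ``yellow'' loops already appearing in Figure \ref{fig:z21-2-heegaard-2}. For each basis element of $C_{0}$ I would enumerate the Maslov index $1$ domains landing in the homologically adjacent summand $C_{-1}$, weighted by the intersection number of their $\boldsymbol{\beta}_{00}$-boundary with $\gamma_{i}$, and record the resulting $2\times 2$ matrices representing $A_{\gamma_{i},-1}|_{C_{0}}$. By the additivity of Lemma \ref{lem:ab-additive}, $A_{-,-1}|_{C_{0}}$ is $\mathbb{F}$-linear in $\gamma$, so it suffices to carry out this computation for $\gamma_{1}$, $\gamma_{2}$, and $\gamma_{1}+\gamma_{2}$.

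The main obstacle is then to verify that each of these three restrictions has rank $1$ image and a common rank $1$ kernel in $C_{0}$, with the three images pairwise distinct. The last point is forced on us by additivity over $\mathbb{F}_{2}$: if two of the restrictions were equal as maps then their sum would vanish identically on $C_{0}$, violating the rank $1$ kernel claim for $\gamma_{1}+\gamma_{2}$. Conversely, once we see from the explicit matrices that the three restrictions share a common rank $1$ kernel and have distinct rank $1$ images, additivity over $\mathbb{F}_{2}$ gives the independence statement for every nontrivial class.

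If the explicit disk counting becomes cumbersome, I would fall back on a structural route: view $(\boldsymbol{\beta}_{00},\boldsymbol{\beta}_{11})$ as representing a null-homologous knot $K\subset\#^{2}S^{1}\times S^{2}$, lift the Alexander $\mathbb{Z}/2$-grading to a genuine $\mathbb{Z}$-filtration, and use that $A_{\gamma,-1}$ is homogeneous with respect to this refinement and commutes with the $U$-action up to chain homotopy. A Künneth-type comparison with the knot Floer complex of an unknot in $\#^{2}S^{1}\times S^{2}$ should then force the kernel inside $C_{0}$ of every nontrivial $A_{\gamma,-1}$ to be the unique rank $1$ $U^{1/2}$-submodule annihilated by the ``essential'' $H_{1}$-action, which is independent of $\gamma$.
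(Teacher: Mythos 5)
Your outline follows the route that the paper relegates to Appendix \ref{subsec:A-direct-computation} (direct enumeration of domains), whereas the main-text proof is deliberately ``curve count free'': it works over ${\cal R}=\mathbb{F}[Z,W]$, observes from the bigradings $({\rm gr}_{z},{\rm gr}_{w})$ that all Maslov index $1$ domains between two fixed generators carry the same weight ($Z$ or $W$), and then reads off both $\partial_{{\cal R}}=0$ and the matrices of $A_{\kappa,-1},A_{\lambda,-1}$ from the two specializations $Z=1$ and $W=1$, where everything is forced by the rank and the $\Lambda^{*}H_{1}$-module structure of $HF^{-}(\#^{2}S^{1}\times S^{2})$. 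Your first paragraph (rank $2$ for $C_{0}$ via admissibility and the rank of $HF^{-}(\#^{2}S^{1}\times S^{2})$) is essentially the paper's argument for that half, though note you need admissibility with respect to \emph{each} of $\{z\}$ and $\{w\}$ separately, not just one auxiliary basepoint, to kill the differential over ${\cal R}$.

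The gap is in the second and third paragraphs: ``enumerate the Maslov index $1$ domains \ldots and record the resulting $2\times2$ matrices'' is precisely where the content of the lemma sits, and you do not say how the holomorphic representatives of those domains are to be counted. Several of the contributing domains are neither bigons nor annuli, so their counts are not visible by inspection and depend a priori on the almost complex structure; the appendix pins them down only by combining $\partial=0$ (from the two single-basepoint admissibilities) with $A_{\kappa,-1}^{2}\simeq0$, and the main text avoids the issue entirely via the weight/specialization argument above. Without one of these inputs your plan does not close. Your additivity bookkeeping over $\mathbb{F}_{2}$ for $\gamma_{1},\gamma_{2},\gamma_{1}+\gamma_{2}$ is correct as far as it goes, but the fallback in your last paragraph is not viable as stated: the pair $(\boldsymbol{\beta}_{00},\boldsymbol{\beta}_{11})$ represents a null-homologous knot in $\#^{2}S^{1}\times S^{2}$ that is not presented as a K\"unneth product with an unknot, and no such splitting is established in the paper, so the asserted comparison does not ``force'' the common kernel.
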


\begin{proof}
Consider the weakly admissible Heegaard diagram from the right hand
side of Figure \ref{fig:z21-2-heegaard-2}. First, we can check that
the Heegaard diagram is weakly admissible with respect to both $\{z\}$
and $\{w\}$. Although there are eight generators, only four of them,
$ax,ay,bx,by$ have $c_{1}=0$. It is possible to show the lemma directly
(see Appendix \ref{subsec:A-direct-computation}). Here, we present
the following, curve count free argument. 
\begin{table}[h]
\begin{centering}
\begin{tabular}{|c|c|c|c|c|}
\hline 
 & $ax$ & $ay$ & $bx$ & $by$\tabularnewline
\hline 
\hline 
${\rm gr}_{z}$ & $0$ & $1$ & $-1$ & $0$\tabularnewline
\hline 
${\rm gr}_{w}$ & $0$ & $-1$ & $1$ & $0$\tabularnewline
\hline 
\end{tabular}
\par\end{centering}
\caption{\label{tab:gradings}The gradings $({\rm gr}_{z},{\rm gr}_{w})$ for
the generators in the torsion ${\rm Spin}^{c}$-structure}
\end{table}

The relative homological $\mathbb{Z}$-gradings ${\rm gr}_{z}$ and
${\rm gr}_{w}$ (arbitrarily lifted to an absolute $\mathbb{Z}$-grading)
are as in Table \ref{tab:gradings}. Let ${\cal R}=\mathbb{F}[Z,W]$,
and let us compute the differential $\partial_{{\cal R}}$ on ${\cal CF}^{-}(\boldsymbol{\beta}_{00},\boldsymbol{\beta}_{11})$
and also partially compute the degree $-1$ $H_{1}$-action. We will
use the following \emph{key fact} repeatedly: for any two generators
$\tau,\sigma\in\{ax,ay,bx,by\}$, by considering the gradings $({\rm gr}_{z},{\rm gr}_{w})$,
one can show that all the Maslov index $1$ domains $\phi\in D(\sigma,\tau)$
have the same weight, which is either $Z$ or $W$.

The differential $\partial_{{\cal R}}$ vanishes since it vanishes
if we identify $Z=1$ or $W=1$ (and by using the key fact): indeed,
the homology of 
\[
{\cal CF}^{-}(\boldsymbol{\beta}_{00},\boldsymbol{\beta}_{11})\otimes\mathcal{R}/(Z-1),\ {\cal CF}^{-}(\boldsymbol{\beta}_{00},\boldsymbol{\beta}_{11})\otimes{\cal R}/(W-1)
\]
are $HF^{-}(\#^{2}S^{1}\times S^{2})$, which has rank $4$ (over
$\mathbb{F}[W]$ and $\mathbb{F}[Z]$, respectively).

Let $\kappa,\lambda\in H_{1}(\#^{2}S^{1}\times S^{2};\mathbb{F})$
span the homology group. To compute the $A_{\kappa,-1},A_{\lambda,-1}$
maps on ${\cal CF}^{-}(\boldsymbol{\beta}_{00},\boldsymbol{\beta}_{11})$,
we only have to compute them in $HF_{{\cal R}/(Z-1)}^{-}$, by the
key fact. In $HF_{\mathcal{R}/(W-1)}^{-}(\boldsymbol{\beta}_{00},\boldsymbol{\beta}_{11})$,
the map 
\[
H_{1}(\#^{2}S^{1}\times S^{2};\mathbb{F})\to{\rm span}_{\mathbb{F}}\left\langle ax,by\right\rangle :\gamma\mapsto A_{\mathcal{R}/(W-1),\gamma,-1}(ay)
\]
is an isomorphism. We have
\[
HF_{{\cal R}/(Z-1),-1}^{-}(\boldsymbol{\beta}_{00},\boldsymbol{\beta}_{11})={\rm span}_{\mathbb{F}}\left\langle Wbx,A_{\kappa,-1}A_{\lambda,-1}(bx)\right\rangle ={\rm span}_{\mathbb{F}}\left\langle Wbx,ay\right\rangle .
\]
Since 
\[
W^{-1}A_{{\cal R}/(Z-1),\gamma,-1}(ay)=A_{\mathcal{R}/(W-1),\gamma,-1}(ay)\in{\rm span}_{\mathbb{F}}\left\langle ax,by\right\rangle ,
\]
the $H_{1}$-action on $ay$ is in particular nontrivial in $HF_{{\cal R}/(Z-1)}^{-}$,
and so we have 
\[
ay=Wbx+A_{\kappa,-1}A_{\lambda,-1}(bx)\in CF_{{\cal R}/(Z-1)}^{-}(\boldsymbol{\beta}_{00},\boldsymbol{\beta}_{11}).
\]
Hence, the maps $A_{\kappa,-1}$ and $A_{\lambda,-1}$ on ${\cal CF}^{-}(\boldsymbol{\beta}_{00},\boldsymbol{\beta}_{11})$
are as follows, where $c,d$ are such that ${\rm span}_{\mathbb{F}}\left\langle c,d\right\rangle ={\rm span}_{\mathbb{F}}\left\langle ax,by\right\rangle $:
\[\begin{tikzcd}
	ay & c & ay & c \\
	d & bx & d & bx
	\arrow["W"{description}, from=1-1, to=1-2]
	\arrow["W"{description}, from=1-3, to=2-3]
	\arrow["Z"{description}, from=1-4, to=1-3]
	\arrow["W"{description}, from=1-4, to=2-4]
	\arrow["Z"{description}, from=2-1, to=1-1]
	\arrow["W"{description}, from=2-1, to=2-2]
	\arrow["{A_{\kappa,-1}}"{description}, shift left=5, draw=none, from=2-1, to=2-2]
	\arrow["Z"{description}, from=2-2, to=1-2]
	\arrow["{A_{\lambda,-1}}"{description}, shift left=5, draw=none, from=2-3, to=2-4]
	\arrow["Z"{description}, from=2-4, to=2-3]
\end{tikzcd}\]In particular, the statement of the lemma follows.
\end{proof}

\subsection{\label{subsec:actions}The remaining cases}

For the remaining cases, we can reduce to cases for which we already
know $HF_{0}^{-}(\boldsymbol{\beta}_{00},\boldsymbol{\beta}_{11})$
well enough. In all the below cases, the composites $x^{rd},x^{dr}$
are nonzero by Section \ref{sec:A-nonvanishing-argument}, and are
homogeneous with respect to the relative Alexander $\mathbb{Z}/2$-grading
by Remark \ref{rem:alex-homogeneous}.

\subsubsection{$(|L_{00}|,|L_{11}|)=(2,1),(1,2)$}

\begin{figure}[h]
\begin{centering}
\includegraphics{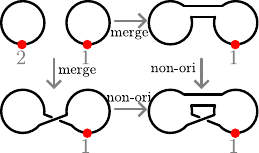}\qquad{}\includegraphics{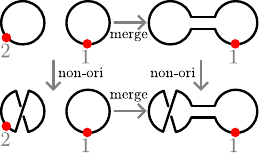}\qquad{}\includegraphics{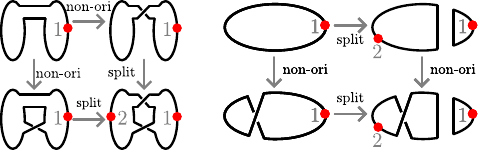}
\par\end{centering}
\caption{\label{fig:z21}The cases for $(|L_{00}|,|L_{11}|)=(2,1),(1,2)$}
\end{figure}

We check the cases of Figure \ref{fig:z21}. In all four cases, $(\boldsymbol{\beta}_{00},\boldsymbol{\beta}_{11})$
represents $(\#^{2}S^{1}\times S^{2},L_{nonori})\#(S^{1}\times S^{2},\emptyset)$,
but we can destabilize and work in a Heegaard diagram where $(\boldsymbol{\beta}_{00},\boldsymbol{\beta}_{11})$
represents $(\#^{2}S^{1}\times S^{2},L_{nonori})$. The argument is
exactly the same as Proposition \ref{prop:non-ori-swap}. The composites
are the canonical element $\theta$.

\subsubsection{\label{subsec:31}$(|L_{00}|,|L_{11}|)=(2,2),(3,1),(1,3)$}

\begin{figure}[h]
\begin{centering}
\includegraphics{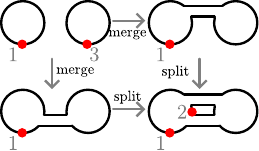}\qquad{}\includegraphics{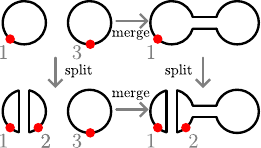}\qquad{}\includegraphics{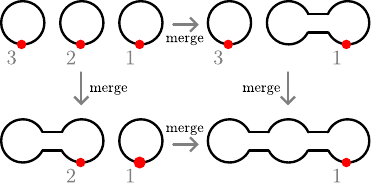}\qquad{}\includegraphics{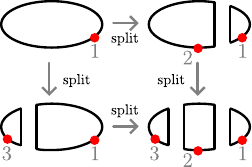}
\par\end{centering}
\caption{\label{fig:merge-merge-cases}The cases for $(|L_{00}|,|L_{11}|)=(2,2),(3,1),(1,3)$}
\end{figure}

We check the cases of Figure \ref{fig:merge-merge-cases}. In all
four cases, $(\boldsymbol{\beta}_{00},\boldsymbol{\beta}_{11})$ represents
$(\#^{2}S^{1}\times S^{2},L_{ori})\#(S^{1}\times S^{2},\emptyset)$,
but we can destabilize and work in a Heegaard diagram where $(\boldsymbol{\beta}_{00},\boldsymbol{\beta}_{11})$
represents $(\#^{2}S^{1}\times S^{2},L_{ori})$. The argument is exactly
the same as Proposition \ref{prop:swap-merge-2}.
\begin{rem}
\label{rem:Similarly,-we-can}Similarly, we can check that the left
hand side Figure \ref{fig:merge-merge-cases-1} does \emph{not} commute.
This is why we had to modify the band maps: the right hand side commutes.
\begin{figure}[h]
\begin{centering}
\includegraphics{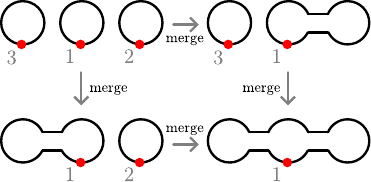}\qquad{}\includegraphics{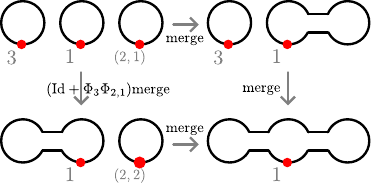}
\par\end{centering}
\caption{\label{fig:merge-merge-cases-1}Left: some merge maps that do \emph{not}
commute; right: we can modify the maps so that they commute}
\end{figure}
\end{rem}

\subsubsection{\label{subsec:Merge,-merge}The remaining cases}

\begin{figure}[h]
\begin{centering}
\includegraphics{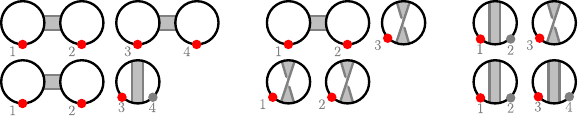}
\par\end{centering}
\caption{\label{fig:z42}The cases for Subsubsection \ref{subsec:Merge,-merge}.}
\end{figure}

We check the cases of Figure \ref{fig:z42}, which are immediate.

\section{\label{sec:Computations-for-the}Modified band maps and modified
band maps for Subsection \ref{subsec:The-reduced-hat}}

In this subsection, we show the analogue of Proposition \ref{prop:We-have-the}
(\ref{enu:4}) for Proposition \ref{prop:(Modified-band-maps-2}.

Recall the setup of Subsection \ref{subsec:The-setup}. We modify
this similarly to Subsubsection \ref{subsec:Step-1}:
\begin{itemize}
\item For each $i=2,\cdots,N$ and $s\in\{00,01,10,11\}$, put two baseballs
$BB_{i,s,1}$ and $BB_{i,s,2}$ on $\sigma(i)$.
\item Put a baseball $BB_{1}$ on $\sigma(1)$, and for each $s\in\{00,01,10,11\}$,
put a baseball $BB_{1,s,2}$ on $\sigma(1)$. For convenience, also
denote $BB_{1}$ as $BB_{1,s,1}$.
\end{itemize}
Define the balled links $L_{s,k}$ and attaching curves $\boldsymbol{\beta}_{s,k}$
for $k=1,2$ similarly to Subsubsection \ref{subsec:Step-1}, and
define $\mu_{2}^{rd,k}$, $\mu_{2}^{dr,k}$ as the $\mu_{2}^{rd}$,
$\mu_{2}^{dr}$'s for the $\boldsymbol{\beta}_{s,k}$'s, and define
$x^{rd,k},x^{dr,k}$ similarly.

Proposition \ref{prop:We-have-the} (\ref{enu:4}) implies that $x^{rd,2}=x^{dr,2}$,
and hence as in Equation (\ref{eq:sumandchi}), one can deduce from
Lemmas \ref{lem:summary-theta} and \ref{lem:summary-delta} that
\[
\mu_{2}^{\boldsymbol{\beta}_{00,1},\boldsymbol{\beta}_{11,1},\boldsymbol{\beta}_{11,2}}((x^{rd,1}+x^{dr,1})\otimes\theta)=\mu_{2}^{\boldsymbol{\beta}_{00,1},\boldsymbol{\beta}_{00,2},\boldsymbol{\beta}_{11,2}}(\theta\otimes(x^{rd,2}+x^{dr,2}))=0.
\]
Although $\theta\in HF^{-}(\boldsymbol{\beta}_{11,1},\boldsymbol{\beta}_{11,2})$
is a swap map, we cannot use Proposition \ref{prop:Something-is-an}
since $BB_{1}$ is a link baseball for $L_{00,1}$ and $L_{11,1}$,
but not for $L_{11,2}$. We show that $\mu_{2}^{\boldsymbol{\beta}_{00,1},\boldsymbol{\beta}_{11,1},\boldsymbol{\beta}_{11,2}}(-\otimes\theta)$
is injective on $HF_{0}^{-}(\boldsymbol{\beta}_{00,1},\boldsymbol{\beta}_{11,1})$
by interpreting it as a band map.

Define $L_{11,tmp}$ as $L_{11,1}$ but where the link baseball for
the component that intersects $BB_{1}$ is $BB_{1,11,2}$ instead,
and let $\boldsymbol{\beta}_{11,tmp}$ be the corresponding attaching
curve. Since $\mu_{2}^{\boldsymbol{\beta}_{11,1},\boldsymbol{\beta}_{11,tmp},\boldsymbol{\beta}_{11,2}}(\theta\otimes\theta)=\theta$
(these are all swap maps), we have 
\[
\mu_{2}^{\boldsymbol{\beta}_{00,1},\boldsymbol{\beta}_{11,1},\boldsymbol{\beta}_{11,2}}(-\otimes\theta)=\mu_{2}^{\boldsymbol{\beta}_{00,1},\boldsymbol{\beta}_{11,tmp},\boldsymbol{\beta}_{11,2}}\left(\mu_{2}^{\boldsymbol{\beta}_{00,1},\boldsymbol{\beta}_{11,1},\boldsymbol{\beta}_{11,tmp}}(-\otimes\theta)\otimes\theta\right).
\]
Since $\mu_{2}^{\boldsymbol{\beta}_{00,1},\boldsymbol{\beta}_{11,tmp},\boldsymbol{\beta}_{11,2}}(-\otimes\theta)$
is an isomorphism by Proposition \ref{prop:Something-is-an}, it is
sufficient to show that $\mu_{2}^{\boldsymbol{\beta}_{00,1},\boldsymbol{\beta}_{11,1},\boldsymbol{\beta}_{11,tmp}}(-\otimes\theta)$
is injective on $HF_{0}^{-}(\boldsymbol{\beta}_{00,1},\boldsymbol{\beta}_{11,1})$.

Let $(z_{1},w_{1})$ be the basepoint pair that corresponds to $BB_{1}$.
Let $\boldsymbol{\beta}_{11,tmp}^{link}$ be the same as $\boldsymbol{\beta}_{11,tmp}$
but let the basepoint pair $(z_{1},w_{1})$ be a link basepoint pair
for $\boldsymbol{\beta}_{11,tmp}^{link}$. Then, $\mu_{2}^{\boldsymbol{\beta}_{00,1},\boldsymbol{\beta}_{11,1},\boldsymbol{\beta}_{11,tmp}}$
and $\mu_{2}^{\boldsymbol{\beta}_{00,1},\boldsymbol{\beta}_{11,1},\boldsymbol{\beta}_{11,tmp}^{link}}$
are identical (and the chain complexes that are involved are identical),
since $(z_{1},w_{1})$ is a link basepoint pair for $\boldsymbol{\beta}_{00,1}$
and $\boldsymbol{\beta}_{11,1}$. Hence, it is sufficient to show
that $\mu_{2}^{\boldsymbol{\beta}_{00,1},\boldsymbol{\beta}_{11,1},\boldsymbol{\beta}_{11,tmp}^{link}}(-\otimes\theta)$
is injective on $HF_{0}^{-}(\boldsymbol{\beta}_{00,1},\boldsymbol{\beta}_{11,1})$.
This follows since if we perform a $0$-surgery along $z_{1}$ and
$w_{1}$ (the right side of Figure \ref{fig:composite-split-merge-1})\footnote{We can ignore the avoiding arc for $BB_{1}$ since the corresponding
basepoint pair is a link basepoint pair for every relevant attaching
curve.}, then this fits into the framework of Section \ref{sec:A-nonvanishing-argument},
where $(\boldsymbol{\beta},\boldsymbol{\beta}_{0},\boldsymbol{\beta}_{1})=(\boldsymbol{\beta}_{00,1},\boldsymbol{\beta}_{11,1},\boldsymbol{\beta}_{11,tmp}^{link})$
($\boldsymbol{\beta}_{2}=\boldsymbol{\beta}_{11,aux}$ of Figure \ref{fig:composite-split-merge-1}).
(Note that performing a $0$-surgery is unnecessary for the argument
to work; we perform a $0$-surgery just so that we can apply Proposition
\ref{prop:combi-claims} instead of computing the relevant homological
gradings again.)

\begin{figure}[h]
\begin{centering}
\includegraphics{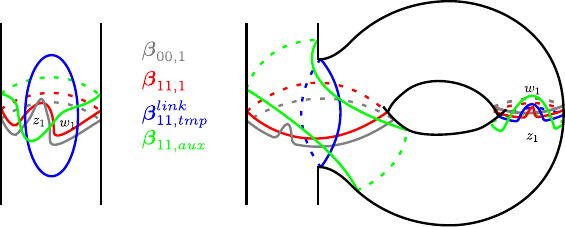}
\par\end{centering}
\caption{\label{fig:composite-split-merge-1}Introduce another attaching curve
$\boldsymbol{\beta}_{11,aux}$ to form a unoriented skein exact triangle
(the avoiding arcs are not drawn)}
\end{figure}

\section{\label{sec:Proofs-of-claims}Modified band maps give rise to an unoriented
skein exact triangle}

In this section, we show Proposition \ref{prop:We-have-the} (\ref{enu:6}).
The argument in this section also works for the corresponding statement
for Proposition \ref{prop:(Modified-band-maps-2}.

For each $i\in\{2,\cdots,N\}$, add a new baseball $BB_{i,aux}$ to
$\sigma(i)\in\boldsymbol{CC}$. If we are in the setting of Subsection
\ref{subsec:The-twisted-complex} (resp. Subsection \ref{subsec:The-reduced-hat}),
then also add a new baseball $BB_{1,aux}$ to $\sigma(1)$ (resp.
define $BB_{1,aux}:=BB_{1}$). For $k=0,1,2$, let $L_{k}'$ be the
balled link with the same underlying link as $L(f_{k})$ but whenever
$BB_{i,f_{k}}$ is a link baseball of $L(f_{k})$, let $BB_{i,aux}$
be the link baseball of the corresponding component of $L_{k}'$ instead.
Then, $L_{0}',L_{1}',L_{2}'$ form an unoriented skein exact triple
(Definition \ref{def:unoriented-skein-triple}). Let $\boldsymbol{\beta}_{k}'$
be the corresponding attaching curve. Also, simply call $\boldsymbol{\beta}_{k}:=\boldsymbol{\beta}(f_{k})$.

First, let us show that 
\begin{equation}
\mu_{2}^{\boldsymbol{\beta}_{0},\boldsymbol{\beta}_{1},\boldsymbol{\beta}_{2}}(\theta\otimes\theta)=0\in HF_{0}^{-}(\boldsymbol{\beta}_{0},\boldsymbol{\beta}_{2}).\label{eq:thetathetazero}
\end{equation}
Using Lemma \ref{lem:summary-theta}, we can deduce that
\[
\mu_{2}^{\boldsymbol{\beta}_{0}',\boldsymbol{\beta}_{0},\boldsymbol{\beta}_{2}}\left(\theta\otimes\mu_{2}^{\boldsymbol{\beta}_{0},\boldsymbol{\beta}_{1},\boldsymbol{\beta}_{2}}(\theta\otimes\theta)\right)=\mu_{2}^{\boldsymbol{\beta}_{0}',\boldsymbol{\beta}_{2}',\boldsymbol{\beta}_{2}}\left(\mu_{2}^{\boldsymbol{\beta}_{0}',\boldsymbol{\beta}_{1}',\boldsymbol{\beta}_{2}'}(\theta\otimes\theta)\otimes\theta\right)\in HF_{0}^{-}(\boldsymbol{\beta}_{0}',\boldsymbol{\beta}_{2}).
\]
By Theorem \ref{thm:Let--be}, we have 
\[
\mu_{2}^{\boldsymbol{\beta}_{0}',\boldsymbol{\beta}_{1}',\boldsymbol{\beta}_{2}'}(\theta\otimes\theta)=0\in HF_{0}^{-}(\boldsymbol{\beta}_{0}',\boldsymbol{\beta}_{2}'),
\]
and by Proposition \ref{prop:Something-is-an}, $\mu_{2}^{\boldsymbol{\beta}_{0}',\boldsymbol{\beta}_{0},\boldsymbol{\beta}_{2}}(\theta\otimes-)$
is an isomorphism on homology. Hence, Equation (\ref{eq:thetathetazero})
holds.

Now, that $\mu_{2}^{\boldsymbol{\beta}_{0},\boldsymbol{\beta}_{1},\boldsymbol{\beta}_{2}}(\delta\otimes\delta)=0$
on homology follows from Equation (\ref{eq:thetathetazero}) using
an algebraic argument, just like the proof of Lemma \ref{lem:summary-delta}.

Let us now show the second part of Proposition \ref{prop:We-have-the}
(\ref{enu:6}). This proof is similar to the proof of Theorem \ref{thm:Let--be}.
By Theorem \ref{thm:Let--be}, there exists some element $\zeta\in CF_{1}^{-}(\boldsymbol{\beta}_{0}',\boldsymbol{\beta}_{2}')$
such that 
\[\begin{tikzcd}
	{\underline {\boldsymbol{\beta}'_{012}}:=} & {\boldsymbol{\beta}_{0}'} & {\boldsymbol{\beta}_{1}'} & {\boldsymbol{\beta}_{2}'}
	\arrow["\theta"{description}, from=1-2, to=1-3]
	\arrow["{\zeta }"{description, pos=0.3}, curve={height=-12pt}, from=1-2, to=1-4]
	\arrow["\theta"{description}, from=1-3, to=1-4]
\end{tikzcd}\]is a twisted complex, and also $HF^{-}(\boldsymbol{\alpha},\underline{\boldsymbol{\beta}'_{012}})=0$.
Choose an element $\zeta\in CF_{1}^{-}(\boldsymbol{\beta}_{0},\boldsymbol{\beta}_{2})$
such that the following is a twisted complex (i.e. $\mu_{1}(\zeta)=\mu_{2}(\delta\otimes\delta)$).
\[\begin{tikzcd}
	{\underline {\boldsymbol{\beta}_{012}}:=} & {\boldsymbol{\beta}_{0}} & {\boldsymbol{\beta}_{1}} & {\boldsymbol{\beta}_{2}}
	\arrow["\delta"{description}, from=1-2, to=1-3]
	\arrow["\zeta"{description, pos=0.3}, curve={height=-12pt}, from=1-2, to=1-4]
	\arrow["\delta"{description}, from=1-3, to=1-4]
\end{tikzcd}\]

We will use the following lemma.

\begin{figure}[h]
\begin{centering}
\includegraphics{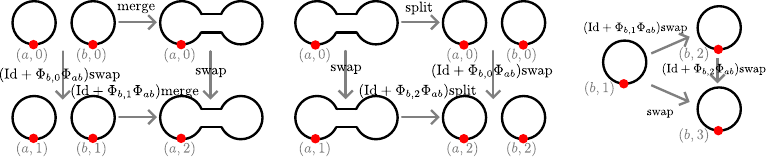}
\par\end{centering}
\caption{\label{fig:composite-split-merge}Three commutative diagrams}
\end{figure}

\begin{lem}
\label{lem:composite-split-merge}Consider Figure \ref{fig:composite-split-merge}.
Some components are not drawn as in Lemma \ref{lem:summary-delta},
and define $\Phi_{ab}:HF^{-}(\boldsymbol{\beta}_{v},\boldsymbol{\beta}_{v'})\to HF^{-}(\boldsymbol{\beta}_{v},\boldsymbol{\beta}_{v'})$
as $\Phi_{ab}:=\sum_{k=a+1}^{b-1}\Phi_{k,v}$. All three cases commute.
\end{lem}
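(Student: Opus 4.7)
The plan is to mimic the argument used in Lemma \ref{lem:summary-delta}: reduce each of the three commutativity statements to the corresponding statement with every edge label replaced by the canonical element $\theta$, which is already handled by Lemma \ref{lem:summary-theta}, and then absorb the extra terms $\Phi_{y}\Phi_{ab}\theta$ in the definition of $\delta$ into the composition by moving the $\Phi$-actions across $\mu_2$ and using that certain $\Phi_{b,?}$ vanish on the relevant $HF^{-}(\boldsymbol{\beta}_{v},\boldsymbol{\beta}_{v'})$. Concretely, for each of the three diagrams I would expand each label $\delta$ as $\theta + \Phi_{y}\Phi_{ab}\theta$, apply $\mu_2$-bilinearity, and treat the resulting four (or more) terms separately. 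The $\theta\otimes\theta$ term is controlled directly by Lemma \ref{lem:summary-theta}; the cross-terms and the double-$\Phi_{ab}$ term are handled by the basic identities
\[
\mu_2(\Phi_{ab}\otimes \mathrm{Id}) = \mu_2(\mathrm{Id}\otimes \Phi_{ab}) = \Phi_{ab}\mu_2(\mathrm{Id}\otimes \mathrm{Id})
\]
(Lemma \ref{lem:phi-swap}) together with the identities $\mu_2(\Phi_{b,v}\otimes \mathrm{Id}) = \Phi_{b,v}\mu_2(\mathrm{Id}\otimes \mathrm{Id})$ or $\mu_2(\mathrm{Id}\otimes \Phi_{b,v'}) = \Phi_{b,v}\mu_2(\mathrm{Id}\otimes \mathrm{Id})$ that hold whenever $\Phi_{b,?}$ vanishes on one of the two input chain complexes. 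Exactly which of these identities applies is determined by whether $BB_b$ is a link or free baseball for each of the three attaching curves in a given triangle, and this in turn is read off from the schematic.

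For the square-shaped parts of the three diagrams, I would follow the same bookkeeping as in the second column of Lemma \ref{lem:summary-delta}: write both $\mu_2^{rd}(\delta\otimes\delta)$ and $\mu_2^{dr}(\delta\otimes\delta)$ as sums of four terms, pair them up via the $\theta$-level commutativity coming from Lemma \ref{lem:summary-theta}, and check that the remaining $\Phi_{y}\Phi_{ab}$-decorated terms are interchanged correctly using $\Phi$-equivariance of $\mu_2$ and the vanishing of certain $\Phi_{b,?}$'s. The key structural observation is that $\Phi_{ab}$ is a sum of $\Phi_{k}$ for indices $k$ strictly between $a$ and $b$, so it commutes freely with $\mu_2$ on either input, while the outer $\Phi_{y}$ has its position dictated by whether the corresponding band is a merge or a split—exactly the same dichotomy that appears in Definition \ref{def:For-each-consecutive}.

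The main obstacle I anticipate is purely combinatorial rather than conceptual: one has to be careful with the bookkeeping of which baseball index $y$ sits at the ``heavy'' end of $\delta$ for each of the three diagrams (recall that the choice between $\Phi_{j,f}$ and $\Phi_{j,g}$ in the definition of $\delta$ depends on whether the band is merge or split), and one has to check that the identification of $\Phi_{b,v}$-actions across different attaching curves really does hold on homology in each of the three diagrams—this is where the hypothesis on which $BB_b$ are link baseballs, encoded by the colors in the schematic, gets used. Once that bookkeeping is done, the verification reduces to a routine computation exactly parallel to the proofs of Lemmas \ref{lem:summary-delta} and \ref{lem:composite-phi-1}, and no new Heegaard-diagram computation is needed.
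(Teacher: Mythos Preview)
Your proposal is correct and follows essentially the same route as the paper: reduce to the $\theta$-labelled case and then absorb the $\Phi_{ab}$-modifications via $\Phi$-equivariance of $\mu_2$, exactly as in Lemma~\ref{lem:summary-delta}. The only small addition the paper makes is to cite Proposition~\ref{prop:swap-commute} alongside Lemma~\ref{lem:summary-theta} for the $\theta$-level commutativity (one of the three diagrams involves a swap--swap composition not covered by Lemma~\ref{lem:summary-theta} alone), but this is a minor bookkeeping point and your overall strategy is the same.
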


\begin{proof}
By Proposition \ref{prop:swap-commute} and Lemma \ref{lem:summary-theta},
Figure \ref{fig:composite-split-merge} commutes if the edges are
labelled $\theta$. We can deduce the lemma from this using algebraic
arguments as in the proof of Lemma \ref{lem:summary-delta}.
\end{proof}
We will define the vertical and diagonal maps of Diagram (\ref{eq:swap-twisted}),
such that if $\underline{{\rm swap}}\in CF^{-}(\underline{\boldsymbol{\beta}'_{012}},\underline{\boldsymbol{\beta}_{012}})$
is the element that consists of the vertical and diagonal maps, then
$\underline{{\rm swap}}$ is a cycle. Let the vertical components
$\boldsymbol{\beta}_{k}'\to\boldsymbol{\beta}_{k}$ be swap maps,
except that we modify $\boldsymbol{\beta}_{k}'\to\boldsymbol{\beta}_{k}$
as in the left vertical map of the first diagram of Figure \ref{fig:composite-split-merge}
if $\boldsymbol{\beta}_{k-1}\to\boldsymbol{\beta}_{k}$ is a merge;
and as in the right vertical map of the second diagram of Figure \ref{fig:composite-split-merge}
if $\boldsymbol{\beta}_{k-1}\to\boldsymbol{\beta}_{k}$ is a split.
(In fact, these modifications are the same.) Then, by Lemmas \ref{lem:summary-theta}
and \ref{lem:composite-split-merge}, and Proposition \ref{prop:combi-claims}
(\ref{enu:If-,-then}), we can fill in the diagonal maps such that
$\underline{{\rm swap}}$ is a cycle.
\begin{equation}\label{eq:swap-twisted}
\begin{tikzcd}
	{\boldsymbol{\beta}_{0}'} & {\boldsymbol{\beta}_{1}'} & {\boldsymbol{\beta}_{2}'} \\
	{\boldsymbol{\beta}_0} & {\boldsymbol{\beta}_1} & {\boldsymbol{\beta}_2}
	\arrow["\theta"', from=1-1, to=1-2]
	\arrow[curve={height=-12pt}, from=1-1, to=1-3]
	\arrow[from=1-1, to=2-1]
	\arrow[from=1-1, to=2-2]
	\arrow[from=1-1, to=2-3]
	\arrow["\theta"', from=1-2, to=1-3]
	\arrow[from=1-2, to=2-2]
	\arrow[from=1-2, to=2-3]
	\arrow[from=1-3, to=2-3]
	\arrow["\delta", from=2-1, to=2-2]
	\arrow[curve={height=12pt}, from=2-1, to=2-3]
	\arrow["\delta", from=2-2, to=2-3]
\end{tikzcd}
\end{equation}

The vertical maps of Figure \ref{fig:composite-split-merge}, paired
with $\boldsymbol{\alpha}$, are quasi-isomorphisms, by the third
diagram of Figure \ref{fig:composite-split-merge} and Proposition
\ref{prop:Something-is-an} (this is similar to the argument of Proposition
\ref{prop:Something-is-an-1}). Hence, the cycle $\underline{{\rm swap}}$
induces a quasi-isomorphism 
\[
\mu_{2}(-\otimes\underline{{\rm swap}}):CF^{-}(\boldsymbol{\alpha},\underline{\boldsymbol{\beta}_{012}'})\to CF^{-}(\boldsymbol{\alpha},\underline{\boldsymbol{\beta}_{012}}),
\]
and so $HF^{-}(\alpha,\underline{\boldsymbol{\beta}_{012}})=0$.

\appendix

\section{\label{sec:Direct-computations}Direct computations}

Let ${\cal R}_{1}=\mathbb{F}[W_{1},W_{2},Z_{1},Z_{2}]$ and ${\cal R}_{2}=\mathbb{F}[W_{1},W_{2},W_{3},Z_{1},Z_{2},Z_{3}]$.
We compute $\mathcal{CFL}_{{\cal R}_{1}}^{-}(\#^{2}S^{1}\times S^{2},L_{ononori})$
and ${\cal CFL}_{{\cal R}_{2}}^{-}(\#^{2}S^{1}\times S^{2},L_{ori})$
using the Heegaard diagrams of Figure \ref{fig:Lnonori-ori}. They
are the tensor product of the three complexes on the left, resp. the
four complexes on the right.
\[\begin{tikzcd}[column sep=large,row sep=scriptsize]
	{\mathcal{R} _1 b} & {\mathcal{R} _1a} & {\mathcal{R} _2 d} & {\mathcal{R} _2 c} \\
	{\mathcal{R} _1 x} & {\mathcal{R} _1 y} & {\mathcal{R} _2 z} & {\mathcal{R} _2 w} \\
	{\mathcal{R} _1 z} & {\mathcal{R} _1 w} & {\mathcal{R} _2 b} & {\mathcal{R} _2 a} \\
	&& {\mathcal{R} _2 x} & {\mathcal{R} _2 y}
	\arrow["{z_1+z_2 }"{description}, curve={height=-12pt}, from=1-1, to=1-2]
	\arrow["{w_1+w_2}"{description}, curve={height=-12pt}, from=1-2, to=1-1]
	\arrow["{z_1 + z_2}"{description}, curve={height=-12pt}, from=1-3, to=1-4]
	\arrow["{w_1 +w_3}"{description}, curve={height=-12pt}, from=1-4, to=1-3]
	\arrow["{z_1+z_2 }"{description}, curve={height=-12pt}, from=2-1, to=2-2]
	\arrow["{w_1+w_2}"{description}, curve={height=-12pt}, from=2-2, to=2-1]
	\arrow["{z_1 +z_2}"{description}, curve={height=-12pt}, from=2-3, to=2-4]
	\arrow["{w_1 +w_3}"{description}, curve={height=-12pt}, from=2-4, to=2-3]
	\arrow["{z_1+w_2}"{description}, from=3-1, to=3-2]
	\arrow["{z_2 +z_3}"{description}, curve={height=-12pt}, from=3-3, to=3-4]
	\arrow["{w_2 + w_3}"{description}, curve={height=-12pt}, from=3-4, to=3-3]
	\arrow["{z_2 +z_3 }"{description}, curve={height=-12pt}, from=4-3, to=4-4]
	\arrow["{w_2 + w_3}"{description}, curve={height=-12pt}, from=4-4, to=4-3]
\end{tikzcd}\]

\begin{figure}[h]
\begin{centering}
\includegraphics{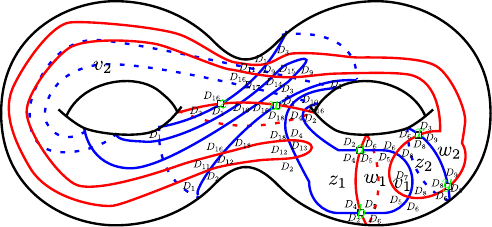}\includegraphics{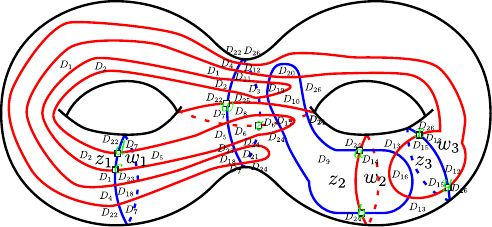}
\par\end{centering}
\caption{\label{fig:Lnonori-ori}Heegaard diagrams for $L_{nonori},L_{ori}\subset\#^{2}S^{1}\times S^{2}$.}
\end{figure}

\subsection{\label{subsec:Direct-computations-for}Direct computations for Section
\ref{sec:Some-important-balled}}

Let us study the left hand side Heegaard diagram of Figure \ref{fig:Lnonori-ori}.
There are $8$ generators, which are $u_{1}u_{2}u_{3}$ for $u_{1}\in\{a,b\}$,
$u_{2}\in\{z,w\}$, $u_{3}\in\{x,y\}$. For two such generators ${\bf u}:=u_{1}u_{2}u_{3}$
and ${\bf v}:=v_{1}v_{2}v_{3}$, there exists a Maslov index $1$
domain in $D({\bf u},{\bf v})$ only the $u_{i}$'s and $v_{i}$'s
differ in exactly one $i$. The following are all the Maslov index
$1$ domains: (by an annulus, we mean a domain as in Figure \ref{fig:annulus})
\begin{itemize}
\item $a\to b$\footnote{By this, we mean that for any $u_{2},u_{3}$, these are all the Maslov
index $1$ domains in $D(au_{2}u_{3},bu_{2}u_{3})$.}: $D_{5}+D_{7}$ (bigon, $w_{1}$), $D_{1}+D_{2}+D_{3}+D_{9}+D_{11}+D_{12}+D_{16}+D_{17}+D_{18}$
($w_{2}$)
\item $z\to w$: $D_{3}+D_{9}+D_{14}+D_{15}+D_{17}$ (annulus, $w_{2}$),
$D_{2}+D_{4}+D_{12}+D_{13}+D_{18}$ (annulus, $z_{1}$)
\item $x\to y$: $D_{7}+D_{8}$ (bigon $z_{2}$), $D_{1}+D_{2}+D_{3}+D_{4}+D_{10}+D_{14}+D_{16}+D_{17}+D_{18}$
($z_{1}$)
\item $b\to a$: $D_{6}+D_{8}$ (annulus, $z_{2}$), $D_{4}+D_{10}+D_{13}+D_{14}+D_{15}$
(bigon, $z_{1}$)
\item $w\to z$: $D_{1}+D_{3}+D_{9}+D_{10}+D_{11}+D_{14}+D_{15}+D_{16}+D_{17}$
($w_{2}$), $D_{1}+D_{2}+D_{4}+D_{10}+D_{11}+D_{12}+D_{13}+D_{16}+D_{18}$
($z_{1}$)
\item $y\to x$: $D_{5}+D_{6}$ (annulus, $w_{1}$), $D_{9}+D_{11}+D_{12}+D_{13}+D_{15}$
(bigon, $w_{2}$)
\end{itemize}
Since the Heegaard diagram is weakly admissible with respect to $\{v_{1},v_{2}\}$,
for each row, there are in total an even number of holomorphic representatives.
Since bigons and annuli always have an odd number of holomorphic representatives,
we can deduce that every domain above has an odd number of holomorphic
representatives, except the two domains for $w\to z$. For these,
we can use $\partial_{{\cal R}_{1}}^{2}=0$ to conclude that both
have an even number of holomorphic representatives.

Let us study the right hand side Heegaard diagram of Figure \ref{fig:Lnonori-ori}.
Similarly to above, there are $16$ generators, given by choosing
one of $\{a,b\}$, $\{c,d\}$, $\{z,w\}$, and $\{x,y\}$. The following
are all the Maslov index $1$ domains.
\begin{itemize}
\item $a\to b$: $D_{14}+D_{16}$ (bigon, $w_{2}$), $D_{3}+D_{6}+D_{8}+D_{11}+D_{12}+D_{21}+D_{24}+D_{25}+D_{26}$
($w_{3}$)
\item $c\to d$: $D_{5}+D_{6}+D_{17}$ (bigon, $w_{1}$), $D_{1}+D_{4}+D_{10}+D_{11}+D_{12}+D_{20}+D_{22}+D_{25}+D_{26}$
($w_{3}$)
\item $z\to w$: $D_{6}+D_{8}+D_{9}+D_{17}+D_{21}+D_{24}$ (annulus, $z_{2}$),
$D_{1}+D_{2}+D_{4}+D_{22}$ (annulus, $z_{1}$)
\item $x\to y$: $D_{15}+D_{16}$ (bigon, $z_{3}$), $D_{1}+D_{7}+D_{8}+D_{9}+D_{10}+D_{11}+D_{22}+D_{23}+D_{24}+D_{25}+D_{26}$
($z_{2}$)
\item $b\to a$: $D_{13}+D_{15}$ (annulus, $z_{3}$), $D_{9}+D_{10}+D_{17}+D_{19}+D_{20}$
(bigon, $z_{2}$)
\item $d\to c$: $D_{2}+D_{3}+D_{19}$ (bigon, $z_{1}$), $D_{7}+D_{8}+D_{9}+D_{18}+D_{21}+D_{23}+D_{24}$
($z_{2}$)
\item $w\to z$: $D_{5}+D_{7}+D_{18}+D_{23}$ (annulus, $w_{1}$), $D_{3}+D_{10}+D_{11}+D_{12}+D_{19}+D_{20}+D_{25}+D_{26}$
(annulus, $w_{3}$)
\item $y\to x$: $D_{13}+D_{14}$ (annulus, $w_{2}$), $D_{4}+D_{12}+D_{18}+D_{20}+D_{21}$
(bigon, $w_{3}$)
\end{itemize}
Similarly, using that the Heegaard diagram with respect to $\{z_{1},z_{2},z_{3}\}$
and $\{w_{1},w_{2},w_{3}\}$ are both admissible, we can show that
all the above domains have an odd number of holomorphic representatives.

\subsection{\label{subsec:A-direct-computation}A direct computation for Lemma
\ref{lem:structure-z11-2} }

\begin{figure}[h]
\begin{centering}
\includegraphics{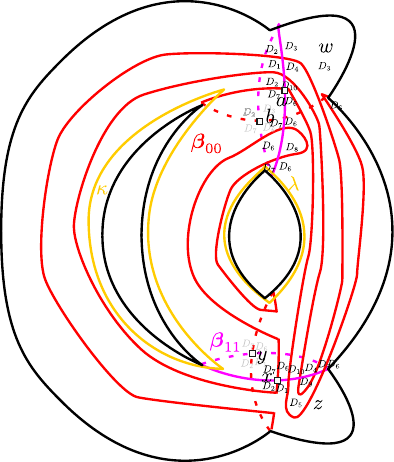}
\par\end{centering}
\caption{\label{fig:z112}A Heegaard diagram for Lemma \ref{lem:structure-z11-2}.}
\end{figure}

Let us study Figure \ref{fig:z112}. There are $4$ generators with
$c_{1}=0$: $ax$, $ay$, $bx$, $by$. The following are all the
Maslov index $1$ domains.
\begin{itemize}
\item $a\to b$: $D_{3}+D_{4}+D_{6}+D_{7}+D_{8}+D_{10}$ ($A_{\kappa,-1}$),
$D_{3}+D_{4}+D_{5}+D_{9}+D_{10}$ (bigon)
\item $x\to y$: $D_{1}+D_{4}+D_{6}+D_{7}+D_{8}+D_{9}$ ($A_{\kappa,-1}$),
$D_{1}+D_{4}+D_{5}+2D_{9}$ (bigon), $D_{1}+D_{2}+D_{6}+2D_{7}$
\item $b\to a$: $D_{1}+D_{4}+D_{6}+2D_{8}+D_{9}$ ($A_{\lambda,-1}$, annulus),
$D_{1}+D_{2}+D_{6}+D_{7}+D_{8}$, $D_{1}+D_{2}+D_{5}+D_{9}$ ($A_{\kappa,-1}$,
annulus)
\item $y\to x$: $D_{3}+D_{4}+D_{6}+2D_{8}+D_{10}$ ($A_{\lambda,-1}$,
annulus), $D_{2}+D_{3}+D_{5}+D_{10}$ ($A_{\kappa,-1}$, annulus)
\end{itemize}
The Heegaard diagram is admissible with respect to $\{w\}$ and also
with respect to $\{z\}$, and so $\partial=0$. Using this, we can
compute the number of holomorphic representatives for all the domains,
except the ones for $x\to y$. Since $A_{\kappa,-1}^{2}\simeq0$ (\cite[Proposition 4.17]{MR2113019},
\cite[Lemma 5.5]{1512.01184}), both domains $D(ax,ay),D(bx,by)$
with underlying two-chain $D_{1}+D_{4}+D_{6}+D_{7}+D_{8}+D_{9}$ have
an odd number of holomorphic representatives, and the actions of $A_{\kappa,-1}$
and $A_{\lambda,-1}$ are as follows. 

\[\begin{tikzcd}[sep=large]
	ax & ay & ax & ay \\
	bx & by & bx & by
	\arrow["Z"{description}, curve={height=-12pt}, from=1-1, to=1-2]
	\arrow["W"{description}, curve={height=12pt}, from=1-1, to=2-1]
	\arrow["W"{description}, curve={height=-12pt}, from=1-2, to=1-1]
	\arrow["W"{description}, curve={height=12pt}, from=1-2, to=2-2]
	\arrow["W"{description}, from=1-4, to=1-3]
	\arrow["Z"{description}, curve={height=12pt}, from=2-1, to=1-1]
	\arrow["Z"{description}, curve={height=-12pt}, from=2-1, to=2-2]
	\arrow["Z"{description}, curve={height=12pt}, from=2-2, to=1-2]
	\arrow["W"{description}, curve={height=-12pt}, from=2-2, to=2-1]
	\arrow["{A_{\kappa,-1}}"', shift right=5, draw=none, from=2-2, to=2-1]
	\arrow["Z"{description}, from=2-3, to=1-3]
	\arrow["Z"{description}, from=2-4, to=1-4]
	\arrow["W"{description}, from=2-4, to=2-3]
	\arrow["{A_{\lambda,-1}}"', shift right=5, draw=none, from=2-4, to=2-3]
\end{tikzcd}\]Hence, the conclusion of Lemma \ref{lem:structure-z11-2} holds ($c=ax+by$,
$d=ax$).

\bibliographystyle{amsalpha}
\bibliography{essay_bib}

\end{document}